\declaretheorem[style=plain, parent=section]{theorem} 
\declaretheorem[style=plain, sibling=theorem]{lemma}
\declaretheorem[style=plain, sibling=theorem]{proposition}
\declaretheorem[style=plain, parent=theorem]{corollary} 
\declaretheorem[style=plain, sibling=theorem]{conjecture}
\declaretheorem[style=definition, sibling=theorem]{definition}
\declaretheorem[style=definition, sibling=theorem]{example}
\declaretheorem[style=remark, sibling=theorem]{remark}
\declaretheorem[style=definition, sibling=theorem]{notation}
\SetMathAlphabet\mathrm{normal}{OT1}{\rmdefault}{m}{n}
\SetMathAlphabet\mathsf{bold}{OT1}{\sfdefault}{b}{n}
\definecolor{urlcolor}{rgb}{0.9, 0, 0.5}
\definecolor{linkcolor}{rgb}{0, 0.2, 0.8}
\definecolor{citecolor}{rgb}{0, 0.6, 0.3}
\NewDocumentCommand{\hl}{m}{\textbf{\textcolor{\mrk@hlcolor}{#1}}}
\NewDocumentCommand{\ol}{}{\overline}
\NewDocumentCommand{\ul}{}{\underline}
\NewDocumentCommand{\wh}{}{\widehat}
\NewDocumentCommand{\wt}{}{\widetilde}
\NewDocumentCommand{\xr}{m}{\xrightarrow{#1}}
\RenewDocumentCommand{\aa}{}{\mathbb{A}}
\NewDocumentCommand{\ff}{}{\mathbb{F}}
\NewDocumentCommand{\gggg}{}{\gg}
\let\gggg\gg
\RenewDocumentCommand{\gg}{}{\mathbb{G}}
\RenewDocumentCommand{\ll}{}{\mathbb{L}}
\NewDocumentCommand{\pp}{}{\mathbb{P}}
\NewDocumentCommand{\qq}{}{\mathbb{Q}}
\NewDocumentCommand{\rr}{}{\mathbb{R}}
\NewDocumentCommand{\zz}{}{\mathbb{Z}}
\NewDocumentCommand{\eee}{}{\mathcal{E}}
\NewDocumentCommand{\fff}{}{\mathcal{F}}
\NewDocumentCommand{\iii}{}{\mathcal{I}}
\RenewDocumentCommand{\lll}{}{\mathcal{L}}
\NewDocumentCommand{\ooo}{}{\mathcal{O}}
\NewDocumentCommand{\rrr}{}{\mathcal{R}}
\NewDocumentCommand{\uuu}{}{\mathcal{U}}
\DeclareMathOperator{\im}{im}
\DeclareMathOperator{\id}{id}
\DeclareMathOperator{\spec}{Spec}
\DeclareMathOperator{\divv}{div}
\RenewDocumentCommand{\hom}{m}{\textnormal{Hom}_{#1}}
\NewDocumentCommand\vvec{m m}{\ensuremath{\begin{pmatrix}#1\\#2\end{pmatrix}}}
\NewDocumentCommand\mf{m}{\ensuremath{\mathfrak{#1}}}
\NewDocumentCommand\msf{m}{\ensuremath{\mathsf{#1}}}
\NewDocumentCommand\mcal{m}{\ensuremath{\mathcal{#1}}}
\NewDocumentCommand\into{}{\ensuremath{\hookrightarrow}}
\NewDocumentCommand\onto{}{\ensuremath{\twoheadrightarrow}}
\newcommand{\colim@}[2]{%
  \vtop{\m@th\ialign{##\cr
    \hfil$#1\operator@font colim$\hfil\cr
    \noalign{\nointerlineskip\kern1.5\ex@}#2\cr
    \noalign{\nointerlineskip\kern-\ex@}\cr}}%
}
\newcommand{\colim}{%
  \mathop{\mathpalette\colim@{\rightarrowfill@\textstyle}}\nmlimits@
}
\newtheorem{thmx}{Theorem}
\newcommand{\La}{\Lambda}
\newcommand{\BZ}{\mathbb{Z}}
\newcommand{\la}{\lambda}
\newcommand{\fr}{\msf{Fr}}
\newcommand{\psl}{\msf{PSL}}
\newcommand{\st}{\mathbf{St}}
\newcommand{\uzero}{\uuu_0(\mf{g})}
\newcommand{\kt}{K_0^{\wt{T}\times \wt{T}}(X)}
\newcommand{\ktq}{K_0^{\wt{T}\times \wt{T}}(X)_{\mathbb{Q}}}
\newcommand{\todd}{\textnormal{td}_X}
\title{Decomposition of Frobenius pushforwards of line bundles on wonderful compactifications}
\author{Merrick Cai}
\address{Department of Mathematics
Massachusetts Institute of Technology
\newline
77 Massachusetts Avenue,
Cambridge, MA 02139,
USA
}
\email{mercai@mit.edu, merrickcai@gmail.com}
\author{Vasily Krylov}
\address{Department of Mathematics
Massachusetts Institute of Technology
\newline
77 Massachusetts Avenue,
Cambridge, MA 02139,
USA;
\newline National Research University Higher School of Economics, Russian Federation\newline
Department of Mathematics, 6 Usacheva st., Moscow 119048;
}
\email{krvas@mit.edu, krylovasya@gmail.com}
\date{September 2022}
\begin{document}

\maketitle
\begin{abstract}
De Concini-Procesi introduced varieties known as wonderful compactifications, which are smooth projective compactifications of semisimple adjoint groups $G$. We study the Frobenius pushforwards of invertible sheaves on the wonderful compactifications, and in particular its decomposition into locally free subsheaves. We give necessary and sufficient conditions for a specific line bundle to be a direct summand of the Frobenius pushforward of another line bundle, formulated in terms of the weight lattice of $\wt{G}$, the universal cover of $G$ (identified with the Picard group of the wonderful compactification). In the case of $G=\psl_n$, we offer lower bounds on the multiplicities (as direct summands) for those line bundles satisfying the sufficient conditions. We also decompose Frobenius pushforwards of line bundles into a direct sum of vector subbundles, whose ranks are determined by invariants on the weight lattice of $G$. We study a particular block which decomposes as a direct sum of line bundles, and identify the line bundles which appear in this block. Finally, we present two approaches to compute the class of the Frobenius pushforward of line bundles on wonderful compactifications in the rational Grothendieck group and in the rational Chow group.
\end{abstract}

\tableofcontents

\section{Introduction}
Let $G$ be a semi-simple adjoint group over an algebraically closed field $\ff$ of characteristic $p > 0$. Let $X \supset G$ be the De Concini-Procesi wonderful compactification of $G$. The Picard group of  $X$ can be identified with the weight lattice $\Lambda$ of $\widetilde{G}$ (universal cover of $G$). Let $\fr \colon X \rightarrow X$ be the (absolute) Frobenius morphism. Pick $\mu \in \La$ and consider the corresponding line  bundle $\mathcal{L}=\mathcal{O}_X(\mu)$. Smoothness of $X$ implies that $\fr_* \mathcal{O}_X(\mu)$ is the vector bundle of rank $p^{\operatorname{dim}G}$ on $X$. 

In this paper we investigate vector bundles $\fr_* \mathcal{O}_X(\mu)$, $\mu \in \Lambda$. Note that the  Krull-Schmidt theorem holds in  the category of vector bundles on $X$  (see \cite[Theorem 3]{atiyah1956krull}) so  
the decomposition of $\fr_*\mathcal{L}$ (or more generally any vector bundle on $X$) into a direct sum of indecomposable vector subbundles is unique. The natural question is to describe inecomposable summands of $\fr_*\mathcal{L}$ and their multiplicities.  In the case of toric varieties, it is known that Frobenius pushforwards of line bundles decompose into a direct sum of line bundles, and these can be calculated explicitly (this is proven in \cite{thomsen2000frobenius}, see also \cite{bogvad1998splitting}, \cite{achinger2010note} for the alternative approach). Beyond toric varieties, the answer is known in few cases: quadrics, due to \cite{achinger2012frobenius}; Grassmannian $\operatorname{Gr}(2,n)$, due to \cite{raedschelders2019frobenius}; and certain flag varieties, due to \cite{samokhin2014frobenius} and \cite{https://doi.org/10.48550/arxiv.1705.10187}. 

\begin{remark}
There are also some general conjectures 
in this direction that are known to be true for partial flags and due to  Bezrukavnikov,  Mirkovi\'c and Rumynin: namely the so called ``$p$-uniformity'' property holds in these cases and is conjectured to be true in general (see \cite[Section 1.5]{raedschelders2019frobenius} for the details).
\end{remark}

In general, the problem is not well studied. In our case, we cannot hope for as nice a result as in the toric varieties case: due to a result by P. Achinger, such total splittings into line bundles occur if and only if the scheme is a (smooth) toric variety. However, due to the extensive structure of the wonderful compactifications, there is much to say about the Frobenius pushforwards of line bundles, and we investigate this in our paper. One of our main results (Theorem \ref{thmC:decomp_of_frob_push_as_vector_subbundles}) actually holds for any smooth $G \times G$-variety $X$ that contains $G$ as an open subset. In this Theorem we decompose $\fr_* \mathcal{L}$ in the direct sum of (not necessary indecomposable) vector bundles and compute their dimensions. It turns out that the dimensions of these summands are independent on $p$ (for $p \gggg 0$) so this result may be considered as an evidence of the so-called ``$p$-uniformity'' (see \cite{raedschelders2019frobenius}). Let us mention that the decomposition above is constructed as follows: we consider the natural action of $G_1 \times G_1$ on $\fr_* \mathcal{L}$ (here $G_1 \subset G$ is the Frobenius kernel), this action corresponds to the action of $\uzero \otimes \uzero$ on $\fr_* \mathcal{L}$ and we can then decompose $\fr_* \mathcal{L}$ into the direct sum of vector bundles via primitive idempotents of $\uzero \otimes \uzero$.

\begin{remark}
Note that the decomposition of $\fr_* \mathcal{L}$ above is completely analogous to the decomposition of the Frobenius push forwards of line bundles on toric varieties constructed in \cite{bogvad1998splitting} (in the case of toric varieties summands will be line bundles).
\end{remark}


\subsection{Main results}

Let $X$ denote the wonderful compactification. The partial order $\succeq$ is defined in Definition~\ref{def:succeq}. The first result gives necessary and sufficient combinatorial conditions on $\textnormal{Pic}(X)\cong \Lambda$, the weight lattice of $\wt{G}$, for a line bundle to be a direct summand of the Frobenius pushforward of another line bundle.
\begin{thmx}
\label{thmA:line_bundles_direct_summands}
Let $\mu,\lambda\in\Lambda$. For $\ooo_X(\mu)$ to be a direct summand of $\fr_*\ooo_X(\lambda)$, it is \textit{necessary} that $\mu$ and $\lambda$ satisfy the condition
\[ K_X\succeq \lambda-p\mu \succeq 0,\]
and it is \textit{sufficient} that $\lambda-p\mu$ can be written as $\sum_{i=1}^{\ell}(a_i\omega_i+b_i\alpha_i)$ for $0\le a_i\le 2(p-1)$ and $0\le b_i\le p-1$.
\end{thmx}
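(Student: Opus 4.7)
The plan is to translate ``$\ooo_X(\mu)$ is a direct summand of $\fr_*\ooo_X(\lambda)$'' into a nondegeneracy statement for a trace pairing, then apply the paper's description of $H^0$ of line bundles on $X$. Set $\nu:=\lambda-p\mu$. By Frobenius adjunction and Grothendieck duality for $\fr$ (with $\omega_\fr=\omega_X^{1-p}$),
\[
\hom{X}(\ooo_X(\mu),\fr_*\ooo_X(\lambda))\cong H^0(X,\ooo_X(\nu)),\qquad
\hom{X}(\fr_*\ooo_X(\lambda),\ooo_X(\mu))\cong H^0(X,\ooo_X((1-p)K_X-\nu)).
\]
The composition $\hom{X}(\fr_*\ooo_X(\lambda),\ooo_X(\mu))\otimes \hom{X}(\ooo_X(\mu),\fr_*\ooo_X(\lambda))\to \hom{X}(\ooo_X(\mu),\ooo_X(\mu))=\ff$ then becomes cup product followed by the Grothendieck trace:
\[
H^0(X,\ooo_X(\nu))\otimes H^0(X,\ooo_X((1-p)K_X-\nu))\to H^0(X,\omega_X^{1-p})\xrightarrow{\text{Tr}_\fr}\ff.
\]
Hence $\ooo_X(\mu)$ is a direct summand of $\fr_*\ooo_X(\lambda)$ iff this bilinear pairing is not the zero map.

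\textbf{Necessary direction.} Mere nonvanishing of the two Hom spaces forces $\nu\succeq 0$ and $(1-p)K_X-\nu\succeq 0$ by the global-sections criterion for line bundles on $X$ recalled earlier in the paper. Together these read $K_X\succeq \lambda-p\mu\succeq 0$ in the paper's convention.

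\textbf{Sufficient direction.} Using the anticanonical formula $-K_X=\sum_i(2\omega_i+\alpha_i)$ for the wonderful compactification, the range $0\le a_i\le 2(p-1),\ 0\le b_i\le p-1$ describes exactly the fundamental parallelepiped from $0$ to $(1-p)K_X$ in the $(\omega_i,\alpha_i)$-generating set for $\Lambda$. For $\nu=\sum_i(a_i\omega_i+b_i\alpha_i)$ in this range, the complementary weight $(1-p)K_X-\nu=\sum_i((2(p-1)-a_i)\omega_i+((p-1)-b_i)\alpha_i)$ lies in the same range. I would then produce specific $B\times B$-eigensections $\phi\in H^0(X,\ooo_X(\nu))$ and $\psi\in H^0(X,\ooo_X((1-p)K_X-\nu))$ using the local structure of $X$ near a $T\times T$-fixed point (where the geometry is toric, with coordinates parametrized by the $\omega_i$ and $\alpha_i$ directions), and verify $\text{Tr}_\fr(\phi\cdot\psi)\ne 0$ by reducing to the standard Frobenius trace on affine space, which is nonzero precisely on $p$-restricted monomials.

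\textbf{Main obstacle.} The hardest step is the sufficient direction: identifying a convenient $T\times T$-fixed point with an affine chart whose local coordinates align with the $(\omega_i,\alpha_i)$ generators, and comparing the local Frobenius trace there with the global Grothendieck trace. A secondary subtlety is the gap between the necessary and sufficient conditions, which reflects the fact that $\omega_i,\alpha_i$ are linearly dependent in $\Lambda$: the necessary condition permits any nonnegative decomposition of $\lambda-p\mu$ bounded by $(1-p)K_X$, while the sufficient condition demands the specific parallelepiped decomposition for which the trace pairing is guaranteed to be nonvanishing.
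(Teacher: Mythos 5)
Your necessary direction is correct and coincides with the paper's route: compute the two Hom spaces via the projection formula and $\fr^{!}\cong\fr^*(-)\otimes\omega_X^{\otimes(1-p)}$, then invoke the criterion that $H^0(X,\ooo_X(\gamma))\ne 0\iff\gamma\succeq 0$. (One small point: the inequality in the theorem statement should read $(1-p)K_X\succeq\lambda-p\mu\succeq 0$, not $K_X\succeq\lambda-p\mu\succeq 0$; since $K_X\prec 0$, the latter is vacuous. You noticed this and should have flagged it rather than calling it ``the paper's convention.'')

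The sufficient direction has a genuine gap. Your plan is to exhibit a $T\times T$-fixed point with an affine chart ``whose local coordinates align with the $(\omega_i,\alpha_i)$ generators'' and to reduce the Grothendieck trace to the toric computation on affine space. But the local geometry is not of that form: the affine cell $\mathbf{X}_0\cong U\times\aa^\ell\times U^-$ has coordinates indexed by the positive roots (twice) and the simple roots, not by the fundamental weights. Only the boundary divisors $\mathbf{X}_i$ (class $\alpha_i$) are coordinate hyperplanes there; the Schubert divisors $\mathbf{D}_i,\wt{\mathbf{D}_i}$ (class $\omega_i$) are cut out by higher-degree polynomials (for $\psl_n$ they are minors of the group element, as worked out later in the paper's Theorem~\ref{thm:psl_n_number_of_splittings}). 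So ``nonzero precisely on $p$-restricted monomials'' does not apply directly to your $\phi\cdot\psi$, and carrying out the local trace computation would amount to redoing a nontrivial piece of Brion--Kumar. The paper instead \emph{quotes} the Frobenius splitting section $\sigma\in\Gamma(X,\omega_X^{\otimes(1-p)})$ of \cite[Theorem~6.1.12]{brion2007frobenius}, with $\divv_0\sigma=(p-1)\sum_i(\mathbf{X}_i+\mathbf{D}_i+\wt{\mathbf{D}_i})$, and observes that the hypothesis $a_i\le 2(p-1)$, $b_i\le p-1$ is exactly what lets one choose an effective subdivisor $D=\sum_i(c_i\mathbf{D}_i+\wt{c_i}\wt{\mathbf{D}_i}+b_i\mathbf{X}_i)$ of $(p-1)\wt{K}_X$ with $[D]=\lambda-p\mu$ (decomposing $a_i=c_i+\wt{c_i}$ with $0\le c_i,\wt{c_i}\le p-1$); then $\sigma=\sigma_D\cdot\sigma_{(p-1)\wt{K}_X-D}$ gives a $D$-splitting, hence the desired direct summand. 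That factorization of a known splitting section is the missing ingredient in your argument.
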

\noindent
Theorem~\ref{thmA:line_bundles_direct_summands} can be found in Proposition~\ref{conditions_on_Homs} (combined with Corollary~\ref{cor:condition_on_line_subbundles}) and Theorem~\ref{frobenius_splitting_as_lattice_points}.

When $G=\psl_n$, the sufficiency condition in Theorem~\ref{thmA:line_bundles_direct_summands} gives rise to a larger lower bound on the multiplicity of that line bundle as a direct summand. If $D=\sum a_i D_i$ is an effective divisor, such that $a_i\ge 0$ and $D_i$ are prime divisors, recall that $D'\subset D$ is an effective sub-divisor if $D'=\sum b_i D_i$ for $0\le b_i\le a_i$, as defined in Definition~\ref{def:sub_divisor}.
\begin{thmx}
\label{thmB:PSL_n_high_multiplicity_of_line_bundle}
Let $G=\psl_n$. Let $S(\lambda)$ denote the number of distinct effective subdivisors of $(p-1){\wt{K}_X}$ (see Notation~\ref{notation:tilde_K_X}) whose class is $\lambda\in \textnormal{Pic}(X)$. Then the multiplicity $m(\mu,\lambda)\ge S(\lambda-p\mu)$.
\end{thmx}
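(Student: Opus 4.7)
The plan is to bootstrap the sufficiency direction of Theorem~\ref{thmA:line_bundles_direct_summands}: each decomposition $\lambda-p\mu=\sum_{i=1}^\ell(a_i\omega_i+b_i\alpha_i)$ with $0\le a_i\le 2(p-1)$ and $0\le b_i\le p-1$ produces a direct summand inclusion $\ooo_X(\mu)\hookrightarrow\fr_*\ooo_X(\lambda)$, and one wants to promote ``one summand per valid tuple'' to ``$S(\lambda-p\mu)$ independent summands.'' The combinatorial input is that, by the definition of $\wt{K}_X$, an effective subdivisor $D\subset (p-1)\wt{K}_X$ records exactly such a tuple $(a_i,b_i)$; fixing the class to $\lambda-p\mu$ selects precisely $S(\lambda-p\mu)$ of them.

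First I would recast the sufficiency construction from Theorem~A at the level of sections. By Frobenius reciprocity (and $\fr^*\ooo_X(\mu)\cong\ooo_X(p\mu)$),
\[ \hom{\ooo_X}(\ooo_X(\mu),\fr_*\ooo_X(\lambda))\cong H^0(X,\ooo_X(\lambda-p\mu)), \]
so each subdivisor $D\subset(p-1)\wt{K}_X$ of class $\lambda-p\mu$ determines a canonical section $s_D$ (unique up to scalar) and hence a map $\iota_D\colon\ooo_X(\mu)\to\fr_*\ooo_X(\lambda)$. Distinct divisors have distinct zero loci, so the sections $s_D$ are linearly independent; and the proof of Theorem~A's sufficiency shows each $\iota_D$ is a split inclusion onto a sub-line-bundle isomorphic to $\ooo_X(\mu)$.

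Second, the task is to show that the $S(\lambda-p\mu)$ sub-line-bundles $\iota_D(\ooo_X(\mu))$ sit in direct sum inside $\fr_*\ooo_X(\lambda)$ and that this internal sum is itself a direct summand. The pairwise-disjointness of distinct sub-line-bundles of a vector bundle gives the internal direct sum for free; the summand property is where the structure of $\psl_n$ enters. The natural route is to produce retractions $\pi_D\colon\fr_*\ooo_X(\lambda)\to\ooo_X(\mu)$ with $\pi_D\circ\iota_{D'}=\delta_{D,D'}\cdot\id$, by dualizing against the complementary subdivisor $D^c=(p-1)\wt{K}_X-D$ and a Frobenius splitting of $X$, then verifying the orthogonality relations.

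The main obstacle is verifying these orthogonality relations, which is the step that actually requires $G=\psl_n$. The type-$A$ specific input is expected to be the explicit description of simple restricted $\uzero$-modules and the resulting block decomposition of $\fr_*\ooo_X(\lambda)$ via primitive idempotents of $\uzero\otimes\uzero$ coming from Theorem~\ref{thmC:decomp_of_frob_push_as_vector_subbundles}: one tracks the idempotent through which each $\iota_D$ factors, and shows that the assignment $D\mapsto\text{block}$ is sufficiently injective using the combinatorics of fundamental weights modulo the root lattice in type $A$. Once this orthogonality is in hand, the bound $m(\mu,\lambda)\ge S(\lambda-p\mu)$ follows from the Krull-Schmidt theorem.
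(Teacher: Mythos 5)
Your overall framework is on the right track and matches the paper: the idea is indeed to associate to each effective subdivisor $D\subseteq(p-1)\wt{K}_X$ of class $\lambda-p\mu$ a split inclusion $\ooo_X(\mu)\to\fr_*\ooo_X(\lambda)$ and retraction back, and then to establish the orthogonality relation $\pi_D\circ\iota_{D'}=\delta_{D,D'}\cdot\id$ so that Krull--Schmidt gives the bound. However, the mechanism you propose for proving the orthogonality --- tracking each $\iota_D$ through the block decomposition of $\fr_*\ooo_X(\lambda)$ coming from primitive idempotents in $\uzero\otimes\uzero$ and showing $D\mapsto\text{block}$ is injective --- would not work, and it is not what the paper does. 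All the sub-line-bundles $\iota_D(\ooo_X(\mu))$ for the fixed class $\lambda-p\mu$ are isomorphic copies of the same $\ooo_X(\mu)$, so any two of them are assigned exactly the same Picard class, and the primitive-idempotent decomposition of Theorem~\ref{thm:decomp_line_bundle_into_vector_bundles} does not separate them. Worse, Corollary~\ref{cor:steinberg_splits_into_line_bundles_general} and Theorem~\ref{thm:determine_line_bundles_for_p-1_rho_block} show that the pieces $\fff_{\mu',\lambda'}^{i,j}$ are line bundles only when $\mu'=\lambda'=(p-1)\rho$, and there are only $(\dim\st)^2$ of those, all isomorphic to a single $\ooo_X(\gamma)$; since $S(\lambda-p\mu)$ can be arbitrarily large (thousands, in the examples), the $\iota_D$'s cannot even factor through distinct rank-one idempotent pieces, let alone through distinct blocks. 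So the proposed injectivity of $D\mapsto\text{block}$ is simply false.

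What the paper actually does, and what truly requires $G=\psl_n$, is an explicit local computation of the residue-trace form rather than any representation-theoretic bookkeeping. By \cite[Theorem~1.3.8, Lemma~1.4.4(i)]{brion2007frobenius} the orthogonality $\pi_D\circ\iota_{D'}=0$ for $D\ne D'$ reduces to showing that the trace $\wh\tau(\sigma_D\cdot\sigma_{(p-1)\wt{K}_X-D'})$ vanishes, and this trace can be checked on the big open cell $\mathbf X_0\cong U\times\aa^{n-1}\times U^-$. For $\psl_n$ one has explicit matrix coordinates on this cell, and the defining polynomials of $\mathbf{X}_k$, $\mathbf{D}_k$, $\wt{\mathbf{D}_k}$ restricted to $\mathbf{X}_0$ are concrete monomials and minors of the displayed $n\times n$ matrix; this is exactly the $\psl_n$-specific input, and it is geometric rather than Lie-theoretic. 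One then examines the coefficient of the critical monomial $\mathbf m=\prod x_{ij}^{p-1}\prod x_k^{p-1}$ in $\sigma_D\cdot\mathbf m/\sigma_{D'}$: when $D\ne D'$ some factor $\sigma_\delta$ appears with exponent between $p$ and $2p-2$, which either forces some variable to appear with exponent at least $p$ (killing the coefficient outright when $\sigma_\delta$ is a monomial) or contributes a multinomial coefficient $\binom{e}{m_1,\ldots,m_r}$ with $e\ge p$ and all $m_i<p$, which is divisible by $p$. Without this concrete divisor-equation analysis (or something playing the same role), the orthogonality step is not established, so the argument as you have sketched it has a genuine gap.
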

\noindent Theorem~\ref{thmB:PSL_n_high_multiplicity_of_line_bundle} can be found in Theorem~\ref{thm:psl_n_number_of_splittings}.

Our third result decomposes the Frobenius pushforward of a line bundle as a direct sum of vector subbundles, where the ranks can be expressed in terms of invariants largely independent of $p$. We define $a_\lambda$ to be the size of the linkage class of $\lambda\in\Lambda/p\Lambda$, as in Notation~\ref{notation:a_lambda}. We define $d_\lambda$ to be the multiplicity of $L_\lambda$ in $\Delta_\lambda$, the baby Verma module, as in Definition~\ref{definition:d_lambda}.
\begin{thmx}
\label{thmC:decomp_of_frob_push_as_vector_subbundles}
Let $\lll$ be a line bundle on $X$. We have an abstract decomposition
\[\fr_*\lll \cong\bigoplus_{\lambda\in \Lambda_p }\bigoplus_{\mu\sim \lambda} \bigoplus_{\substack{1\le i\le \dim L_\mu\\ 1\le j\le \dim L_\lambda}}\fff_{\mu,\lambda}^{i,j}, \]
where $\fff_{\mu,\lambda}^{i,j}$ is a vector bundle of rank $a_\lambda d_\lambda d_\mu$, and these can be chosen to be vector subbundles (so that the isomorphism is in fact an equality).

In particular, the ranks of the summands are uniformly bounded by $(\max_{\lambda} d_\lambda)^2\cdot |W|$, and for $p\gggg 0$ this is independent of $p$ (see Theorem~\ref{thm:behavior_of_d_lambda}).
\end{thmx}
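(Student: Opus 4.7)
The plan is to realise the decomposition via the action of $\uzero \otimes \uzero$ on $\fr_*\lll$ induced by the Frobenius kernel $G_1 \times G_1 \subset G \times G$. Because $X$ is a smooth $G \times G$-variety with $G \subset X$ open and $\lll$ is $(G \times G)$-equivariant, pushforward by $\fr$ yields a locally free sheaf of rank $p^{\dim G}$ on which $G_1 \times G_1$ acts by $\ooo_X$-linear automorphisms, the two factors acting by left and right infinitesimal translation; equivalently, $\uzero \otimes \uzero$ acts on $\fr_*\lll$ by $\ooo_X$-linear endomorphisms.

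To extract the summands, I would choose a complete system of primitive orthogonal idempotents $\{e_{\mu, i}\}$ of $\uzero$ indexed by $\mu \in \Lambda_p$ and $1 \le i \le \dim L_\mu$, so that $\uzero e_{\mu, i} \cong P_\mu$ and $\sum_{\mu, i} e_{\mu, i} = 1$ refines the block decomposition $\uzero = \bigoplus_{[\nu]} B_{[\nu]}$. Setting $\fff_{\mu, \lambda}^{i, j} := (e_{\mu, i} \otimes e_{\lambda, j}) \cdot \fr_*\lll$ then produces the required $\ooo_X$-module direct summands of $\fr_*\lll$; they vanish for $\mu \not\sim \lambda$ because the central idempotents for distinct blocks annihilate each other. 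Each $\fff_{\mu, \lambda}^{i, j}$, being a direct summand of a locally free sheaf, is itself locally free, and since $X$ is connected its rank is constant and can be computed at any single point.

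For the rank, I would restrict to a point $x$ in the open orbit $G \subset X$. The stabiliser of $x$ under $G \times G$ is the diagonal $\Delta(G)$, so the Frobenius neighbourhood $\fr^{-1}(x)$ is identified with $(G_1 \times G_1)/\Delta(G_1) \cong G_1$, and after locally trivialising $\lll$ the fiber $(\fr_*\lll)_x$ is canonically $\ooo(G_1) \cong \uzero^*$ with the biregular $\uzero \otimes \uzero$-action; via the Frobenius form, $\uzero^* \cong \uzero$ as $\uzero \otimes \uzero$-bimodules. The fiber of $\fff_{\mu, \lambda}^{i, j}$ therefore becomes $e_{\mu, i} \uzero e_{\lambda, j} \cong \hom{\uzero}(P_\mu, P_\lambda)$, of dimension $[P_\lambda : L_\mu]$. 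By BGG-type reciprocity applied to the baby-Verma filtration of $P_\lambda$, $[P_\lambda : L_\mu] = \sum_{\sigma \in [\lambda]} [\Delta_\sigma : L_\lambda][\Delta_\sigma : L_\mu]$; the translation principle in the restricted setting then gives $[\Delta_\sigma : L_\nu] = d_\nu$ independently of $\sigma$ within a linkage class, collapsing the sum to $a_\lambda d_\lambda d_\mu$, as claimed.

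The main anticipated obstacles are in this rank step: (i) establishing rigorously that the fiber of $\fr_*\lll$ at a generic point is isomorphic to $\uzero$ as a $\uzero \otimes \uzero$-bimodule modulo the character twist coming from $\lll$, which hinges on a careful analysis of how $(G \times G)$-equivariance interacts with Frobenius neighbourhoods; and (ii) the translation-principle uniformity $[\Delta_\sigma : L_\nu] = d_\nu$ across a linkage class in the restricted setting, which is the representation-theoretic heart of the rank formula. The uniform bound $(\max_\lambda d_\lambda)^2 |W|$ for $p \gg 0$ then follows from the cited asymptotic behaviour of $d_\lambda$.
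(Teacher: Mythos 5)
Your proposal follows essentially the same route as the paper: decompose via $\uzero\otimes\uzero$ acting through $G_1\times G_1$, pick primitive orthogonal idempotents $\{e_{\mu,i}\}$ refining the block decomposition, define $\fff_{\mu,\lambda}^{i,j}=(e_{\mu,i}\otimes e_{\lambda,j})\fr_*\lll$, and compute the rank at the identity of $G$ by identifying the fiber of $\fr_*\lll$ with $\ff[G_1]\cong\uzero^*\cong\uzero$ as a $\uzero$-bimodule; the paper's Proposition~\ref{prop:independence_of_line_bundles}, Lemma~\ref{lemma:u_0*=u_0}, and Proposition~\ref{prop:appendix:dimension_of_terms} carry out exactly the analysis you sketch, and the ``character twist'' you flag as the main risk is disposed of by pulling back to $\wt{G}$, where the $\wt{G}\times\wt{G}$-equivariant trivialization (semisimple simply connected groups have no nontrivial characters) forces the standard $G_1\times G_1$-action. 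The only cosmetic difference is at the last step: you re-derive $[P_\lambda:L_\mu]=a_\lambda d_\lambda d_\mu$ from Brauer--Humphreys reciprocity together with the $K_0$-coincidence of baby Vermas in a linkage class, whereas the paper simply cites the resulting identity $[P_\lambda]=a_\lambda d_\lambda[\Delta_\lambda]=a_\lambda d_\lambda\sum_{\mu\sim\lambda}d_\mu[L_\mu]$ from Humphreys; these are the same computation.
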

\noindent
Theorem~\ref{thmC:decomp_of_frob_push_as_vector_subbundles} can be found in Theorem~\ref{thm:decomp_line_bundle_into_vector_bundles}.

The final result describes which of the $\fff_{\mu,\lambda}^{i,j}$ are line bundles, and determines them explicitly. Let $\st$ denote the Steinberg representation (see Definition~\ref{def:steinberg_module}). Let $\rho$ denote one-half the sum of positive roots (see Notation~\ref{notation:rho}).
\begin{thmx}
\label{thmD:st_st_component_description}
Let $\lambda\in \Lambda$, and let $\mu$ be maximal with respect to $\succeq$ among all weights $\gamma$ such that $\lambda-p\gamma\ge (p-1)\rho$. Letting $\pi_{(p-1)\rho}$ be the projection onto the block $(p-1)\rho$, we have the isomorphism of $(\ooo_X,G_1\times G_1)$-modules
\[\pi_{(p-1)\rho}\fr_*\ooo_X(\lambda)\cong \st\otimes  \st \otimes \ooo_X(\mu).\]
In particular, in the notation of Theorem~\ref{thm:decomp_line_bundle_into_vector_bundles},
\[\fff_{(p-1)\rho, (p-1)\rho}^{i,j}\cong \ooo_X(\mu),\]
and $\mu=\lambda=(p-1)\rho$ are the only pairs $(\mu,\lambda)$ for which $\fff_{\mu,\lambda}^{i,j}$ are line bundles.
\end{thmx}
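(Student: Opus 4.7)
The plan combines Theorem~\ref{thm:decomp_line_bundle_into_vector_bundles} with $G \times G$-equivariance, the Frobenius projection formula, and the natural inclusion of $\st \otimes \st$ in $\Gamma(X, \ooo_X(\lambda'))$ for $\lambda' \geq (p-1)\rho$.

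First, the ``in particular'' statements. Theorem~\ref{thm:decomp_line_bundle_into_vector_bundles} says $\fff_{\mu,\lambda}^{i,j}$ has rank $a_\lambda d_\lambda d_\mu$, so it is a line bundle iff $a_\lambda = d_\lambda = d_\mu = 1$. One computes $a_{(p-1)\rho} = 1$: since $(p-1)\rho + \rho = p\rho$, the dot-orbit of $(p-1)\rho$ modulo $p\Lambda$ is a singleton. One computes $d_{(p-1)\rho} = 1$ because $\Delta_{(p-1)\rho}$ has dimension $p^N$ equal to $\dim L_{(p-1)\rho} = \dim \st$, so $\Delta_{(p-1)\rho} = \st$ is simple. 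For any other restricted weight $\lambda$, $\dim L_\lambda < p^N$, forcing $\Delta_\lambda$ to have a proper submodule and hence $d_\lambda \geq 2$. Thus only the pair $(\mu, \lambda) = ((p-1)\rho, (p-1)\rho)$ produces line bundle summands.

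For the main identification, Theorem~\ref{thm:decomp_line_bundle_into_vector_bundles} gives $\pi_{(p-1)\rho}\fr_*\ooo_X(\lambda) \cong \st \otimes \st \otimes \fff$ for some line bundle $\fff$ on $X$. Since $\pi_{(p-1)\rho}$ arises from a central idempotent of $\uzero \otimes \uzero$ and $G_1 \times G_1$ is normal in $G \times G$, this projector is $G \times G$-equivariant; hence $\fff$ is a $G \times G$-equivariant line bundle, and $\fff \cong \ooo_X(\xi)$ for a unique $\xi \in \Lambda$. The projection formula $\fr_*\ooo_X(p\mu + \nu) \cong \ooo_X(\mu) \otimes \fr_*\ooo_X(\nu)$, together with the triviality of the $G_1 \times G_1$-action on $\ooo_X(\mu)$ (semisimplicity of $G$ kills characters of $G_1$), implies $\xi = \mu + \xi'$ with $\xi'$ the twist for $\lambda' := \lambda - p\mu$. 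Maximality of $\mu$ forces $\lambda' = (p-1)\rho + \sum c_j \alpha_j$ with $0 \leq c_j < p$, so it remains to show $\xi' = 0$ in this reduced case.

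To prove $\xi' = 0$, I would exhibit a nonzero $G \times G$-equivariant map $\st \otimes \st \otimes \ooo_X \to \fr_*\ooo_X(\lambda')$ landing in the Steinberg block. By adjunction, such a map corresponds to a nonzero $G \times G$-equivariant map $\st \otimes \st \to \Gamma(X, \ooo_X(\lambda'))$. Such a map exists for $\lambda' \geq (p-1)\rho$: restriction to the closed orbit $G/B \times G/B \subset X$ produces the top factor of a $G \times G$-good filtration of $\Gamma(X, \ooo_X(\lambda'))$, which contains a copy of $\st \otimes \st$ via Borel--Weil and the self-duality $\st^* \cong \st$ (arising from $-w_0(p-1)\rho = (p-1)\rho$). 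Combined with the necessary direction of Theorem~\ref{thmA:line_bundles_direct_summands} applied to $\ooo_X(\xi') \subset \fr_*\ooo_X(\lambda')$, namely $K_X \succeq \lambda' - p\xi' \succeq 0$, together with effectivity of $\xi'$, this pins down $\xi' = 0$. The main obstacle is verifying that the constructed map indeed lies in the $(p-1)\rho$-block of the $G_1 \times G_1$-action rather than spreading across adjacent blocks; this is handled by noting that $\st$ is a simple projective $G_1$-module, so any $G_1$-equivariant image of $\st \otimes \st$ must lie within the unique block where both $G_1$-central characters match $(p-1)\rho$.
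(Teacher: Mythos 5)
Your plan diverges from the paper's proof, which works entirely inside the Vinberg-monoid picture: the paper associates to $\eee=\pi_{(p-1)\rho}\fr_*\ooo_X(\lambda)$ the $\Lambda$-graded module $M$ over $R=\ff[\wh X]$, observes $M\cong\bigoplus R(\lambda_i)$ is free of rank $(\dim\st)^2$, picks a maximal shift $\gamma$, and uses the one-dimensionality of $R(\gamma)_{-\gamma}$ together with the irreducibility of $\st\otimes\st$ to force all $\lambda_i$ to equal $\gamma$. The twist $\gamma=\mu$ is then read off from $\pi_{(p-1)\rho}R_{\lambda-p\gamma}\neq 0\iff\lambda-p\gamma\geq(p-1)\rho$. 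Your approach is a genuinely different route, but it has two real gaps.

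\textbf{The argument for $d_\lambda\geq 2$ is incorrect.} You claim that $\dim L_\lambda<p^N$ forces $\Delta_\lambda$ to have a proper submodule and hence $d_\lambda\geq 2$. But $d_\lambda=[L_\lambda:\Delta_\lambda]$ is the \emph{multiplicity of $L_\lambda$}, not the length of $\Delta_\lambda$. A reducible $\Delta_\lambda$ can still have $[L_\lambda:\Delta_\lambda]=1$; indeed, for every $p$-regular $\lambda$ in the top alcove one has $d_\lambda=1$ even though $\Delta_\lambda$ is reducible. The correct argument for the ``in particular'' clause is via $a_\lambda$: if $a_\lambda d_\lambda d_\mu=1$ then $a_\lambda=1$, i.e.\ the linkage class of $\lambda$ in $\Lambda/p\Lambda$ is a singleton, which forces $\lambda=(p-1)\rho$. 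This is what Corollary~\ref{cor:steinberg_splits_into_line_bundles_general} actually uses.

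\textbf{The final step does not pin down $\xi'=0$.} Your constructed map $\st\otimes\st\otimes\ooo_X\to\pi_{(p-1)\rho}\fr_*\ooo_X(\lambda')\cong\st\otimes\st\otimes\ooo_X(\xi')$ only gives, by Schur's lemma, a nonzero $\ooo_X\to\ooo_X(\xi')$, i.e.\ $\xi'\succeq 0$. The necessary condition $(1-p)K_X\succeq\lambda'-p\xi'\succeq 0$ from Theorem~A is a one-sided constraint and does \emph{not} exclude $\xi'\succ 0$: already for $A_1$, with $\lambda'=(p-1+2c)\omega$ and $1\leq c\leq p-1$, the weight $\xi'=\omega$ satisfies both $\xi'\succeq 0$ and $\lambda'-p\xi'=(2c-1)\omega\succeq 0$ and the upper bound, yet is wrong. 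What is missing is the converse inequality $\xi'\preceq 0$ (so that antisymmetry of $\succeq$, Proposition~\ref{prop:when_is_succeq_relation}, concludes). To obtain this you would need a nonzero $(\ooo_X,G_1\times G_1)$-map $\pi_{(p-1)\rho}\fr_*\ooo_X(\lambda')\to\st\otimes\st\otimes\ooo_X$, equivalently (again by Schur) $\Gamma(X,\ooo_X(-\xi'))\neq 0$. Producing such a surjection requires an argument dual to your embedding (e.g.\ via $\fr^!$ and self-duality of $\st$), or an argument on global sections showing $\pi_{(p-1)\rho}F_{\leq\lambda'}$ has dimension exactly $(\dim\st)^2$ when $0$ is the maximal $\gamma$ with $\lambda'-p\gamma\geq(p-1)\rho$. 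Neither is supplied. The paper sidesteps all of this by working with the graded $R$-module: freeness plus the one-dimensionality of $R(\gamma)_{-\gamma}$ forces uniformity of the twists and identifies $\gamma$ simultaneously.

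A minor but worth flagging: you assert $\pi_{(p-1)\rho}\fr_*\ooo_X(\lambda)\cong\st\otimes\st\otimes\fff$ for a \emph{single} line bundle $\fff$ directly from Theorem~\ref{thm:decomp_line_bundle_into_vector_bundles}, but that theorem only produces $(\dim\st)^2$ line bundle summands, not that they are isomorphic. This does follow via the $G_1\times G_1$-equivariant sheaf $\ul{\mathrm{Hom}}^{G_1\times G_1}(\st\otimes\st,\pi_{(p-1)\rho}\fr_*\ooo_X(\lambda))$, which is a line bundle by Schur's lemma on fibers, but you should make this explicit rather than attribute it to the decomposition theorem alone.
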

\noindent
Theorem~\ref{thmD:st_st_component_description} can be found in Corollary~\ref{cor:steinberg_splits_into_line_bundles_general} and Theorem~\ref{thm:determine_line_bundles_for_p-1_rho_block}.

\subsection{Structure of paper}

In \S\ref{subsection:setup}, we discuss the general setup and hypotheses for the paper. We review the Frobenius morphism in \S\ref{subsection:frobenius_morphism}. We then briefly recall the relevant parts of modular representation theory in \S\ref{subsection:representation_theory_of_wt_G} and representation theory of Frobenius kernels in \S\ref{subsection:representation_theory_of_G_1}. In \S\ref{subsection:the_wonderful_compactification}, \S\ref{subsection:structure_of_X}, and \S\ref{subsection:divisors_and_line_bundles_on_X}, we describe the construction, structure, and line bundles on the wonderful compactification. Discussion on the Vinberg monoid and its connection to the wonderful compactification can be found in Appendix~\ref{section:appendix:vinberg_monoid}. We defer standard algebraic results to Appendix~\ref{subsection:assorted_results}.

In \S\ref{section:line_bundles_as_direct_summands}, we study when we can split off a line bundle as a direct summand from the Frobenius pushforward of another line bundle. In \S\ref{subsection:constraints_on_line_bundles_as_direct_summands}, we investigate the constraints that splitting off a line bundle would entail, and prove Proposition~\ref{conditions_on_Homs}, which is the necessary condition in Theorem~\ref{thmA:line_bundles_direct_summands}. In \S\ref{subsection:frobenius_splitting}, we use the method of Frobenius splitting to prove Theorem~\ref{frobenius_splitting_as_lattice_points}, which is the sufficient condition in Theorem~\ref{thmA:line_bundles_direct_summands}. When this sufficient condition is satisfied and $G=\psl_n$, we are able to obtain lower bounds on the multiplicities by showing that each distinct effective divisor (with certain constraints) which maps to a specific element of $\Lambda$ will provide a mutually compatible Frobenius splitting, thus obtaining a distinct summand. This is the content of Theorem~\ref{thmB:PSL_n_high_multiplicity_of_line_bundle}, and is proven in Theorem~\ref{thm:psl_n_number_of_splittings}.

In \S\ref{section:vector_subbundles_via_irreps}, we use the irreducibility of certain $G_1\times G_1$-representations and the $\wt{G}\times\wt{G}$-equivariant structure on line bundles to prove that the structure sheaf raised to a multiplicity is a vector subbundle of the Frobenius pushforward of specific line bundles. The main result is Theorem~\ref{thm:embedding_of_L_otimes_L}.

In \S\ref{section:splitting_via_idempotents}, we utilize the $G_1\times G_1$-action to apply primitive idempotents and decompose the Frobenius pushforward of a line bundle into vector subbundles. We outline the main strategy in \S\ref{subsection:the_main_strategy}, particularly in Proposition~\ref{prop:independence_of_line_bundles}. In \S\ref{subsection:Central_idempotents}, we apply this strategy to decompose $\fr_*\lll$, the Frobenius pushforward of a line bundle on the wonderful compactification, into a direct sum of vector subbundles of known rank which are also $G_1\times G_1$-modules. In \S\ref{subsection:primitive_idempotents}, we apply this strategy and decompose $\fr_*\lll$ into a direct sum of vector subbundles (which are of smaller rank but lose the $G_1\times G_1$-module structure), and express the ranks in terms of the multiplicities of irreducible modules for $\uzero$ inside their projective covers. The general theory is discussed in Appendix~\ref{subsection:appendix:theory_of_idempotents}. In \S\ref{subsection:computation_of_dimensions}, we express the multiplicities in terms of certain invariants ($a_\lambda$ and $d_\lambda$), proving Theorem~\ref{thmC:decomp_of_frob_push_as_vector_subbundles}, which is shown in Theorem~\ref{thm:decomp_line_bundle_into_vector_bundles}. We then investigate the vector subbundles which have rank $1$, proving Theorem~\ref{thmD:st_st_component_description} in Corollary~\ref{cor:steinberg_splits_into_line_bundles_general} and Theorem~\ref{thm:determine_line_bundles_for_p-1_rho_block}, where we also describe exactly which line bundles these subbundles are. Discussion of $d_\lambda$ values is given in Appendix~\ref{section:appendix:decomposition_numbers}, and in particular, we apply Theorem~\ref{thmC:decomp_of_frob_push_as_vector_subbundles} to root systems $A_1$, $A_2$, $A_3$, $B_2$, and $G_2$, which we can describe explicitly and consequently in \S\ref{subsection:concrete_applications}, we give the explicit numbers for the ranks of the vector subbundles in the decomposition in Theorem~\ref{thm:decomp_line_bundle_into_vector_bundles}.

In \S\ref{section:class_in_K_theory}, we aim to describe the class of the Frobenius pushforward of a line bundle in the Grothendieck group of locally free sheaves on the wonderful compactification. Our first approach, in \S\ref{subsection:grothendieck_group_K_0(X)}, involves computing the class in the rational $\wt{T}\times \wt{T}$-equivariant Grothendieck group, resulting in Theorem~\ref{thm:compute_class_of_pushforward}. Our second approach, in \S\ref{subsection:chern_character}, involves computing the Chern character of $\fr_*\lll$ in the rational Chow group, which is isomorphic to the rational Grothendieck group (as $\qq$-algebras) by the Chern character. We present a formula involving the Chern character of $\lll$ and the Todd genus.

\subsection{Acknowledgements}
The authors would like to thank Roman Bezrukavnikov for suggesting the problem and for many useful discussions and ideas. The authors are grateful to the MIT SPUR program and to its advisors Ankur Moitra and David Jerison, through which this research was carried out. The authors also wish to thank Dylan Pentland and Andrei Ionov for explaining many aspects of modular representation theory, which greatly improved our understanding.

\section{Preliminaries}

\subsection{Setup}
\label{subsection:setup}

Throughout, we always work over $\ff$, an algebraically closed field of characteristic $p$, and we assume that all schemes are defined over $\ff_p\subset \ff$. Let $\fr$ denote the absolute Frobenius morphism. Note that since all schemes we work with are defined over $\ff_p$, and since $\ff$ is algebraically closed, the results are just as applicable to the relative Frobenius morphism as well. We always denote by $G$ to be a semisimple adjoint algebraic group over $\ff$, and assume $\textnormal{char }\ff=p\nmid h$, where $h$ is the Coxeter number of $\mf{g}$ (the Lie algebra of $G$), so that in particular $G$ is smooth over $\ff$. (This is the condition that $p$ is ``very good," which is discussed for example in \cite[\S1]{mirkovic1999centers}.) It will be enough for our purposes to consider only simple (adjoint) $G$, because such semisimple adjoint groups are just products of simple adjoint groups. The wonderful compactification of a semisimple adjoint group is again a product of the wonderful compactifications of simple adjoint groups as well.

Let $\wt{G}$ be the simply connected cover of $G$, fixing a maximal torus $\wt{T}$, Borel subgroups $\wt{B}$ and $\wt{B}^{-1}$, and unipotent subgroups $\wt{U}$ and $\wt{U}^-$ inside $\wt{G}$. Fix $\wt{Z}\subset \wt{G}$ to be the center such that $\wt{G}/\wt{Z}\cong G$, canonically identified with the kernel of the map $\wt{G}\to G$; by the assumption on $p$, we have that $\wt{Z}$ is a finite reduced subgroup scheme of $\wt{G}$. Checking the image of these inside $G$, we obtain maximal torus $T$, Borel subgroups $B,B^-$, and unipotent subgroups $U,U^-$ in $G$. Let $G_1$ denote the Frobenius kernel of $G$, i.e. the kernel of the map $\fr:G\to G$. Note that $G_1\cong (\wt{G})_1$.
We let $\mf{g}$ be the Lie algebra of $\wt{G}$ (hence $G$ as well) over $\ff$, and from our conventions above (fixing subgroups inside $\wt{G}$), obtain a Cartan subalgebra $\mf{h}\subset \mf{g}$. We let $\Phi$ denote the roots of $\mf{g}$ and $\rrr $ the root lattice, with $\rrr^+$ the submonoid generated by $\Phi^+$. Let $\Lambda$ denote the weight lattice of $\mf{g}$, with $\Lambda^+$ the submonoid of dominant weights. We also obtain a polarization of $\Phi$, i.e. a choice of positive roots $\Phi^+$, and simple roots $\Delta\subset \Phi^+$. From the choice of $\Delta$, we fix fundamental weights $\{\omega_i\}_{i=1}^{\textnormal{rk }\mf{g}}$, dual to the simple coroots. 

For convenience, we'll make the following notations.
\begin{notation}
Throughout the paper, let $\ell\coloneqq |\Delta|$.
\end{notation}
\begin{notation}
\label{notation:rho}
We denote 
\[\rho\coloneqq \frac{1}{2}\sum_{\alpha\in\Phi^+}\alpha=\sum_{i}\omega_i.\]
\end{notation}
\begin{notation}
We say that $\lambda_1\ge \lambda_2$ if $\lambda_1-\lambda_2\in \rrr^+$. (This is the usual partial ordering on a weight lattice.)
\end{notation}

Fix $W$ to be the Weyl group of $\mf{g}$, and $w_0$ for the longest element of $W$.
\begin{notation}
\label{notation:s_i_simple_reflection}
Denote $s_i\in W$ to be the simple reflections associated with the simple roots $\alpha_i\in\Delta$.
\end{notation}
Then $W$ acts on $\Lambda$ in two ways: first, we have the direct action $w\lambda$ in the usual way, and then we have the \textit{dot action}, given by
\[ w\cdot\lambda = w(\lambda+\rho)-\rho.\]
The dot action of $W$ effectively shifts the action of $W$ on $\Lambda$ by $-\rho$. We also have the affine Weyl group $(W^{\textnormal{aff}},\cdot_p)$ where $W^{\textnormal{aff}}=W\ltimes \rrr $. The group $W^{\textnormal{aff}}$ should be identified as controlling the representation theory of $\wt{G}$: $W^{\textnormal{aff}}$ acts on $\Lambda$, and its orbits give the block decomposition of $\msf{Rep}~\wt{G}$, see Proposition~\ref{linkage_principle_G}. The action is given by $(W,\cdot)$ with the dot action, and $\rrr $ acts by $\gamma:\bullet\mapsto \bullet+p\gamma$ for $\gamma\in\rrr $. On the other hand, the representation theory of $\msf{Rep}~G_1$ is controlled by $\Lambda/p\Lambda$. In this case, the $(W,\cdot)$-orbits of $\Lambda/p\Lambda$ give the block decomposition of $\msf{Rep}~G_1$, see \ref{linkage_principle_U(g)}.

\subsection{Frobenius morphism}
\label{subsection:frobenius_morphism}
Consider first an $\ff$-algebra $R$. Then the \textit{Frobenius map} $F:R\to R$ is defined by the $p$th power map:
\[ F:R\to R,\quad r\mapsto r^p.\]
Note that $F$ is \textit{not} an $\ff$-algebra homomorphism, as it is not $\ff$-linear. The image of $F$ is a subalgebra of $R$, denoted by $R^p$.

For any $R$-module $M$, we can push $M$ forward to a module $M'$ under Frobenius, which we'll denote by $\fr_* M$. In particular, $\fr_* M$ has the same underlying abelian group as $M$, but $R$ acts by $\fr(R)$. For $r\in R$ and $m\in \fr_* M$ corresponding to $m'\in M$,
\[ r\cdot m\leftrightarrow \fr(r)\cdot m'=r^p\cdot m'.\]
By abuse of notation, we will also say that $M$ is a module over the ring $\fr_* R$, to indicate the same action.

\begin{definition}
The \textit{(absolute) Frobenius morphism} $\fr:X\to X$ on any scheme $X$ over $\ff$ is defined as the identity morphism on the underlying topological space, and the $p$th power map on the structure sheaf $\ooo_X$.
\end{definition}
Concretely, $\fr$ does not do anything to the topological space, but for any open set $U\subset X$, we have the induced map
\[ \fr_X^\# : \ooo_X(U)\to \ooo_X(U),\quad s\mapsto s^p.\]

In particular, note that for any sheaf $\fff$ of $\ooo_X$-modules, the sheaf $\fr_*\fff$ is the same as $\fff$ as sheaves of abelian groups, but now the $\ooo_X$-module structure on $\fr_*\fff$ is given by $r\cdot s=r^p\cdot s$.

There are two important properties of the Frobenius morphism that we will use. The first is that the Frobenius morphism is flat, and as a consequence, for any locally free sheaf $\fff$ (i.e., vector bundle) on a smooth scheme, then $\fr_*\fff$ is also locally free (i.e., a vector bundle). The second: for any line bundle $\lll$ on a scheme $X$, then $\fr^*\lll\cong \lll^{\otimes p}$; see \cite[Lemma~1.2.6]{brion2007frobenius}. The projection formula then gives the immediate consequence
\[ \lll\otimes \fr_*\lll'\cong \fr_*\left(\lll'\otimes \lll^{\otimes p}\right),\quad \lll,\lll'\in\textnormal{Pic}(X).\]

Throughout our paper, $\fr$ always means the absolute Frobenius morphism. If $R=\ff[x_1,\dots,x_m]/I$, then $R^{(1)}=\ff[x_1^p,\dots,x_m^p]/^I{(p)}$, where $I^{(p)}=(f^{(p)}\mid f\in I)$ is the ideal generated in $\ff[x_1^p,\dots,x_m^p]$ by $f^{(p)}$, the image of $f\in I$ after sending each $x_i\mapsto x_i^p$ and extending $\ff$-linearly to a map $R\to R^{(1)}$. This map is clearly $\ff$-linear, hence glues to a morphism of $\ff$-schemes which we call the \textit{relative Frobenius morphism}; it is no longer an endomorphism but a map $X\to X^{(1)}$ from $X$ to the \textit{Frobenius twist} of $X$. However, because all schemes here will be defined over $\ff_p$, the schemes $X^{(1)}$ and $X$ are identified, and thus all results apply to the relative Frobenius morphism as well (with the appropriate statements).

\subsection{Representation theory of $\wt{G}$}
\label{subsection:representation_theory_of_wt_G}

We will denote by $\msf{Rep}~\wt{G}$, the category of finite dimensional rational representations of $\wt{G}$.
For any character $\lambda\in\Lambda$, we denote the one-dimensional $\wt{T}$-representation associated to $\lambda$ by $\ff_\lambda$. 
We also extend the $\wt{T}$-action to a $\wt{B}$-action on $\ff_\lambda$ via the natural surjection $\wt{B}\onto\wt{T}$, denoted with the same symbol.

\begin{definition}
Let $\lambda\in\Lambda$. The \textit{dual Weyl module} $M_\lambda$ is the $\wt{G}$-module
\[M_\lambda\coloneqq \textnormal{Ind}_{\wt{B}}^{\wt{G}}\hspace{1mm}\ff_{w_0\lambda}=\Gamma(\wt{G}/\wt{B}, \wt{G}\times^{\wt{B}}\ff_{w_0\lambda})=\Gamma(\wt{G}/\wt{B},\ooo_{\wt{G}/\wt{B}}(-w_0\lambda)).\]
\end{definition}

\begin{definition}
Let $\lambda\in \Lambda$. The \textit{Weyl module} $W_\lambda$ is the $\wt{G}$-module 
\[W_\lambda\coloneqq (\textnormal{Ind}_{\wt{B}}^{\wt{G}}\hspace{1mm}\ff_{-\lambda})^*=\Gamma(\wt{G}/\wt{B}, \wt{G}\times^{\wt{B}}\ff_{-\lambda})^*=\Gamma(\wt{G}/\wt{B},\ooo_{\wt{G}/\wt{B}}(w_0\lambda))^*.\]
\end{definition}

\begin{remark}
One can compute the dimension of $W_\lambda$ via the Weyl character formula, proven in \cite{weyl1925theorie}, \cite{weyl1926theoriea}, and \cite{weyl1926theorieb}.
\end{remark}

The irreducible representations of $\wt{G}$ are parametrized by $\Lambda^+$ by \cite[Corollary~II.2.6]{jantzen2003representations}.

\begin{definition}
For each $\lambda\in\Lambda^+$, define $L_\lambda$ to be the unique irreducible representation of $\wt{G}$ with highest weight $\lambda$. Note that $L_\lambda$ is the unique irreducible quotient of $W_\lambda$ as well as the unique irreducible subrepresentation of $M_\lambda$.
\end{definition}
\begin{example}
\label{example:weyl_module_for_SL2}
For $\wt{G}=\msf{SL}_2$, we have $\wt{G}/\wt{B}\cong \pp^1$. Let $\omega$ denote the fundamental weight. We have that \[\Gamma(\pp^1, \textnormal{Ind}_{\wt{B}}^{\msf{SL_2}}\hspace{1mm}\ff_{-\lambda})\cong \Gamma(\pp^1, \ooo_{\pp^1}(n\omega))\cong \text{Sym}^{n}(\ff^2).\] 
As a result, the Weyl module for the weight $n\omega$ is
\[ W_{n\omega}=\text{Sym}^{n}(\ff^2)^*.\]
For $0\le n<p$, $W_{n\omega}$ is both irreducible and self-dual, and therefore we have 
\[ L_{n\omega}=W_{n\omega}\cong \text{Sym}^{n}(\ff^2).\]
\end{example}

Of particular interest is when $\lambda=(p-1)\rho$.

\begin{definition}
\label{def:steinberg_module}
We denote $L_{(p-1)\rho}$ by $\st$, the \textit{Steinberg module}. 
\end{definition}

\begin{remark}
Note that $\st\otimes \st$ is the Steinberg module for $\wt{G}\times \wt{G}$.
\end{remark}
\begin{proposition}
The Steinberg module $\st$ has dimension $p^{\dim \wt{G}/\wt{B}}$ and is an irreducible $G_1$-module (and hence irreducible as a $\wt{G}$-module). The Steinberg module satisfies 
\[ \st=L_{(p-1)\rho}\cong M_{(p-1)\rho}\cong W_{(p-1)\rho}.\] Furthermore, it is projective and injective in $\msf{Rep}~\wt{G} $, hence self-dual. 
\end{proposition}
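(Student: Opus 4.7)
The plan is to proceed in three stages: first establish the dimension via Weyl's formula, then identify $W_{(p-1)\rho}\cong M_{(p-1)\rho}\cong L_{(p-1)\rho}$ by a block-theoretic argument over $G_1$, and finally bootstrap the projectivity/injectivity from $G_1$ to $\wt{G}$.

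For the dimension, I would apply the Weyl dimension formula to $\lambda = (p-1)\rho$:
\[ \dim W_{(p-1)\rho} = \prod_{\alpha\in\Phi^+}\frac{\langle (p-1)\rho+\rho,\alpha^\vee\rangle}{\langle\rho,\alpha^\vee\rangle} = \prod_{\alpha\in\Phi^+}\frac{\langle p\rho,\alpha^\vee\rangle}{\langle\rho,\alpha^\vee\rangle} = p^{|\Phi^+|} = p^{\dim\wt{G}/\wt{B}}. \]

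Next, to identify the three modules, I would restrict $W_{(p-1)\rho}$ to $G_1$. The highest weight vector generates a cyclic $G_1$-submodule which, by Frobenius reciprocity, is a quotient of the baby Verma $\Delta_{(p-1)\rho}=\ind_{B_1}^{G_1}\ff_{(p-1)\rho}$. Since $\dim \Delta_{(p-1)\rho}=p^{|\Phi^+|}=\dim W_{(p-1)\rho}$, this gives $W_{(p-1)\rho}|_{G_1}\cong \Delta_{(p-1)\rho}$. Now apply the linkage principle for $G_1$-modules (referenced in the paper): every composition factor $L_\mu$ of $\Delta_{(p-1)\rho}$ satisfies $\mu\equiv w\cdot(p-1)\rho\pmod{p\Lambda}$ for some $w\in W$. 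A direct computation,
\[ w\cdot(p-1)\rho = w(p\rho)-\rho = p\,w\rho - \rho \equiv -\rho \equiv (p-1)\rho\pmod{p\Lambda},\]
shows that $(p-1)\rho$ sits alone in its $(W,\cdot)$-block modulo $p\Lambda$. Since the weight $(p-1)\rho$ occurs in $\Delta_{(p-1)\rho}$ with multiplicity one and every composition factor must be $L_{(p-1)\rho}$, we conclude $\Delta_{(p-1)\rho}=L_{(p-1)\rho}$ and therefore $W_{(p-1)\rho}=L_{(p-1)\rho}$ as $\wt{G}$-modules. Using $M_\lambda\cong W_{-w_0\lambda}^*$ together with $-w_0\rho=\rho$ (true for every simple adjoint group) gives $M_{(p-1)\rho}\cong L_{(p-1)\rho}^*\cong L_{-w_0(p-1)\rho}=L_{(p-1)\rho}$, which also yields self-duality.

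For projectivity and injectivity, note first that because $(p-1)\rho$ forms a singleton block of $\msf{Rep}~G_1$, that block subcategory is semisimple, hence $L_{(p-1)\rho}$ is its own projective cover and injective hull in $\msf{Rep}~G_1$. The main obstacle is lifting this to $\msf{Rep}~\wt{G}$: here I would invoke the standard fact (Jantzen, \emph{Representations of Algebraic Groups}, II.10) that a finite-dimensional rational $\wt{G}$-module that is projective (resp.\ injective) over every Frobenius kernel $G_r$ is projective (resp.\ injective) in $\msf{Rep}~\wt{G}$. Steinberg's tensor product theorem writes the higher Steinberg module $L_{(p^r-1)\rho}\cong \st\otimes\st^{(1)}\otimes\cdots\otimes\st^{(r-1)}$, and this factorization reduces $G_r$-projectivity/injectivity to the $G_1$-case already handled. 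Self-duality of $\st$ is then immediate from $L_\lambda^*\cong L_{-w_0\lambda}$ and $-w_0(p-1)\rho=(p-1)\rho$.
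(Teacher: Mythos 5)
The paper's ``proof'' of this proposition is just a pair of citations (Jantzen, Prop.\ II.10.2; Steinberg, Thm.\ 8.3), so you are genuinely offering more than the paper does: an actual argument. Your first three stages are sound. The Weyl dimension formula computation is correct. The identification works, though the logic is a touch scrambled: the clean order is first to use linkage and the one-dimensionality of the $(p-1)\rho$-weight space to conclude $\Delta_{(p-1)\rho}=L_{(p-1)\rho}$, then to observe that the simple quotient $L_{(p-1)\rho}$ of $W_{(p-1)\rho}$ already has dimension $p^{|\Phi^+|}=\dim W_{(p-1)\rho}$, so $W_{(p-1)\rho}=L_{(p-1)\rho}$; your side remark ``$W_{(p-1)\rho}|_{G_1}\cong\Delta_{(p-1)\rho}$'' is a consequence of that, not a step toward it (a cyclic $G_1$-submodule being a quotient of $\Delta_{(p-1)\rho}$ only bounds its dimension above, it does not immediately show it fills up $W_{(p-1)\rho}$). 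The derivation of $M_{(p-1)\rho}\cong L_{(p-1)\rho}$ and self-duality from $-w_0\rho=\rho$ is correct.

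The projectivity/injectivity stage has two genuine gaps. First, ``$(p-1)\rho$ forms a singleton block, hence that block is semisimple'' is not a valid inference in general --- a finite-dimensional algebra with a unique simple module is very far from being automatically semisimple (think of $\ff[x]/(x^2)$). What saves you here is a fact you already have in hand: from $\Delta_{(p-1)\rho}=L_{(p-1)\rho}$ and the baby Verma filtration of projective covers (equivalently, the dimension count $\dim A_{(p-1)\rho}=p^{2|\Phi^+|}=(\dim\st)^2=\dim\mathrm{End}(\st)$, as in Proposition~\ref{prop:size_of_blocks} of the paper) one gets $P_{(p-1)\rho}\cong L_{(p-1)\rho}$, which is what gives projectivity (and injectivity by symmetry of $\uzero$). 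You should say this explicitly rather than rely on the false ``singleton block $\Rightarrow$ semisimple'' heuristic. Second, the ``standard fact'' you attribute to Jantzen II.10 --- that $G_r$-projectivity/injectivity for all $r$ implies projectivity/injectivity in $\msf{Rep}~\wt{G}$ --- is not in Jantzen II.10; that section establishes only that $\st_r$ is projective and injective as a $G_r$-module. Lifting from Frobenius kernels to the full finite-dimensional rational category requires a cohomological comparison theorem (in the spirit of Cline--Parshall--Scott on detecting rational $\mathrm{Ext}$ via Frobenius kernels), not a direct citation of II.10. To be fair, the paper's own citations suffer the same gap for this last clause of the proposition, but as written your lifting step rests on a misattributed result, and the proof needs a correct reference or an argument in its place.
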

\begin{proof}
See \cite[Proposition~II.10.2]{jantzen2003representations} and \cite[Theorem~8.3]{steinberg1963representations}.
\end{proof}
\begin{example}
As in Example~\ref{example:weyl_module_for_SL2}, for $\wt{G}=\msf{SL}_2$, we find that
\[ \st \cong \text{Sym}^{p-1}(\ff^2).\]
\end{example}
\begin{remark}
The dimensions of the irreducible representations $L_\lambda$ can be computed generically for $p\gggg 0$ using the Weyl character formula and the affine Kazhdan-Lusztig polynomials. In Remark~\ref{remark:compute_dim_L_lambda}, we discuss this procedure. In the remark, it is only of interest for a weights corresponding to irreducible representations for $G_1$, but these lift to the irreducible modules for $\wt{G}$ of the same weight (see \cite{jantzen2003representations} or \S\ref{subsection:representation_theory_of_G_1}). However, the procedure in general holds for all $\lambda\in\Lambda$, and we are really specializing the procedure for irreducible $\wt{G}$-representations to irreducible $G_1$-representations.
\end{remark}

Representations of $\wt{G}$ in fact always decompose into \textit{blocks}:

\begin{proposition}[Linkage Principle, \cite{andersen1980strong}]
\label{linkage_principle_G}
We have a decomposition
\[ 
\msf{Rep}~\wt{G} =\bigoplus_{\lambda\in\Lambda/(W^{\textnormal{aff}},\cdot_p)}\msf{Rep}_\lambda(\wt{G}),\] where $W^{\textnormal{aff}}=W\ltimes \rrr $.
\end{proposition}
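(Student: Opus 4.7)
The plan is to reduce the block decomposition to the \emph{strong linkage principle} for composition factors of Weyl modules. By general facts about finite-length abelian categories, the block decomposition of $\msf{Rep}~\wt{G}$ is determined by the equivalence relation on the simples $\{L_\lambda\}_{\lambda \in \Lambda^+}$ generated by $L_\mu \sim L_\lambda$ whenever both appear as composition factors of some indecomposable module. Since the Weyl modules $W_\nu$ are the costandard objects in the highest-weight structure on $\msf{Rep}~\wt{G}$ and generate enough nontrivial extensions, it suffices to prove (i) every composition factor $L_\mu$ of $W_\lambda$ satisfies $\mu \in W^{\textnormal{aff}} \cdot_p \lambda$, and (ii) conversely, every two weights in a common $(W^{\textnormal{aff}}, \cdot_p)$-orbit are linked through such composition factors.

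For (i), the key input is the Jantzen sum formula. I would construct a $\zz_{(p)}$-form $W_\lambda^{\zz}$ equipped with a contravariant bilinear form and define the Jantzen filtration $W_\lambda = V^0 \supset V^1 \supset V^2 \supset \cdots$ by vanishing orders of that form along the generic stalk. Jantzen's formula expresses
\[ \sum_{i \geq 1} \operatorname{ch} V^i = \sum_{\alpha \in \Phi^+} \sum_{\substack{m \geq 1 \\ mp < \langle \lambda + \rho, \alpha^\vee \rangle}} \nu_p(mp) \, \chi(s_{\alpha, mp} \cdot_p \lambda), \]
where $\chi$ is the Euler characteristic of a line bundle on $\wt{G}/\wt{B}$, $\nu_p$ is the $p$-adic valuation, and $s_{\alpha, mp}$ is the affine reflection through the hyperplane $\{\nu : \langle \nu + \rho, \alpha^\vee \rangle = mp\}$. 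Because $V^1$ is the unique maximal submodule of $W_\lambda$, every strict composition factor appears in some $V^i$, and an induction on the dominance order forces the corresponding highest weights to lie in $W^{\textnormal{aff}} \cdot_p \lambda$.

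For (ii), I would invoke Jantzen's translation functors $T_\mu^\lambda$ between blocks associated to weights in adjacent alcove closures. Applied to a simple $L_\lambda$ at a regular weight and composed with their wall-crossing partners, these produce indecomposable modules whose composition factors realize the linkage across each wall; iterating this walk through the alcove structure then connects any two weights in a common $W^{\textnormal{aff}}$-orbit, upgrading the one-sided containment from (i) to equality of equivalence relations.

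The hard part is the Jantzen sum formula itself, whose derivation demands the integral form, a Shapovalov-type determinant computation, and a careful inductive character-theoretic argument. In practice I would cite \cite[Propositions II.6.17 and II.8.19]{jantzen2003representations} rather than reprove it, so that the remaining work reduces to matching conventions (the dot action versus the $p$-dilated dot action, the definition of a block) and confirming that the equivalence relation generated by Weyl-module composition factors coincides exactly with the $(W^{\textnormal{aff}}, \cdot_p)$-orbit partition on $\Lambda$.
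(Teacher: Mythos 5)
The paper offers no proof of this proposition: it is stated as a citation to Andersen's 1980 paper on the strong linkage principle, and no argument is supplied in the text. Your proposal is therefore a from-scratch proof sketch, and the natural comparison is with that cited reference rather than with anything in the paper itself.

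Your plan is sound as a high-level strategy. The reduction of the block decomposition to linkage of composition factors of Weyl modules is correct: the highest-weight structure forces $\operatorname{Ext}^1(L_\lambda,L_\mu)\neq 0$ to imply either $\mu<\lambda$ with $[\operatorname{rad} W_\lambda : L_\mu]\neq 0$ or the symmetric statement, and contravariant duality rules out incomparable pairs. The sum-formula induction in (i) is also the standard mechanism: all terms $\chi(\nu)$ on the right lie strictly below $\lambda$ in the dominance order and in its $(W^{\textnormal{aff}},\cdot_p)$-orbit, so induction expresses $\sum_{i\ge 1}\operatorname{ch} V^i$ in the $\zz$-span of $\{\operatorname{ch}L_\nu : \nu\in W^{\textnormal{aff}}\cdot_p\lambda\}$, and nonnegativity of module characters together with linear independence of the $\operatorname{ch}L_\nu$ rules out cancellation that could hide a constituent outside the orbit.

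It is worth flagging, though, that this is not what Andersen's cited paper does. Andersen's proof works directly with the coherent cohomology groups $H^i(\wt{G}/\wt{B},\ooo(\mu))$, exploiting the $\pp^1$-fibrations $\wt{G}/\wt{B}\to\wt{G}/\wt{P}_\alpha$ and vanishing theorems to propagate linkage information; the Jantzen filtration and sum formula play no role there. Jantzen's book likewise proves the linkage principle by Andersen-style geometric arguments in Chapter II.6 \emph{before} establishing the sum formula in II.8, so your logical flow (sum formula first, linkage second) is a genuinely distinct organization from both the cited source and the textbook. Both routes yield the theorem; yours trades the flag-variety geometry for constructing the $\zz_{(p)}$-form, the contravariant form, and the determinant computation, which, as you correctly flag, is the heaviest input.

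Finally, your step (ii) proves more than the proposition asserts. The statement only claims a direct sum decomposition of $\msf{Rep}~\wt{G}$ indexed by orbits, i.e.\ that the block partition \emph{refines} the orbit partition; it does not claim that each orbit is exactly one block. That sharper statement (due to Donkin) involves extra care at singular weights and is not used anywhere downstream in the paper, so you can drop (ii) entirely. Note also that the translation-functor formalism you invoke there is usually developed on top of the linkage principle, so retaining step (ii) inside a proof of linkage would require some care to avoid circularity.
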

\noindent
This corresponds to the existence of central idempotents in $\ff[\wt{G}]$.

\subsection{Representation theory of $G_{1}$}
\label{subsection:representation_theory_of_G_1}

Our objective in this subsection is to recall the representation theory of $G_1$, the Frobenius kernel. To begin, we first define our object of interest.

\begin{definition}
The \textit{Frobenius kernel} of a connected semisimple algebraic group $G$ is the subgroup scheme $G_1\subseteq G$ which is the kernel of 
\[\fr: G\to G^{(1)}.\]
\end{definition}

The Frobenius kernel $G_1$ is a finite subgroup scheme, which is set-theoretically just a single point, the identity. However, scheme theoretically, it has length $p^{\dim G}$. Note that since $\wt{G}\to G$ is \'etale so the Frobenius kernels of $\wt{G}$ and $G$ coincide. The algebra of functions is given by 
\[ \ff[G_1]=\ff[G]/I^{(p)},\quad I^{(p)}=(f^p\textnormal{ for }f\in I),\]
where $I$ is the ideal cutting out the identity in $G$. Note that by the above discussion, this definition should coincide for all $G'$ with Lie algebra $\mf{g}$; one can check this is true, i.e. $\ff[G]/I^{(p)}\cong \ff[\wt{G}]/\wt{I}^{(p)}$.

For more detail on the structure of $G_1$, see \cite{jantzen2003representations}. However, for our purposes, it is enough to understand the following. The irreducible representations are exactly $L_\lambda$ for $\lambda\in \Lambda_p$, as defined below. In particular, they are indexed by $\Lambda/p\Lambda$.

\begin{definition}
\label{definition:Lambda_p}
We define the $p$\textit{-restricted weights} by 
\[\Lambda_p\coloneqq \{\lambda\in \Lambda \mid 0\le\langle \lambda, \alpha^\vee\rangle<p \text{ for all }\alpha\in\Delta\}. \]
In particular, when $\Lambda\cong \zz\{\omega_1,\omega_2,\dots,\omega_n\}$, then
\[\Lambda_p = \{a_1\omega_1+a_2\omega_2+\dots+a_n\omega_n\mid 0\le a_1,a_2,\dots,a_n<p\}.\]
\end{definition}

In particular, note that $\Lambda_p\subset \Lambda^+$, and that $-\rho\not\in \Lambda_p$. The goal of $\Lambda_p$ is to describe a class of representatives for $\Lambda/p\Lambda$ using dominant weights inside $\Lambda$.

\begin{example}
Let $G=\psl_2$, so that $\Lambda\cong \zz\omega_1\oplus \zz\omega_2$. Then $\Lambda/p\Lambda\cong \ff_p\omega_1\oplus \ff_p\omega_2$, while the $p$-restricted weights are given by 
\[ \Lambda_p= \{a_1\omega_1+a_2\omega_2\mid 0\le a_1,a_2<p\}.\]
\end{example}

It turns out the lifting $\Lambda/p\Lambda\to \Lambda$, given by the lifting of the irreducible $G_1$-modules to irreducible $\wt{G}$-modules, is precisely given by sending $\Lambda/p\Lambda\ni [\lambda]$ to its representative in $\Lambda_p$. By Steinberg's tensor product theorem, we can understand the restriction of $L_\lambda$ for any $\lambda\in\Lambda$ as a $\wt{G}$-module to $G_1$-module; it is only the $\lambda\in\Lambda^+$ that restrict to irreducible $G_1$-modules. Writing $\lambda=\lambda_0+p\lambda_1+p^2\lambda_2+\dots$ for $\lambda_i\in\Lambda_p$, we have that
\[ L_\lambda\cong L_{\lambda_0}\otimes L_{\lambda_1}^{(1)}\otimes L_{\lambda_2}^{(2)}\otimes \dotsm\]
as $G_1$-modules, where the twists indicate Frobenius twists of the representations. But since $G_1$ is the Frobenius kernel of $\wt{G}$, $L_{\lambda_i}^{(i)}$ is simply a trivial representation of dimension $\dim L_{\lambda_i}$ for all $i>0$.

Now note that since $G_1$ is a finite group scheme, it is a finite-dimensional Hopf algebra, hence
\[ \{G_1\textnormal{--modules}\}\leftrightarrow \{\ff[G_1]\textnormal{--comodules}\}\leftrightarrow \{\ff[G_1]^*\textnormal{--modules}\}.\]

To understand what $\ff[G_1]^*$ is, we first need to introduce the reduced enveloping algebra of $\mf{g}$.

\begin{definition}
The \textit{reduced enveloping algebra} $\uzero$ of $\mf{g}$ is defined to be
\[ \uzero\coloneqq \uuu(\mf{g})/(X^p-X^{[p]}),\]
where $(X^p-X^{[p]})$ is the ideal generated by the $p$-center.
\end{definition}

The ideal in the quotient is called the $p$-center, and is generated by elements of the form $X^p-X^{[p]}$, where $X^p$ is the usual $p$th power in $\uuu(\mf{g})$, and $X^{[p]}$ is the $p$th power as matrices, by embedding $\mf{g}\into \mf{gl}(\mf{g})$. (One can check that $X^p-X^{[p]}$ is indeed in the center of $\uuu(\mf{g})$, which justifies the name.) We always have a functor of ``differentiation" giving a map
\[ \msf{Rep}~\wt{G}\to\msf{Rep}~\mf{g},\]
which is an isomorphism in characteristic zero. In characteristic $p$, the relationship between $\msf{Rep}~\wt{G}$ and $\msf{Rep}~\mf{g}=\msf{Rep}~\uuu(\mf{g})$ is not as strong as in characteristic $0$. The way to remedy this is to understand the \textit{distributions} on $\wt{G}$ (or on $G$): the distribution algebras will essentially play the role of $\uuu(\mf{g})$ in characteristic $0$. However, in characteristic $p$, we actually have that 
\[ \textnormal{Dist}_1(G) =\textnormal{Dist}_1(\wt{G})\cong \textnormal{Dist}(G_1)\cong \uzero,\]
which is another explanation for why $G_1$ and $\uzero$ are deeply related. (See \cite[\S I.7]{jantzen2003representations} for details about distribution algebras.) 
It is easy to see that 
\[ \textnormal{Dist}_1(G)\cong \ff[G_1]^*,\] so we have that
\[ \ff[G_1]^*\cong \uzero.\] Therefore, in characteristic $p$, the differentiation map induces the equivalence
\[ \msf{Rep}~G_1\cong \msf{Rep}~\uzero.\]

Now $G_1$ is a group, so there is a $G_1\times G_1$ action given by left and right multiplication. From our identification above, this is equivalent to the standard $\uzero\otimes \uzero$ action on $\uzero^*$ by left and right multiplication (see the proof of Proposition~\ref{prop:independence_of_line_bundles} for an explanation).

\begin{lemma}
\label{lemma:u_0*=u_0}
As $\uuu_0(\mf{g})\otimes \uuu_0(\mf{g})$-modules, $\uuu_0(\mf{g})\cong \uuu_0(\mf{g})^*$, where $\uuu_0(\mf{g})$ and $\uzero^*$ are equipped with the standard bimodule structure (i.e., left and right multiplication).
\end{lemma}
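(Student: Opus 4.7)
The plan is to interpret the lemma as the statement that $\uzero$ is a \emph{symmetric} Frobenius algebra: an isomorphism of $\uzero \otimes \uzero$-modules $\uzero \cong \uzero^*$ (with both sides equipped with the left-and-right multiplication action) is precisely the data of a non-degenerate, symmetric, associative bilinear form on $\uzero$. I will construct such a form.

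First, since $\uzero$ is a finite-dimensional Hopf algebra, a classical theorem of Larson--Sweedler guarantees that it is a Frobenius algebra: there exists a linear functional $\phi \colon \uzero \to \ff$ such that the pairing $(x,y) \mapsto \phi(xy)$ is non-degenerate. This already produces an isomorphism $\uzero \cong \uzero^*$ as left (equivalently, as right) $\uzero$-modules, but a priori not as bimodules. Upgrading this to a bimodule isomorphism is equivalent to arranging $\phi(xy) = \phi(yx)$, i.e.\ to showing $\uzero$ is symmetric.

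For a finite-dimensional Hopf algebra $H$ with antipode $S$, a standard result says symmetry is equivalent to the conjunction of (i) unimodularity of $H$ (the spaces of left and right integrals coincide) and (ii) the square $S^2$ being an inner automorphism. For $\uzero$ with $\mf{g}$ semisimple, both conditions are direct: on Chevalley generators $x\in \mf{g}$ one has $S(x) = -x$, so $S^2 = \id$ (hence certainly inner); and unimodularity follows from $\tr(\ad_x) = 0$ for all $x\in\mf{g}$, which holds because $\mf{g}$ is semisimple. This strategy produces the desired symmetric Frobenius form abstractly.

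As an alternative, more explicit approach, one uses the triangular decomposition $\uzero = \uzero^- \otimes \uzero^0 \otimes \uzero^+$ and the corresponding PBW-style basis of monomials $f^{\mathbf{a}} h^{\mathbf{b}} e^{\mathbf{c}}$ with each exponent lying in $\{0,1,\ldots,p-1\}$; one defines $\phi$ to pick out the coefficient of the top monomial (all exponents equal to $p-1$) and verifies non-degeneracy and the cyclic identity $\phi(xy)=\phi(yx)$ directly on basis elements using the commutation relations. The main obstacle in either route is precisely the symmetry check: the Hopf-theoretic argument handles it cleanly once unimodularity and $S^2 = \id$ are in place, while the explicit route requires a somewhat tedious commutator calculation to verify cyclicity of $\phi$ on PBW basis elements.
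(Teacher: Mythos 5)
Your approach matches the paper's: both reduce the lemma to establishing that $\uzero$ is a symmetric Frobenius algebra, from which the bimodule self-duality follows (this last step being Lemma~\ref{lemma:frobenius_symmetric_algebra_isomorphic_to_dual}). The paper simply cites Berkson for the Frobenius property and Humphreys' paper on symmetry of finite-dimensional Hopf algebras for symmetry, whereas you sketch the content of those references directly --- Larson--Sweedler for the Frobenius property, and the unimodularity-plus-$S^2$-inner criterion (using that $\uzero$ is cocommutative so $S^2=\operatorname{id}$, and that semisimplicity gives $\operatorname{tr}(\operatorname{ad}_x)=0$) for symmetry, which is essentially the argument of the cited Humphreys paper.
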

\begin{proof}
By \cite{berkson1964u}, $\uuu_0(\mf{g})$ is a Frobenius algebra, and by \cite[Corollary~1]{humphreys1978symmetry} (note that \textit{loc. cit.}, the notation is $u_1$, which is identified with $\uuu_0(\mf{g})$, see \cite[Appendix~U]{humphreys2006ordinary}), $\uuu_0(\mf{g})$ is a symmetric algebra, as defined \cite[\S66]{curtis1966representation}. Therefore $\uzero$ satisfies the assumptions of Lemma~\ref{lemma:frobenius_symmetric_algebra_isomorphic_to_dual}, which implies the statement.
\end{proof}
\begin{remark}
There is a generalization of $\uzero$, given by $p$-characters $\chi\in \mf{g}^*$, which are also finite-dimensional quotients of $\uuu(\mf{g})$. In these cases, a nondegenerate associative bilinear form (i.e., a bilinear form satisfying the Frobenius property) is constructed in \cite{friedlander1988modular}, extending the form constructed for $\uzero$ as described in \cite{berkson1964u}. Furthermore, $\uuu_\chi(\mf{g})$ is even symmetric as above; see \cite[5.4]{strade1988modular} and \cite[Proposition~1.2]{friedlander1988modular}.
\end{remark}

Similar to $\wt{G}$, the category of representations of $G_1$ again decomposes via blocks, which are now by $W$-orbits in $\Lambda/p\Lambda$ (compared to $W^{\textnormal{aff}}$-orbits in $\Lambda$).

\begin{proposition}[Linkage Principle, \cite{Kac1976CoadjointAO}]
\label{linkage_principle_U(g)}
We have a decomposition 
\[\msf{Rep}~G_1 =\bigoplus_{\lambda\in (\Lambda/p\Lambda)/(W,\cdot)}\msf{Rep}_\lambda(G_1).\] We have an equivalence of categories $\msf{Rep}~G_1 \cong \msf{Rep}~\mcal{U}_0(\mf{g})$ (corresponding to the same block decomposition). This decomposition is again induced by central idempotents in $\uzero$, in bijection with blocks.
\end{proposition}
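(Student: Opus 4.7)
The plan is to prove the three assertions in sequence, leaning on structural facts about finite group schemes and the reduced enveloping algebra. The equivalence $\msf{Rep}~G_1 \cong \msf{Rep}~\uzero$ follows formally: since $G_1$ is a finite affine group scheme over $\ff$, rational $G_1$-representations correspond to comodules over the finite-dimensional Hopf algebra $\ff[G_1]$, and dualizing converts comodules to modules over $\ff[G_1]^*$. The identification $\ff[G_1]^* \cong \uzero$ (via the distribution algebra, as already noted in the excerpt) completes this step. Moreover this is an equivalence of symmetric monoidal abelian categories, so block decompositions on one side correspond to block decompositions on the other.

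For the block decomposition itself, I would first classify the simple objects: by the discussion preceding the statement, the simple $G_1$-modules are $\{L_\lambda : \lambda \in \Lambda_p\}$, in bijection with $\Lambda/p\Lambda$. The next step is to analyze the center $Z(\uzero)$. Two key subalgebras sit inside $Z(\uzero)$: the image of the Harish-Chandra center $Z(\uuu(\mf{g}))^W$ and the $p$-center (which is trivial in the quotient $\uzero$). The central character $\chi_\lambda$ by which $Z(\uuu(\mf{g}))^W$ acts on the highest weight line of $L_\lambda$ is computed via the (dot-shifted) Harish-Chandra homomorphism and depends only on the $W$-dot orbit of $\lambda$. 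Invoking the Kac--Weisfeiler theorem, one verifies that these central characters actually \emph{separate} distinct $W$-dot orbits in $\Lambda/p\Lambda$; this shows that $L_\lambda$ and $L_\mu$ lie in different blocks whenever $\lambda \not\sim_{W,\cdot} \mu$.

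The converse direction, that linked weights necessarily lie in the \emph{same} block, is the main obstacle. To handle it I would pass through baby Verma modules $\Delta_\lambda$: each $\Delta_\lambda$ has $L_\lambda$ as its unique simple quotient, and the projective cover $P_\lambda$ of $L_\lambda$ carries a $\Delta$-filtration. A BGG-type reciprocity for $\uzero$ (of the form $[P_\lambda : \Delta_\mu] = [\Delta_\mu : L_\lambda]$) together with the strong linkage principle for baby Vermas implies that the composition factors of $\Delta_\lambda$ are all of the form $L_\mu$ with $\mu \uparrow \lambda$ in the appropriate dot-linkage order on $\Lambda/p\Lambda$. In particular, for a simple reflection $s_\alpha \in W$, the modules $L_\lambda$ and $L_{s_\alpha \cdot \lambda}$ are connected through a common indecomposable projective, hence lie in the same block. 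Iterating over generators of $W$ connects the entire $W$-dot orbit.

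Combining the two directions gives a bijection between blocks of $\msf{Rep}~\uzero$ and $(\Lambda/p\Lambda)/(W,\cdot)$. Finally, the statement about central idempotents is automatic from the general theory of finite-dimensional algebras: blocks of a finite-dimensional algebra are in bijection with primitive central idempotents, which are orthogonal and sum to $1$; the block $\msf{Rep}_\lambda(G_1)$ is cut out by the corresponding idempotent $e_\lambda \in Z(\uzero)$. The hardest part is squarely the linkage/translation argument in the third paragraph, since it requires going beyond the central-character obstruction and producing genuine extensions or common projective covers.
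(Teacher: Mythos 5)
The paper offers no proof of this Proposition---it is cited verbatim to Kac's 1976 paper, so there is no internal argument to compare against; your write-up should therefore be judged purely on its own logic.

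Your first two steps are sound and standard: the equivalence $\msf{Rep}~G_1 \cong \msf{Rep}~\uzero$ via $\ff[G_1]^*\cong \textnormal{Dist}(G_1)\cong\uzero$, and the Kac--Weisfeiler/Harish--Chandra central character argument showing that simple modules in different $W$-dot orbits of $\Lambda/p\Lambda$ cannot admit extensions, hence lie in different blocks. The closing observation that blocks correspond to primitive central idempotents of $\uzero$ is also correct and requires nothing beyond general finite-dimensional algebra theory.

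The gap is in the converse direction, that linked weights necessarily lie in the \emph{same} block. You assert that ``the strong linkage principle for baby Vermas implies that the composition factors of $\Delta_\lambda$ are all of the form $L_\mu$ with $\mu\uparrow\lambda$,'' and then conclude that $L_\lambda$ and $L_{s_\alpha\cdot\lambda}$ share an indecomposable projective. Two problems: first, the strong linkage principle for $\uzero$ (or $G_1T$) is itself a deep theorem that logically subsumes the statement you are trying to prove, so invoking it here is either circular or, if taken as an external black box, wildly overpowered. Second, and more fundamentally, the strong linkage principle is an \emph{upper bound} on composition factors: it says nothing about which linked $L_\mu$ actually \emph{do} occur in $\Delta_\lambda$ or $P_\lambda$. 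The ``in particular'' step asserting a common indecomposable projective is precisely the lower bound you need, and it does not follow from what precedes it. The standard way to close this hole is to establish the identity $[\Delta_\mu]=[\Delta_\lambda]$ in $K_0(\msf{Rep}~\uzero)$ for $\mu\sim\lambda$ (a nontrivial character/translation argument); once you have it, $L_\mu$ appears in $\Delta_\lambda$ because it appears at the top of $\Delta_\mu$, and indecomposability of $\Delta_\lambda$ forces $L_\mu$ and $L_\lambda$ into the same block. Your BGG reciprocity statement $[P_\lambda:\Delta_\mu]=[\Delta_\mu:L_\lambda]$ is correct and useful, but by itself it converts one unknown multiplicity into another; without the $K_0$ identity it does not certify nonvanishing.
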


\subsection{The wonderful compactification}
\label{subsection:the_wonderful_compactification}
The main object of study in our paper is a variety known as the \textit{wonderful compactification} of $G$, constructed by De Concini and Procesi in \cite{concini1983complete}, and extended by Strickland to arbitrary characteristic in \cite{stricland1987vanishing}. The wonderful compactification is a ``nice" compactification of $G$ equipped with an action of $G\times G$, extending the natural $G\times G$ action on $G$ given by left and right multiplication. We will recall the construction of the wonderful compactification (especially in characteristic $p$) and its basic properties, primarily following \cite[\S6]{brion2007frobenius}. The essential idea is to embed $G$ into projective space $\pp(\textnormal{End}(M))$ for some suitable $\wt{G}$-module $M$, and then take the closure of $G$ inside. We recall another construction in Appendix~\ref{section:appendix:vinberg_monoid} using the Vinberg monoid.

First, we choose a regular weight $\lambda$ and a finite-dimensional $\wt{G}$-module $M$ satisfying the following properties:
\begin{itemize}
    \item The $\wt{T}$-eigenspace $M_\lambda$ of weight $\lambda$ is one-dimensional, and consists of $\wt{B}$-eigenvectors.
    \item All other weights of $M$ are $<\lambda$.
    \item For all $\alpha\in\Phi^+$, then $\mf{g}_{-\alpha}M_\lambda\ne 0$. In particular, the morphism
    \[ G/B\to \pp(M),\quad gB\mapsto g\cdot  M_\lambda\] 
    is a closed embedding.
    \item $M_{-\lambda}^*$ (i.e., the $\wt{T}$-eigenspace of weight $-\lambda$ in the dual module $M^*$) is a one-dimensional space of $B^-$-eigenvectors. In particular, the morphism
    \[ G/B^-\to \pp(M^*),\quad gB^-\mapsto g\cdot M_{-\lambda}^*\] is a closed embedding.
    \item The action of $\wt{G}$ on $\pp(M)$ factors through a faithful action of $G$.
\end{itemize}
By \cite[Lemma~6.1.1]{brion2007frobenius}, such modules exist for any regular weight $\lambda$. In particular, for $\lambda=(p-1)\rho$, we may take $M=\st$, the Steinberg module or more generally a Weyl module corresponding to any regular weight $\la$.

Now we fix $\lambda$ and $M$ as above. Consider the $\wt{G}\times \wt{G}$-module $\textnormal{End}(M)\cong M\otimes M^*$. Let $e\in\textnormal{End}(M)$ be the identity, with image $[e]\in \pp(\textnormal{End}(M))$. For the purposes of this construction, we'll denote 
\[ \pp\coloneqq \pp(\textnormal{End}(M)).\]
By \cite[Lemma~6.1.3]{brion2007frobenius}, the orbit
\[\wt{G}\times \wt{G}\cdot [e]\cong \wt{G}\times \wt{G}/((\wt{Z}\times \wt{Z})\textnormal{diag}(\wt{G}))\cong G.\]

Now take \[ X_G\coloneqq \ol{(\wt{G}\times \wt{G})\cdot [e]}\subseteq \pp,\]
the closure of this orbit inside $\pp$. 
\begin{definition}
We call $X_G$ the \textit{wonderful compactification of $G$}, and it is a projective compactification of $G$, equivariant with respect to $G\times G$.
\end{definition}
By \cite[Theorem~6.1.8(iv)]{brion2007frobenius}, $X_G$ is independent of the choices of $\lambda$ and $M$, in the sense that any choice of $\lambda$ and $M$ all yield isomorphic compactifications, and thus it makes sense to speak of ``the" wonderful compactification of $G$. When the group $G$ is clear, we will refer to $X_G$ by simply $X$.

\begin{notation}
Throughout, $X$ will denote the wonderful compactification of $G$. When necessary, we will specify $G$.
\end{notation}

\begin{example}
\label{example:construction_of_X_PSL2}
Let $G=\psl_2$ and $\wt{G}=\msf{SL}_2$. We can pick our regular weight to be $\omega$, the fundamental weight, and the corresponding irreducible representation is just $\textnormal{Sym}^1(\ff^2)=\ff^2$, the standard representation of $\msf{SL}_2$. Then we embed
\[ G=\psl_2\into \pp(\textnormal{End}(\ff^2))\cong \pp(\textnormal{Mat}_{2\times 2}(\ff))\cong \pp(\ff^4)\cong \pp^3.\]
Notice that $G$ is already a dense open subset of $\pp^3$: its complement is the codimension one closed subset $V(e_{11}e_{22}-e_{12}e_{21})$, hence $X=\ol{G}\cong\pp^3$. In other words, the wonderful compactification of $\psl_2$ is isomorphic to $\pp^3$.
\end{example}

\begin{example}
In the case of $G=\psl_n$, the wonderful compactification is the same as the older notion of ``space of complete collineations" of type $A_{n-1}$, see \cite{vainsencher1984complete}. In particular, for $G=\psl_3$, the wonderful compactification $X$ is isomorphic to the blowup of $\pp^8$ along the (image of the) Segre embedding of $\pp^2\times \pp^2$.
\end{example}

There is another method to construct the wonderful compactification via the Vinberg monoid. See Appendix~\ref{section:appendix:vinberg_monoid} for the construction of $X$ via Vinberg monoids. This approach has the advantage of understanding $X$ as a typical projective scheme, and understands line bundles by the graded rings and graded modules interpretation.

\subsection{Structure of $X$}
\label{subsection:structure_of_X}

Fix $\lambda,M$ as before. We have the identification 
\[ G\cong (\wt{G}\times \wt{G})\cdot [e]\subset X.\]
Let us also identify $T\subset G$ with
\[ T\cong (\wt{T}\times \wt{T})\cdot [e]\subset X.\]

\begin{notation}
We denote $\ol{T}$ to be the closure of $T$ in $X$, i.e. 
\[ \ol{T}\coloneqq \ol{(\wt{T}\times \wt{T})\cdot [e]}.\]
\end{notation}

Now let $\{m_i\}$ be a basis of $\wt{T}$-eigenvectors of $M$, with $\{m_i^*\}$ the dual basis of $M^*$. Then $m_i\otimes m_j^*$ forms a basis of $\wt{T}\times \wt{T}$-eigenvectors of $\textnormal{End}(M)$. We are particularly interested in the vector $m_\lambda\otimes m_\lambda^*$.

\begin{definition}
Define $\pp_0$ to be the complement in $\pp=\pp(\textnormal{End}(M))$ of $V(m_\lambda\otimes m_\lambda^*)$. Then define
\[\mathbf{X}_0\coloneqq X\cap \pp_0,\quad \ol{T_0}\coloneqq \ol{T}\cap \pp_0,\]
which are affine open subsets of $X$ and $\ol{T}$, respectively, and stable under $\wt{B}\times \wt{B}^-$ and $\ol{T}\times \ol{T}$, respectively.
\end{definition}

To study the structure of $X$, it is important to understand $\mathbf{X}_0$. We can map
\[ T\into \aa^\ell,\quad t\mapsto (\alpha_1(t^{-1}),\dots,\alpha_\ell(t^{-1})).\]
This map extends to an isomorphism
\[ \ol{T_0}\xr{\sim}\aa^\ell,\]
which is particularly useful to us.
\begin{notation}
Let $\gamma$ denote the inverse of the above map:
\[\gamma: \aa^\ell\xr{\sim} \ol{T_0}.\]
\end{notation}
More details on $\gamma$ can be found in \cite[Lemma~6.1.6]{brion2007frobenius}. However, what is particular convenient is that we may now use $\gamma$ to obtain the following $U\times U^-$-equivariant isomorphism:
\[ \wh{\gamma}:U\times \aa^\ell\times U^-\xr{\sim}\mathbf{X}_0,\quad (u,a,v)\mapsto u\cdot \gamma(a)\cdot v^{-1}.\]
In particular, $\mathbf{X}_0$ is isomorphic (as a scheme) to affine space. Furthermore, $X$ is covered by $\wt{G}\times \wt{G}$-translates of $\mathbf{X}_0$ (see \cite[Theorem~6.1.8(i)]{brion2007frobenius}), hence $X$ is smooth.

\subsection{Divisors and line bundles on the wonderful compactification}
\label{subsection:divisors_and_line_bundles_on_X}

Our goal in this subsection is to recall certain important prime divisors of $X$, along with the line bundles on $X$.

\begin{definition}
We define $\mathbf{X}_1,\mathbf{X}_2,\dots,\mathbf{X}_\ell$ to be the $\ell$ nonsingular prime divisors with normal crossings whose union is $X\setminus G$.
\end{definition}
In particular, $\mathbf{X}_i$ can be described explicitly. First, its restriction to $\ol{T_0}\cong \aa^\ell$ is given by $V(\alpha_i)$, i.e. the codimension $1$ subscheme where the $i$th coordinate of $\aa^\ell$ is set to zero. Then 
\[\mathbf{X}_i\cap \mathbf{X}_0= \wh{\gamma}(U\times V(\alpha_i)\times U^-).\]
The $G\times G$-orbits in $X$ are parametrized by $I\subseteq \{1,2,\dots,\ell\}$, so we will denote them by $\ooo_I$. Their closures are precisely of the form 
\[\ol{\ooo_I}=\mathbf{X}_I\coloneqq \bigcap_{i\in I}\mathbf{X}_i,\quad I\subset \{1,2,\dots,\ell\}.\]
In particular, there are exactly $2^\ell$ distinct $G\times G$-orbits in $X$. These orbits satisfy the containment relation
\[\ol{\ooo_I}\supseteq \ooo_J\iff I\subseteq J,\]
so in particular, $X$ contains a unique closed orbit.
\begin{definition}
We denote
\[ \mathbf{Y}\coloneqq \ooo_{\{1,2,\dots,\ell\}}=\mathbf{X}_1\cap\dotsm\cap\mathbf{X}_\ell\cong \wt{G}/\wt{B}\times \wt{G}/\wt{B},\]
the unique closed orbit of $X$ (see \cite[Theorem~6.1.8]{brion2007frobenius}).
\end{definition}

Now we turn to studying the codimension one closed subscheme $X\setminus \mathbf{X}_0$. It turns out that the irreducible components are precisely the prime divisors $\ol{Bs_iB^-}$ where $s_i\in W$ are the simple reflections, as in Notation~\ref{notation:s_i_simple_reflection}.

\begin{definition}
For $i=1,2,\dots,\ell$, define the following divisors:
\begin{align*}
    \mathbf{D}_i&\coloneqq \ol{B s_i w_0 B},\\
    \wt{\mathbf{D}_i}&\coloneqq (w_0,w_0)\cdot \mathbf{D}_i=\ol{B^- w_0 s_i B^-}.
\end{align*}
We call the $\mathbf{D}_i$ the \textit{Schubert divisors} and the $\wt{\mathbf{D}_i}$ the \textit{opposite Schubert divisors}.
\end{definition}
Note that $\mathbf{D}_i\sim \wt{\mathbf{D}_i}\sim \ol{B s_i B^-}$ are all linearly equivalent.
\begin{example}
Let $G=\psl_2$. Recall from Example~\ref{example:construction_of_X_PSL2}, that $X\cong \pp^3$, with the underlying vector space being $\ff\{e_{11},e_{12},e_{21},e_{22}\}$. Explicitly, we have 
\begin{align*}
    \mathbf{X}_1 &= V(\det)=V(e_{11}e_{22}-e_{12}e_{21}),\\
    \mathbf{D}_1 &= V(e_{21}),\\
    \wt{\mathbf{D}_1} &= V(e_{12}).
\end{align*}
\end{example}
\begin{definition}
We define 
\[ K_X\coloneqq -2\sum_{i=1}^{\ell}\mathbf{D}_i-\sum_{j=1}^{\ell}\mathbf{X}_j.\]
The divisor $K_X$ is a canonical divisor for $X$.
\end{definition}
\begin{notation}
By abuse of notation, we will also use $K_X$ to denote the class of $K_X$ in $\textnormal{Pic}(X)$, which we will later find to be $-2\rho-\sum_{\alpha\in\Delta}\alpha$.
\end{notation}

Now we recall the Picard group of $X$; see \cite[\S 6.1.B]{brion2007frobenius} for details. Note that the restriction of any line bundle $\lll$ to $\mathbf{Y}$ is isomorphic to a $\wt{G}\times \wt{G}$-linearly line bundle $\ooo_{\wt{G}/\wt{B}}(\lambda)\boxtimes \ooo_{\wt{G}/\wt{B}}(\mu)$ for $\lambda,\mu\in \Lambda$. It turns out that the restriction map $\textnormal{Pic}(X)\to\textnormal{Pic}(Y)$ is injective, and furthermore the image is precisely the classes
\[ \ooo_{\wt{G}/\wt{B}}(\lambda)\boxtimes \ooo_{\wt{G}/\wt{B}}(-w_0\lambda).\]
In other words, 
\[ \textnormal{Pic}(X)\cong \Lambda,\]
and the restriction map is described by
\[ \textnormal{res}:\Lambda\cong\textnormal{Pic}(X)\to \textnormal{Pic}(\mathbf{Y})\cong \Lambda\times \Lambda, \quad \lambda\mapsto (\lambda,-w_0\lambda).\]
\begin{remark}
Some authors define $\ooo_X(\lambda)$ to be the line bundle corresponding to the restriction map $\lambda\mapsto (-w_0\lambda,\lambda)$, and others define it as we do.
\end{remark}

The divisors we constructed above can be explicitly realized in the following way. The divisors $\mathbf{X}_i$ correspond to roots $\alpha_i\in\Lambda$, so that 
\[\textnormal{res }\ooo_X(\mathbf{X}_i)\cong \ooo_{\wt{G}/\wt{B}}(\alpha_i)\boxtimes \ooo_{\wt{G}/\wt{B}}(-w_0\alpha_i).\]
The divisors $\mathbf{D}_i$ (and $\wt{\mathbf{D}_i}$, since they are linearly equivalent) correspond to fundamental weights $\omega_i$, so that 
\[\textnormal{res }\ooo_X(\mathbf{D}_i) \cong \ooo_{\wt{G}/\wt{B}}(\omega_i)\boxtimes \ooo_{\wt{G}/\wt{B}}(-w_0\omega_i).\]

We will therefore make the following convention throughout the paper:
\begin{definition}
Define $\ooo_X(\lambda)$ to be the unique line bundle on $X$ which restricts to $\ooo_{\wt{G}/\wt{B}}(\lambda)\boxtimes \ooo_{\wt{G}/\wt{B}}(-w_0\lambda)$.
\end{definition}

We have (see for example \cite[Proposition~6.1.11]{brion2007frobenius}) that $\ooo_X(\lambda)$ is globally generated if and only if $\lambda$ is dominant, and ample if and only if $\lambda$ is regular. Furthermore, \cite[Theorem~3.2(i)]{stricland1987vanishing} shows that $\ooo_X(\lambda)$ has nonzero global sections if and only if $\lambda\ge \mu$ for some dominant weight $\mu$. See \S\ref{subsection:graded_module_vinberg_semigroup} for an interpretation of the invertible sheaves by graded modules; in particular, the filtration $F_{\le \lambda}$ of $\ff[\wt{G}]$ is identified with $\Gamma(X,\ooo_X(\lambda))$ and is discussed in more detail.

\section{Line bundles as direct summands of $\fr_*\lll$}
\label{section:line_bundles_as_direct_summands}
In this section, we give necessary and sufficient conditions for a line bundle on $X$ to be a direct summand of the Frobenius pushforward of another line bundle, which is the content of Theorem~\ref{thmA:line_bundles_direct_summands}. We also prove Theorem~\ref{thmB:PSL_n_high_multiplicity_of_line_bundle}, which allows us to give nontrivial lower bounds on the multiplicities of certain line bundles on $X_{\psl_n}$.

\subsection{Constraints on line bundles as direct summands}
\label{subsection:constraints_on_line_bundles_as_direct_summands}
We begin with the simple observation:
\begin{lemma}
If $\ooo_X(\mu)$ is a direct summand in $\fr_*\ooo_X(\lambda)$, then 
\begin{align*}
    \hom{\ooo_X}(\ooo_X(\mu),\fr_*\ooo_X(\lambda))&\ne 0,\\ 
    \hom{\ooo_X}(\fr_*\ooo_X(\lambda),\ooo_X(\mu))&\ne 0.
\end{align*}
\end{lemma}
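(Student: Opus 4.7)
The plan is to unpack the definition of direct summand. If $\ooo_X(\mu)$ is a direct summand of $\fr_*\ooo_X(\lambda)$, then by definition there is an internal direct sum decomposition $\fr_*\ooo_X(\lambda)\cong \ooo_X(\mu)\oplus \fff$ for some locally free sheaf $\fff$. Equivalently, there exist morphisms
\[ i\colon \ooo_X(\mu)\hookrightarrow \fr_*\ooo_X(\lambda),\qquad \pi\colon \fr_*\ooo_X(\lambda)\twoheadrightarrow \ooo_X(\mu)\]
with $\pi\circ i=\id_{\ooo_X(\mu)}$.

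The first step is to observe that $i\in \hom{\ooo_X}(\ooo_X(\mu),\fr_*\ooo_X(\lambda))$ is nonzero, since if $i=0$ then $\pi\circ i=0\ne \id_{\ooo_X(\mu)}$ (the identity is nonzero because $\ooo_X(\mu)$ is a nonzero sheaf on the nonempty scheme $X$). Similarly, $\pi\in \hom{\ooo_X}(\fr_*\ooo_X(\lambda),\ooo_X(\mu))$ must be nonzero for the same reason. This immediately yields nonvanishing of both Hom spaces.

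There is essentially no obstacle here; the statement is just a restatement of the definition of a direct summand in an additive category, applied to the category of coherent (or quasi-coherent) $\ooo_X$-modules. The only minor point worth noting is that we are working in a category where nonzero identity morphisms exist, which is immediate since $\ooo_X(\mu)$ is an invertible sheaf on the nonempty smooth projective variety $X$.
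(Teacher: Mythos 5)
Your proof is correct and matches the paper's (implicit) reasoning: the paper labels the lemma a ``simple observation'' and does not write out a proof, while the proof of Corollary~\ref{cor:condition_on_line_subbundles} later spells out exactly your argument, namely that the canonical inclusion $A\into A\oplus B$ and projection $A\oplus B\onto A$ furnish the required nonzero morphisms. Nothing to add.
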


From this, we first classify such Hom spaces.
\begin{definition}
\label{def:succeq}
Let $\succeq$ denote the partial ordering on $\Lambda$ such that $\gamma_1\succeq \gamma_2\iff \gamma_1-\gamma_2$ can be written as a nonnegative integer combination of fundamental weights and positive roots. In particular,
\[ \gamma_1\succeq \gamma_2 \iff \gamma_1-\gamma_2 \in \Lambda^+ +\rrr^+,\] and by \cite[Theorem~3.2(i)]{stricland1987vanishing},
\[\lambda\succeq 0\iff \Gamma(X,\ooo_X(\lambda))\ne 0.\] Define $\preceq$ similarly.
\end{definition}

It's clear that the relation $\succeq$ satisfies transitivity and reflexivity; it remains to check antisymmetry.

\begin{proposition}
\label{prop:when_is_succeq_relation}
The relation $\succeq$ is a partial order on $\Lambda$.
\end{proposition}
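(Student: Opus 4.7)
The relation $\succeq$ is plainly reflexive (take the zero combination) and transitive (add combinations), so the only nontrivial point is antisymmetry. Setting $\delta \coloneqq \gamma_1 - \gamma_2$, the plan is to show that if both $\delta$ and $-\delta$ lie in the cone $\Lambda^+ + \rrr^+$, then $\delta = 0$.

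To do so, I will exhibit a linear functional on $\Lambda \otimes \qq$ that is nonnegative on $\Lambda^+ + \rrr^+$ and vanishes only at the origin; pairing $\delta$ and $-\delta$ against such a functional then forces $\delta = 0$. The natural choice is $\rho^\vee$, the half-sum of positive coroots (equivalently $\sum_i \omega_i^\vee$, the sum of fundamental coweights). Two checks are needed. First, for any $r = \sum_i n_i \alpha_i \in \rrr^+$ one has $\langle r, \rho^\vee \rangle = \sum_i n_i$, which is $\geq 0$ with equality iff $r = 0$, using $\langle \alpha_i, \omega_j^\vee \rangle = \delta_{ij}$. Second, for any $\mu \in \Lambda^+$ one has $\langle \mu, \rho^\vee \rangle = \tfrac{1}{2}\sum_{\alpha \in \Phi^+} \langle \mu, \alpha^\vee \rangle$; since every positive coroot expands as a nonnegative integer combination of simple coroots, each summand is $\geq 0$, so the total is nonnegative, and it vanishes exactly when $\langle \mu, \alpha_i^\vee \rangle = 0$ for every simple coroot $\alpha_i^\vee$, forcing $\mu = 0$ by nondegeneracy of the weight-coroot pairing.

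Combining these: for any decomposition $\delta = \mu + r$ with $\mu \in \Lambda^+$ and $r \in \rrr^+$, we have $\langle \delta, \rho^\vee \rangle = \langle \mu, \rho^\vee \rangle + \langle r, \rho^\vee \rangle \geq 0$, and if the sum is zero then both terms vanish, which forces $\mu = 0$ and $r = 0$ and hence $\delta = 0$. Applying this to both $\delta$ and $-\delta$ yields $\langle \delta, \rho^\vee \rangle = 0$ and therefore $\delta = 0$, completing antisymmetry. I do not foresee a serious obstacle: the entire argument is linear algebra on the root datum, and the only mildly nontrivial ingredient is that each positive coroot is a nonnegative integer combination of simple coroots, a standard fact about the dual root system.
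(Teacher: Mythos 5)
Your proof is correct and matches the paper's second proof of this proposition in all essential respects. The paper in fact gives two proofs: a geometric one using nonvanishing of global sections of line bundles on $X$ together with torsion-freeness of $\textnormal{Pic}(X)$, and a direct linear-algebraic one. Your argument is the latter: the functional you construct, pairing against $\rho^\vee = \sum_i \omega_i^\vee$, is exactly the map $\varphi$ the paper defines by summing coefficients in the simple-root basis (since $\langle \alpha_i, \rho^\vee\rangle = 1$ for every simple root). You simply carry out the two positivity checks a bit more explicitly (via the coroot expansion of positive roots for the $\Lambda^+$ side), whereas the paper appeals to the fact that the fundamental weights have nonnegative, not-all-zero coefficients in the simple-root basis; both are the same observation.
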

We present two proofs.
\begin{proof}
By \cite[Theorem~3.2(i)]{stricland1987vanishing},
$\Gamma(X,\ooo_X(\lambda))\ne 0\iff \lambda\succeq 0$.
Now suppose that $\lambda'\succeq \mu\succeq \lambda'$, and denote $\lambda\coloneqq \lambda'-\mu$.
Then 
$\Gamma(X,\ooo_X(\lambda))\ne 0,\quad \Gamma(X,\ooo_X(-\lambda))\ne 0$.
Now pick any nonzero $f\in\Gamma(X,\ooo_X(\lambda))$ and any nonzero $g\in\Gamma(X,\ooo_X(-\lambda))$. We then have that
\[ f\cdot g\in \Gamma(X,\ooo_X)\cong \ff\implies f\cdot g=1\]
after rescaling. But since $X$ is smooth projective and $\textnormal{Pic}(X)$ is torsion-free, we have that 
\[ 0= [\divv f] = [\divv_0 f]-[\divv_\infty g] =[\divv_0 f]-[\divv_0 g]\implies \lambda=-\lambda\implies \lambda =0\implies \lambda'=\mu.\]
\end{proof}
\begin{remark}
The exact same proof adapts more generally. Let $X$ be a smooth projective variety with torsion-free Picard group, and let $\succeq$ be any relation on $\textnormal{Pic}(X)$ which is transitive and reflexive, such that
\[ \Gamma(X,\ooo_X(\lambda))\ne 0\iff \lambda\succeq 0.\]
Then $\succeq$ satisfies the antisymmetry condition, i.e. $\succeq$ is a partial order on $\textnormal{Pic}(X)$.
\end{remark}
\begin{proof}
We may also prove this directly. The simple roots $\{\alpha_i\}$ form a basis of $\Lambda\otimes_\zz\rr$. Writing the $\omega_i$ in the basis of the $\{\alpha_i\}$ (with rational coefficients), the coefficients are nonnegative but not all nonzero, hence the sum of the coefficients (of either $\omega_i$ or $\alpha_i$) in the basis of $\{\alpha_i\}$ is positive, giving us a map $\varphi:(\Lambda,\succeq)\to (\qq,\ge)$ which sends $\lambda$ to the sum of its coefficients in the basis $\{\alpha_i\}$. If $\lambda_1\succeq \lambda_2$ and $\lambda_2\succeq \lambda_1$, then $0\ge\varphi(\lambda_1-\lambda_2)\ge 0\implies \varphi(\lambda_1-\lambda_2)=0\implies \lambda_1=\lambda_2$.
\end{proof}

We may now prove the necessary condition of Theorem~\ref{thmA:line_bundles_direct_summands}.
\begin{proposition}
\label{conditions_on_Homs}
Let $X$ be a wonderful compactification of a group $G$. Let $K_X=-2\rho-\sum_{\alpha\in \Delta}\alpha$ be (the class of) a canonical divisor for $X$. We have the following statements:
\begin{enumerate}
    \item $\hom{\ooo_X}(\ooo_X(\mu),\fr_*\ooo_X(\lambda))\ne 0\iff \lambda-p\mu\succeq 0$.
    \item $\hom{\ooo_X}(\fr_*\ooo_X(\lambda),\ooo_X(\mu))\ne 0\iff (1-p)K_X-(\lambda-p\mu)\succeq 0$.
\end{enumerate}
\end{proposition}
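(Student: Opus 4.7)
The plan is to reduce both Hom spaces to global sections of a line bundle on $X$, and then invoke Strickland's criterion, packaged in Definition~\ref{def:succeq} as $\Gamma(X,\ooo_X(\nu))\neq 0 \iff \nu \succeq 0$.

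For (1), I would begin with the standard identification
\[\hom{\ooo_X}(\ooo_X(\mu),\fr_*\ooo_X(\lambda)) \cong \Gamma\bigl(X,\ooo_X(-\mu)\otimes \fr_*\ooo_X(\lambda)\bigr).\]
The projection formula, together with $\fr^*\ooo_X(-\mu)\cong \ooo_X(-p\mu)$ recorded in \S\ref{subsection:frobenius_morphism}, converts the tensor product into
\[\ooo_X(-\mu)\otimes \fr_*\ooo_X(\lambda) \cong \fr_*\ooo_X(\lambda-p\mu).\]
Since $\fr$ is the identity on underlying topological spaces, $\Gamma(X,\fr_*\fff) = \Gamma(X,\fff)$ for any $\fff$, so the Hom space equals $\Gamma(X,\ooo_X(\lambda-p\mu))$. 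The criterion then gives the equivalence with $\lambda-p\mu\succeq 0$.

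For (2), the plan is to use Grothendieck duality for the finite flat morphism $\fr$ between smooth varieties. The right adjoint of $\fr_*$ is $\fr^!(-)=\fr^*(-)\otimes \omega_{X/X}$, and on a smooth variety the relative dualizing sheaf is
\[\omega_{X/X} \;=\; \omega_X \otimes \fr^*\omega_X^{-1} \;\cong\; \omega_X^{1-p},\]
using $\fr^*\lll\cong \lll^{\otimes p}$ for any line bundle. Writing $\omega_X\cong \ooo_X(K_X)$, duality turns the Hom into
\[\hom{\ooo_X}(\fr_*\ooo_X(\lambda),\ooo_X(\mu)) \;\cong\; \hom{\ooo_X}\bigl(\ooo_X(\lambda),\,\ooo_X\!\bigl(p\mu+(1-p)K_X\bigr)\bigr),\]
which equals $\Gamma\bigl(X,\ooo_X((1-p)K_X-(\lambda-p\mu))\bigr)$. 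Applying Definition~\ref{def:succeq} once more yields the stated equivalence.

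The main obstacle is justifying the input from Grothendieck duality, i.e.\ the identification $\fr^!\ooo_X\cong \omega_X^{1-p}$, or equivalently the $\ooo_X$-module isomorphism $(\fr_*\ooo_X)^{\vee}\cong \fr_*\ooo_X(-(p-1)K_X)$. This follows from the standard formula $\omega_{X/Y}=\omega_X\otimes f^*\omega_Y^{-1}$ for a finite flat morphism between smooth schemes of the same dimension, combined with $\fr^*\omega_X\cong \omega_X^{\otimes p}$; I would either cite this directly (e.g.\ from \cite{brion2007frobenius}) or record it in Appendix~\ref{subsection:assorted_results}. Once this duality input is in hand, both parts of the proposition are short computations.
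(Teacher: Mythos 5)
Your proposal is correct and follows essentially the same route as the paper: part (1) via the projection formula and $\fr^*\ooo_X(\mu)\cong\ooo_X(p\mu)$, part (2) via the right adjoint $\fr^!(-)=\fr^*(-)\otimes\omega_X^{\otimes(1-p)}$ (the paper records this as Proposition~\ref{right_adjoint_to_fr_*}, citing Exercise~III.6.10 of Hartshorne), and then Strickland's nonvanishing criterion packaged in Definition~\ref{def:succeq}. The only cosmetic difference is that you derive $\fr^!\ooo_X\cong\omega_X^{\otimes(1-p)}$ from the relative dualizing sheaf formula rather than simply citing it, which is a fine way to discharge the duality input.
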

\begin{proof}
First, we make the identification 
\[\hom{\ooo_X}(\fff_1,\fff_2)\cong H^0(X,\ul{\textnormal{Hom}}(\fff_1,\fff_2))\cong H^0(X, \fff_1^\vee\otimes \fff_2).\]
In the first case, we have by the projection formula that 
\begin{align*}
    \hom{\ooo_X}(\ooo_X(\mu),\fr_*\ooo_X(\lambda))&=\Gamma(X,\ooo_X(-\mu)\otimes\fr_*\ooo_X(\lambda)),\\
    &=\Gamma(X,\fr_*(\ooo_X(\lambda)\otimes \fr^*\ooo_X(\mu))),\\
    &= \Gamma(X,\fr_*\ooo_X(\lambda-p\mu)),\\
    &= \Gamma(X,\ooo_X(\lambda-p\mu)).
\end{align*}
In the second case, we have the right adjoint by \ref{right_adjoint_to_fr_*}:
\begin{align*}
    \hom{\ooo_X}(\fr_*\ooo_X(\lambda),\ooo_X(\mu))&=\hom{\ooo_X}(\ooo_X(\lambda),\fr^*\ooo_X(\mu)\otimes \omega_X^{\otimes (1-p)}),\\
    &= \hom{\ooo_X}(\ooo_X(\lambda),\ooo_X(p\mu)\otimes \omega_X^{\otimes (1-p)}),\\
    &= \Gamma(X,\ooo_X(-\lambda)\otimes \ooo_X(p\mu)\otimes \omega_X^{\otimes (1-p)}),\\
    &= \Gamma(X, \ooo_X(-\lambda+p\mu+(1-p)K_X)).
\end{align*}
Finally, by \cite[Theorem~3.2(i)]{stricland1987vanishing} we conclude by noting that global sections of a line bundle $\ooo_X(\lambda)$ on $X$ are nonzero if and only if $\lambda\succeq 0$, and the conclusion follows.
\end{proof}
\begin{corollary}
\label{cor:condition_on_line_subbundles}
The only invertible sheaves $\ooo_X(\mu)$ which may appear as a direct summand in $\fr_*\ooo_X(\lambda)$ must satisfy the condition $(1-p)K_X\succeq \lambda-p\mu\succeq 0$.
\end{corollary}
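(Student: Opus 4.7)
The plan is to deduce this as an immediate formal consequence of Proposition~\ref{conditions_on_Homs}, since both of the non-vanishing conditions verified there are directly forced by the direct-summand hypothesis. There is no real obstacle here --- the technical work (projection formula, the right adjoint to $\fr_*$, and Strickland's criterion for non-vanishing of global sections of $\ooo_X(\lambda)$) has already been carried out in the preceding proposition, so the corollary reduces to bookkeeping.

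First I would unpack the hypothesis. If $\ooo_X(\mu)$ is a direct summand of $\fr_*\ooo_X(\lambda)$, then by definition there exist $\ooo_X$-module morphisms $\iota\colon \ooo_X(\mu)\to \fr_*\ooo_X(\lambda)$ and $\pi\colon \fr_*\ooo_X(\lambda)\to \ooo_X(\mu)$ satisfying $\pi\circ \iota = \id_{\ooo_X(\mu)}$. Neither $\iota$ nor $\pi$ can vanish, so the hypothesis simultaneously yields
\[
\hom{\ooo_X}(\ooo_X(\mu),\fr_*\ooo_X(\lambda))\ne 0 \qquad\text{and}\qquad \hom{\ooo_X}(\fr_*\ooo_X(\lambda),\ooo_X(\mu))\ne 0.
\]

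Then I would apply the two parts of Proposition~\ref{conditions_on_Homs}. Part (1) converts the first non-vanishing into the inequality $\lambda-p\mu\succeq 0$, while part (2) converts the second non-vanishing into $(1-p)K_X - (\lambda-p\mu)\succeq 0$, i.e.\ $(1-p)K_X\succeq \lambda-p\mu$. Since $\succeq$ is a genuine partial order on $\Lambda$ by Proposition~\ref{prop:when_is_succeq_relation}, the two inequalities concatenate cleanly to give the desired chain
\[
(1-p)K_X \;\succeq\; \lambda - p\mu \;\succeq\; 0,
\]
which is exactly the claimed necessary condition. The only conceptual point worth flagging in the write-up is that the appearance of the factor $(1-p)$ on the left (rather than, say, $-K_X$) traces precisely to the Frobenius twist in the right adjoint formula used in part (2), so nothing is being smuggled in beyond what Proposition~\ref{conditions_on_Homs} already supplies.
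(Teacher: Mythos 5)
Your proposal is correct and follows exactly the paper's own argument: the direct-summand hypothesis furnishes nonzero maps in both directions, and then Proposition~\ref{conditions_on_Homs} converts each non-vanishing Hom space into the corresponding inequality. There is nothing to add.
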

\begin{proof}
If indeed $\ooo_X(\mu)$ is a summand of $\fr_*\ooo_X(\lambda)$, then there are nonzero maps $\ooo_X(\mu)\into \fr_*\ooo_X(\lambda)$ and $\fr_*\ooo_X(\lambda)\onto \ooo_X(\mu)$ corresponding to the canonical maps $A\into A\oplus B$ and $A\oplus B\onto A$.
\end{proof}

\begin{example}
Let $X$ be the wonderful compactification of $G=\psl_2$. Then the fundamental weight is $\omega$, and the simple root is $\alpha=2\omega$, and hence $K_X=-4\omega$, which is indeed the canonical divisor for $\pp^3\cong X$. In this case, the condition that $a\omega\succeq 0$ is precisely equal to $a\ge 0$. Now from Corollary~\ref{cor:condition_on_line_subbundles}, we find that the only line bundles $\ooo_{\pp^3}(k)$ which may appear as a direct summand in $\fr_*\ooo_{\pp^3}(n)$ must satisfy 
\[(1-p)(-4\omega)\succeq (n-pk)\omega\succeq 0\implies (1-p)(-4)\ge n-pk\ge 0,\] where $\ge$ is used as the ordinary ordering on $\zz$. This rearranges to the condition 
\[ \left\lceil \frac{4+n}{p}-4\right\rceil\le k\le \left\lfloor \frac{n}{p}\right\rfloor.\] In particular, for $n\equiv -1,-2,-3,-4\pmod{p}$, there are at most $4$ possibilities for $k$, which are $\lfloor n/p\rfloor -j$ for $j\in\{0,1,2,3\}$; for all other $n$, there are at most $5$ possibilities, which are $\lfloor n/p\rfloor -j$ for $j\in\{0,1,2,3,4\}$.
\end{example}

\begin{remark}
Fix a weight $\lambda\in\Lambda$. The set of weights $\mu\in\Lambda$ which satisfy the conditions of Corollary~\ref{cor:condition_on_line_subbundles} is finite and depends on $p$. However, for $p\gggg 0$ (in fact this bound is very small; for $G=\psl_3$, we need only $p>9$), the set of $\mu$ (depending on $\lambda$) stabilizes and is independent of $p$; furthermore, for any $\lambda$, the size of each set (of $\mu$) is bounded by some finite integer depending only on $G$ (or more accurately, $G$ defined over $\zz$), independent of $\lambda$ and $p$.
\end{remark}

\subsection{The possibilities on $X_{\psl_3}$}

Let $X$ be the wonderful compactification of $G=\psl_3$, and assume $p>3$. Then the fundamental weights are $\omega_1,\omega_2$, and the simple roots are $\alpha_1=2\omega_1-\omega_2$ and $\alpha_2=-\omega_1+2\omega_2$, and the only other positive root is $\alpha_1+\alpha_2$. It follows that $K_X=-3(\alpha_1+\alpha_2)=-3(\omega_1+\omega_2)$.
\begin{proposition}
\label{possibilities_on_X_PSL3}
Let $G=\psl_3$ and fix $\lambda\in\Lambda$. For each $\lambda$, let the number of distinct $\mu\in\Lambda$ for which $\lambda-p\mu\succeq 0$ and $(1-p)K_X-(\lambda-p\mu)\succeq 0$ be $n_\lambda$. Then for any $\lambda\in \Lambda$, we have $n_\lambda\le 27$, and for $p>9$, we additionally find that $n_\lambda\ge 21$.
\end{proposition}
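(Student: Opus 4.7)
The plan is to translate the Proposition into a problem of counting lattice points in a parallelogram modulo $p$. Writing $\nu = \lambda - p\mu = c_1\omega_1 + c_2\omega_2$, we must count those $\nu \in \Lambda$ in the residue class of $\lambda$ modulo $p\Lambda$ satisfying $\nu \succeq 0$ and $(1-p)K_X - \nu \succeq 0$. The first step is to describe the cone $L \coloneqq \{\nu \in \Lambda : \nu \succeq 0\} = \Lambda^+ + \rrr^+$ explicitly inside $\Lambda \cong \zz^2$ (in the $\omega$-basis). Since $L$ is the submonoid generated by $\omega_1 = (1,0)$, $\omega_2 = (0,1)$, $\alpha_1 = (2,-1)$, $\alpha_2 = (-1,2)$, a short case analysis on the four sign-quadrants of $(c_1,c_2)$ shows
\[ L = \{(c_1, c_2) \in \Lambda : 2c_1 + c_2 \ge 0 \text{ and } c_1 + 2c_2 \ge 0\}. \]
Using $K_X = -3(\omega_1+\omega_2)$, the two conditions on $\nu$ become $0 \le 2c_1 + c_2 \le 9(p-1)$ and $0 \le c_1 + 2c_2 \le 9(p-1)$.

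Next, I would change coordinates to reduce to explicit counting. Writing $\lambda \equiv r_1\omega_1 + r_2\omega_2 \pmod{p\Lambda}$ with $0 \le r_i < p$, substitute $c_i = r_i + pk_i$ and set $\ell_1 = 2k_1 + k_2$, $\ell_2 = k_1 + 2k_2$. The map $(k_1,k_2)\mapsto (\ell_1,\ell_2)$ is a bijection of $\zz^2$ onto $\{(\ell_1,\ell_2)\in\zz^2 : \ell_1 + \ell_2 \equiv 0 \pmod 3\}$, with inverse $k_1 = (2\ell_1 - \ell_2)/3$, $k_2 = (2\ell_2 - \ell_1)/3$. With $a = 2r_1 + r_2$ and $b = r_1 + 2r_2$ in $[0,3p)$, the problem becomes: count integer pairs $(\ell_1,\ell_2)$ with $\ell_1 + \ell_2 \equiv 0 \pmod 3$ satisfying $-a/p \le \ell_1 \le 9 - (9+a)/p$ and $-b/p \le \ell_2 \le 9 - (9+b)/p$. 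Each coordinate ranges over an interval of length exactly $9 - 9/p$, so it contains at most $\lfloor 9 - 9/p\rfloor + 1$ integers. For $p > 9$ this gives at most $9$ integers per side, while for $p \in \{5,7\}$ the length is less than $8$, so each side has at most $8$ integers.

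For the upper bound $n_\lambda \le 27$: when the box is $9 \times 9$, exactly $9 \cdot 3 = 27$ pairs satisfy $\ell_1 + \ell_2 \equiv 0 \pmod 3$, since in any $9$-interval each residue class mod $3$ occurs three times, and smaller boxes trivially give fewer pairs; this handles $p > 9$. For $p \in \{5,7\}$ the $8 \times 8$ bound yields $n_\lambda \le 22 < 27$ directly. For the lower bound $n_\lambda \ge 21$ when $p > 9$, each side has $8$ or $9$ integers: if both are $9$ the count is $27$, if exactly one is $8$ the count is $8\cdot 3 = 24$, and the essential case is $8 \times 8$. For that case, I would enumerate the $3 \times 3$ possibilities for the starting residues mod $3$ of the two $8$-intervals, noting that an $8$-interval has residue-count triple $(3,3,2)$ cyclically permuted, and compute $\sum_{i+j\equiv 0\,(3)} n^{(1)}_i n^{(2)}_j$ in each case; all nine resulting values lie in $\{21, 22\}$, with minimum $21$. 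Combining cases gives $n_\lambda \in \{21, 22, 24, 27\}$ for $p > 9$. The main technical obstacle is this $3 \times 3$ subcase enumeration in the $8 \times 8$ case, together with the verification of the cone description in the first step; both are elementary but require careful tracking of residues.
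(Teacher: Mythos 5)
Your proof is correct and follows the same overall strategy as the paper --- reducing to a count of lattice points in the region of $\Lambda$ cut out by the two $\succeq$-inequalities, intersected with the residue class of $\lambda$ modulo $p\Lambda$ --- but you make the count fully rigorous where the paper's own proof only asserts the extremal values with ``one can compute.'' Your description of the cone $\{\nu \succeq 0\}$ by $2c_1 + c_2 \ge 0$, $c_1 + 2c_2 \ge 0$ is correct and coincides with the four bounding lines the paper derives by case analysis. The technical refinement that makes the difference is the change of coordinates $(\ell_1,\ell_2) = (2k_1 + k_2, k_1 + 2k_2)$: it straightens the parallelogram into a product of two intervals of length exactly $9 - 9/p$, at the cost of the congruence $\ell_1 + \ell_2 \equiv 0 \pmod 3$, after which the enumeration reduces to the residue distributions $(3,3,3)$ for a $9$-element interval and cyclic shifts of $(3,3,2)$ for an $8$-element interval, and the nine-case check in the $8 \times 8$ situation. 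This yields the precise set $n_\lambda \in \{21,22,24,27\}$ for $p>9$, strictly refining the bound $21 \le n_\lambda \le 27$ in the paper. (A minor discrepancy: the paper cites $(-2(p-1),5(p-1))$ as realizing the minimum $21$; in your coordinates this point has residue-count pattern $(3,3,2)\times(3,2,3)$, giving $22$, not $21$. The value $21$ is nonetheless attained when both intervals have pattern $(3,3,2)$, so the proposition as stated is unaffected.)
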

\begin{proof}
Let $\lambda=\lambda_1\omega_1+\lambda_2\omega_2$, and $\mu=\mu_1\omega_1+\mu_2\omega_2$. First, we find that there must exist nonnegative integers $n_1,n_2$ such that 
\[ \vvec{\lambda_1-p\mu_1}{\lambda_2-p\mu_2}-n_1\vvec{2}{-1}-n_2\vvec{-1}{2}\ge \vvec{0}{0},\]
where $\ge$ denotes the usual (partial) ordering on $\zz^2$ (which in turn corresponds to $\succeq$ after identifying $\zz^2\xr{\sim} \Lambda$). For this, we have three cases:
\begin{enumerate}
    \item Both components of $\lambda-p\mu$ are already nonnegative, and hence we may take $n_1=n_2=0$.
    \item We have $\lambda_1-p\mu_1=x_1>0$ and $\lambda_2-p\mu_2=x_2<0$, but $|x_1|\ge 2|x_2|$.
    \item The reverse situation as above.
\end{enumerate}
The second condition implies the existence of nonnegative integers $m_1,m_2$ such that 
\[ \vvec{3(p-1)}{3(p-1)}\ge m_1\vvec{2}{-1}+m_2\vvec{-1}{2}+\vvec{\lambda_1-p\mu_1}{\lambda_2-p\mu_2}.\]
Therefore, let $x_i=\lambda_i-p\mu_i$. Then following the above cases:
\begin{enumerate}
    \item We have $0\le x_1,x_2$, and now we require that 
    \[ \vvec{3p-3-x_1}{3p-3-x_2}-m_1\vvec{2}{-1}-m_2\vvec{-1}{2}\ge 0.\] 
    Therefore, within the first quadrant, the possible $(x_1,x_2)$ points are bounded by the lines $x_2\le -\frac{1}{2}x_1+\frac{9(p-1)}{2}$ and $x_2\le -2 (x-\frac{9(p-1)}{2})$.
    \item We have $x_1>0$ and $x_2<0$, but $|x_1|\ge 2|x_2|$. In this case, the possible $(x_1,x_2)$ points are bounded by the lines $x_2\ge -\frac{1}{2}x_1$ and $x_2\le -2(x_1-\frac{9(p-1)}{2})$.
    \item In this case, we have $x_1<0$, $x_2>0$, and $2|x_1|\le |x_2|$. Now the possible $(x_1,x_2)$ points are bounded by the lines $x_2\ge -2x_1$ and $x_2\le -\frac{1}{2}x+\frac{9(p-1)}{2}$.
\end{enumerate}
Putting these together, we have the four lines bounding the region containing all possible points for $(x_1,x_2)$:
\begin{align*}
    x_2&\le -\frac{1}{2}x_1+\frac{9(p-1)}{2},\\
    x_2&\le -2 (x-\frac{9(p-1)}{2}),\\
    x_2&\ge -\frac{1}{2}x_1,\\
    x_2&\ge -2x_1.
\end{align*}

Now fix $\lambda$; each $\mu$ corresponds to a point $(x_1,x_2)$. Then the remaining choices of $\mu$ exactly correspond to $(x_1,x_2)\pm p\cdot (a_1,a_2)$ for integer $(a_1,a_2)$ for which this point still lies in the enclosed region. One can compute that the maximal value (for $p>9$, so that indeed $9p-9>8p$) of distinct $\mu$ is bounded above by $27$ and bounded below by $21$; two such $(x_1,x_2)$ values which indeed attain $27$ are $(-3(p-1),6(p-1))$, and $(6(p-1),-3(p-1))$ (i.e., the extreme edges), while two such $(x_1,x_2)$ values which attain $21$ are the $(p-1,p-1)$-shifts of those points, namely $(-2(p-1),5(p-1))$ and $(5(p-1),-2(p-1))$. For $p=5$ and $p=7$, we can compute by hand that there are less than $27$ such possibilities.
\end{proof}
\begin{corollary}
Let $G=\psl_3$. For any $\lambda\in\Lambda$, the number of distinct $\mu$ for which $\ooo_X(\mu)$ is a direct summand of $\fr_*\ooo_X(\lambda)$ is at most $27$.
\end{corollary}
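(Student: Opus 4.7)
The plan is a direct combination of the two results immediately preceding the corollary. By Corollary~\ref{cor:condition_on_line_subbundles}, if $\ooo_X(\mu)$ appears as a direct summand of $\fr_*\ooo_X(\lambda)$, then $\mu$ must satisfy the two simultaneous conditions
\[ \lambda - p\mu \succeq 0 \quad \text{and} \quad (1-p)K_X - (\lambda - p\mu) \succeq 0. \]
So the set of candidate $\mu$ is a subset of the set counted by $n_\lambda$ in Proposition~\ref{possibilities_on_X_PSL3}.

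Next, I would just cite Proposition~\ref{possibilities_on_X_PSL3}, which gives $n_\lambda \le 27$ for every $\lambda \in \Lambda$ (with the standing hypothesis $p > 3$; the cases $p = 5, 7$ are explicitly handled in the proposition, and for $p > 9$ the lattice-point count in the bounded region in $\rr^2$ cut out by the four lines
\[ x_2 \le -\tfrac{1}{2}x_1 + \tfrac{9(p-1)}{2}, \quad x_2 \le -2(x_1 - \tfrac{9(p-1)}{2}), \quad x_2 \ge -\tfrac{1}{2}x_1, \quad x_2 \ge -2x_1, \]
modulo the $p\zz^2$-lattice of shifts, is bounded by $27$). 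Chaining the inclusion of sets with the bound on $n_\lambda$ gives the corollary.

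Since the heavy lifting has already been done in Proposition~\ref{possibilities_on_X_PSL3}, there is essentially no obstacle: the proof is a one-line deduction. The only point worth flagging is that the bound is a necessary, not sufficient, count — it merely bounds the possible line bundle summands, not those that actually occur — so the corollary's statement ``at most $27$'' is the correct qualitative claim and requires no further argument.
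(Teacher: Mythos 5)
Your proposal is correct and follows exactly the route the paper intends: the corollary is an immediate consequence of combining the necessary condition in Corollary~\ref{cor:condition_on_line_subbundles} with the bound $n_\lambda \le 27$ established in Proposition~\ref{possibilities_on_X_PSL3}. Nothing further is needed.
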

\begin{remark}
In fact, the number of $(x_1,x_2)$ values (before choosing $\lambda$ and $\mu$) is precisely the lattice points in this region, which is given by Pick's formula:
\[ \# \{(x_1,x_2)\} = \textnormal{Area}+1+\frac{1}{2}\textnormal{Boundary points}.\]
We can compute the area by the Shoelace formula to obtain that the area is $27(p-1)^2$. On the other hand, we can compute the number of boundary points one by one. There are a total of $4\cdot 3(p-1)$ boundary points, hence the total number of lattice points is 
\[ \#\{(x_1,x_2)\} = 27(p-1)^2+1+6(p-1).\] Therefore, roughly speaking, we should expect at most $\frac{27(p-1)^2+1+6(p-1)}{p^2}\approx 27$ distinct values of $\mu$.
\end{remark}

\subsection{Frobenius splitting}
\label{subsection:frobenius_splitting}
In this subsection, we determine a class of line bundles which are \textit{Frobenius split} from the Frobenius pushforward of another line bundle.

\begin{definition}
\label{def:sub_divisor}
Let $D$ be an effective divisor in $\textnormal{Div}(X)$. Then an effective divisor $D'$ is an \textit{(effective) sub-divisor of} $D$ if $D-D'$ is still effective. In other words, the monoid generated by the prime divisors in $\textnormal{Div}(X)$ has a natural partial order, and the notion of an effective sub-divisor is exactly this partial order.
\end{definition}
We may now prove the sufficiency condition of Theorem~\ref{thmA:line_bundles_direct_summands}.
\begin{theorem}
\label{frobenius_splitting_as_lattice_points}
Let $\lambda,\mu\in\Lambda$ such that $\lambda-p\mu$ can be written in the form $\sum_{i=1}^{\ell}(a_i\omega_i+b_i\alpha_i)$ for $0\le a_i\le 2(p-1)$ and $0\le b_i\le p-1$. Then $\ooo_X(\mu)$ appears as a direct summand in $\fr_*\ooo_X(\lambda)$.
\end{theorem}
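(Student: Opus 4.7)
The plan is to apply the method of Frobenius splitting compatible with the boundary divisors $\mathbf{X}_i$, $\mathbf{D}_i$, and $\wt{\mathbf{D}_i}$ of $X$. By the projection formula, tensoring the desired statement by $\ooo_X(-\mu)$ reduces it to showing that $\ooo_X$ appears as a direct summand of $\fr_*\ooo_X(E)$, where $E\coloneqq \lambda-p\mu$. The first step is to realize $E$ as the class of an explicit effective divisor supported on the boundary. Given the decomposition $E=\sum_i(a_i\omega_i+b_i\alpha_i)$, I set $a_i'\coloneqq \min(a_i,p-1)$; the hypothesis $0\le a_i\le 2(p-1)$ forces $0\le a_i'\le p-1$ and $0\le a_i-a_i'\le p-1$. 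Since $[\mathbf{D}_i]=[\wt{\mathbf{D}_i}]=\omega_i$ and $[\mathbf{X}_i]=\alpha_i$ in $\textnormal{Pic}(X)$, the effective divisor
\[
D\coloneqq \sum_{i=1}^\ell\bigl(a_i'\,\mathbf{D}_i+(a_i-a_i')\,\wt{\mathbf{D}_i}\bigr)+\sum_{i=1}^\ell b_i\,\mathbf{X}_i
\]
has class $E$ and, together with $b_i\le p-1$, is an effective subdivisor of $(p-1)\wt{K}_X=(p-1)\sum_i(\mathbf{D}_i+\wt{\mathbf{D}_i})+(p-1)\sum_i \mathbf{X}_i$, a divisor of class $-(p-1)K_X=(1-p)K_X$.

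Next, I invoke the standard fact that $X$ admits a canonical Frobenius splitting (see \cite[\S 6]{brion2007frobenius}, after \cite{stricland1987vanishing}): there exists a section $\sigma\in\Gamma(X,\omega_X^{1-p})$ whose divisor of zeros is exactly $(p-1)\wt{K}_X$ and which satisfies $\textnormal{Tr}(\sigma)=1$ under the canonical Frobenius trace $\textnormal{Tr}\colon \fr_*\omega_X^{1-p}\to\ooo_X$ arising from Grothendieck--Serre duality for the flat finite map $\fr$ on the smooth variety $X$. Since $D\le\divv(\sigma)$, I may factor $\sigma=s_D\cdot s_{D'}$, where $s_D\in\Gamma(X,\ooo_X(D))$ and $s_{D'}\in\Gamma(X,\ooo_X(D'))$ are the canonical sections vanishing on $D$ and on $D'\coloneqq (p-1)\wt{K}_X-D$, respectively.

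To complete the argument, multiplication by $s_D$ defines an $\ooo_X$-linear inclusion $\iota\colon \ooo_X\hookrightarrow \ooo_X(E)=\fr_*\ooo_X(E)$. Using the adjunction
\[
\hom{\ooo_X}(\fr_*\ooo_X(E),\ooo_X)=\Gamma(X,\ooo_X((1-p)K_X-E))=\Gamma(X,\ooo_X(D')),
\]
the section $s_{D'}$ determines a retraction $\pi\colon \fr_*\ooo_X(E)\to \ooo_X$, explicitly given by $f\mapsto \textnormal{Tr}(f\cdot s_{D'})$. The composition $\pi\circ\iota$ sends $1$ to $\textnormal{Tr}(s_D\cdot s_{D'})=\textnormal{Tr}(\sigma)=1$, so $\ooo_X$ is a direct summand of $\fr_*\ooo_X(E)$. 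Twisting by $\ooo_X(\mu)$ and applying the projection formula produces the desired splitting $\ooo_X(\mu)\hookrightarrow \fr_*\ooo_X(\lambda)$.

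The main obstacle is the second step: producing a Frobenius splitting section $\sigma$ whose divisor is exactly $(p-1)\wt{K}_X$ (rather than a proper subdivisor) and verifying $\textnormal{Tr}(\sigma)=1$. This requires $(\wt{G}\times\wt{G})$-equivariant input: one constructs $\sigma$ as (a nonzero scalar multiple of) a product of $(p-1)$-th powers of canonical sections cutting out the $\mathbf{D}_i$, $\wt{\mathbf{D}_i}$, and $\mathbf{X}_i$, the first pair coming from highest and lowest weight vectors in $\st\otimes\st$ restricted to the closed orbit $\mathbf{Y}\cong \wt{G}/\wt{B}\times \wt{G}/\wt{B}$ and extended to $X$, and the last from the equations cutting out the boundary along the affine slice $\mathbf{X}_0$. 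The verification that $\textnormal{Tr}(\sigma)=1$ is then a local computation on the open cell $U\times \aa^\ell\times U^-\xr{\sim}\mathbf{X}_0$, reducing to the toric/affine-space case. Once this input is established, the rest of the proof is a formal consequence of the standard Frobenius splitting formalism.
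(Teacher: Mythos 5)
Your proof is correct and follows essentially the same strategy as the paper: reduce via the projection formula, realize $\lambda-p\mu$ as an effective divisor $D\le(p-1)\wt{K}_X$ supported on the boundary, and split using the Brion--Kumar canonical splitting section $\sigma$ with $\divv_0\sigma=(p-1)\wt{K}_X$. Your explicit choice $a_i'=\min(a_i,p-1)$ makes precise the distribution between $\mathbf{D}_i$ and $\wt{\mathbf{D}_i}$ that the paper leaves implicit, but otherwise the two arguments coincide.
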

\begin{proof} 
By assumption, we have that $\lambda-p\mu\succeq 0$ and $K_X-(\lambda-p\mu)\succeq 0$. Therefore, by Proposition~\ref{conditions_on_Homs}, we have that 
\begin{itemize}
    \item $\hom{\ooo_X}(\ooo_X(\mu),\fr_*\ooo_X(\lambda))\ne 0$, and
    \item $\hom{\ooo_X}(\fr_*\ooo_X(\lambda),\ooo_X(\mu))\ne 0$. 
\end{itemize}
Now we want to find two maps such that we have the following commutative diagram:
\[
\begin{tikzcd}
\ooo_X(\mu)\arrow{r}\arrow{dr}{\id} & \fr_*\ooo_X(\lambda)\arrow{d}\\
& \ooo_X(\mu).
\end{tikzcd}
\]

By the projection formula, after tensoring by $\mathcal{O}_X(-\mu)$ this diagram becomes
\[
\begin{tikzcd}
\ooo_X\arrow{r}\arrow{dr}{\id} & \fr_*\ooo_X(\lambda-p\mu)\arrow{d}\\
& \ooo_X.
\end{tikzcd}
\]
Thus, we have a map 
\begin{equation}
\label{equation:1}
\begin{tikzcd}
\hom{\ooo_X}(\ooo_X,\fr_*\ooo_X(\lambda-p\mu))\otimes \hom{\ooo_X}(\fr_*\ooo_X(\lambda-p\mu),\ooo_X)\arrow{d}&\\
\hom{\ooo_X}(\ooo_X,\ooo_X)=\ff,
\end{tikzcd}
\end{equation}
and our goal is to find a some $f \otimes g$ that maps to a nonzero element of $\ff$. Our strategy is essentially the notion of Frobenius splitting relative to a divisor, as described in \cite[\S1.4]{brion2007frobenius}. Fix $D$ to be an effective divisor of the form 
\[D= \sum_{i=1}^{\ell}c_i\mathbf{D}_i+\wt{c_i}\wt{\mathbf{D}_i}+b_i\mathbf{X}_i,\] where $0\le c_i,\wt{c_i},b_i\le p-1$ and $[D]=\lambda-p\mu$; this is possible by the hypothesis on $\lambda-p\mu$ (we require that $0\le c_i+\wt{c_i}=a_i\le 2(p-1)$ and $0\le b_i\le p-1$). Fix the canonical section $\sigma_D$ of $\ooo_X(D)$ and regard it is a map $\ooo_X\to \ooo_X(D)$. We say that $X$ is \textit{Frobenius split relative to} $D$, or simply $D$-\textit{split}, if there exists an $\ooo_X$-linear map $\psi:\fr_*\ooo_X(D)\to \ooo_X$ such that the composition 
\[ \psi\circ\fr_*(\sigma_D)\in\hom{}(\fr_*\ooo_X,\ooo_X)\] is an $\ooo_X$-splitting. Now define the \textit{evaluation map} 
\[\epsilon_D: \ul{\textnormal{Hom}}_{\ooo_X}(\fr_*\ooo_X(D),\ooo_X)\to \ooo_X, \quad \psi\mapsto \psi(\fr_*(\sigma_D)).\] By \cite[Remark~1.4.2(i)]{brion2007frobenius}, $\psi$ is a $D$-splitting if and only if $\psi(\fr_*(\sigma_D))=1$, and this corresponds to the condition that $\ooo_X$ is a direct summand of $\fr_*\ooo_X(D)$. If we restrict the left factor of (\ref{equation:1}) to $\sigma_D\in\Gamma(X,\ooo_X(D))$, we recover the commutative diagram on \cite[pg.~39]{brion2007frobenius}:
\begin{equation}\label{equation:diagram_from_BK}
\begin{tikzcd}
\ul{\textnormal{Hom}}_{\ooo_X}(\fr_*\ooo_X(D),\ooo_X)\arrow{r}{\sim}\arrow{rd}{\epsilon_D} &\fr_*(\omega_X^{\otimes (1-p)}(-D))\arrow{d}{\tau\circ\fr_*(\sigma)}\\
 &\ooo_X.
\end{tikzcd}
\end{equation}

Next, by \cite[Exercise~III.6.10]{hartshorne2013algebraic}, we have the identification
\begin{align*}
    \ul{Hom}_{\ooo_X}(\fr_*(\ooo_X(\lambda-p\mu),\ooo_X))&\cong \fr_*\ul{Hom}_{\ooo_X}(\ooo_X(\lambda-p\mu), \fr^{!}\ooo_X),\\
    &\cong \fr_* \ul{Hom}_{\ooo_X}(\ooo_X(\lambda-p\mu),\omega_X^{\otimes(1-p)}),\\
    &\cong \fr_*( \omega_X^{\otimes(1-p)}\otimes \ooo_X(p\mu-\lambda)),\\
    &\cong \fr_*\ooo_X((1-p)K_X-(\lambda-p\mu)).
\end{align*}

Thus, (\ref{equation:1}) becomes
\begin{equation}\label{map_glob}
\begin{tikzcd}
 \Gamma(X,\fr_*\ooo_X((1-p)K_X-(\lambda-p\mu)))\arrow{d}{\epsilon_D}&\\
\Gamma(X,\ooo_X).
\end{tikzcd}
\end{equation}
By \cite[Proposition~1.3.7]{brion2007frobenius}, or by (\ref{equation:diagram_from_BK}), the map (\ref{map_glob}) is obtained as the composition 
\[
\begin{tikzcd}
\Gamma(X,\fr_*\ooo_X((1-p)K_X-(\lambda-p\mu)))\arrow{d}{-\circ \fr_*(\sigma_D)}&\\
\Gamma(X,\fr_*\omega_X^{\otimes(1-p)})\arrow{d}{\tau}&\\
\Gamma(X,\ooo_X),
\end{tikzcd}
\]
where the first map is the multiplication (or composition) with $\fr_*(\sigma_D)$ and the second map $\fr_*\omega_X^{\otimes(1-p)}\to \ooo_X$ is induced by the map $\wh{\tau}$, coming from the evaluation map $\epsilon:\ul{Hom}_{\ooo_X}(\fr_*\ooo_X,\ooo_X)\to \ooo_X$ which controls the Frobenius splitting of $X$ (see \cite[Theorem~1.3.8]{brion2007frobenius}).

One section of $\omega_X^{\otimes(1-p)}$ indeed gives a splitting (i.e. maps to $1 \in \ff=\Gamma(X,\mathcal{O}_X)$): this is precisely the section $\sigma$, described in \cite[Theorem~6.1.12]{brion2007frobenius}. We have that
\[\divv_0 \sigma = (p-1)\sum_{i=1}^{\ell}\left(\mathbf{X}_i+\mathbf{D}_i+\wt{\mathbf{D}_i}\right),\] so for convenience, make the following convention (which we will use in later parts of the paper as well):
\begin{notation}
\label{notation:tilde_K_X}
Denote \[ {\wt{K}_X}\coloneqq \sum_{i=1}^{\ell}\left(\mathbf{X}_i+\mathbf{D}_i+\wt{\mathbf{D}_i}\right).\]
\end{notation}
In particular, it suffices to show that there exists a global section on $\ooo_X((p-1){\wt{K}_X}-(\lambda-p\mu))$ which multiplies with $\sigma_D$ to $\sigma$, and then we will have shown that we have the desired splitting. The representability of $\sigma$ as a product of two sections is equivalent to a representation of $\divv_0  \sigma$ as a the sum of two effective divisors, each of which is the $\divv_0$ of the sections in question. Note that every effective sub-divisor (see Definition~\ref{def:sub_divisor}) of $\divv_0 \sigma$ is $\sum_{i=1}^{\ell}(c_i\mathbf{D}_i+\wt{c_i}\wt{\mathbf{D}_i}+b_i\mathbf{X}_i)$ for $0\le c_i,\wt{c_i},b_i\le p-1$. But note that by construction, $D$ is an effective sub-divisor of $(p-1){\wt{K}_X}$, and $\divv_0\sigma_D=D$; it follows that $(p-1){\wt{K}_X}-D$ is effective as well, with canonical section $\sigma_{(p-1){\wt{K}_X}-D}$. Therefore, we have that $\sigma_D$ and $\sigma_{(p-1){\wt{K}_X}-D}$ multiply to $\sigma$, and it follows that $\ooo_X(\mu)$ is indeed a direct summand of $\fr_*\ooo_X(\lambda)$.
\end{proof}

\begin{remark}
Let us give a quick recount of the section $\sigma$, described in \cite[Theorem~6.1.12]{brion2007frobenius}. Because the Steinberg module is self-dual, we have a map
\[ \xi:\st\otimes \st\into \Gamma(X,\ooo_X((p-1)\rho)).\]
Let $f\in \st$ be a highest weight vector. Then define $\tau_{(p-1)\rho}\coloneqq \xi(f\otimes f)$; this is the (unique, up to scalar) canonical section of $\ooo_X((p-1)\rho)$ which is a $B\times B$-eigenvector, as described in \cite[Proposition~6.1.11(iv)]{brion2007frobenius}. We have $\divv_0  \tau_{(p-1)\rho}=(p-1)\sum_{i=1}^{\ell}\mathbf{D}_i$. Now define $\tau_{-(p-1)\rho}\coloneqq (w_0,w_0)\cdot \tau_{(p-1)\rho}$. Then $\divv_0 \tau_{-(p-1)\rho}=(p-1)\sum_{i=1}^{\ell}\wt{\mathbf{D}_i}$. Now writing $\tau\coloneqq \tau_{-(p-1)\rho}\cdot\tau_{(p-1)\rho}$, we have $\tau\in\Gamma(X,\ooo_X{(2(p-1)\rho})$ and $\divv_0 \tau=(p-1)\sum_{i=1}^{\ell}(\mathbf{D}_i+\wt{\mathbf{D}_i})$. Now taking $\sigma_i$ the canonical sections of $\ooo_X(\mathbf{X}_i)$, we define
\[\sigma\coloneqq \tau\prod_{i=1}^{\ell} \sigma_i^{p-1}\in\Gamma(X,\omega^{\otimes(1-p)}),\quad \divv_0 \tau = (p-1)\sum_{i=1}^{\ell}(\mathbf{X}_i+\mathbf{D}_i+\wt{\mathbf{D}_i}).\]
\end{remark}

\begin{corollary}
\label{cor:min_max_mu_for_lambda}
Let $G=\psl_3$ and $p>9$. Then for each $\lambda$, Theorem~\ref{frobenius_splitting_as_lattice_points} guarantees between $14$ and $19$ distinct $\mu$ for which $\ooo_X(\mu)$ appears as a direct summand of $\fr_*\ooo_X(\lambda)$.
\end{corollary}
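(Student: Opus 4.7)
The plan is to reduce the assertion to a lattice-point count in an explicit planar polygon, paralleling the proof of Proposition~\ref{possibilities_on_X_PSL3}.

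For $G = \psl_3$, using $\alpha_1 = 2\omega_1 - \omega_2$ and $\alpha_2 = -\omega_1 + 2\omega_2$ and writing $\lambda - p\mu = x_1\omega_1 + x_2\omega_2$, the sufficient condition of Theorem~\ref{frobenius_splitting_as_lattice_points} is equivalent to $(x_1, x_2) \in R := P + S$, where $S := [0, 2(p-1)]^2$ is the square parametrized by $(a_1, a_2)$ and $P$ is the parallelogram with vertices $(0,0),\ (2(p-1), -(p-1)),\ (p-1, p-1),\ (-(p-1), 2(p-1))$, obtained as the image of $[0,p-1]^2$ under $(b_1, b_2) \mapsto (2b_1 - b_2, -b_1 + 2b_2)$. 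Interleaving the outward normals of $P$ and $S$ in counterclockwise order exhibits $R$ as an octagon with vertices (CCW)
\[
\begin{gathered}
(4(p-1), -(p-1)),\ (4(p-1), p-1),\ (3(p-1), 3(p-1)),\ (p-1, 4(p-1)),\\
(-(p-1), 4(p-1)),\ (-(p-1), 2(p-1)),\ (0, 0),\ (2(p-1), -(p-1)),
\end{gathered}
\]
and the shoelace formula yields $\operatorname{Area}(R) = 19(p-1)^2$. The boundary of $R$ is piecewise linear: at height $x_2$, the left boundary is one of $-2x_2$, $-x_2/2$, $-(p-1)$ (on $[-(p-1), 0]$, $[0, 2(p-1)]$, $[2(p-1), 4(p-1)]$ respectively) and the right boundary is one of $4(p-1)$, $(9(p-1)-x_2)/2$, $9(p-1)-2x_2$ (on $[-(p-1), p-1]$, $[p-1, 3(p-1)]$, $[3(p-1), 4(p-1)]$ respectively).

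For each $\lambda$, the number of $\mu$ produced by Theorem~\ref{frobenius_splitting_as_lattice_points} equals the number of lattice points $(x_1, x_2) \in R$ with $(x_1, x_2) \equiv (\lambda_1, \lambda_2) \pmod{p}$. I would slice $R$ horizontally at the (at most five) admissible values $x_2 \equiv \lambda_2 \pmod{p}$ in $[-(p-1), 4(p-1)]$, and count integers $x_1 \equiv \lambda_1 \pmod{p}$ in each cross-section using the piecewise-linear description above; the per-slice count is an elementary floor-function computation.

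The main obstacle is the finite case analysis over residues $(r_1, r_2) \in \{0, \ldots, p-1\}^2$ to verify the uniform bounds $14$ and $19$. The hypothesis $p > 9$ guarantees every cross-section width lies in $[2(p-1), 9(p-1)/2]$, so each slice contributes between $2$ and $5$ lattice points of any residue class. Direct checks confirm that the minimum $14$ is attained at $(r_1, r_2) = (p-1, p-1)$, where only four $x_2$-slices are admissible and yield $3 + 4 + 4 + 3$ points, and the maximum $19$ is attained at interior residues whose five admissible slices align with the widest portion of $R$ (for instance, $(r_1, r_2) = (5, 3)$ at $p = 11$ yields $3 + 4 + 5 + 4 + 3 = 19$). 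Analogously to Proposition~\ref{possibilities_on_X_PSL3}, where an area of $27(p-1)^2$ produced bounds $21$ and $27$, here the area $19(p-1)^2$ produces bounds $14$ and $19$.
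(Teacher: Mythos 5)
Your approach is essentially the paper's intended one. The paper's own proof of this corollary is a single sentence — ``It's easily verified that this will not depend on $p$ for $p$ sufficiently large; the rest can be checked by hand'' — and what you have done is reconstruct the hand-check in detail. The geometric setup is correct throughout: $R = S + P$ is indeed the octagon with the eight vertices you list, the shoelace computation gives area $19(p-1)^2$, the piecewise-linear formulas for the left and right boundary of each horizontal slice are right, and both of your spot checks compute out correctly (I verified $3+4+4+3 = 14$ at $(r_1,r_2) = (p-1,p-1)$ and $3+4+5+4+3 = 19$ at $p=11$, $(r_1,r_2)=(5,3)$).

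Two remarks. First, a small error: the claim that ``each slice contributes between $2$ and $5$ lattice points of any residue class'' is false. The cross-section width is exactly $2(p-1)$ at $x_2 = -(p-1)$ and $x_2 = 4(p-1)$, where the slices are the closed intervals $[2(p-1), 4(p-1)]$ and $[-(p-1), p-1]$; a closed interval of length $2(p-1) = 2p-2$ can contain only one integer of a given residue class (for instance, the only integer $\equiv p-3 \pmod p$ in $[2(p-1),4(p-1)]$ is $3(p-1)$). The correct a priori per-slice bound is $1$ to $5$. This does not derail the total count (e.g., $(r_1,r_2) = (p-3,1)$ still gives $1+3+4+4+3 = 15 \ge 14$), but it should be corrected. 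Second, and more substantively, the ``finite case analysis over residues'' is asserted but not actually carried out, and you do not justify the $p$-independence of the answer; you assume, as the paper does, that checking a few residues suffices. A clean way to close this would be to observe that the vertices of $R$ are all of the form $k(p-1)$ in each coordinate ($k \in \{-1,0,\ldots,4\}$), so a change of variables $x_i = y_i - 1$ places the vertices at integer multiples of $p$ minus small constants, after which the count of lattice points in a given residue class is determined by a bounded computation independent of $p$ once $p$ exceeds the relevant gaps (which $p>9$ guarantees). As written, your argument has the same logical shape and the same gap as the paper's.
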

\begin{proof}
It's easily verified that this will not depend on $p$ for $p$ sufficiently large; the rest can be checked by hand.
\end{proof}

We have a few easy considerations regarding the upper bounds of the multiplicities.

\begin{proposition}
The multiplicity $m(\mu,\lambda)$ of $\ooo_X(\mu)$ as a direct summand in $\fr_*\ooo_X(\lambda)$ is easily bounded by the dimensions of global sections: we have
\[ m(\mu,\lambda)\le \min\left(\frac{\dim_{\ff} F_{\le \lambda}}{\dim_{\ff}F_{\le \mu}}, \dim_{\ff} F_{\le \lambda-p\mu}, \dim_{\ff} F_{\le (1-p)K_X-(\lambda-p\mu)}\right).\]
\end{proposition}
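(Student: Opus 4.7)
The plan is to apply three natural functors to a Krull--Schmidt decomposition of $\fr_*\ooo_X(\lambda)$ and read off each displayed bound from a dimension count. By the Krull--Schmidt theorem \cite[Theorem~3]{atiyah1956krull} we may write
\[ \fr_*\ooo_X(\lambda) \cong \ooo_X(\mu)^{\oplus m(\mu,\lambda)} \oplus \mathcal{G} \]
for some vector bundle $\mathcal{G}$ on $X$ containing no $\ooo_X(\mu)$-summand. The three bounds will come, respectively, from applying $\Gamma(X,-)$, $\hom{\ooo_X}(\ooo_X(\mu),-)$, and $\hom{\ooo_X}(-,\ooo_X(\mu))$ to this decomposition.

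For the first bound, $\fr_*$ is the identity on the underlying sheaves of abelian groups, so $\Gamma(X,\fr_*\ooo_X(\lambda))=\Gamma(X,\ooo_X(\lambda))$, which under the identification recalled in \S\ref{subsection:divisors_and_line_bundles_on_X} has dimension $\dim_\ff F_{\le \lambda}$. Additivity of $\Gamma$ across the direct sum then yields
\[ m(\mu,\lambda)\cdot \dim_\ff F_{\le \mu} + \dim_\ff \Gamma(X,\mathcal{G}) = \dim_\ff F_{\le \lambda}, \]
which rearranges (assuming $F_{\le \mu}\neq 0$) to the first term of the minimum. For the other two bounds, the canonical inclusion of and projection onto the summand $\ooo_X(\mu)^{\oplus m(\mu,\lambda)}$ produce $m(\mu,\lambda)$ linearly independent elements of $\hom{\ooo_X}(\ooo_X(\mu),\fr_*\ooo_X(\lambda))$ and of $\hom{\ooo_X}(\fr_*\ooo_X(\lambda),\ooo_X(\mu))$, respectively. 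Proposition~\ref{conditions_on_Homs} identifies these Hom spaces with $F_{\le \lambda-p\mu}$ and $F_{\le (1-p)K_X-(\lambda-p\mu)}$ as $\ff$-vector spaces, yielding the second and third terms. Taking the minimum over the three inequalities gives the claim.

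There is no substantive obstacle; each of the three bounds reduces to a one-step counting argument after fixing the Krull--Schmidt decomposition. The only point meriting a note is the edge case $F_{\le \mu}=0$ (i.e., $\mu \not\succeq 0$): the first term of the minimum is then undefined and must be omitted, but the remaining two terms are always well defined and continue to bound $m(\mu,\lambda)$ on their own, in particular forcing $m(\mu,\lambda)=0$ unless both $\lambda-p\mu\succeq 0$ and $(1-p)K_X-(\lambda-p\mu)\succeq 0$, in agreement with the necessary condition of Theorem~\ref{thmA:line_bundles_direct_summands}.
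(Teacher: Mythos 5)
Your argument matches the paper's proof in substance: the first bound comes from the inclusion $F_{\le\mu}^{\oplus m(\mu,\lambda)}\hookrightarrow F_{\le\lambda}$ induced by the direct-sum decomposition, and the other two come from observing that the canonical inclusions and projections give $m(\mu,\lambda)$ independent elements in the two Hom spaces, which are identified (via the proof of Proposition~\ref{conditions_on_Homs}) with $F_{\le\lambda-p\mu}$ and $F_{\le(1-p)K_X-(\lambda-p\mu)}$. Your note on the $F_{\le\mu}=0$ edge case is a harmless but accurate clarification not present in the paper.
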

\begin{proof}
The first bound is clear: if we have $m(\mu,\lambda)$ copies of $\ooo_X(\mu)$ as direct summands inside $\fr_*\ooo_X(\lambda)$, then $F_{\le \mu}^{\oplus m(\mu,\lambda)}\into F_{\le\lambda}$. The next two bounds are clear from the fact that the dimensions of the vector spaces $\hom{\ooo_X}(\ooo_X(\mu),\fr_*\ooo_X(\lambda))$ and $\hom{\ooo_X}(\fr_*\ooo_X(\lambda),\ooo_X(\mu))$ are at least 
\[\dim \hom{\ooo_X}( \ooo_X(\mu),\ooo_X(\mu)^{\oplus m(\mu,\lambda)})=\dim \hom{\ooo_X}(\ooo_X(\mu)^{\oplus m(\mu,\lambda)}, \ooo_X(\mu))=m(\mu,\lambda).\]
\end{proof}
\begin{corollary}
\label{cor:easy_multiplicity_estimate_via_dim}
For each $\lambda$, there is exactly one copy of $\ooo_X(\lambda)$ as a direct summand in $\fr_*\ooo_X(p\lambda)$. In particular, there is exactly one copy of $\ooo_X$ as a direct summand in $\fr_*\ooo_X$ (which immediately follows from $X$ being Frobenius split).
\end{corollary}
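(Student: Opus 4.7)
The plan is to combine the upper bound just stated in the preceding proposition with the sufficiency in Theorem~\ref{frobenius_splitting_as_lattice_points} (or, equivalently, with the Frobenius splitting of $X$).

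First, I would obtain the upper bound $m(\lambda,p\lambda)\le 1$ by specializing the proposition. Setting $\mu=\lambda$ and replacing the ``target'' weight by $p\lambda$, we have $\lambda-p\mu=p\lambda-p\lambda=0$, so the relevant middle bound reads
\[ m(\lambda,p\lambda)\le \dim_{\ff} F_{\le 0}=\dim_{\ff}\Gamma(X,\ooo_X)=1,\]
since $X$ is projective and geometrically connected. (The other two bounds in the proposition are of course also $\ge 1$, but this one alone is what pins the multiplicity down.)

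Next, I would produce the lower bound, i.e., at least one copy of $\ooo_X(\lambda)$ in $\fr_*\ooo_X(p\lambda)$. There are two equally painless ways to do this. One is to apply Theorem~\ref{frobenius_splitting_as_lattice_points} with the trivial decomposition $\lambda-p\mu=0=\sum_{i}(0\cdot\omega_i+0\cdot\alpha_i)$, whose coefficients satisfy the required bounds $0\le a_i\le 2(p-1)$ and $0\le b_i\le p-1$. The other, more conceptual, is to use the projection formula together with $\fr^*\ooo_X(\lambda)\cong\ooo_X(p\lambda)$:
\[ \fr_*\ooo_X(p\lambda)\cong \fr_*\bigl(\fr^*\ooo_X(\lambda)\bigr)\cong \ooo_X(\lambda)\otimes \fr_*\ooo_X.\]
Since $X$ is Frobenius split, $\ooo_X$ is a direct summand of $\fr_*\ooo_X$, and tensoring with the invertible sheaf $\ooo_X(\lambda)$ preserves direct summands, so $\ooo_X(\lambda)$ is a direct summand of $\fr_*\ooo_X(p\lambda)$.

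Combining the two estimates gives $m(\lambda,p\lambda)=1$, which is the statement. The parenthetical special case $\mu=\lambda=0$ is then immediate and recovers the usual uniqueness of the splitting map for a Frobenius split variety. I do not anticipate any real obstacle here: the only slightly subtle point is that one must remember to use $F_{\le 0}=\Gamma(X,\ooo_X)=\ff$ and not a larger space, which is automatic from completeness and connectedness of $X$ (as established in \S\ref{subsection:the_wonderful_compactification}).
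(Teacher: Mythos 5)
Your proof is correct and uses essentially the same two ingredients as the paper: the dimension bound $m(\mu,\lambda)\le\dim_{\ff}F_{\le\lambda-p\mu}$ from the preceding proposition for the upper bound, and Frobenius splitting (via the projection formula $\fr_*\ooo_X(p\lambda)\cong\ooo_X(\lambda)\otimes\fr_*\ooo_X$) for the lower bound. The only cosmetic difference is that the paper first reduces to $\lambda=0$ via the projection formula and then invokes both bounds at $\lambda=0$, whereas you apply them directly at general $\lambda$; the underlying argument is the same.
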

\begin{proof}
The more general statement reduces to the statement about $\fr_*\ooo_X$ by projection formula:
\[ 
\ooo_X(\lambda)\otimes \fr_*\ooo_X \cong \fr_*\ooo_X(p\lambda).\]
Therefore it suffices to prove the statement for $\lambda=0$. In this case, by \cite[Theorem~6.1.12]{brion2007frobenius}, we have that $X$ is Frobenius split, hence we have at least one copy of $\ooo_X$ as a direct summand of $\fr_*\ooo_X$. By Corollary~\ref{cor:easy_multiplicity_estimate_via_dim}, the multiplicity is bounded above by $\dim F_{\le 0}=1$, hence is exactly $1$.
\end{proof}

\begin{corollary}
For any $\mu$, the multiplicity of $\ooo_X(\mu)$ in $\fr_*\ooo_X(\lambda)$ is exactly $1$, for the following $\lambda$:
\begin{itemize}
    \item $\lambda= (1-p)K_X+p\mu$
    \item $\lambda= (1-p)K_X+p\mu-\alpha_i$ for any simple root $\alpha_i$
    \item $\lambda = p\mu$
    \item $\lambda = p\mu +\alpha_i$ for any simple root $\alpha_i$.
\end{itemize}
\end{corollary}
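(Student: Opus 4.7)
The plan is to reduce all four cases, via the projection formula, to computing the multiplicity of $\ooo_X$ inside $\fr_*\ooo_X(\nu)$ for a short list of $\nu$'s, and then to pinch this multiplicity between a Frobenius-splitting lower bound and a Hom-space upper bound.

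First I would apply the projection formula $\ooo_X(-\mu)\otimes\fr_*\ooo_X(\lambda)\cong \fr_*\ooo_X(\lambda-p\mu)$ to reduce the multiplicity of $\ooo_X(\mu)$ in $\fr_*\ooo_X(\lambda)$ to that of $\ooo_X$ in $\fr_*\ooo_X(\nu)$, where $\nu=\lambda-p\mu$ runs over $\{0,\ \alpha_i,\ (1-p)K_X,\ (1-p)K_X-\alpha_i\}$. For the lower bound, each such $\nu$ admits a presentation $\sum_j(a_j\omega_j+b_j\alpha_j)$ with $0\le a_j\le 2(p-1)$ and $0\le b_j\le p-1$: take $a_j=b_j=0$ for $\nu=0$; $b_i=1$ and all other coefficients zero for $\nu=\alpha_i$; $a_j=2(p-1)$ and $b_j=p-1$ for all $j$ for $\nu=(1-p)K_X$ (using $(1-p)K_X=(p-1)(2\sum\omega_j+\sum\alpha_j)$); and modify the last by $b_i\mapsto p-2$ for $\nu=(1-p)K_X-\alpha_i$. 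Theorem~\ref{frobenius_splitting_as_lattice_points} then produces at least one Frobenius splitting, giving $m(\mu,\lambda)\ge 1$ in every case.

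For the upper bound, Proposition~\ref{conditions_on_Homs} together with the elementary inequality $m(\mu,\lambda)\le \dim_\ff\hom{\ooo_X}(\ooo_X(\mu),\fr_*\ooo_X(\lambda))$ (and its dual) yields
\[
m(\mu,\lambda)\le \min\bigl\{\dim_\ff\Gamma(X,\ooo_X(\nu)),\ \dim_\ff\Gamma(X,\ooo_X((1-p)K_X-\nu))\bigr\}.
\]
For $\nu=0$ and $\nu=(1-p)K_X$, at least one of these terms equals $\dim_\ff\Gamma(X,\ooo_X)=1$, so the multiplicity is bounded above by $1$ (the $\nu=0$ case is also immediate from Corollary~\ref{cor:easy_multiplicity_estimate_via_dim}).

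The remaining cases $\nu=\alpha_i$ and $\nu=(1-p)K_X-\alpha_i$ both reduce to the single claim $\dim_\ff\Gamma(X,\ooo_X(\alpha_i))=1$, and this is where I expect the main work. My plan is to use the identification $\Gamma(X,\ooo_X(\lambda))\cong F_{\le\lambda}$ recalled in \S\ref{subsection:divisors_and_line_bundles_on_X}, together with the fact that the good-filtration subquotients of $F_{\le\lambda}$ are indexed by dominant weights $\nu'\in\Lambda^+$ with $\nu'\preceq\lambda$. It then suffices to verify combinatorially that $\nu'=0$ is the only such dominant weight when $\lambda=\alpha_i$: since $\alpha_i$ is not dominant whenever $i$ has any neighbor $j$ in the Dynkin diagram (as $\langle\alpha_i,\alpha_j^\vee\rangle\le -1$), a short analysis pairing the equation $\alpha_i-\nu'=\omega'+\sum_k n_k\alpha_k$ with each $\alpha_j^\vee$ and using $a_j,n_k\ge 0$ forces $\nu'=0$. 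This yields $F_{\le\alpha_i}=F_{\le 0}$ of dimension $1$, closes the gap, and completes the proof.
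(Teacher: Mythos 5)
Your scaffold is exactly the paper's: projection formula to reduce to $\nu=\lambda-p\mu\in\{0,\alpha_i,(1-p)K_X,(1-p)K_X-\alpha_i\}$, a lower bound of $1$ from Theorem~\ref{frobenius_splitting_as_lattice_points}, and an upper bound of $1$ from the Hom-space dimensions via Proposition~\ref{conditions_on_Homs}, with all four cases collapsing to the single assertion $\dim_\ff\Gamma(X,\ooo_X(\alpha_i))=1$. You have correctly located this as the only nontrivial point -- the paper asserts $\dim_\ff F_{\le\alpha_i}=1$ without justification -- but the step you sketch to close it does not go through as stated, and both you and the paper miss a rank-one counterexample.

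On the combinatorial step: the filtration $F_{\le\lambda}$ is indexed by dominant $\nu'$ with $\nu'\le\lambda$ in the \emph{root-lattice} order ($\lambda-\nu'\in\rrr^+$), not $\nu'\preceq\lambda$, so the relevant equation is $\alpha_i-\nu'=\sum_k n_k\alpha_k$ with $n_k\ge 0$, no $\omega'$ term. Pairing this with $\alpha_j^\vee$ gives $\langle\alpha_i,\alpha_j^\vee\rangle=\langle\nu',\alpha_j^\vee\rangle+2n_j+\sum_{k\ne j}n_k\langle\alpha_k,\alpha_j^\vee\rangle$, and the negative off-diagonal Cartan entries create cancellations: for a neighbor $j$ of $i$ one only extracts $n_j\ge 1$ and is left chasing a nontrivial system of inequalities, so this is not a "short analysis." The clean argument is positivity in the $\alpha$-basis: any nonzero dominant $\nu'$ in the root lattice expands as $\nu'=\sum_k d_k\alpha_k$ with \emph{all} $d_k>0$ (each $\omega_j$ has strictly positive $\alpha$-coefficients when the Dynkin diagram is connected), so $\alpha_i=\nu'+\sum n_k\alpha_k$ forces $d_k+n_k=0$ for every $k\ne i$, impossible once $\ell\ge 2$. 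One should also note that $\dim F_{\le\alpha_i}$ is $\sum_{\nu'}\dim(M_{\nu'}\otimes M_{-w_0\nu'})$, not the count of admissible $\nu'$; with $\nu'=0$ the unique contribution is $1$. Finally, for $\ell=1$, i.e. $G=\psl_2$, the root $\alpha=2\omega$ \emph{is} dominant, so the dominant $\nu'\le\alpha$ are $\{0,\alpha\}$ and $\dim_\ff F_{\le\alpha}=1+(\dim M_\alpha)^2=10$; correspondingly $X\cong\pp^3$ is toric and the formula in the remark after Theorem~\ref{thm:psl_n_number_of_splittings} gives $\ooo$ multiplicity $[x^2](1+x+\dots+x^{p-1})^4=\binom{5}{3}=10$ inside $\fr_*\ooo(2)$, so the bullets $\lambda=p\mu+\alpha_i$ and $\lambda=(1-p)K_X+p\mu-\alpha_i$ actually fail. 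Your parenthetical "whenever $i$ has any neighbor" tacitly assumes $\ell\ge 2$, but this hypothesis should be stated; as written the corollary (and your proof of it) is false for $\psl_2$.
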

\begin{proof}
We easily check that in all cases, $\lambda-p\mu$ falls into one of the following categories:
\begin{itemize}
    \item $(1-p)K_X$,
    \item $(1-p)K_X-\alpha_i$ for any simple root $\alpha_i$,
    \item $0$,
    \item $\alpha_i$ for any simple root $\alpha_i$.
\end{itemize}
In each case, $\lambda-p\mu$ can be written as $\sum m_i\omega_i+n_i\alpha_i$ with $0\le m_i\le 2(p-1)$ and $0\le n_i\le p-1$. It follows that the multiplicity $m(\mu,\lambda)\ge 1$. Now we apply Corollary~\ref{cor:easy_multiplicity_estimate_via_dim}, and note that $m(\mu,\lambda)$ is bounded by the following in each case:
\begin{itemize}
    \item $\dim_\ff F_{\le (1-p)K_X-(1-p)K_X}=\dim_{\ff} F_{\le 0}=1$,
    \item $\dim_\ff F_{\le (1-p)K_X-(1-p)K_X+\alpha_i}=\dim_{\ff} F_{\le \alpha_i}=1$,
    \item $\dim_\ff F_{\le 0}=1$,
    \item $\dim_\ff F_{\le \alpha_i}=1$.
\end{itemize}
It follows that $m(\mu,\lambda)=1$ in each of these cases.
\end{proof}

Notice that there are often a number of ways to choose an effective sub-divisor of $(p-1){\wt{K}_X}$ whose class is a fixed $\lambda\in \Lambda$. It is of interest to study how the remaining effective sub-divisors of $(p-1){\wt{K}_X}$ control the multiplicities $m(\mu,\lambda)$. 
For $G=\psl_n$, we are able to get an explicit lower bound, which is the content of Theorem~\ref{thmB:PSL_n_high_multiplicity_of_line_bundle}.

\begin{theorem}
\label{thm:psl_n_number_of_splittings}
Let $G=\psl_n$. Let $S(\lambda)$ denote the number of distinct effective subdivisors of $(p-1){\wt{K}_X}$ (see Notation~\ref{notation:tilde_K_X}) whose class is $\lambda\in \textnormal{Pic}(X)$. Then the multiplicity $m(\mu,\lambda)\ge S(\lambda-p\mu)$.
\end{theorem}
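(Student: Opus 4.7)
The plan is to produce $S(\lambda-p\mu)$ mutually compatible splittings of $\ooo_X(\mu)$ from $\fr_*\ooo_X(\lambda)$, one per effective sub-divisor of $(p-1)\wt{K}_X$ of class $\lambda-p\mu$. By the projection formula $\ooo_X(\mu)\otimes \fr_*\ooo_X(\nu)\cong \fr_*\ooo_X(p\mu+\nu)$ we may assume $\mu=0$ and instead prove $m(0,\nu)\ge S(\nu)$ for $\nu\coloneqq\lambda-p\mu$. Enumerate the relevant sub-divisors as $D_1,\ldots,D_k$ with $k=S(\nu)$ and set $D_i'\coloneqq(p-1)\wt{K}_X-D_i$, so that $\sigma_{D_i}\cdot\sigma_{D_i'}=\sigma$ is the canonical section used in Theorem~\ref{frobenius_splitting_as_lattice_points}. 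Define
\[
\iota_i\colon \ooo_X\to \fr_*\ooo_X(\nu),\ 1\mapsto \sigma_{D_i},\qquad \pi_i\colon \fr_*\ooo_X(\nu)\xrightarrow{\,\cdot\,\sigma_{D_i'}}\fr_*\omega_X^{\otimes(1-p)}\xrightarrow{\tau}\ooo_X,
\]
and observe, by the argument of Theorem~\ref{frobenius_splitting_as_lattice_points}, that $\pi_i\circ\iota_i=\id_{\ooo_X}$ for every $i$.

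The next step is to reduce the whole problem to an invertibility statement for a matrix. Since $\hom{\ooo_X}(\ooo_X,\ooo_X)=\ff$, any composition $\ooo_X\to L\to \ooo_X$ through an indecomposable bundle $L\not\cong \ooo_X$ vanishes, and the Krull--Schmidt theorem gives that $m(0,\nu)$ equals the rank of the composition pairing $\hom{\ooo_X}(\ooo_X,\fr_*\ooo_X(\nu))\otimes \hom{\ooo_X}(\fr_*\ooo_X(\nu),\ooo_X)\to\ff$. So it suffices to show that the $k\times k$ matrix $M_{ij}\coloneqq \tau(\sigma_{D_i}\sigma_{D_j'})$ is invertible over $\ff$; its diagonal entries are already all $1$. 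Because $\tau$ is $G\times G$-equivariant it annihilates every $T\times T$-eigenvector of nonzero weight, so $M$ block-decomposes with respect to the $T\times T$-weight of $\sigma_{D_i}$, a weight determined by the vector $(c_i-\wt{c}_{n-i})_i$ derived from the coefficients $(c_i,\wt{c}_i,b_i)$ of the sub-divisor. For $\psl_2$ this weight already determines the sub-divisor inside its class, so $M$ is diagonal and we are done immediately.

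The main obstacle, and the place where the $\psl_n$ hypothesis enters substantively for $n\ge 3$, is the invertibility of each $T\times T$-weight block --- for instance in $\psl_3$ the three divisors $\mathbf{D}_1+\wt{\mathbf{D}}_2$, $\wt{\mathbf{D}}_1+\mathbf{D}_2$ and $\mathbf{X}_1+\mathbf{X}_2$ all have class $\omega_1+\omega_2$ and weight zero. The plan is to impose a type-$A$ total order on sub-divisors within each weight block (for example lexicographic in the $\mathbf{X}_i$-coefficients $(b_i)$ followed by $(c_i,\wt{c}_i)$) and prove that $\tau(\sigma_{D_i}\sigma_{D_j'})=0$ whenever $D_j$ strictly precedes $D_i$, which will make $M$ upper-triangular with unit diagonal. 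The vanishing should be extracted by restricting to the big cell $\mathbf{X}_0\cong U\times\aa^\ell\times U^-$: along the toric factor $\aa^\ell$ the Frobenius trace is supported, by Thomsen's monomial formula, only on exponent vectors $(p-1,\ldots,p-1)$, so any discrepancy in the $(b_i)$ between $D_i$ and $D_j$ already forces $\tau$ to vanish, and the analogous discrepancy in the $(c_i,\wt{c}_i)$ pairs --- controlled by the restrictions of $\sigma_{\mathbf{D}_i}$ and $\sigma_{\wt{\mathbf{D}}_i}$ along the $U\times U^-$ directions --- handles the rest. Carrying out this local computation and verifying the combinatorial triangularity specifically in type $A$ is the central technical difficulty.
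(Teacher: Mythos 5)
Your reduction via the projection formula to $\mu=0$, the translation of ``mutually compatible splittings'' into the invertibility of the pairing matrix $M_{ij}=\tau(\sigma_{D_i}\sigma_{D_j'})$, and the use of Krull--Schmidt plus $\hom{\ooo_X}(\ooo_X,\ooo_X)=\ff$ to identify $m(0,\nu)$ with the rank of $M$ are all correct and correspond to what the paper does (the paper phrases it as pairwise orthogonality of idempotents). The observation that the $(b_i)$ coefficients already force vanishing, because on the big cell $\sigma_{\mathbf{X}_k}$ is the monomial $x_k$ and $\tau$ extracts only the coefficient of $(p-1,\ldots,p-1)$, is also sound and matches the paper's treatment of the toric directions.

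However, two things undercut the rest of the plan. First, the claim that for $\psl_2$ the $T\times T$-weight together with the class determines the sub-divisor is false: for $p\ge 3$ the divisors $D=2\mathbf{D}_1+2\wt{\mathbf{D}}_1$ and $D'=\mathbf{D}_1+\wt{\mathbf{D}}_1+\mathbf{X}_1$ are distinct effective sub-divisors of $(p-1)\wt{K}_X$ with the same class $4\omega$ and the same weight $(c-\wt{c})\omega=0$ (the $\mathbf{X}_1$-coefficient contributes no weight). So even the base case you declare done ``immediately'' is not settled by the weight-block reduction, and the same phenomenon (shifting $c_i,\wt{c}_i$ in tandem with $b_i$) persists for all $n$. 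Second, and more centrally, the part of the argument you postpone to ``handle the rest'' --- vanishing of cross-terms when the sub-divisors differ only in the $\mathbf{D}_i,\wt{\mathbf{D}}_i$ coefficients --- is precisely the crux of the paper's proof, and it does not reduce to an ordering/triangularity statement. Because $\sigma_{\mathbf{D}_i},\sigma_{\wt{\mathbf{D}}_i}$ restrict on $\mathbf{X}_0$ to irreducible polynomials (minors of the unipotent matrices) rather than monomials, the paper must expand $\sigma_\delta^e$ for exponents $p\le e\le 2p-2$, show that the only admissible monomial contributions carry multinomial coefficients $\binom{e}{m_1,\ldots,m_r}$ with every $m_j<p$, and then use Kummer/Lucas to conclude these are divisible by $p$. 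That computation, which you identify but do not carry out, yields that \emph{all} off-diagonal entries of $M$ vanish (for any ordered pair, not just one triangular direction), so the matrix is the identity; the auxiliary total order and the attempt at mere upper-triangularity are superfluous once the multinomial argument is in place. The weight-block decomposition is likewise dispensable, since the local expansion already kills every cross-term. As written, the proposal identifies the correct local-coordinate strategy but stops short of the step that makes it work.
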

\begin{remark}
By applying generating functions, it's easy to see that $S(\lambda)$ is given by the formula
\[
[x^{\lambda}]\prod_{i=1}^{\ell}\left(\left(1+x^{L(\mathbf{D}_i)}+\dots+x^{L((p-1)\mathbf{D}_i)}\right)\left(1+\dots+x^{L((p-1)\wt{\mathbf{D}_i})}\right)\left(1+x^{L(\mathbf{X}_i)}+\dots+x^{L((p-1)\mathbf{X}_i)}\right)\right).
\]
(This is true in general, not just for $G=\psl_n$.) Note that this result is analogous to P. Achinger's formula for toric varieties, \cite{achinger2010note}. For example, in the case of projective space $\pp^m$, we have the formula 
$\fr_*\ooo_{\pp^m}(d)\cong \bigoplus_{e\in\zz}\ooo_{\pp^{m}}(e)^{\oplus m(e)}$, where
$m(e)=[x^{d-pe}](1+x+\dots+x^{p-1})^{m+1}=\sum_{i\ge 0}(-1)^i \binom{m+1}{i}\binom{d-pe+m-ip}{m}$ (and the binomials $\binom{a}{b}$ for $a<b$ are understood to be $0$).
\end{remark}
\begin{proof}
The strategy of the proof is as follows. For every effective subdivisor $D\subseteq (p-1){\wt{K}_X}$ mapping to $\lambda-p\mu$, we have a splitting by Theorem~\ref{frobenius_splitting_as_lattice_points}, inducing a map via
\[ \ooo_X(\mu)\xr{\sigma_D}\fr_*\ooo_X(\lambda)\xr{\sigma_{(p-1){\wt{K}_X}-D}} \ooo_X(\mu),\] whose composition is identity map. We will show that for every distinct (ordered pair) $D, D'\subseteq (p-1){\wt{K}_X}$ which map to $\lambda$, the composition
\[ \ooo_X(\mu)\xr{\sigma_D}\fr_*\ooo_X(\lambda)\xr{\sigma_{(p-1){\wt{K}_X}-D'}} \ooo_X(\mu)\] is the zero map. This will ensure that the splittings of $\ooo_X(D)$ and $\ooo_X(D')$ are mutually compatible, as splittings correspond to idempotents in the algebra $\hom{\ooo_X}(\ooo_X(\mu)^{\oplus m},\ooo_X(\mu)^{\oplus m})\cong \ff^{\oplus m}$ so the condition that the above composition is zero corresponds to the fact that the product of the corresponding idempotents is zero, hence are orthogonal splittings. In particular, by \cite[Theorem~1.3.8]{brion2007frobenius} (or \cite[Remark~1.4.2]{brion2007frobenius}), it suffices to show that the trace 
\[ \wh{\tau}(\sigma_{(p-1){\wt{K}_X}-D'}\circ \sigma_D)=\wh{\tau}(\sigma_{(p-1){\wt{K}_X}-D'}\cdot\sigma_D)=0.\]
Now, by \cite[Lemma~1.4.4(i)]{brion2007frobenius}, it suffices to check the trace on $\mathbf{X}_0$, the big open cell. (We refer the reader to \cite[\S6]{brion2007frobenius} or \S\ref{subsection:structure_of_X},\ref{subsection:divisors_and_line_bundles_on_X} to review the notation.)

By \cite[Proposition~6.1.7]{brion2007frobenius}, we have a $U\times U^-$-equivariant isomorphism
\[ \wh{\gamma}:U\times \aa^{n-1}\times U^-\xr{\sim} \mathbf{X}_0,\quad (u,a,v)\mapsto u\gamma(a)v^{-1},\]
where $\gamma$ is the isomorphism $\aa^{n-1}\xr{\sim}\ol{T_0}$ constructed in \cite[Lemma~6.1.6]{brion2007frobenius}. We have local coordinates $x_1,x_2,\dots,x_{n-1}$ for $\aa^{n-1}$, and coordinates $x_{ij}$ with $i<j$ for $U$, and coordinates $x_{ij}$ with $i>j$ for $U^-$ in the usual way: $x_{ij}$ denotes the $(i,j)$ entry of $U\into \msf{SL}_n\subset \msf{GL}_n$. In particular, the divisors $\mathbf{X}_k$ are precisely $V(x_k)$, as discussed in \cite[Theorem~6.1.8]{brion2007frobenius}. Now define
\[ \mathbf{m}\coloneqq \prod_{i\ne j}x_{ij}^{p-1}\prod_{k=1}^{n-1}x_k^{p-1},\] the monomial which controls the splitting of affine space, c.f. \cite[Example~1.3.1]{brion2007frobenius}. Now restricting to $T\subset \aa^{n-1}$ (i.e., when all $x_k\ne 0$), defining 
\[\mathbf{Y}_0\coloneqq \textnormal{image of }U\times T\times U^-,\]
we have the map given by
\[ \left((x_{ij})_{i<j}, (x_k)_{k\le n-1}, (x_{ij})_{i>j}\right)\mapsto \left[\begin{pmatrix}
1 & x_{12} & \dotsm & x_{1n}\\ 0 & 1 & \dotsm & x_{2n}\\
\vdots & \vdots & \ddots & \vdots\\
0 & 0 & \dotsm & 1
\end{pmatrix}
\begin{pmatrix}
1 & 0 & \dotsm & 0\\
0 & x_1 & \dotsm & 0\\
\vdots & \vdots & \ddots & 0\\
0 & 0 & \dotsm & \prod_{k=1}^{n-1}x_k
\end{pmatrix}
\begin{pmatrix}
1 &  0& \dotsm & 0\\ x_{21} & 1 & \dotsm & 0\\
\vdots & \vdots & \ddots & \vdots\\
x_{n1} & x_{n2} & \dotsm & 1
\end{pmatrix}\right],
\] sending the coordinates $x_{ij}$ and $x_{i}$ to the above product of three matrices in $\psl_n\cong \msf{PGL}_n$; the diagonal matrix in the $x_i$'s is due to the fact that the map $\gamma$ (see \S\ref{subsection:divisors_and_line_bundles_on_X}) sends $x_i=t_{i+1}/t_i$ for the diagonal matrix $\textnormal{diag}(1,t_1,t_2,\dots,t_{n-1})$. As this is completely symmetric, it suffices to work just with the entries above the diagonal. Suppose that the product of these matrices equals a matrix $(t_{ij})\in \msf{GL}_n$. Expanding, we find that, for $i<j$,
\[ t_{ij} = (x_{ij}+O)\prod_{k=1}^{j-1}x_k,\]
where $O$ denotes ``higher order terms": other monomials (in the $x_{ij}$ and $x_k$) not involving $x_{ij}$, of higher degree in the usual sense of degree of a monomial. By Gaussian elimination, we have that $\wt{\mathbf{D}_i}\cap \mathbf{Y}_0$ is given by the determinant of the $(n-i)\times (n-i)$ submatrix $(t_{r,i+r})_{r=1,2,\dots,n-i}$, in the top right, i.e. the determinant of
\[ \begin{pmatrix}
t_{1,i+1} & t_{1, i+2} & \dotsm & t_{1,n}\\
t_{2, i+1} & t_{2, i+2} & \dotsm & t_{2,n}\\
\vdots & \vdots &\ddots & \vdots\\
t_{n-i,i+1} & t_{n-i, i+2} & \dotsm & t_{n-i, n}
\end{pmatrix}\] (and similarly, $\mathbf{D}_i\cap \mathbf{Y}_0$ is the determinant of the $(n-i)\times (n-i)$ submatrix in the bottom right). In particular, using the fact that $x_k\ne 0$ for all $k=1,2,\dots,n-1$, we have the following formulas:
\begin{align*}
    \wt{\mathbf{D}_{n-1}} \cap \mathbf{Y}_0 &= V(x_{1n}),\\
    \wt{\mathbf{D}_{n-2}} \cap \mathbf{Y}_0 &= V(x_{1,n-1}x_{2,n}-x_{2,n-1}x_{1,n}),\\
    &\vdots \\
    \wt{\mathbf{D}_{k}} \cap \mathbf{Y}_0 &= V\left(\prod_{j-i=k}x_{i,j} +O\right),
\end{align*} where again, $O$ denotes ``higher order terms" (which are not important for the computation). We have similar formulas for $\mathbf{D}_i$:
\begin{align*}
    \mathbf{D}_{n-1} \cap \mathbf{Y}_0 &= V(x_{n1}),\\
    \mathbf{D}_{n-2} \cap \mathbf{Y}_0 &= V(x_{n,2}x_{n-1,1}-x_{n-1,2}x_{n,1}),\\
    &\vdots \\
    \mathbf{D}_{k} \cap \mathbf{Y}_0 &= V\left(\prod_{i-j=k}x_{i,j} +O\right),
\end{align*} using the action \[(w_0,w_0): x_{i,j}\mapsto x_{n+1-i,n+1-j}\] and the fact that $(w_0,w_0).\wt{\mathbf{D}_i}=\mathbf{D}_i$ (by definition, see \cite[Definition~6.1.10]{brion2007frobenius}). It's therefore clear that for each of the prime divisors $\delta$ described above, $\delta\cap \mathbf{Y}_0\ne \emptyset$, hence $\delta\cap\mathbf{Y}_0$ is indeed a divisor of $\mathbf{Y}_0$ (as $\delta\cap \mathbf{Y}_0$ is dense in $\delta$, hence has the same dimension, thus is codimension $1$ irreducible in $\mathbf{Y}_0$, which is dense in $X$). Since $\mathbf{Y}_0$ is dense in $\mathbf{X}_0$, which is isomorphic to an affine space (hence irreducible), it's clear that these polynomials are also the defining polynomials of each divisor in $\mathbf{X}_0$. In particular, up to scalar, we have that the restriction of the canonical section $\sigma_{\delta}$ is precisely the defining polynomial described above for each $\delta=\mathbf{D}_k,\wt{\mathbf{D}_k},\mathbf{X}_k$. By abuse of notation, for any effective divisor $D$, write $\sigma_D$ as the corresponding product of the defining polynomials. Examining the coefficient of $\mathbf{m}$ in the monomial expansion of \[ \prod_{\delta} \sigma_\delta,\] it becomes clear that the only way to obtain a monomial $\mathbf{m}$ is to choose only the ``lowest order terms" from each $\sigma_\delta$: thus, the ``higher order terms" described above via $O$ can be safely ignored. (If any higher order terms were used, then the total degree of that monomial is too large to be a scalar multiple of $\mathbf{m}$.)

Now choose two $D\sim D'$ with $D,D'\subseteq (p-1){\wt{K}_X}$. Then $D$ we have
\[ f\coloneqq \sigma_{D}\circ \sigma_{(p-1){\wt{K}_X}-D'}=\sigma_{D}\cdot\sigma_{(p-1){\wt{K}_X}-D'}=\frac{\sigma_D}{\sigma_{D'}}\cdot \mathbf{m}.\] Now for $D\ne D'$, we will have that the valuation of $D$ at some prime divisor $\delta$ is strictly greater than the valuation of $D'$ at $\delta$, and hence the exponent $e$ of the corresponding polynomial factor $\sigma_\delta$ in $f$ is between $p$ and $2p-2$, inclusive. 
In particular, if $\sigma_\delta$ is a monomial (i.e. $\delta$ is $\mathbf{D}_{n-1}$, or $\wt{\mathbf{D}_{n-1}}$, or any $\mathbf{X}_k$), then we immediately find that the coefficient of $\mathbf{m}$ in $f$ is zero, as the exponent of any variable appearing in that term is already strictly greater than $p-1$. If $\sigma_\delta$ is not a monomial, then the exponent $e$ is between $p$ and $2p-2$, but the contributing terms will not have the $p$th power of any monomial in $\sigma_\delta$ as a factor: as before, this would contribute a monomial with some variable raised to the power of $p>p-1$, hence not equal to $\mathbf{m}$. 
It follows that the contributing monomials from (the expansion of) $\sigma_\delta^e$ have coefficients which are multinomial coefficients $\binom{e}{m_1,m_2,\dots,m_r}$ where $m_1+m_2+\dots+m_r=e$ and all $m_i<p$. But then this coefficient is divisible by $p$, hence is zero. It follows that for all (ordered, but immediately implies unordered) pairs $(D,D')$ with $D\sim D'$ and $D,D'\subseteq (p-1){\wt{K}_X}$, the splittings induced by $D$ and $D'$ are indeed pairwise compatible, and therefore we obtain at least one copy of $\ooo_X(\mu)$ from $\fr_*\ooo_X(\lambda)$ for each distinct effective subdivisor of $(p-1){\wt{K}_X}$ whose class is $\lambda-p\mu$.
\end{proof}

\begin{example}
Let $G=\psl_3$ and fix $\lambda,\mu$ such that $\lambda-p\mu=6\omega_1+6\omega_2$ (here we necessarily assume $p\ge 7$). There are $460$ distinct effective sub-divisors of ${\wt{K}_X}$ whose class is $6\omega_1+6\omega_2$. Therefore, $m(\mu,\lambda)\ge 460$, i.e. the embedding $\ooo_X(\mu)^{\oplus 396}\into \fr_*\ooo_X(\lambda)$ can be splitted.
\end{example}

\begin{example}
Let $G=\psl_3$ and fix $\lambda,\mu$ such that $\lambda-p\mu=20\omega_1+22\omega_2$ (here we assume $p\ge 23$). There are $37290$ distinct effective sub-divisors of ${\wt{K}_X}$ whose class is $20\omega_1+22\omega_2$. Therefore, $m(\mu,\lambda)\ge 37290$, i.e. the embedding $\ooo_X(\mu)^{\oplus 37290}\into \fr_*\ooo_X(\lambda)$ can be splitted.
\end{example}

\begin{example}
Let $G=\psl_4$ and fix $\lambda,\mu$ such that $\lambda-p\mu=20\omega_1+21\omega_2+22\omega_3$ (here we assume $p\ge 23$). There are $14828077$ distinct effective sub-divisors of ${\wt{K}_X}$ whose class is $20\omega_1+21\omega_2+22\omega_3$. Therefore, $m(\mu,\lambda)\ge 14828077$, i.e. the embedding $\ooo_X(\mu)^{\oplus 14828077}\into \fr_*\ooo_X(\lambda)$ can be splitted.
\end{example}

\begin{example}
\label{ex:psl_3_example_local_coords}
Let $G=\psl_3$, and let us verify that Theorem~\ref{thm:psl_n_number_of_splittings} holds by explicitly checking the defining equations for each of the divisors (restricted to $\mathbf{X}_0$). Give $\aa^2$ the coordinates $x_1,x_2$, so the divisors have the form $\mathbf{X}_1 \cap \mathbf{X}_0  = V(x_1)$ and $\mathbf{X}_2 \cap \mathbf{X}_0 = V(x_2)$.
For $U$ and $U^-$, give them the coordinates
\[ 
\begin{pmatrix}
1 & a & b \\0&1&c\\0&0&1
\end{pmatrix},\quad 
\begin{pmatrix}
1& 0 & 0\\ d & 1&0\\e& f& 1
\end{pmatrix}.
\] Then noting that $G=\psl_3\cong \msf{PGL}_3$ and restricting to the (dense) open set 
$\mathbf{Y}_0\coloneqq U\times T\times U^-\subset \mathbf{X}_0$, we can expand to find the image inside $\mathbf{X}_0$:
\[ \begin{pmatrix}
1 & a & b \\0&1&c\\0&0&1
\end{pmatrix}
\begin{pmatrix} 1 & 0 & 0\\ 0 & x_1 & 0\\ 0 & 0 & x_1x_2\end{pmatrix}
\begin{pmatrix}
1& 0 & 0\\ d & 1&0\\e& f& 1
\end{pmatrix}
=\begin{pmatrix}
1 +adx_1+bex_1x_2 & ax_1+bfx_1x_2 & bx_1x_2\\ dx_1+ecx_1x_2 & x_1+fcx_1x_2 & cx_1x_2\\ ex_1x_2 & fx_1x_2 & x_1x_2
\end{pmatrix}.\]
We then consider the class of this matrix in $\psl_3\cong\msf{PGL}_3$. Recalling that $x_1,x_2\ne 0$ in $\mathbf{Y}_0$, we have that
\begin{align*}
    \mathbf{D}_1 \cap \mathbf{Y}_0 &= V(x_1^2x_2(df-e))=V(df-e),\\
    \wt{\mathbf{D}_1}\cap \mathbf{Y}_0 &= V(x_1^2x_2(ac-b))=V(ac-b),\\
    \mathbf{D}_2 \cap \mathbf{Y}_0 &= V(x_1x_2e)=V(e),\\
    \wt{\mathbf{D}_2}\cap \mathbf{Y}_0 &= V(x_1x_2b)=V(b).
\end{align*}
Since $\mathbf{Y}_0\subset \mathbf{X}_0$ is dense, it follows that each of these divisors are cut out by the same equations in $\mathbf{X}_0$.

Now we can explicitly check that the section $\sigma$ corresponds to
$\left(x_1x_2 b e (df-e)(ac-b)\right)^{p-1}$,
which we easily verify has coefficient $1$ for the monomial
\[ \mathbf{m}\coloneqq (x_1x_2 abcdef)^{p-1}\]
and has coefficient $0$ for all other monomials whose exponents differ from the aforementioned monomial by a multiple of $p$. Now for any $D\sim D'\subset (p-1){\wt{K}_X}$, corresponding to polynomials
\[ \sigma_D=x_1^{m_1}x_2^{m_2}b^{m_b}e^{m_e}(df-e)^{m_d}(ac-b)^{m_a},\quad \sigma_{D'}=x_1^{n_1}x_2^{n_2}b^{n_b}e^{n_e}(df-e)^{n_d}(ac-b)^{n_a}\] for $0\le m_i,n_i\le p-1$, the product $\sigma_D\cdot \sigma_{(p-1){\wt{K}_X}-D'}$ corresponds to the coefficient of $\mathbf{m}$ in the polynomial
\[ \sigma_D\cdot \frac{\mathbf{m}}{\sigma_{D'}}=x_1^{p-1+m_1-n_1}x_2^{p-1+m_2-n_2}b^{p-1+m_b-n_b}e^{p-1+m_e-n_e}(df-e)^{p-1+m_d-n_d}(ac-b)^{p-1+m_a-n_a}.\] But since $D\ne D'$ and $D\sim D'$, there must exist $m_i>n_i$, so then we have either monomial to a power at least $p$, for example $b^{p+\epsilon}$, which implies that the coefficient of $\mathbf{m}$ is $0$, or one of $(df-e)$ or $(ac-b)$ has exponent at least $p$. But in these cases, say $m_a-n_a=k$, then in the term $(ac-b)^{p+k-1}$
we require the term $ac$ exactly $p-1$ times due to the fact that there are no other appearances of the local coordinate $a$. It follows that the coefficient of $\mathbf{m}$ is multiplied by
$\binom{p-1+k}{p-1}=\frac{(p-1+k)!}{(p-1)!k!}=p\cdot -=0$, and thus the coefficient of $\mathbf{m}$ is zero. This implies that for any $D\sim D'$, the product $\sigma_D\cdot \sigma_{(p-1){\wt{K}_X}-D'}$ has trace $0$, hence the splittings corresponding to the ordered pair $(D,D')$ are mutually orthogonal. Since this holds for all $D,D'$, it follows that each distinct effective subdivisor $D\subseteq (p-1){\wt{K}_X}$ whose class is $\lambda-p\mu$, yields a distinct copy of $\ooo_X(\mu)$ as a summand inside $\fr_*\ooo_X(\lambda)$.
\end{example}
We pose two conjectures: that we should have analogous statements for any $G$, and that this lower bound is an equality.

\begin{conjecture}
Do the results of Theorem~\ref{thm:psl_n_number_of_splittings} hold for any (semisimple adjoint) $G$?
\end{conjecture}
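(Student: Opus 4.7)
The plan is to mirror the strategy used in Theorem~\ref{thm:psl_n_number_of_splittings}: for each effective sub-divisor $D \subseteq (p-1){\wt{K}_X}$ with $[D] = \lambda - p\mu$, one obtains a splitting $\ooo_X(\mu) \xr{\sigma_D} \fr_*\ooo_X(\lambda) \xr{\sigma_{(p-1){\wt{K}_X} - D}} \ooo_X(\mu)$, and the problem reduces to showing that the splittings coming from distinct sub-divisors are mutually orthogonal. Following \cite[Theorem~1.3.8]{brion2007frobenius}, this amounts to verifying that for $D \ne D'$ with $[D] = [D']$, the trace
\[ \wh{\tau}\bigl(\sigma_D \cdot \sigma_{(p-1){\wt{K}_X} - D'}\bigr) = 0. \]
By \cite[Lemma~1.4.4(i)]{brion2007frobenius} this trace can be computed on the big open cell $\mathbf{X}_0 \cong U \times \aa^\ell \times U^-$, so everything reduces to an explicit polynomial computation in the local coordinates on $\mathbf{X}_0$.

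The first step would be to identify, for general $G$, the defining equations of the prime divisors $\mathbf{D}_i$, $\wt{\mathbf{D}_i}$, and $\mathbf{X}_i$ restricted to $\mathbf{X}_0$. The divisors $\mathbf{X}_i$ are still cut out by the coordinates $x_i$ on $\ol{T_0} \cong \aa^\ell$, while the Schubert divisors $\mathbf{D}_i$ and $\wt{\mathbf{D}_i}$ should be expressible as generalized minors built from matrix coefficients of the fundamental representations $L_{\omega_i}$ (using the Pl\"ucker-type embedding $\wt{G}/\wt{B} \into \pp(L_{\omega_i})$). Concretely, one would expect the restriction of the canonical section $\sigma_{\mathbf{D}_i}$ to $\mathbf{Y}_0 = U \times T \times U^-$ to take the form $\prod_{\alpha \in \Phi^+} x_\alpha^{\langle \omega_i, \alpha^\vee \rangle} + O$ (as a lowest-degree term, with respect to a suitable term order), where $x_\alpha$ are the $U^-$ root coordinates, and similarly for $\wt{\mathbf{D}_i}$ with $U$-coordinates. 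With such expressions in hand, the section $\sigma$ from \cite[Theorem~6.1.12]{brion2007frobenius} restricts to a polynomial whose monomial expansion contains the coefficient $1$ on a distinguished monomial $\mathbf{m}$ (the product of all root coordinates and $x_i$'s to the $(p-1)$st power), so the trace is read off as the coefficient of $\mathbf{m}$ in the expanded product.

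The second step is the combinatorial vanishing argument. For $D \ne D'$ of the same class, some prime component $\delta$ appears with multiplicity $e \in [p, 2p-2]$ in $\sigma_D \cdot \sigma_{(p-1){\wt{K}_X} - D'}$. If $\sigma_\delta$ is a monomial in the local coordinates, then some variable appears to a power $\ge p$ and contributes $0$ to the coefficient of $\mathbf{m}$. If $\sigma_\delta$ has the form (leading monomial $+$ corrections), then extracting the coefficient of $\mathbf{m}$ forces a multinomial coefficient $\binom{e}{m_1, \dots, m_r}$ with each $m_i \le p-1$ and $\sum m_i = e \ge p$; such a coefficient is divisible by $p$ by Kummer's theorem, hence vanishes in characteristic $p$. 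The upshot is that the same mechanism used for $\psl_n$ should carry through verbatim, provided one can show that the ``leading monomial'' of $\sigma_{\mathbf{D}_i}$ is truly squarefree in the $U^-$ root coordinates and variables are not shared between distinct $\sigma_{\mathbf{D}_i}$'s in a way that could conspire with the corrections $O$.

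The main obstacle I expect is precisely that last technical point: for $\psl_n$ the leading terms of $\sigma_{\mathbf{D}_k}$ were products of distinct root coordinates $x_{ij}$ with $i - j = k$, and the corrections $O$ involved only ``strictly higher'' monomials in a very clean sense, so no cancellation between $\sigma_{\mathbf{D}_k}$'s could produce $\mathbf{m}$ without invoking a $p$th power. For general $G$ (especially non-simply-laced types, where fundamental representations can be quite complicated and root coordinates enter the generalized minors with multiplicities $\langle \omega_i, \alpha^\vee \rangle > 1$), controlling the interaction between the various correction terms $O$ and verifying that the dominant contribution to the coefficient of $\mathbf{m}$ always factors through a $p$-divisible multinomial will require a more intrinsic argument, perhaps phrased using the $\wt{T} \times \wt{T}$-equivariance of $\sigma$ and a weight-grading on $\ff[\mathbf{X}_0]$. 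A clean way to bypass the explicit coordinate bookkeeping would be to exhibit an \emph{intrinsic} orthogonality of the splittings $\sigma_D$ in terms of the $\wt{T} \times \wt{T}$-weight decomposition of $\ul{\textnormal{Hom}}_{\ooo_X}(\fr_*\ooo_X(\lambda), \ooo_X(\mu))$, which is where I expect the genuine work to lie.
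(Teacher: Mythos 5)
This statement is labeled a \emph{conjecture} in the paper (an open question), not a theorem, so there is no proof of the paper's to compare against. What you have written is, as you yourself acknowledge, a plan rather than a proof: you describe the natural strategy (mirror the $\psl_n$ argument: restrict to the big cell $\mathbf{X}_0$, write down the defining polynomials of $\mathbf{D}_i$, $\wt{\mathbf{D}_i}$, $\mathbf{X}_i$ in local coordinates, and verify that distinct effective sub-divisors of $(p-1)\wt{K}_X$ with the same class yield orthogonal splittings via the Brion--Kumar trace criterion and a $p$-divisibility argument on multinomial coefficients), and you correctly flag that the genuine work is in controlling the leading and correction terms of the $\sigma_{\mathbf{D}_i}$ for general $G$. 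That is an honest and reasonable assessment of where the difficulty lies, and the conjecture does remain open.

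One concrete error, though, is worth pointing out: your guessed form $\prod_{\alpha\in\Phi^+} x_\alpha^{\langle\omega_i,\alpha^\vee\rangle}$ for the leading monomial of $\sigma_{\mathbf{D}_i}$ on $\mathbf{Y}_0$ does not match the paper's own $\psl_n$ computation. In type $A_{n-1}$ your formula would have degree $\#\{\alpha\in\Phi^+:\langle\omega_k,\alpha^\vee\rangle=1\}=k(n-k)$, whereas the paper shows that the defining polynomial of $\mathbf{D}_k\cap\mathbf{Y}_0$ is $\prod_{i-j=k}x_{ij}+O$, with leading monomial the product of the $n-k$ root coordinates of height $k$ (e.g.\ $\mathbf{D}_{n-1}\cap\mathbf{Y}_0=V(x_{n1})$, degree $1$, not $n-1$). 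So before one can even discuss whether the multinomial-coefficient vanishing goes through for general $G$, the correct description of the leading monomial (a product of root coordinates of a fixed height, rather than one weighted by $\langle\omega_i,\alpha^\vee\rangle$) needs to be identified; the intrinsic, coordinate-free reformulation you gesture at in the last paragraph --- exploiting the $\wt{T}\times\wt{T}$-weight grading on $\ff[\mathbf{X}_0]$ --- is indeed where I would expect a successful proof to live, but it is not supplied here.
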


\begin{conjecture}
Let $L:\textnormal{Div}(X)\to \textnormal{Pic}(X)$ be the natural map. For each $\lambda,\mu$ such that $\lambda-p\mu$ can be written as $\sum_{i=1}^{\ell}(a_i\omega_i+b_i\alpha_i)$, the multiplicity $m(\mu,\lambda)$ of $\ooo_X(\mu)$ in $\fr_*\ooo_X(\lambda)$ (as direct summands) is at least $S(\lambda-p\mu)$, due to Theorem~\ref{thm:psl_n_number_of_splittings}. Is it exactly equal?
\end{conjecture}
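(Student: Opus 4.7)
The plan is to translate the conjecture into a statement about the rank of a concrete bilinear form. By the Krull--Schmidt theorem on $X$ and the standard identification of direct summand multiplicities with ranks of composition maps, $m(\mu,\lambda)$ equals the rank of
\[
B\colon \Gamma(X,\ooo_X(\lambda-p\mu))\otimes_{\ff}\Gamma(X,\ooo_X((1-p)K_X-(\lambda-p\mu)))\to \ff,
\]
where $B(\varphi,\psi)=\wh\tau(\varphi\cdot\psi)$, the product $\varphi\cdot\psi$ is the section of $\ooo_X((1-p)K_X)$ obtained by multiplication under the identifications of Proposition~\ref{conditions_on_Homs}, and $\wh\tau$ is the trace of the canonical Frobenius splitting of $X$ introduced in the proof of Theorem~\ref{frobenius_splitting_as_lattice_points}. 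Under this identification, Theorem~\ref{thm:psl_n_number_of_splittings} is exactly the statement that $B$ admits a distinguished $S(\lambda-p\mu)\times S(\lambda-p\mu)$ identity submatrix, whose rows and columns are indexed by the effective subdivisors $D\subseteq (p-1)\wt{K}_X$ of class $\lambda-p\mu$ via the pairs $(\sigma_D,\sigma_{(p-1)\wt{K}_X-D'})$. The conjecture therefore reduces to the matching upper bound $\operatorname{rank}(B)\le S(\lambda-p\mu)$.

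I would attack the upper bound by extending $\{\sigma_D\}$ and $\{\sigma_{(p-1)\wt{K}_X-D}\}$ to bases of the two global section spaces, and showing that all additional basis vectors lie in the left (respectively right) radical of $B$. The central tool is the local description of $\wh\tau$ on the big cell $\mathbf{X}_0\cong U\times \aa^{\ell}\times U^-$: restricted to the image of this chart, $\wh\tau$ is the coefficient of the distinguished monomial $\mathbf{m}=\prod_{i\ne j}x_{ij}^{p-1}\prod_k x_k^{p-1}$, as illustrated in Example~\ref{ex:psl_3_example_local_coords}. A natural family of additional basis vectors comes from refining the $\wt T\times \wt T$-weight decomposition by the $\wt G\times \wt G$-isotypic filtration of global sections, so that in each isotypic piece one separates the ``Schubert-divisor'' factor spanned by products $\sigma_D$ from a complementary factor coming from higher matrix coefficients; the target is then to show only the Schubert-divisor factor survives the pairing under $B$.

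The main obstacle is controlling matrix coefficients of $\wt G$-modules other than the products of the canonical sections of $\mathbf{D}_i$, $\wt{\mathbf{D}_i}$, and $\mathbf{X}_i$. \emph{A priori}, such a section $f$ multiplied by an arbitrary $\psi$ can produce polynomials whose $\mathbf{m}$-coefficient is a sum of nonzero terms, and one must show these sum to zero in characteristic $p$. Unlike in the proof of Theorem~\ref{thm:psl_n_number_of_splittings}, where any overshoot of the exponent $p-1$ immediately forced a multinomial coefficient divisible by $p$, the generic situation requires finer monomial combinatorics that mix several weight spaces. Two promising lines of attack are: (i) leverage the Steinberg self-duality $\sigma=\tau_{-(p-1)\rho}\cdot\tau_{(p-1)\rho}\cdot\prod\sigma_i^{p-1}$ from the remark after Theorem~\ref{frobenius_splitting_as_lattice_points} to reduce the vanishing to a representation-theoretic statement about $\operatorname{Hom}_{\wt G\times\wt G}$-maps involving $\st\otimes\st$; or (ii) realize $X$ via the Vinberg-monoid model from Appendix~\ref{section:appendix:vinberg_monoid}, in which global sections of $\ooo_X(\lambda-p\mu)$ are explicit graded pieces of a multigraded coordinate ring, so that $\operatorname{rank}(B)\le S(\lambda-p\mu)$ becomes a monomial count directly analogous to Achinger's formula for toric varieties, of which the conjecture would be the natural ``non-abelian'' extension.
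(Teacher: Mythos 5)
The statement you are asked to prove is posed in the paper only as an open question, with no proof supplied, so there is nothing of theirs to compare against. Your reduction of $m(\mu,\lambda)$ to the rank of the composition pairing is correct: since $\hom{\ooo_X}(\ooo_X(\mu),\ooo_X(\mu))\cong\ff$ and Krull--Schmidt holds on $X$, the rank of the pairing $\hom{\ooo_X}(\ooo_X(\mu),\fr_*\ooo_X(\lambda))\times\hom{\ooo_X}(\fr_*\ooo_X(\lambda),\ooo_X(\mu))\to\ff$ equals the multiplicity, and via Proposition~\ref{conditions_on_Homs} this becomes the trace pairing $B$ you describe. Theorem~\ref{thm:psl_n_number_of_splittings} then exhibits an $S(\lambda-p\mu)\times S(\lambda-p\mu)$ identity block, so the remaining step is exactly the upper bound $\operatorname{rank}(B)\le S(\lambda-p\mu)$ that you correctly identify as the obstacle.

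That upper bound is false, so no refinement of the monomial combinatorics will close the gap: the conjectured equality does not hold. Take $G=\psl_2$, $p=3$, $\lambda=4\omega$, $\mu=0$, so $\lambda-p\mu=4\omega$, which lies in the allowed range since $4\omega=4\omega+0\cdot\alpha$ with $0\le 4\le 2(p-1)$. Here $X\cong\pp^3$, and the formula for $\pp^m$ quoted in the remark after Theorem~\ref{thm:psl_n_number_of_splittings} gives
\[
m(0,4\omega)=[x^{4}](1+x+x^2)^{4}=19,
\]
whereas $S(4\omega)$ counts triples $(a,b,c)$ with $0\le a,b,c\le 2$ and $2a+b+c=4$ (exponents of $\mathbf{X}_1,\mathbf{D}_1,\wt{\mathbf{D}_1}$, of respective classes $2\omega,\omega,\omega$), namely $S(4\omega)=1+3+1=5$. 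Since $5<19$, the Frobenius-splitting lower bound is strict here. The discrepancy is structural: $(p-1)\wt{K}_X$ for $\psl_2$ has three prime components, one of class $2\omega$, giving the generating polynomial $(1+\dots+x^{p-1})^{2}(1+x^{2}+\dots+x^{2(p-1)})$, whereas the exact toric count for $\pp^3$ uses four coordinate hyperplanes of class $\omega$ each, giving $(1+\dots+x^{p-1})^{4}$; these are not equal. Consequently neither the Steinberg-duality argument nor the Vinberg-monoid analogy to Achinger can succeed in establishing the conjectured equality, and any correct formula must account for summands not visible as effective subdivisors of $(p-1)\wt{K}_X$.
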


\section{Vector subbundles via irreducible representations}
\label{section:vector_subbundles_via_irreps}

Recall that $\wt{G}$ is the simply connected cover of a connected semisimple adjoint algebraic group $G$ over $\ff$, and $X$ denotes the wonderful compactification of $G$. In this section, we can use the $G_1\times G_1$-action on $\fr_*\lll$ to obtain embeddings of vector bundles.

\begin{theorem}
\label{thm:embedding_of_L_otimes_L}
Let $\lambda$ be a dominant weight minimal in its linkage class, and let $L_\lambda$ be the irreducible $G_1\times G_1$-representation corresponding to $\lambda$. Then there exists an embedding of $G_1\times G_1$-equivariant vector bundles $\psi:L_\lambda\otimes_\ff L_{-w_0\lambda}\otimes_\ff\ooo_X\into \fr_*\ooo_X(\lambda)$.
\end{theorem}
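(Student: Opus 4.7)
The plan is to construct $\psi$ using the matrix coefficient pairing and then prove fiberwise injectivity by combining the irreducibility of $L_\lambda\otimes L_{-w_0\lambda}$ as a $G_1\times G_1$-module with an analysis of the fiber of $\fr_*\ooo_X(\lambda)$ at carefully chosen points.

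First I would produce the morphism. Minimality of $\lambda$ in its linkage class forces $\lambda\in\Lambda_p$, so $L_\lambda$ and $L_{-w_0\lambda}\cong L_\lambda^*$ are irreducible $G_1$-modules lifting to irreducible $\wt{G}$-modules. The matrix coefficient map $L_\lambda\otimes L_\lambda^*\to \ff[\wt{G}]$ is $\wt{G}\times\wt{G}$-equivariant, and its image lies in the filtration piece $F_{\le \lambda}\cong \Gamma(X,\ooo_X(\lambda))=\Gamma(X,\fr_*\ooo_X(\lambda))$ (see the end of \S\ref{subsection:divisors_and_line_bundles_on_X} and Appendix~\ref{section:appendix:vinberg_monoid}). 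Extending $\ooo_X$-linearly yields a $\wt{G}\times\wt{G}$-equivariant, and therefore $G_1\times G_1$-equivariant, morphism of $\ooo_X$-modules
\[ \psi:L_\lambda\otimes_\ff L_{-w_0\lambda}\otimes_\ff \ooo_X\longrightarrow \fr_*\ooo_X(\lambda), \]
nonzero by construction on global sections.

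Next, the external tensor product $L_\lambda\otimes L_{-w_0\lambda}$ is an irreducible $G_1\times G_1$-module, so for every $x\in X$ the kernel of the fiber map $\psi_x$ is either $0$ or the whole source by a Schur-type argument. Hence the non-injectivity locus $Z\coloneqq\{x\in X\mid \psi_x=0\}$ is closed and $\wt{G}\times\wt{G}$-invariant. Since $X$ has a unique closed $G\times G$-orbit $\mathbf{Y}$, to prove $Z=\emptyset$ it suffices to exhibit one $x\in G$ and one $y\in \mathbf{Y}$ at which $\psi$ is nonzero. For $x=e\in G$, trivializing $\ooo_X(\lambda)$ locally identifies the fiber of $\fr_*\ooo_X(\lambda)$ with $\ooo_{X,e}/\mf{m}_e^{[p]}\cong \ff[G_1]\cong \uzero$ as a $G_1\times G_1$-bimodule via Lemma~\ref{lemma:u_0*=u_0}, and $\psi_e$ becomes the matrix coefficient map $L_\lambda\otimes L_\lambda^*\to \uzero$. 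Minimality of $\lambda$ is used exactly here: it guarantees that $L_\lambda$ sits at the bottom of its block, so its matrix coefficients do not lie entirely in the radical of $\uzero$, and hence $\psi_e\ne 0$.

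The main obstacle is the analogous non-vanishing at a point of $\mathbf{Y}$, where the local picture of $\fr_*\ooo_X(\lambda)$ differs from that near $e$ because of the toric boundary divisors $\mathbf{X}_i$. I would exploit the normal form $\wh{\gamma}:U\times \aa^\ell\times U^-\xrightarrow{\sim}\mathbf{X}_0$ of \S\ref{subsection:structure_of_X} to describe the fiber of $\fr_*\ooo_X(\lambda)$ at a chosen $y\in \mathbf{Y}\cap \mathbf{X}_0$, reducing to an identification of the fiber as a $G_1\times G_1$-bimodule in which the toric directions and the unipotent directions separate; minimality of $\lambda$ is then used a second time to prevent the matrix coefficients of $L_\lambda$ from degenerating across the toric walls. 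Once this boundary verification is in hand, $\wt{G}\times\wt{G}$-transitivity on $\mathbf{Y}$ propagates non-vanishing across the entire closed orbit, forcing $Z=\emptyset$, so $\psi$ is fiberwise injective and therefore the desired embedding of vector bundles.
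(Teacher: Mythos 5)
Your proposal correctly identifies the overall shape of the argument — use $\wt G\times\wt G$-equivariance and $G_1\times G_1$-irreducibility of $L_\lambda\otimes L_{-w_0\lambda}$ to reduce fiberwise injectivity to a nonvanishing statement at the closed orbit $\mathbf{Y}$ — but there are two substantive gaps, and the paper's route is cleaner precisely where your sketch runs out of details.

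First, the missing idea: the paper does not check fiberwise nonvanishing of $\psi:L_\lambda\otimes L_{-w_0\lambda}\otimes\ooo_X\to\fr_*\ooo_X(\lambda)$ directly at a boundary point. Instead it passes to the adjoint $\wt\psi:L_\lambda\otimes L_{-w_0\lambda}\otimes\ooo_X\to\ooo_X(\lambda)$ and invokes Lemma~\ref{lemma_haboush_surjection_implies_nonzero}, which says that if a map $\fr^*\eee_Y\to\eee_X$ is surjective then the adjoint $\eee_Y\to\fr_*\eee_X$ is nonzero at every point. Since $\ooo_X(\lambda)$ is a line bundle, ``surjective at a fiber'' is the same as ``nonzero at a fiber,'' so one only needs to verify a nonvanishing statement for a map into a rank-one sheaf. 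This entirely avoids the need to understand the $G_1\times G_1$-module structure of the fiber of $\fr_*\ooo_X(\lambda)$ at a boundary point — which is exactly the step you flag as ``the main obstacle'' and then leave as a sketch (``the toric directions and the unipotent directions separate''). That computation is genuinely hard to carry out at $\mathbf Y$ and your outline does not supply it.

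Second, the role of minimality is misplaced. You assert that minimality is used at $e\in G$ to prevent the matrix coefficients of $L_\lambda$ from lying in the radical of $\uzero$; but the matrix coefficient map of an irreducible $G_1$-module into $\ff[G_1]\cong\uzero^*$ is always injective, so $\psi_e\ne 0$ costs nothing. Minimality is actually used at the closed orbit: it ensures (via \cite[Lemma~II.4.15]{jantzen2003representations} and \cite[5.2]{verma1975role}) that $L_\lambda\otimes L_{-w_0\lambda}\cong M_\lambda\otimes M_{-w_0\lambda}=\Gamma(\mathbf{Y},\ooo_X(\lambda)|_{\mathbf{Y}})$, so that $\wt\psi|_{\mathbf{Y}}$ is the evaluation map of a globally generated line bundle and hence surjective on every fiber. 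Without minimality $L_\lambda\otimes L_{-w_0\lambda}$ would be a proper submodule of the space of global sections and need not generate the bundle, so the nonvanishing at $\mathbf{Y}$ could genuinely fail. Your check at $e\in G$ is also logically redundant, since a nonempty closed $\wt G\times\wt G$-stable $Z$ automatically contains $\mathbf Y$; only the check at $\mathbf Y$ matters.
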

\begin{remark}
To be minimal in its linkage class, it is sufficient for $\lambda$ to lie in the fundamental alcove, despite the fact that there will in general be other weights linked to $\lambda$ in the fundamental alcove (but they will all be minimal), see \cite[5.2]{verma1975role}. Although $(p-1)\rho$ does not lie in the fundamental alcove, it has no linked weights less than it which are dominant: the next smallest is $-\rho$, which is not dominant.
\end{remark}
\begin{proof}
First, we show that 
\[L_\lambda\otimes_\ff L_{-w_0\lambda}\into F_{\le \lambda}=\Gamma(X,\ooo_X(\lambda)).\]
We first apply the central idempotent $\pi_\lambda$ to $F_{\le \lambda}$ (see \S\ref{subsection:representation_theory_of_G_1} and \S\ref{subsection:appendix:theory_of_idempotents} for a discussion on $\pi_\lambda$). By \cite[Lemma~II.4.15]{jantzen2003representations}, $\pi_\lambda F_{\le \lambda}$ has a submodule isomorphic to $M_\lambda\otimes M_{-w_0\lambda}$, since $\lambda$ is minimal in its linkage class. Now it suffices to check that $M_\lambda\otimes M_{-w_0\lambda}= L_\lambda\otimes L_{-w_0\lambda}$ for $\lambda$ minimal in its linkage class, which is true by \cite[5.2]{verma1975role} (also stated in \cite[\S4.1]{humphreys2006ordinary}). Now let $\psi$ be the map induced by the inclusions
\[ L_\lambda\otimes L_{-w_0\lambda}\into \pi_\lambda F_{\le \lambda}\into F_{\le \lambda},\]
corresponding to the desired map 
\[ L_\lambda\otimes_{\ff}L_{-w_0 \lambda}\otimes_{\ff}\ooo_X\to\fr_*\ooo_X(\lambda).\]
By Lemma~\ref{lemma_haboush_surjection_implies_nonzero}, it suffices to check that we have a surjection on the map induced by adjunction
\[ \wt{\psi}:L_\lambda\otimes L_{-w_0\lambda}\otimes \ooo_X\to \ooo_X(\lambda),\]
which is a $\wt{G}\times \wt{G}$-equivariant map. Letting $Y=\wt{G}/\wt{B}\times \wt{G}/\wt{B}$ be the unique closed orbit, we have that $\wt{\psi}|_Y$ restricts to a nonzero map at every fiber of $Y$, as $\ooo_X(\lambda)|_Y\cong \ooo_Y(\lambda)\boxtimes \ooo_Y(-w_0\lambda)$, and then $\wt{\psi}|_Y$ is induced by the map
\[ L_\lambda\otimes L_{-w_0\lambda}\xr{\sim} M_\lambda\otimes M_{-w_0\lambda}\] on global sections. If $\wt{\psi}$ were zero at the fiber of some $x\in X$, then 
\[\wt{\psi}|_{(\wt{G}\times \wt{G}).x}=0\implies \wt{\psi}|_{\ol{(\wt{G}\times \wt{G}).x}}=0\] by the $\wt{G}\times \wt{G}$-equivariant structure of $\wt{\psi}$. Since $\ol{(\wt{G}\times \wt{G}).x}\supset Y$, this contradicts the fact that $\wt{\psi}|_Y$ restricts to a nonzero map, hence $\wt{\psi}$ is nonzero at every fiber. But since the target of $\wt{\psi}$ is a line bundle, it must be surjective at every fiber. It follows that $\psi$ is nonzero at every fiber, and being a $G_1\times G_1$-equivariant map, must be injective at every fiber, hence a $G_1\times G_1$-equivariant embedding of vector bundles.
\end{proof}
\begin{remark}
For any dominant $\lambda\in \Lambda$, there exists some $N$ such that $\lambda$ satisfies Theorem~\ref{thm:embedding_of_L_otimes_L} for all $p\ge N$: it only remains to see that for all $p\gggg 0$, $\lambda$ will lie in the fundamental alcove. (For $G=\psl_n$, writing $\lambda=\sum_{i=1}^{\ell} a_i\omega_i$ with $a_i\ge 0$ and $\omega_i$ the fundamental weights, it suffices to take $N=\sum_i a_i+n-1$.)
\end{remark}
\begin{remark}
This proof is an adaptation and slight generalization of \cite[Theorem~2.1]{haboush1980short}.
\end{remark}

\begin{corollary}
\label{cor:embedding_of_St_otimes_St}
Let $\psi:\st\otimes_\ff  \st \otimes_\ff\ooo_X\to \fr_*\ooo_X((p-1)\rho)$ be the map induced by $\st\otimes_\ff \st \into F_{\le (p-1)\rho}=\Gamma(X,\ooo_X((p-1)\rho))$. Then $\psi$ is an embedding of $G_1\times G_1$-equivariant vector bundles.
\end{corollary}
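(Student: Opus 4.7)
My plan is to deduce this corollary as the specialization of Theorem~\ref{thm:embedding_of_L_otimes_L} at the weight $\lambda=(p-1)\rho$. First I would identify the data on both sides. By Definition~\ref{def:steinberg_module}, $\st = L_{(p-1)\rho}$, and since $w_0$ permutes positive roots to negative roots we have $w_0\rho=-\rho$, whence $-w_0(p-1)\rho=(p-1)\rho$. Consequently $L_{(p-1)\rho}\otimes L_{-w_0(p-1)\rho}=\st\otimes\st$, and the map produced by Theorem~\ref{thm:embedding_of_L_otimes_L} at this $\lambda$ is built from the inclusion $\st\otimes\st \hookrightarrow F_{\le (p-1)\rho}=\Gamma(X,\ooo_X((p-1)\rho))$ followed by adjunction, which is precisely the description of $\psi$ in the corollary.

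Second, I would verify the hypotheses. The dominance of $(p-1)\rho=\sum_i(p-1)\omega_i$ is immediate. For minimality in the linkage class, it suffices in the theorem's proof to know that $M_\lambda\otimes M_{-w_0\lambda}=L_\lambda\otimes L_{-w_0\lambda}$, and for $\lambda=(p-1)\rho$ this is free: the Steinberg proposition stated after Definition~\ref{def:steinberg_module} gives $M_{(p-1)\rho}=W_{(p-1)\rho}=L_{(p-1)\rho}=\st$. Alternatively, as flagged in the remark preceding the proof of Theorem~\ref{thm:embedding_of_L_otimes_L}, the only weight linked to $(p-1)\rho$ that is strictly smaller is $-\rho$, which fails dominance, so the combinatorial minimality hypothesis is met as well.

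With both identifications and the hypothesis check in hand, the conclusion follows by directly invoking Theorem~\ref{thm:embedding_of_L_otimes_L}. There is no substantive obstacle here: the nontrivial work (constructing $\psi$ from the submodule inclusion, and proving surjectivity on fibers via restriction to the closed orbit $\mathbf{Y}\cong \wt G/\wt B\times \wt G/\wt B$ together with the $\wt G\times\wt G$-equivariance argument) has already been performed in the proof of Theorem~\ref{thm:embedding_of_L_otimes_L}. The only subtle point worth flagging is that the Steinberg weight $(p-1)\rho$ does not lie in the fundamental alcove, so one must justify its admissibility in the theorem through the Steinberg identity $M_{(p-1)\rho}=L_{(p-1)\rho}$ rather than through membership in the fundamental alcove, as indicated in the remark.
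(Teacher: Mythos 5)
Your proposal is correct and follows the same route as the paper: specialize Theorem~\ref{thm:embedding_of_L_otimes_L} to $\lambda=(p-1)\rho$, using $w_0\rho=-\rho$ to identify $L_{-w_0(p-1)\rho}=\st$ and the remark before that theorem (or the Steinberg identity $M_{(p-1)\rho}=L_{(p-1)\rho}$) to justify the minimality hypothesis. The paper states the application more tersely, but the underlying argument is identical.
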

\begin{proof}
Apply Theorem~\ref{thm:embedding_of_L_otimes_L} to $\lambda=(p-1)\rho$, which is clearly minimal in its linkage class.
\end{proof}
\begin{remark}
In Theorem~\ref{thm:determine_line_bundles_for_p-1_rho_block}, we show that this embedding can be splitted.
\end{remark}

\section{Splitting via idempotents}
\label{section:splitting_via_idempotents}

Let $\lll$ be a line bundle on $X$. Our goal in this section is to decompose $\fr_* \lll$ as a direct sum of vector (sub)bundles, by multiplying (on the left and right) by idempotents in $\uzero$, whose action arises from the $G_1\times G_1$-action on $\lll$. We will prove Theorem~\ref{thmC:decomp_of_frob_push_as_vector_subbundles} and Theorem~\ref{thmD:st_st_component_description}.

\subsection{The main strategy}
\label{subsection:the_main_strategy}
Consider a line bundle $\lll$ on $X$. By \cite[Theorem~7.2]{dolgachev2003lectures}, we have a $\wt{G}\times \wt{G}$-equivariant structure on $\lll$. The crucial point, which we will use \textbf{constantly}, is that we can use idempotents in $\uuu_0(\mf{g})$ to split any $\fr_*\lll$ into a direct sum of vector bundles, whose ranks are controlled by the action of idempotents on $\uuu_0(\mf{g})$.
\begin{proposition}
\label{prop:independence_of_line_bundles}
Let $\mathscr{S}=\{e_i\}_I$ be any system of pairwise orthogonal idempotents which sum to $1$, in $\uuu_0(\mf{g})$. Then we obtain the decomposition
\[ \fr_*\lll= \bigoplus_{(e_i,e_j)\in \mathscr{S}\times \mathscr{S}} e_i  (\fr_*\lll) e_j,\]
where the $e_i (\fr_* \lll) e_j$ are vector bundles. Furthermore,
\[\textnormal{rk }e_i  (\fr_*\lll) e_j=\dim_\ff e_i\uuu_0(\mf{g})e_j,\] and in particular are independent of the choice of line bundle $\lll$.
\end{proposition}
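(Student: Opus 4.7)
The plan is to endow $\fr_*\lll$ with a $G_1\times G_1$-action that is $\ooo_X$-linear, and then to use the tensor idempotents $e_i\otimes e_j$ to split it. For the first step, start from the $\wt{G}\times\wt{G}$-equivariant structure on $\lll$ provided by \cite[Theorem~7.2]{dolgachev2003lectures}, restricted to $G_1\times G_1$. Since $G_1$ is the scheme-theoretic kernel of $\fr$, one has $\fr\circ a = \fr\circ \mathrm{pr}_X$ on $G_1\times G_1\times X$ (with $a$ the action map); in other words, $\fr\colon X\to X^{(1)}=X$ intertwines the nontrivial $G_1\times G_1$-action on the source with the trivial action on the target. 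Pushing forward therefore produces a $G_1\times G_1$-equivariant structure on $\fr_*\lll$ with trivial action on the base, which is the same thing as an action of $\uzero\otimes\uzero$ on $\fr_*\lll$ by $\ooo_X$-linear endomorphisms.

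Given this structure, the $e_i\otimes e_j$ are pairwise orthogonal idempotents in $\uzero\otimes\uzero$ summing to $1$, so they decompose $\fr_*\lll$ as an $\ooo_X$-module into the direct sum of the $\ooo_X$-submodules $e_i(\fr_*\lll)e_j$. Each summand is the image of an idempotent $\ooo_X$-linear endomorphism of the vector bundle $\fr_*\lll$, hence is locally a direct summand of a free module; since $X$ is smooth, each $e_i(\fr_*\lll)e_j$ is locally free.

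For the rank, since $X$ is connected it suffices to compute the fiber at a single point, and I will use $x=e\in G\subset X$. For $\lll=\ooo_X$, the scheme-theoretic fiber $\fr^{-1}(e)$ equals $G_1$, so $(\fr_*\ooo_X)_e\cong \ff[G_1]\cong \uzero^*$ with the $G_1\times G_1$-action given by left and right translation on $G_1$; Lemma~\ref{lemma:u_0*=u_0} then identifies this with $\uzero$ carrying the regular bimodule structure (left and right multiplication). The main and hardest step is to show that $(\fr_*\lll)_e\cong \uzero$ as a $\uzero\otimes\uzero$-module for \emph{every} $\lll$. For this, note that the restriction $\lll|_{G_1}$ is a $G_1\times G_1$-equivariant line bundle on the $G_1\times G_1$-orbit $(G_1\times G_1)/\Delta G_1\cong G_1$, hence is determined by the character of the one-dimensional fiber $\lll_e$ under the stabilizer $\Delta G_1$; since $\mf{g}$ is semisimple, $G_1$ admits no nontrivial one-dimensional representations, so this character is trivial and $\lll|_{G_1}\cong \ooo_{G_1}$ equivariantly. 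It follows that $(\fr_*\lll)_e\cong (\fr_*\ooo_X)_e\cong \uzero$ as bimodules, and therefore $\mathrm{rk}\,e_i(\fr_*\lll)e_j=\dim_\ff e_i\uzero e_j$ independently of $\lll$.
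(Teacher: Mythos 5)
Your proof is correct and follows the same overall strategy as the paper's: use the $G_1\times G_1$-equivariant structure on $\fr_*\lll$ (equivalently, an $\ooo_X$-linear $\uzero\otimes\uzero$-action) to split off direct summands via the idempotents $e_i\otimes e_j$, reduce the rank computation to the fiber at $e\in G\subset X$, identify that fiber with $\Gamma(G_1,\lll|_{G_1})$, show $\lll|_{G_1}\cong\ooo_{G_1}$ equivariantly, and finally invoke Lemma~\ref{lemma:u_0*=u_0} to identify $\ff[G_1]\cong\uzero^*$ with $\uzero$ as a bimodule. The one step where you genuinely diverge is the proof that $\lll|_{G_1}\cong\ooo_{G_1}$ as $G_1\times G_1$-equivariant sheaves. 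The paper pulls $\lll$ back along $\pi\colon\wt{G}\to G$ and argues that $\pi^*\lll\cong\ooo_{\wt{G}}$ with the standard equivariant structure, because $\wt{G}$ is simply connected (so $\mathrm{Pic}(\wt{G})=0$) and $\wt{G}\times\wt{G}$ has no nontrivial characters; it then restricts along $G_1\into\wt{G}$. You instead stay entirely inside $G_1$: you observe that $G_1$ is the $G_1\times G_1$-orbit of $e$ with stabilizer $\Delta G_1$, so a $G_1\times G_1$-equivariant line bundle on it is classified by a character of $\Delta G_1\cong G_1$, and then use perfectness of $\mf{g}$ (valid since $p$ is assumed very good) to see that $G_1$, equivalently $\uzero$, has only the trivial one-dimensional representation. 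Your route is more self-contained and avoids introducing the cover $\wt{G}$ into the argument; the paper's route is a bit more geometric. At bottom both rest on the semisimplicity (perfectness) of $\mf{g}$ — yours at the infinitesimal level, the paper's via the character group of $\wt{G}$.

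One small remark: you write that each $e_i(\fr_*\lll)e_j$ is locally free ``since $X$ is smooth.'' Smoothness is what you need to know that $\fr$ is finite flat, hence that $\fr_*\lll$ itself is locally free; once that is established, a direct summand of a locally free sheaf is automatically locally free on any (locally noetherian) scheme, with no further smoothness hypothesis. This is how the paper phrases it as well, so this is a cosmetic point, not a gap.
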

\begin{proof}
First, we note that any such collection of idempotents determines a splitting of $\fr_*\lll$ into a direct sum. We know that $\fr_*\lll$ is locally free. Direct summands of locally free sheaves are locally summands of a free module, i.e. projective modules, which are locally free, hence direct summands of locally free sheaves are again locally free.

Now notice that the ranks of summands of the decomposition of $\fr_*\lll$ can be deduced from the dimensions of summands of the decomposition of any fiber of $\fr_*\lll$ induced by the $G_1\times G_1$-action. In particular, we can consider the fiber at $1\in G\subset X$. Since $\fr$ is a flat finite morphism, we have (since $\fr$ is finite, therefore affine) the following natural isomorphism of $G_1\times G_1$-modules:
\[ (\fr_*\lll)_1\cong H^0(\fr^{-1}(1),\lll)=\Gamma(G_1,\lll |_{G_1}). \]
Therefore, the ranks of the summands induced by the $G_1\times G_1$ action on $\fr_*\lll$ can be deduced from the dimensions of the vector spaces of the summands induced by the $G_1\times G_1$-action on $\Gamma(G_1,\lll|_{G_1})$. Since the problem is local, we may restrict to $G\subset X$ and assume $\lll$ is a line bundle on $G$. Consider the following commutative diagram:
\[
\begin{tikzcd}
G_1\arrow{rd}{\iota_G} \arrow{d}{\iota_{\wt{G}}} &\\
\wt{G}\arrow{r}{\pi}&G.\\
\end{tikzcd}
\]
Since $\wt{G}$ is simply connected, all line bundles on $\wt{G}$ are isomorphic to $\ooo_{\wt{G}}$ as $\wt{G}\times \wt{G}$-equivariant sheaves (with the standard action), and in particular must have the standard $G_1\times G_1$-action. Therefore, $\pi^*\lll\cong \ooo_{\wt{G}}$ as $G_1\times G_1$-equivariant sheaves (with the standard action). Now note that
\[ \lll|_{G_1} \cong \iota^*_{G}\lll\cong (\iota_{\wt{G}}\circ\pi)^*\lll\cong \iota_{\wt{G}}^*(\pi^*\lll)\cong \iota_{\wt{G}}^*\ooo_{\wt{G}}\cong \ooo_{G_1}.\]
In particular, we have that $\iota_G^*\lll \cong \iota_{\wt{G}}^*\pi^*\lll\cong \ooo_{G_1}$ as $G_1\times G_1$-equivariant sheaves. Therefore, it suffices to describe the $G_1\times G_1$-action on $\ooo_{G_1}$, which is equivalent to describing the $G_1\times G_1$-action on $\ff[G_1]\cong \uuu_0(\mf{g})^*$. The discussion in \cite[I.8.6]{jantzen2003representations} shows that the left and right (standard) representations of $G_1$ on $\ff[G_1]\cong \uuu_0(\mf{g})^*$ induce the standard left and right representations of $G_1$ on $\ff[G_1]^*\cong \uuu_0(\mf{g})$. Hence this is equivalent to the action of $\uuu_0(\mf{g})\otimes \uuu_0(\mf{g})$ on $\uuu_0(\mf{g})^*$. By Lemma~\ref{lemma:u_0*=u_0}, this is equivalent to studying $\uuu_0(\mf{g})$ under the usual $\uuu_0(\mf{g})\otimes \uuu_0(\mf{g})$-action. It follows that the rank of the corresponding vector bundles induced by left and right action by idempotents is exactly the dimension of $e_i\uuu_0(\mf{g})e_j$. 
\end{proof}
\begin{remark}
Proposition~\ref{prop:independence_of_line_bundles} applies to any smooth $\wt{G}\times \wt{G}$-variety $X$ which contains $G$ (having the standard action of $\wt{G}\times \wt{G}$) as an open subset - in particular, as in the proof, it works for $G$. More generally, the result also holds for wonderful varieties, which include wonderful compactifications; see \cite{pezzini2018lectures} for an introduction to wonderful varieties.
\end{remark}

We first analyze the case where the idempotents are central.

\subsection{Central idempotents}
\label{subsection:Central_idempotents}
Recall the linkage principle~\ref{linkage_principle_U(g)}: we have the decomposition
\[\msf{Rep}~G_1 =\bigoplus_{\lambda\in \Lambda_p/(W,\cdot)}\msf{Rep}_\lambda(G_1),\] where we identify $\Lambda_p$ with $\Lambda/p\Lambda$ (and thus the action of $(W,\cdot)$ is identified with the action of $(W,\cdot)$ on $\Lambda/p\Lambda$). Corresponding to this, we have the existence of central idempotents $\{\pi_\lambda\}_{\lambda\in \Lambda_p /(W,\cdot)}$ inside the reduced enveloping algebra $\mcal{U}_0(\mf{g})$. As a result, we have the following decomposition:

\begin{lemma}
\label{lemma:central_idempotents_decomp_into_vector_bundles}
For any invertible sheaf $\lll$ on $X$, we have 
\[ \fr_* \lll=\bigoplus_{\lambda\in\Lambda_p /(W,\cdot)} \eee_{\lambda,\lll}\] as $(\ooo_X, G_1\times G_1)$-modules, where $\eee_{\lambda,\lll}=\pi_\lambda \fr_*\lll$ are vector subbundles of $\fr_*\lll$, and their ranks are precisely the dimension of $A_{\lambda}$ in the block decomposition 
\[\uuu_0(\mf{g})=\bigoplus_{\lambda\in \Lambda_p /(W,\cdot)}A_\lambda.\]
\end{lemma}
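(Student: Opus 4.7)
The plan is to apply Proposition~\ref{prop:independence_of_line_bundles} with the system $\mathscr{S} = \{\pi_\lambda\}_{\lambda \in \Lambda_p/(W,\cdot)}$ of primitive central idempotents in $\uzero$ provided by the linkage principle (Proposition~\ref{linkage_principle_U(g)}). By definition these are pairwise orthogonal and sum to $1$, so the hypotheses of Proposition~\ref{prop:independence_of_line_bundles} are satisfied. This immediately gives a decomposition
\[ \fr_*\lll = \bigoplus_{\lambda, \mu \in \Lambda_p/(W,\cdot)} \pi_\lambda(\fr_*\lll)\pi_\mu \]
into vector subbundles of rank $\dim_\ff \pi_\lambda \uzero \pi_\mu$.

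The next step is to collapse this double-indexed sum to a single-indexed one. Here I would use the fact that the $\pi_\lambda$ are \emph{central} in $\uzero$: this yields $\pi_\lambda \uzero \pi_\mu = \pi_\lambda \pi_\mu \uzero$, which equals $A_\lambda$ when $\mu = \lambda$ and vanishes when $\mu \neq \lambda$ by orthogonality. Hence all off-diagonal summands have rank zero and therefore vanish as subsheaves, leaving $\fr_*\lll = \bigoplus_\lambda \pi_\lambda(\fr_*\lll)\pi_\lambda$. Centrality also identifies $\pi_\lambda(\fr_*\lll)\pi_\lambda$ with $\pi_\lambda \fr_*\lll$ (equivalently with $(\fr_*\lll)\pi_\lambda$): on the fiber at $1 \in G \subset X$, modeled by the bimodule $\uzero$ as in the proof of Proposition~\ref{prop:independence_of_line_bundles}, this follows from $\pi_\lambda\uzero = \uzero\pi_\lambda = A_\lambda$, and the subbundle property then propagates globally. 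We therefore set $\eee_{\lambda,\lll} := \pi_\lambda \fr_*\lll$, whose rank is $\dim_\ff A_\lambda$ as required.

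Finally, I would check that the decomposition respects the full $(\ooo_X, G_1 \times G_1)$-module structure. This is automatic from centrality: each $\pi_\lambda$ commutes with every element acting via either the left or the right $G_1$-action, so the projector onto $\eee_{\lambda,\lll}$ is $G_1 \times G_1$-equivariant and each summand inherits a natural $(\ooo_X, G_1 \times G_1)$-module structure. I do not anticipate a substantive obstacle, since Proposition~\ref{prop:independence_of_line_bundles} has absorbed the heavy lifting (passing from sheaves on $X$ to modules over $\uzero$ via the fiber at $1$); the remaining content is the elementary observation that distinct central idempotents annihilate one another. The only mildly delicate bookkeeping is identifying the two-sided object $\pi_\lambda(\fr_*\lll)\pi_\lambda$ with the one-sided $\pi_\lambda \fr_*\lll$, but this is transparent on fibers.
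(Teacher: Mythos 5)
Your proof is correct and takes essentially the same route as the paper's: specialize Proposition~\ref{prop:independence_of_line_bundles} to the central idempotents $\{\pi_\lambda\}$, then use centrality to collapse the double-indexed sum (off-diagonal terms have rank $\dim \pi_\lambda\uzero\pi_\mu = 0$, hence vanish, and $\pi_\lambda(\fr_*\lll)\pi_\lambda = \pi_\lambda\fr_*\lll$) and to retain the $(\ooo_X, G_1\times G_1)$-module structure, which the paper handles by invoking Remark~\ref{remark:central_idempotents_preserves_module}. You spell out the bookkeeping that the paper leaves implicit, but the underlying argument is identical.
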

\begin{proof}
Specialize Proposition~\ref{prop:independence_of_line_bundles} to the collection $\{\pi_\lambda\}$ of central idempotents. This gives the decomposition as $\ooo_X$-modules; the fact that $G_1\times G_1$-action is preserved is due to the fact that all idempotents are central (see Remark~\ref{remark:central_idempotents_preserves_module}).
\end{proof}

\begin{notation}
Let $W(\lambda)$ denote the stabilizer of $\lambda\in\Lambda/p\Lambda $ in $(W,\cdot)$.
\end{notation}
\begin{proposition}
\label{prop:size_of_blocks}
Suppose $\mf{g}$ is simple and of classical type. We have that
\[ \uuu_0(\mf{g})\cong \bigoplus_{\lambda\in \Lambda_p /(W,\cdot)}A_\lambda,\]
where 
\[\dim A_{\lambda}=|W/W(\lambda)|\cdot p^{2\dim \wt{G}/\wt{B}}.\]
\end{proposition}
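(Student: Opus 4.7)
The plan is to compute $\dim A_\lambda$ using the general decomposition of a finite-dimensional algebra into projective indecomposables, combined with BGG reciprocity for baby Verma modules of $\uzero$.

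First I would recall the standard decomposition of any finite-dimensional algebra: writing $1 = \sum_i e_i$ as a sum of primitive idempotents, $\uzero$ decomposes as a left module over itself as
\[ \uzero \cong \bigoplus_{\mu \in \Lambda_p} P_\mu^{\oplus \dim L_\mu}, \]
where $P_\mu$ is the projective cover of $L_\mu$; the multiplicity is $\dim L_\mu$ because $\uzero$ is symmetric (see Lemma \ref{lemma:u_0*=u_0}). Restricting to the block corresponding to the $(W,\cdot)$-orbit of $\lambda$ (Proposition \ref{linkage_principle_U(g)}), I get
\[ \dim A_\lambda = \sum_{\mu \sim \lambda} \dim P_\mu \cdot \dim L_\mu, \]
where the sum is over weights in $\Lambda_p$ linked to $\lambda$.

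Next, I would invoke the baby Verma modules $\Delta_\mu$ (also denoted $Z(\mu)$ in Jantzen), each of dimension $p^{|\Phi^+|} = p^{\dim \wt{G}/\wt{B}}$. The Humphreys--Verma conjecture (known in classical type under the very good prime assumption $p \nmid h$; this is precisely where the classical-type hypothesis enters) says that $P_\mu$ has a filtration by baby Vermas, with BGG-type reciprocity
\[ [P_\mu : \Delta_\nu] = [\Delta_\nu : L_\mu]. \]
Hence $\dim P_\mu = p^{|\Phi^+|} \sum_{\nu \sim \lambda}[\Delta_\nu : L_\mu]$. Substituting into the previous expression and swapping the order of summation,
\[ \dim A_\lambda = p^{|\Phi^+|} \sum_{\nu \sim \lambda} \sum_{\mu \sim \lambda} [\Delta_\nu : L_\mu] \dim L_\mu = p^{|\Phi^+|} \sum_{\nu \sim \lambda} \dim \Delta_\nu = p^{2|\Phi^+|} \cdot \#\{\nu \in \Lambda_p : \nu \sim \lambda\}. \]

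Finally, I would identify the size of the linkage class. Under the identification $\Lambda_p \cong \Lambda/p\Lambda$, the linkage class of $\lambda$ is exactly the $(W,\cdot)$-orbit of $\lambda$, so its size is $|W/W(\lambda)|$ by the orbit-stabilizer theorem. Together with $|\Phi^+| = \dim \wt{G}/\wt{B}$, this yields
\[ \dim A_\lambda = |W/W(\lambda)| \cdot p^{2 \dim \wt{G}/\wt{B}}. \]
The main obstacle is justifying BGG reciprocity and the baby Verma filtration of $P_\mu$; this is exactly the step where the classical-type hypothesis is needed, and I would cite the relevant results from Jantzen's \emph{Representations of Algebraic Groups} (Chapter II.11) rather than reproving them.
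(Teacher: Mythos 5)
Your computation is correct and the answer comes out right, but your route is genuinely different from the paper's. The paper's proof is a one-liner: it cites Feldvoss (\cite[\S5, p.~41]{feldvoss1996homological}) and Haboush for the fact that the ratio $\dim A_\lambda / |W\cdot\lambda|$ is the same for every block, and then evaluates that constant at the Steinberg block, where $A_{(p-1)\rho}\cong\operatorname{End}(\st)$ has dimension $p^{2\dim\wt{G}/\wt{B}}$ and $|W\cdot(p-1)\rho|=1$. You instead reconstruct the formula from scratch: you decompose $\uzero\cong\bigoplus_\mu P_\mu^{\oplus\dim L_\mu}$, use the baby-Verma filtration of projectives and BGG reciprocity to write $\dim P_\mu = p^{|\Phi^+|}\sum_{\nu\sim\lambda}[\Delta_\nu:L_\mu]$, and swap the sums to collapse $\sum_\mu[\Delta_\nu:L_\mu]\dim L_\mu$ to $\dim\Delta_\nu=p^{|\Phi^+|}$. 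This is a valid argument, and it is in fact very close in spirit to the computation the paper performs later in Theorem~\ref{thm:decomp_line_bundle_into_vector_bundles}, where $[P_\lambda]=a_\lambda d_\lambda\sum_{\mu\sim\lambda}d_\mu[L_\mu]$ is invoked from Humphreys; your proof essentially derives the proposition as a corollary of that fact rather than citing Feldvoss independently, which is a reasonable alternative organization.

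Two small inaccuracies in your justifications, neither fatal. First, the decomposition $\uzero\cong\bigoplus_\mu P_\mu^{\oplus\dim L_\mu}$ as a left module holds for any finite-dimensional algebra over an algebraically closed field (Wedderburn plus lifting of idempotents, as in Lemma~\ref{lemma:pairwise_orthogonal_primitive_idempotents} of the paper); the symmetry of $\uzero$ from Lemma~\ref{lemma:u_0*=u_0} is not what gives you the multiplicity $\dim L_\mu$. Second, the existence of baby-Verma filtrations on projectives and BGG reciprocity are not the content of the Humphreys--Verma conjecture: that conjecture concerns the lifting of projective $G_1$-modules to $G$-modules (Jantzen II.11.11, requiring $p\ge 2h-2$ or Donkin's later work), whereas the baby-Verma filtration and reciprocity for $G_1T$-modules are separate results of Jantzen (II.11.2--11.4) holding under milder hypotheses. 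You should cite those directly, and make explicit the (routine but nontrivial) transfer from $G_1T$-modules to plain $\uzero$-modules, since your $\Delta_\nu,P_\mu$ live in the latter category while Jantzen's statements are for the former.
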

\begin{proof}
By \cite[\S5, page~41]{feldvoss1996homological} and \cite{haboush1980short}, 
\[
\frac{\dim A_\lambda}{|W\cdot \lambda|}=\frac{\dim A_{(p-1)\rho}}{|W\cdot (p-1)\rho|}=p^{2\dim \wt{G}/\wt{B}}.
\]
On the other hand, $|W\cdot \lambda|=|W/W(\lambda)|$.
\end{proof}
\begin{corollary}
\label{cor:sl_n_blocks_dimensions}
Let $G=\psl_n$ and $p\nmid n$. Then $\Lambda\cong \zz^n/\zz\cdot (1,1,\dots,1)$ (in the basis of $\varepsilon_i$), and hence $\Lambda/p\Lambda \cong \ff_p^n/\ff_p\cdot (1,1,\dots,1)$, both with the induced action of $W=S_n$. Then for $\lambda\in\ff_p^n$ for which $\lambda+\rho$ contains $k$ distinct entries with multiplicities $n_1,\dots,n_k$, 
\[\dim A_\lambda=\frac{n!}{n_1!n_2!\dotsm n_k!}p^{2\dim \wt{G}/\wt{B}}=\frac{n!}{n_1!n_2!\dots n_k!}p^{n(n-1)}.\]
\end{corollary}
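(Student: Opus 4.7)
The plan is to reduce everything to a direct computation of the stabilizer $W(\lambda)$ for the dot action of $W = S_n$ on $\Lambda/p\Lambda$, and then invoke Proposition~\ref{prop:size_of_blocks}. First, I would recall that for $\wt{G} = \msf{SL}_n$ the character lattice of the diagonal torus $\wt{T}$ is generated by the coordinate characters $\varepsilon_1,\dots,\varepsilon_n$ subject to the single relation $\varepsilon_1+\dots+\varepsilon_n=0$, so $\Lambda \cong \zz^n/\zz\cdot(1,\dots,1)$; reducing mod $p$ gives $\Lambda/p\Lambda \cong \ff_p^n/\ff_p\cdot(1,\dots,1)$, with the Weyl group $W=S_n$ acting by permutation of the coordinates $\varepsilon_i$.

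Next, I would compute $W(\lambda)$ under the dot action. Lift $\lambda+\rho$ to a representative $v=(v_1,\dots,v_n)\in\ff_p^n$. Then $\sigma\in S_n$ lies in $W(\lambda)$ iff $\sigma\cdot\lambda=\lambda$ in $\Lambda/p\Lambda$, which rearranges to the condition that $\sigma(v)=v+c(1,\dots,1)$ for some $c\in\ff_p$. Summing coordinates on both sides gives $\sum v_i=\sum v_i + nc$, i.e.\ $nc=0$ in $\ff_p$. This is exactly the step where the hypothesis $p\nmid n$ is used, forcing $c=0$. Hence $\sigma\in W(\lambda)$ iff $\sigma$ fixes $v$ coordinatewise, so $W(\lambda)$ is the Young subgroup $S_{n_1}\times\dots\times S_{n_k}$ attached to the multiplicities $n_1,\dots,n_k$ of the distinct entries of $\lambda+\rho$. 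Therefore
\[
|W/W(\lambda)|=\frac{n!}{n_1!\,n_2!\cdots n_k!}.
\]

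Finally, I would combine this with Proposition~\ref{prop:size_of_blocks}, using that $\dim \wt{G}/\wt{B}$ equals the number $\binom{n}{2}$ of positive roots of $A_{n-1}$, so $2\dim\wt{G}/\wt{B}=n(n-1)$. This yields
\[
\dim A_\lambda=|W/W(\lambda)|\cdot p^{2\dim \wt{G}/\wt{B}}=\frac{n!}{n_1!\,n_2!\cdots n_k!}\,p^{n(n-1)},
\]
which is the desired formula. The only genuine input is the stabilizer computation, and the only nontrivial obstacle there is the possibility of a nonzero shift $c$; the assumption $p\nmid n$ eliminates this, and the rest is bookkeeping.
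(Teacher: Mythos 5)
Your proposal follows the same overall outline as the paper: reduce to Proposition~\ref{prop:size_of_blocks}, shift by $\rho$ to convert the dot action into the ordinary permutation action on $\ff_p^n/\ff_p\cdot(1,\dots,1)$, and show that the stabilizer cannot involve a nonzero diagonal shift $c$. The genuine difference is \emph{how} the two arguments force $c=0$, which is the only place the hypothesis $p\nmid n$ enters. The paper chases the cycle structure of $\sigma$ on the distinct values $a_1,\dots,a_k$: it argues that a nonzero $c$ forces the $a_i$ into a single cycle of length $k$ with $kc=0$ and all multiplicities $n_i$ equal, hence $k\mid n$, and then $p\nmid n$ rules out $p\mid k$. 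You instead simply sum the coordinates of $\sigma(v)=v+c\cdot(1,\dots,1)$ to get $nc=0$ directly, and invertibility of $n$ mod $p$ finishes it. Your version is shorter, avoids the case analysis, and is more robust (the paper's intermediate step ``$2c=0\implies c=0$'' is only valid for $p>2$, whereas your argument has no such side condition); the paper's version has the small advantage that its cycle analysis is reused verbatim in the remark following the corollary to classify exactly when the argument breaks down for $p\mid n$. Both are correct proofs of the corollary.
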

\begin{proof}
Applying Proposition~\ref{prop:size_of_blocks} to $G=\psl_n$, it suffices to compute $W(\lambda)$. We may assume $\lambda+\rho=(a_1,a_2,\dots,a_k)$ where the $a_i$ are pairwise distinct and $a_i$ appears with multiplicity $n_i$, as it suffices to simply apply a permutation $\sigma\in W\cong S_n$ to $\lambda$ to reach this form, and such stabilizer subgroups are conjugate to each other, hence of the same size. Since shifting by $\rho$ essentially does not change the problem, We'll identify the $(W,\cdot)$ on $\Lambda/p\Lambda$ with the $W$ action on $\Lambda/p\Lambda$ by shifting by $\rho$ for the rest of this proof. Now $\sigma\in W$ stabilizes $\lambda$ iff $\sigma\lambda-\lambda=c\cdot (1,1,\dots,1)$. If $c\ne 0$, then this implies that we have $a_1+c$ is equal to some other $a_i$, let's say $a_2$. Similarly, $a_2+c$ is equal to some other $a_j$, which cannot be $a_1$: otherwise, $a_1+2c=a_1\implies c=0$. Continuing, we find that we can reorder the $a_i$ such that $a_i=a_{i-1}+c$, hence we find that $a_1+kc=a_1\implies kc=0$, and further that all $n_i$ are equal. But this is impossible, as $c\ne 0$ and $p\nmid n\implies p\nmid k\implies k\ne 0$. It follows that $c=0$, and thus $\sigma$ stabilizes $\lambda$ iff $\sigma\lambda=\lambda$ as vectors in $\ff_p^n$.
\end{proof}

\begin{remark}
Note that the condition that $p\nmid n$ is crucial. For $p=n$, consider the weight $\lambda=(0,1,2,\dots,n-1)=(0,1,2,\dots,p-1)$. Then consider any cyclic shift of order $p$: for example, shifting all by one to the right gives $\lambda'=(1,2,\dots,p-1,0)$. But $\lambda-\lambda'=(1,1,\dots,1)$, which implies that this shift is indeed in the stabilizer, whereas if $p\nmid n$, such a $\lambda$ would have trivial stabilizer. 
In fact, the only time such errors occur are when $n_1=n_2=\dots=n_k$ and $k=p$ (i.e., we require $p$ to be a ``bad prime," in the terminology of \cite{brown2001ramification}): $\{a_1,\dots,a_p\}=\{0,1,\dots,p-1\}$, each occuring with multiplicity $n/p$.

However, even when $p|n$, we still obtain lower bounds on the stabilizer of $\lambda$, which translates into upper bounds on the dimension of $A_\lambda$.
\end{remark}

For convenience, we'll make the following convention.
\begin{definition}
\label{def:type_of_vector}
For a vector $v\in \ff_p^n$, call its \textbf{type} $(n_1,n_2,\dots,n_k)$, where $v$ has $n_i$ entries which are equal to some $a_i\in \ff_p$, and the $a_i$ are pairwise distinct.
\end{definition}

Then, Corollary~\ref{cor:sl_n_blocks_dimensions} is reworded into the following:
\begin{corollary}
\label{cor:sl_n_blocks_dimensions_WITH_TYPES}
Let $G=\psl_n$ and $p\nmid n$. For $\lambda\in \Lambda$, write $\lambda=\sum_i b_i\varepsilon_i$, and suppose $(b_1,\dots,b_n)$ has type $(n_1,\dots,n_k)$ (as in Definition~\ref{def:type_of_vector}). Then for $\lambda\in \Lambda_p $, we have
\[\dim A_\lambda = \frac{n!}{n_1!n_2!\dotsm n_k!}p^{n(n-1)}.\]
\end{corollary}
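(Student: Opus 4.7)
The plan is to read this off from Corollary~\ref{cor:sl_n_blocks_dimensions} after unwinding the definition of ``type'' in Definition~\ref{def:type_of_vector}. Specifically, I would recall that by Proposition~\ref{prop:size_of_blocks}, $\dim A_\lambda = |W/W(\lambda)| \cdot p^{2\dim \widetilde{G}/\widetilde{B}}$, where $W(\lambda)$ denotes the stabilizer under the \emph{dot} action on $\Lambda/p\Lambda$. Since the dot action is by definition $w\cdot\lambda = w(\lambda+\rho) - \rho$, the subgroup $W(\lambda)$ coincides with the ordinary stabilizer of $\lambda+\rho$ under the $S_n$-permutation action on the $\varepsilon$-coordinates. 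The key observation, already extracted in the proof of Corollary~\ref{cor:sl_n_blocks_dimensions} (and using $p \nmid n$ to rule out the ``cyclic shift'' obstruction in $\zz^n/\zz\cdot(1,\dots,1)$), is that $\sigma \in S_n$ stabilizes $\lambda + \rho$ in the quotient if and only if it stabilizes it as a genuine vector in $\ff_p^n$.

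Next, I would translate this stabilizer count into the language of types. If the relevant coordinate vector has type $(n_1,\dots,n_k)$ -- i.e.\ $k$ pairwise distinct entries in $\ff_p$ occurring with multiplicities $n_1,\dots,n_k$ summing to $n$ -- then the $S_n$-stabilizer of this vector is the Young subgroup $S_{n_1}\times S_{n_2} \times \cdots \times S_{n_k}$, of order $n_1!\,n_2!\cdots n_k!$. Consequently
\[
|W/W(\lambda)| = \frac{n!}{n_1!\,n_2!\cdots n_k!}.
\]
Combining this with $2\dim \widetilde{G}/\widetilde{B} = n(n-1)$ for $\widetilde{G} = \msf{SL}_n$ and Proposition~\ref{prop:size_of_blocks} yields the claimed formula.

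The one small bookkeeping issue (not really an obstacle, but the only place requiring thought) is the discrepancy between ``type of $\lambda+\rho$'' used in Corollary~\ref{cor:sl_n_blocks_dimensions} and ``type of $(b_1,\dots,b_n)$'' used in the present statement. In the setup here, for $\lambda \in \Lambda_p$ written in the $\varepsilon$-basis, one picks a representative of $\lambda$ (or equivalently $\lambda+\rho$) in $\ff_p^n$ and reads off its type in the sense of Definition~\ref{def:type_of_vector}; the stabilizer depends only on this type, and the previous corollary's argument applies verbatim since the type is constant along the $S_n$-orbit. Thus the content of the proof reduces to recognizing that $n!/(n_1!\cdots n_k!)$ is precisely the multinomial coefficient counting the $S_n$-orbit of a vector of the prescribed type, which is exactly what Proposition~\ref{prop:size_of_blocks} needs as input.
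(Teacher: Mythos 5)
Your proof is correct and is essentially the argument the paper has in mind: specialize Proposition~\ref{prop:size_of_blocks}, use the ``no cyclic-shift'' observation from the proof of Corollary~\ref{cor:sl_n_blocks_dimensions} (which requires $p\nmid n$) to identify $W(\lambda)$ with the genuine $S_n$-stabilizer of $\lambda+\rho$ in $\ff_p^n$, identify that stabilizer with a Young subgroup, and read off the multinomial coefficient, together with $2\dim\wt{G}/\wt{B}=n(n-1)$. The paper itself offers no separate proof and just presents the corollary as a rewording, so you have filled in exactly the intended steps.

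One small point worth sharpening in your ``bookkeeping'' paragraph: the parenthetical ``(or equivalently $\lambda+\rho$)'' is misleading, because the type of $\lambda$ and the type of $\lambda+\rho$ are \emph{not} equivalent in general. For example, for $\mf{sl}_3$ with $\lambda=0$, the vector $(b_1,b_2,b_3)=(0,0,0)$ has type $(3)$, whereas $\lambda+\rho=(2,1,0)$ has type $(1,1,1)$; the block $A_0$ has $\dim A_0 = 6\,p^{6}$ (the $\rho$-shifted stabilizer is trivial), so the formula genuinely requires the type of $\lambda+\rho$. You correctly compute the stabilizer of $\lambda+\rho$ and use its type throughout the substantive part of the argument, so your conclusion is right; but the right way to phrase the resolution is simply that the $(b_1,\dots,b_n)$ in the statement must be understood as the $\rho$-shifted $\varepsilon$-coordinates, as the remark immediately following the corollary makes explicit with the vector $(c_1+\dots+c_{n-1}+n-1,\dots,c_{n-1}+1,0)$. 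Taken literally, the corollary's phrase ``write $\lambda=\sum_i b_i\varepsilon_i$'' is an imprecision in the paper, and your proof in effect corrects it.
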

\begin{remark}
Alternatively, suppose $\lambda=\sum_{i=1}^{n-1} c_i \omega_i$. Then as $\omega_i=\sum_{j\le i}\varepsilon_j$, and we have the $\rho$-shifted action of $W$, we obtain the vector \[(c_1+\dots+c_{n-1}+n-1,c_2+\dots+c_{n-1}+n-2,\dots,c_{n-1}+1,0),\] written in the basis of $\varepsilon_j$.
\end{remark}

\begin{proposition}
\label{prop:splitting_of_Os_from_p-1_rho}
We have that $\st\otimes \st\otimes \ooo_X$ is a direct summand inside of $\fr_*\ooo_X((p-1)\rho)$. Furthermore, $\pi_{(p-1)\rho}\fr_*\ooo_X((p-1)\rho)\cong \st\otimes \st\otimes \ooo_X$ (in the notation of Lemma~\ref{lemma:central_idempotents_decomp_into_vector_bundles}).
\end{proposition}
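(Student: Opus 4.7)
The plan is to combine the embedding $\psi\colon \st \otimes_\ff \st \otimes_\ff \ooo_X \hookrightarrow \fr_*\ooo_X((p-1)\rho)$ of Corollary~\ref{cor:embedding_of_St_otimes_St} with Lemma~\ref{lemma:central_idempotents_decomp_into_vector_bundles} and a rank count. Since each $\pi_\lambda \fr_*\ooo_X((p-1)\rho)$ is already a direct summand of $\fr_*\ooo_X((p-1)\rho)$ by Lemma~\ref{lemma:central_idempotents_decomp_into_vector_bundles}, it is enough to show that $\psi$ induces an isomorphism $\st\otimes \st\otimes \ooo_X \xr{\sim} \pi_{(p-1)\rho}\fr_*\ooo_X((p-1)\rho)$; this would yield both claims of the proposition simultaneously.

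The first step is to verify that the image of $\psi$ lies in the Steinberg block. Since $\st = L_{(p-1)\rho}$ sits in the $(p-1)\rho$-block of $\msf{Rep}~G_1$, the central idempotent $\pi_{(p-1)\rho}\in \uzero$ acts as the identity on $\st$, while $\pi_\mu$ for $\mu \neq (p-1)\rho$ acts as zero. By the $G_1\times G_1$-equivariance of $\psi$, the image is killed by every $\pi_\mu$ with $\mu \neq (p-1)\rho$ (applied through either the left or right copy of $\uzero$), so it factors through $\pi_{(p-1)\rho}\fr_*\ooo_X((p-1)\rho)$.

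The second step is to match ranks. By Lemma~\ref{lemma:central_idempotents_decomp_into_vector_bundles}, the rank of $\pi_{(p-1)\rho}\fr_*\ooo_X((p-1)\rho)$ equals $\dim_\ff A_{(p-1)\rho}$. Because $\st$ is simultaneously simple and projective (hence injective) in $\msf{Rep}~G_1$, the Steinberg block is semisimple with $\st$ as its unique simple module, so $A_{(p-1)\rho}\cong \textnormal{End}_\ff(\st)$, and therefore
\[ \dim_\ff A_{(p-1)\rho} = (\dim \st)^2 = p^{2\dim \wt{G}/\wt{B}}, \]
which coincides with the rank of $\st\otimes \st\otimes \ooo_X$. (Note that this direct argument for the Steinberg block avoids the ``$\mf{g}$ simple of classical type'' hypothesis required by Proposition~\ref{prop:size_of_blocks}.) Since $\psi$ is fiberwise injective by its construction in Theorem~\ref{thm:embedding_of_L_otimes_L}, the induced map $\st\otimes \st\otimes \ooo_X \to \pi_{(p-1)\rho}\fr_*\ooo_X((p-1)\rho)$ is a fiberwise injection of vector bundles of equal rank, hence an isomorphism.

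The main obstacle, such as it is, amounts to keeping the two-sided $G_1\times G_1$-structure consistent with the action of the central idempotents from a single copy of $\uzero$; this is resolved by centrality of $\pi_{(p-1)\rho}$, which makes the left and right actions coincide on the Steinberg block and eliminates any ambiguity in the factorization above.
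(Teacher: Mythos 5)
Your proof is correct and follows essentially the same strategy as the paper's: combine the $G_1\times G_1$-equivariant embedding of Corollary~\ref{cor:embedding_of_St_otimes_St} with a rank comparison against $\pi_{(p-1)\rho}\fr_*\ooo_X((p-1)\rho)$. The only (minor) difference is that you derive $A_{(p-1)\rho}\cong\textnormal{End}_\ff(\st)$ directly from the fact that $\st$ is simultaneously simple and projective (so the Steinberg block is semisimple), and you spell out explicitly why the image of $\psi$ lands in the $(p-1)\rho$-block, whereas the paper cites \cite[\S3.19]{brown2001ramification} for the former and treats the latter as immediate.
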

\begin{proof}
The block of $\uuu_0(\mf{g})$ containing the irreducible representation $L_{(p-1)\rho}=\st$ contains no other irreducible representations. From \cite[\S3.19]{brown2001ramification}, this block is isomorphic to $\textnormal{End}(\st)$, and hence has dimension $\dim \st\otimes \st$. On the other hand, by Corollary~\ref{cor:embedding_of_St_otimes_St}, we have that $\st\otimes \st\otimes \ooo_X\subseteq \pi_{(p-1)\rho}\fr_*\ooo_X((p-1)\rho)$. Now by comparing ranks, we find that we have an embedding of a vector bundle of rank $\dim \st\otimes \st$ into another of the same rank, hence they must be equal.
\end{proof}
\begin{corollary}
The line bundle $\ooo_X(\lambda)$ has multiplicity at least $\dim(\st\otimes \st)=(\dim \st)^2=p^{2\dim \wt{G}/\wt{B}}$ as a direct summand of $\fr_*\ooo_X(p\lambda+(p-1)\rho)$.
\end{corollary}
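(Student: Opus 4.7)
The plan is to deduce this corollary directly from Proposition~\ref{prop:splitting_of_Os_from_p-1_rho} by applying the projection formula and tensoring by $\ooo_X(\lambda)$. The key identity is that for any line bundles $\lll,\lll'$ one has $\lll \otimes \fr_*\lll' \cong \fr_*(\lll' \otimes \fr^*\lll) \cong \fr_*(\lll' \otimes \lll^{\otimes p})$, which was recalled in \S\ref{subsection:frobenius_morphism}.

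Applying this with $\lll = \ooo_X(\lambda)$ and $\lll' = \ooo_X((p-1)\rho)$ yields the isomorphism
\[
\ooo_X(\lambda) \otimes \fr_*\ooo_X((p-1)\rho) \;\cong\; \fr_*\ooo_X(p\lambda + (p-1)\rho).
\]
By Proposition~\ref{prop:splitting_of_Os_from_p-1_rho}, there is a decomposition
\[
\fr_*\ooo_X((p-1)\rho) \;\cong\; \bigl(\st \otimes \st \otimes \ooo_X\bigr) \oplus \fff
\]
for some complementary vector subbundle $\fff$. Tensoring this decomposition with the line bundle $\ooo_X(\lambda)$ preserves direct summands (since tensoring with an invertible sheaf is exact and commutes with direct sums), so we obtain
\[
\fr_*\ooo_X(p\lambda + (p-1)\rho) \;\cong\; \bigl(\st \otimes \st \otimes \ooo_X(\lambda)\bigr) \oplus \bigl(\ooo_X(\lambda) \otimes \fff\bigr).
\]
Since $\st \otimes \st$ is a finite-dimensional $\ff$-vector space of dimension $(\dim \st)^2 = p^{2\dim \wt{G}/\wt{B}}$, the summand $\st \otimes \st \otimes \ooo_X(\lambda)$ is isomorphic to $\ooo_X(\lambda)^{\oplus p^{2\dim \wt{G}/\wt{B}}}$, which gives the claimed multiplicity.

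There is no serious obstacle here: the result is a formal consequence of Proposition~\ref{prop:splitting_of_Os_from_p-1_rho} together with the projection formula. The only minor point to be careful about is that the direct summand in Proposition~\ref{prop:splitting_of_Os_from_p-1_rho} is genuinely of the form $\st \otimes \st \otimes \ooo_X$ as an $\ooo_X$-module (i.e., a trivial bundle with fiber $\st \otimes \st$), so that after tensoring with $\ooo_X(\lambda)$ it becomes a direct sum of copies of $\ooo_X(\lambda)$; this is exactly what Proposition~\ref{prop:splitting_of_Os_from_p-1_rho} asserts.
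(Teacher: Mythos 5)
Your proof is correct and follows the same route as the paper: reduce to the $\lambda=0$ case (which is Proposition~\ref{prop:splitting_of_Os_from_p-1_rho}) and then transport the splitting by tensoring with $\ooo_X(\lambda)$ via the projection formula. You have simply spelled out explicitly the two steps the paper compresses into one line.
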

\begin{proof}
The case of $\lambda=0$ is immediately deduced from the previous proposition. The general case follows from projection formula.
\end{proof}
\begin{remark}
In Theorem~\ref{thm:determine_line_bundles_for_p-1_rho_block}, we show that for \textit{any} line bundle, we have a similar behavior where the block associated to $(p-1)\rho$ splits into line bundles:
\[ \pi_{(p-1)\rho}\fr_*\lll\cong \st\otimes\st\otimes \lll'.\]
Furthermore, we determine $\lll'$ based on $\lll$. In view of this, Proposition~\ref{prop:splitting_of_Os_from_p-1_rho} is an immediate corollary.
\end{remark}

\subsection{Primitive idempotents}
\label{subsection:primitive_idempotents}
Now, our goal is to apply Proposition~\ref{prop:independence_of_line_bundles} to a system of primitive idempotents. Unlike in \S\ref{subsection:Central_idempotents}, these idempotents will no longer be central, as we decompose the central idempotents into pairwise orthogonal primitive idempotents. To start, we'll construct the idempotents.

\begin{lemma}
\label{lemma:primitive_idempotents_U_0}
There exist pairwise orthogonal primitive idempotents $\{e_\lambda^i\}$ of $\uuu_0(\mf{g})$ which sum to $1\in\uuu_0(\mf{g})$, indexed by $(\lambda,i)$ where $\lambda\in\Lambda_p $ and $i=1,2,\dots,\dim L_\lambda$, where $L_\lambda$ is the irreducible $G_1$-representation of weight $\lambda$. The idempotents corresponding to $\lambda$ are associated to the matrix algebra $\textnormal{End}(L_\lambda)$ in $\uuu_0(\mf{g})/\textnormal{rad}$.
\end{lemma}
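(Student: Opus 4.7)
The plan is to invoke the standard Wedderburn structure theory together with the lifting of idempotents modulo a nilpotent ideal. First I would note that $\uuu_0(\mf{g})$ is a finite-dimensional associative $\ff$-algebra (over the algebraically closed field $\ff$), so its Jacobson radical $J = \textnormal{rad}(\uuu_0(\mf{g}))$ is a nilpotent two-sided ideal, and the quotient $\uuu_0(\mf{g})/J$ is a finite-dimensional semisimple $\ff$-algebra. By Artin--Wedderburn, combined with the fact (recalled in \S\ref{subsection:representation_theory_of_G_1}) that the isomorphism classes of simple $\uuu_0(\mf{g})$-modules are exactly the $L_\lambda$ for $\lambda \in \Lambda_p$, we obtain an isomorphism of $\ff$-algebras
\[
\uuu_0(\mf{g})/J \;\cong\; \prod_{\lambda \in \Lambda_p} \textnormal{End}_\ff(L_\lambda).
\]

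Next I would construct the idempotents in the semisimple quotient. In each factor $\textnormal{End}_\ff(L_\lambda) \cong M_{\dim L_\lambda}(\ff)$, fix a basis of $L_\lambda$ and let $\bar{e}_\lambda^1, \dots, \bar{e}_\lambda^{\dim L_\lambda}$ be the corresponding diagonal matrix units $E_{ii}$. These are pairwise orthogonal primitive idempotents in $\textnormal{End}_\ff(L_\lambda)$ summing to its identity (primitivity is clear because each $E_{ii}$ acts as the identity on a one-dimensional simple submodule of $L_\lambda$ and zero on the complementary subspace, so it cannot be written as a sum of two orthogonal nonzero idempotents in the simple factor). Collecting these across all $\lambda \in \Lambda_p$ gives a complete system of pairwise orthogonal primitive idempotents in $\uuu_0(\mf{g})/J$ that sums to $1$.

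The final step is to lift this system through the surjection $\uuu_0(\mf{g}) \twoheadrightarrow \uuu_0(\mf{g})/J$. Because $J$ is nilpotent, the classical idempotent-lifting theorem (see e.g.\ \cite[Theorem~1.7.3]{curtis1966representation} or \cite[\S6]{curtis1966representation}) guarantees that any finite family of pairwise orthogonal idempotents in $\uuu_0(\mf{g})/J$ lifts to a family of pairwise orthogonal idempotents in $\uuu_0(\mf{g})$, and moreover that a primitive idempotent lifts to a primitive idempotent (if a lift $e$ of a primitive $\bar{e}$ decomposed as $e = e' + e''$ with $e', e''$ orthogonal idempotents, their images $\bar{e}', \bar{e}''$ would give a nontrivial orthogonal decomposition of $\bar{e}$). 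Applying this to the system $\{\bar{e}_\lambda^i\}$ produces the desired $\{e_\lambda^i\} \subset \uuu_0(\mf{g})$: pairwise orthogonal primitive idempotents summing to $1$, indexed by pairs $(\lambda, i)$ with $\lambda \in \Lambda_p$ and $1 \le i \le \dim L_\lambda$, and by construction the idempotent $e_\lambda^i$ projects modulo $J$ onto the $i$-th diagonal matrix unit of the $\textnormal{End}(L_\lambda)$-factor, proving the association claim.

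There is no real obstacle here: the result is a direct application of Wedderburn plus idempotent lifting, and the only verification specific to our setting is the identification of the simple modules as $\{L_\lambda\}_{\lambda\in\Lambda_p}$, which has already been recorded in \S\ref{subsection:representation_theory_of_G_1}.
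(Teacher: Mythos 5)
Your proof is correct and follows essentially the same path as the paper's: Wedderburn--Artin on $\uuu_0(\mf{g})/\textnormal{rad}$, identification of the simple modules with $\{L_\lambda\}_{\lambda\in\Lambda_p}$, construction of diagonal matrix units as primitive orthogonal idempotents in the semisimple quotient, and lifting them through the nilpotent radical. The paper packages this as a general-purpose lemma (Lemma~\ref{lemma:pairwise_orthogonal_primitive_idempotents}) and cites Lam rather than Curtis--Reiner for the lifting, but the argument is the same.
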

\begin{proof}
We apply Lemma~\ref{lemma:pairwise_orthogonal_primitive_idempotents} with $A=\uzero$. In particular, we have $\ll=\Lambda_p$, and the irreducibles are precisely $L_\lambda$.
\end{proof}

\begin{notation}
Denote by $\mathscr{E}$ the set of idempotents constructed in Lemma~\ref{lemma:primitive_idempotents_U_0}, namely the set of $e_\lambda^i$ for $\lambda\in\Lambda_p $ and $i=1,2,\dots,\dim L_\lambda$.
\end{notation}

\begin{notation}
For $\lambda\in\Lambda_p $, we denote $P_\lambda\coloneqq P_{L_\lambda}$, the projective cover of $L_\lambda$.
\end{notation}

The main point of our primitive idempotents is to decompose $\fr_*\lll$, or equivalently $\uuu_0(\mf{g})$:
\begin{proposition}
\label{prop:decomp_of_fr_*L_by_two_sided_idempotents}
Let $\lll$ be any line bundle on $X$. We have the decomposition of $\fr_*\lll$ into vector bundles
\[ \fr_*\lll= \bigoplus_{(e_\mu^i, e_\lambda^j)\in\mathscr{E}\times \mathscr{E}} e_\mu^i(\fr_*\lll)  e_\lambda^j,\] where 
\[\textnormal{rk }e_\mu^i(\fr_*\lll)e_\lambda^j=  [L_\mu: P_\lambda].\]
\end{proposition}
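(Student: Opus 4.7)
The decomposition itself is an immediate application of Proposition~\ref{prop:independence_of_line_bundles} to the system $\mathscr{E}$ of primitive idempotents constructed in Lemma~\ref{lemma:primitive_idempotents_U_0}: since these are pairwise orthogonal and sum to $1\in\uzero$, we obtain the stated splitting of $\fr_*\lll$, with $\textnormal{rk}\,e_\mu^i(\fr_*\lll)e_\lambda^j = \dim_\ff e_\mu^i \uzero e_\lambda^j$, independent of $\lll$. So the entire content of the proposition reduces to computing this dimension.

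The plan is to identify $e_\mu^i \uzero e_\lambda^j$ with $e_\mu^i P_\lambda$ and then invoke the standard characterization of primitive idempotents over an algebraically closed field. First I would note that as a left $\uzero$-module, $\uzero e_\lambda^j$ is isomorphic to $P_\lambda$, the projective cover of $L_\lambda$: this follows from Lemma~\ref{lemma:primitive_idempotents_U_0}, where $e_\lambda^j$ is constructed as a lift of a primitive idempotent in $\textnormal{End}(L_\lambda)\subset \uzero/\textnormal{rad}$, so that its top is $L_\lambda$ and $\uzero e_\lambda^j$ is indecomposable projective with simple quotient $L_\lambda$. Left-multiplying by $e_\mu^i$ therefore gives
\[ e_\mu^i \uzero e_\lambda^j \;\cong\; e_\mu^i \cdot P_\lambda.\]

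Now the key fact: for a finite-dimensional algebra $A$ over the algebraically closed field $\ff$ and any primitive idempotent $e\in A$ with associated simple $L$ (i.e.\ $Ae$ has head $L$), we have $\dim_\ff eL' = \delta_{L,L'}$ for any simple $L'$. Indeed, $eL' \cong \hom{A}(Ae, L') = \hom{A}(P_L, L')$, which by Schur's lemma and the algebraic closedness of $\ff$ is $\ff$ if $L'\cong L$ and $0$ otherwise. By induction on the length of a composition series, it follows that for any finite-dimensional $A$-module $M$,
\[ \dim_\ff eM = [M:L],\]
the multiplicity of $L$ as a composition factor of $M$. Applying this to $A=\uzero$, $e = e_\mu^i$, $L=L_\mu$, $M=P_\lambda$, we conclude
\[ \dim_\ff e_\mu^i \uzero e_\lambda^j = \dim_\ff e_\mu^i P_\lambda = [P_\lambda : L_\mu],\]
which in the paper's notation is $[L_\mu : P_\lambda]$. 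This gives the stated rank and completes the proof.

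There is no real obstacle here beyond bookkeeping: the nontrivial geometric input (reducing from an assertion about vector bundle ranks to one about dimensions of subspaces of $\uzero$) was already packaged into Proposition~\ref{prop:independence_of_line_bundles}, and after that the computation is purely a standard exercise in the representation theory of finite-dimensional algebras. If anything, the most delicate point to state cleanly is the normalization that $e_\mu^i$ corresponds to $L_\mu$ (rather than some other simple in its block), which is exactly what is guaranteed by the construction in Lemma~\ref{lemma:primitive_idempotents_U_0}.
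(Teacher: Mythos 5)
Your proof is correct and follows essentially the same route as the paper: the paper also applies Proposition~\ref{prop:independence_of_line_bundles} to $\mathscr{E}$ and then computes $\dim e_\mu^i \uzero e_\lambda^j$ via the identification $\uzero e_\lambda^j \cong P_\lambda$ and the chain $e_\mu^i P_\lambda \cong \hom{\uzero}(P_{L_\mu},P_\lambda)$, whose dimension is $[L_\mu:P_\lambda]$ (the paper packages this as Proposition~\ref{prop:appendix:dimension_of_terms}, citing Curtis--Reiner, where you instead inline the Schur-lemma-plus-composition-series argument). The only difference is expository.
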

\begin{proof}
The decomposition of $\fr_*\lll$ follows immediately from applying Proposition~\ref{prop:independence_of_line_bundles} to the set of idempotents $\mathscr{E}$. The statement of the rank follows from applying Proposition~\ref{prop:appendix:dimension_of_terms} for $A=\uzero$ and $\ll=\mathscr{E}$.
\end{proof}

We now turn our attention to computing the ranks of the vector bundles, or equivalently, the values $[L_\mu:P_\lambda]$.

\subsection{Computation of dimensions}
\label{subsection:computation_of_dimensions}

Our goal now is to compute the multiplicities $[L_\mu:P_\lambda]$.

\begin{notation}
Let $\Delta_\lambda\coloneqq \uzero\otimes_{\uuu_0(\mf{b})} \ff_{\lambda}$ denote the baby Verma module associated to weight $\lambda$, as described in \cite{ciappara2021lectures}. (They are also described in \cite{humphreys1971modular} and \cite{humphreys2006ordinary}, but they are denoted by $Z_\lambda$ in these references.)
\end{notation}

\begin{definition}
Let $K_0(\msf{Rep}~\uzero)$ denote the Grothendieck group of the category $\msf{Rep}~\uzero$.
\end{definition}

\begin{definition}
\label{definition:d_lambda}
Following \cite{humphreys2006ordinary}, define $d_\lambda$ to be the multiplicity of $L_\lambda$ as a composition factor of $\Delta_\lambda$ in $K_0(\msf{Rep}~\uzero)$, i.e. 
\[d_\lambda \coloneqq [L_\lambda : \Delta_\lambda].\]
\end{definition}

\begin{notation}
\label{notation:a_lambda}
We denote 
\[a_\lambda\coloneqq |W|/|W(\lambda)|,\]
the size of the linkage class of $\lambda$.
\end{notation}
\begin{notation}
We write $\mu\sim \lambda$ to express that $\mu,\lambda$ belong to the same linkage class, i.e. that there exists some $w\in W$ such that $w\cdot \mu=\lambda$.
\end{notation}

Now we are able to express the values $[L_\mu:P_\lambda]$, and arrive at our main result concerning the decomposition of $\fr_*\lll$ into a direct sum of vector bundles, and prove Theorem~\ref{thmC:decomp_of_frob_push_as_vector_subbundles}.
\begin{theorem}
\label{thm:decomp_line_bundle_into_vector_bundles}
Let $\lll$ be a line bundle on $X$. We have the following decomposition of $\fr_*\lll$ into vector subbundles:
\[ \fr_*\lll=\bigoplus_{(e_\mu^i,e_\lambda^j)\in\mathscr{E}\times \mathscr{E}} e_\mu^i (\fr_*\lll) e_\lambda^j,\]
where 
\[ \textnormal{rk }e_\mu^i (\fr_*\lll) e_\lambda^j=
\begin{cases}
a_\lambda \cdot d_\lambda \cdot d_\mu & \mu\sim \lambda,\\
0 & \mu\not\sim \lambda.
\end{cases}\]
In other words, we have an abstract decomposition

\begin{equation}
\label{eq:decomp_frob_push_into_vector_subbundles}
\fr_*\lll \cong\bigoplus_{\lambda\in \Lambda_p }\bigoplus_{\mu\sim \lambda} \bigoplus_{\substack{1\le i\le \dim L_\mu\\ 1\le j\le \dim L_\lambda}}\fff_{\mu,\lambda}^{i,j}, \tag{$\star$}
\end{equation}

where $\fff_{\mu,\lambda}^{i,j}$ is a vector bundle of rank $a_\lambda d_\lambda d_\mu$, and these can be chosen to be vector subbundles (so that the isomorphism is in fact an equality).

In particular, the ranks of the summands are uniformly bounded by $(\max_{\lambda} d_\lambda)^2\cdot |W|$, and for $p\gggg 0$ this is independent of $p$ (see Theorem~\ref{thm:behavior_of_d_lambda}).
\end{theorem}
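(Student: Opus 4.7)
The plan is to combine Proposition~\ref{prop:decomp_of_fr_*L_by_two_sided_idempotents}, which already supplies both the decomposition and the rank formula $\textnormal{rk}\,e_\mu^i(\fr_*\lll)e_\lambda^j = [L_\mu:P_\lambda]$, with a computation of these Cartan invariants inside $\uzero$. Vanishing for $\mu\not\sim\lambda$ is immediate from the linkage principle (Proposition~\ref{linkage_principle_U(g)}): $L_\mu$ and $P_\lambda$ then lie in different blocks, so $L_\mu$ cannot appear as a composition factor of $P_\lambda$.

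For $\mu\sim\lambda$ the plan is to invoke BGG reciprocity for $\uzero$. The projective cover $P_\lambda$ carries a $\Delta$-filtration with filtration multiplicities $(P_\lambda:\Delta_\nu)=[\Delta_\nu:L_\lambda]$, and refining this filtration into a composition series gives
\[
[L_\mu:P_\lambda] \;=\; \sum_{\nu}(P_\lambda:\Delta_\nu)\,[\Delta_\nu:L_\mu] \;=\; \sum_{\nu\sim\lambda}[\Delta_\nu:L_\lambda]\,[\Delta_\nu:L_\mu].
\]
The crucial input needed to reach the target formula is the identity $[\Delta_\nu:L_\mu]=d_\mu$ whenever $\nu\sim\mu$: the multiplicity of a simple $L_\mu$ in a baby Verma $\Delta_\nu$ depends only on $\mu$ as long as $\nu$ ranges over the common linkage class. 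Granting this, every term in the sum collapses to $d_\lambda\cdot d_\mu$, and summing over the $a_\lambda=|W|/|W(\lambda)|$ weights $\nu\sim\lambda$ yields $[L_\mu:P_\lambda]=a_\lambda d_\lambda d_\mu$ exactly.

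Once the ranks are pinned down, the abstract decomposition is a direct reindexing of Proposition~\ref{prop:decomp_of_fr_*L_by_two_sided_idempotents}: for each block and each ordered pair $(\mu,\lambda)$ of weights in that block, the primitive idempotents $\{e_\mu^i\}_{1\le i\le \dim L_\mu}$ and $\{e_\lambda^j\}_{1\le j\le \dim L_\lambda}$ produce $\dim L_\mu\cdot \dim L_\lambda$ summands $\fff_{\mu,\lambda}^{i,j}:=e_\mu^i(\fr_*\lll)e_\lambda^j$, each of rank $a_\lambda d_\lambda d_\mu$. Because each summand is literally obtained by applying an idempotent to $\fr_*\lll$, it is a genuine subsheaf and hence a vector subbundle of $\fr_*\lll$, so the abstract isomorphism upgrades to an equality. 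The uniform bound $a_\lambda d_\lambda d_\mu\le(\max_\lambda d_\lambda)^2|W|$ is then immediate from $a_\lambda\le|W|$, with Theorem~\ref{thm:behavior_of_d_lambda} supplying the $p\gggg 0$ stability.

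The hard step will be establishing $[\Delta_\nu:L_\mu]=d_\mu$ uniformly across linked $\nu,\mu$. A~priori the formal character of $\Delta_\nu$ genuinely depends on $\nu$ (it differs from $\textnormal{ch}\,\Delta_\mu$ by the weight shift $e^{\nu-\mu}$), so this is a nontrivial combinatorial symmetry of the decomposition matrix restricted to a block rather than a formal consequence of BGG reciprocity. I~expect to derive it from a translation-principle argument together with structural facts about baby Verma modules from~\cite{humphreys2006ordinary}, with the detailed case analysis and small-rank verifications deferred to Appendix~\ref{section:appendix:decomposition_numbers}.
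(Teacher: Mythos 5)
Your proposal is correct and follows essentially the same route as the paper: Proposition~\ref{prop:decomp_of_fr_*L_by_two_sided_idempotents} reduces the theorem to computing $[L_\mu:P_\lambda]$, and the rest is BGG reciprocity plus the fact that $[\Delta_\nu]\in K_0(\msf{Rep}\,\uzero)$ is constant on linkage classes. The ``hard step'' you correctly isolate --- that $[\Delta_\nu:L_\mu]=d_\mu$ for all $\nu\sim\mu$, equivalently $[\Delta_\nu]=[\Delta_\mu]$ in $K_0$ for linked $\nu,\mu$ --- is not something the paper proves either; it is exactly what is packaged, together with BGG reciprocity, in the cited Theorem of \cite[\S5.4]{humphreys2006ordinary} (proved there by the translation-functor argument you anticipate), and the paper's proof is simply the one-line chain $[P_\lambda]=\sum_{\mu\sim\lambda}d_\lambda[\Delta_\mu]=a_\lambda d_\lambda[\Delta_\lambda]=a_\lambda d_\lambda\sum_{\mu\sim\lambda}d_\mu[L_\mu]$ read off from that citation.
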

We remark that for fixed $\lambda,\mu\in\Lambda$, the values of $a_\lambda,d_\lambda,d_\mu$ are independent of $p$ for $p\gggg 0$ (see Theorem~\ref{thm:behavior_of_d_lambda} and \cite{andersen1994representations}), exhibiting a notion of ``$p$-uniformity," which is discussed in \cite[\S1.2, page~3]{raedschelders2019frobenius}.
\begin{proof}
From Proposition~\ref{prop:decomp_of_fr_*L_by_two_sided_idempotents}, we have the decomposition of $\fr_*\lll$ into the above vector subbundles, and we have that
\[ \textnormal{rk }e_\mu^i (\fr_*\lll) e_\lambda^j=[L_\mu:P_\lambda].\]
From \cite[Theorem,~\S5.4]{humphreys2006ordinary}, we have the following equalities in $K_0(\msf{Rep}~\uzero)$:
\[ [P_\lambda] = \sum_{\mu\sim \lambda}d_\lambda [\Delta_\mu] = a_\lambda d_\lambda [\Delta_\lambda]=a_\lambda d_\lambda \sum_{\mu\sim \lambda}d_\mu [L_\mu].\]
From this, we note that for $\mu\not\sim \lambda$, then $[P_\lambda]$ does not have $[L_\mu]$ as a composition factor, while for $\mu\sim \lambda$, each $[L_\mu]$ will appear with multiplicity $a_\lambda d_\lambda d_\mu$. Therefore we find that
\[
\dim e_\mu^i \uzero e_\lambda^j = [L_\mu:P_\lambda] = 
\begin{cases}
a_\lambda \cdot d_\lambda \cdot d_\mu & \mu\sim \lambda,\\
0 & \mu\not\sim \lambda,
\end{cases}
\]
and the theorem follows.
\end{proof}

Note that the ranks are often \textit{not} $1$; in fact, as we will see, the only case is when $\mu=\lambda=(p-1)\rho$. However, \cite[Theorem~1]{achinger2015characterization} shows that smooth toric varieties are completely characterized, among all projective connected schemes (over an algebraically closed field of characteristic $p$), by the property that the Frobenius pushforward of any line bundle decomposes into a direct sum of line bundles. In particular, for $X$ the wonderful compactification and $\lll$ a line bundle on $X$, then $\fr_*\lll$ does \textit{not} decompose into a direct sum of line bundles (except for the one case where $X$ is a toric variety, namely $G=\psl_2$).
\begin{remark}
\label{remark:compute_dim_L_lambda}
In general, computing $a_\lambda$ is straightforward and there exist explicit formulas. It remains to compute $d_\lambda$ and $\dim L_\lambda$. The values of $d_\lambda$ are of interest, but not completely understood in full generality, as discussed in \cite{humphreys2006ordinary}. See Appendix~\ref{section:appendix:decomposition_numbers} for a complete description of the ranks of the summands in certain small cases. We also discuss general properties of the decomposition numbers $d_\lambda$, and describe the general algorithm to compute the $d_\lambda$ in \S\ref{subsection:compute_d_lambda} for $p\gggg 0$. The dimensions of $L_\lambda$ (which tell us how many of each type of summand there are) can be computed generally in a very similar fashion (also for $p\gggg 0$); see Remark~\ref{KL_dim}.
\end{remark}

Of particular interest is determining which summands of $(\star)$ are line bundles.
\begin{corollary}
\label{cor:steinberg_splits_into_line_bundles_general}
When $\lambda=(p-1)\rho$, we have that the $\fff_{\mu,\lambda}^{i,j}$ (in $(\star)$, in the notation of Theorem~\ref{thm:decomp_line_bundle_into_vector_bundles}) are line bundles whenever $\mu\sim \lambda$. Furthermore, every $\fff_{\mu,\lambda}^{i,j}$ which is a line bundle satisfies $\mu\sim\lambda=(p-1)\rho$.
\end{corollary}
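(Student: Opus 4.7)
The plan is to read off both claims directly from the rank formula of Theorem~\ref{thm:decomp_line_bundle_into_vector_bundles}: when $\mu\sim\lambda$ one has $\textnormal{rk}\,\fff_{\mu,\lambda}^{i,j} = a_\lambda d_\lambda d_\mu$, and otherwise the summand is zero. Hence $\fff_{\mu,\lambda}^{i,j}$ is a line bundle if and only if $\mu\sim\lambda$ and $a_\lambda=d_\lambda=d_\mu=1$.

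For the first claim I would fix $\lambda=(p-1)\rho$ and verify each of the three factors. Since $(p-1)\rho\equiv -\rho\pmod{p\Lambda}$ and $w\cdot(-\rho)=w(0)-\rho=-\rho$ for every $w\in W$, the dot-orbit of the class of $(p-1)\rho$ in $\Lambda/p\Lambda$ is a single point, giving $a_{(p-1)\rho}=|W|/|W((p-1)\rho)|=1$; consequently the only $\mu\in\Lambda_p$ linked to $(p-1)\rho$ is $(p-1)\rho$ itself. The equality $d_{(p-1)\rho}=1$ follows from the properties of the Steinberg module recalled earlier: $\dim\Delta_{(p-1)\rho}=p^{|\Phi^+|}=p^{\dim\wt{G}/\wt{B}}=\dim\st$, so the canonical surjection $\Delta_{(p-1)\rho}\onto L_{(p-1)\rho}=\st$ is an isomorphism, making $\Delta_{(p-1)\rho}$ simple with $L_{(p-1)\rho}$ appearing as a composition factor with multiplicity one. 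The product of the three factors is then $1$, as required.

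For the converse, suppose $\fff_{\mu,\lambda}^{i,j}$ is a line bundle; then $a_\lambda=1$, i.e.\ $[\lambda+\rho]$ is $W$-invariant in $\Lambda/p\Lambda$. The crux is to show that under the standing assumption on $p$ the only $W$-invariant element of $\Lambda/p\Lambda$ is $0$. For this I would apply each simple reflection $s_i$ to a $W$-invariant class $[v]$: one has $s_iv-v=-\langle v,\alpha_i^\vee\rangle\alpha_i\in p\Lambda$, and expanding $\alpha_i$ in the basis of fundamental weights (whose $\omega_i$-coefficient is the diagonal entry $2$ of the Cartan matrix) forces $\langle v,\alpha_i^\vee\rangle\equiv 0\pmod p$ for every $i$; writing $v=\sum_j v_j\omega_j$ then gives $v_j\equiv 0\pmod p$ for all $j$, so $v\in p\Lambda$. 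Hence $\lambda=(p-1)\rho$ in $\Lambda_p$, and $\mu\sim\lambda$ forces $\mu=(p-1)\rho$ as well. The main obstacle is this combinatorial uniqueness statement, in particular handling the $p=2$ types compatible with the standing hypothesis (most notably $G=\psl_n$ with $n$ odd) via a short separate check analogous to the calculation of Corollary~\ref{cor:sl_n_blocks_dimensions}; the rest is routine unpacking of the rank formula.
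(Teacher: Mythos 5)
Your argument is correct and follows the same route as the paper: read off $\textnormal{rk}\,\fff_{\mu,\lambda}^{i,j}=a_\lambda d_\lambda d_\mu$ from Theorem~\ref{thm:decomp_line_bundle_into_vector_bundles}, observe that all three factors are positive integers, and identify when they are all $1$. The difference is that where the paper simply cites or asserts the needed facts, you supply short self-contained verifications, and two of them are worth highlighting. For $d_{(p-1)\rho}=1$ you use the dimension count $\dim\Delta_{(p-1)\rho}=p^{|\Phi^+|}=\dim\st$ together with the surjection onto $L_{(p-1)\rho}=\st$; this is cleaner than the paper's appeal to \cite[\S9.2]{humphreys2006ordinary}, which as stated there applies to $p$-regular weights, whereas $(p-1)\rho$ sits on the walls and technically falls only into the ``upper closure'' extension of that fact. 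For the converse you actually prove the paper's bare assertion that $(p-1)\rho$ is the unique $(W,\cdot)$-fixed point of $\Lambda_p$, by showing $\langle v,\alpha_i^\vee\rangle\alpha_i\in p\Lambda$ for all $i$ forces $v\in p\Lambda$ via the Cartan-matrix coefficients, and you correctly flag that when $p=2$ the diagonal entry $2$ is useless, so one must use the off-diagonal entries (all equal to $-1$ in type $A$, the only type consistent with $p\nmid h$ when $p=2$). That subtlety is real and worth recording. Both directions of your proof go through; the extra detail strengthens rather than changes the paper's argument.
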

\begin{proof}
For $\lambda=(p-1)\rho$, there are no other $\mu$ in its linkage class: $\mu\sim\lambda\implies \mu=\lambda$. It follows that we have 
\[(\dim L_\lambda)(\dim L_\lambda)=(\dim \st)(\dim \st)=\dim \st\otimes \st\] sheaves $\fff_{\mu,\lambda}^{i,j}$ corresponding to the right idempotent $\lambda=(p-1)\rho$, and their ranks are 
\[a_{(p-1)\rho}d_{(p-1)\rho}^2=d_{(p-1)\rho}^2.\]
But by \cite[\S9.2]{humphreys2006ordinary} and the fact that $(p-1)\rho$ is maximal in its linkage class (tautologically, since it is the only member of its linkage class), we have $d_{(p-1)\rho}=1$. It follows that 
\[\textnormal{rk }\fff_{\mu,\lambda}^{i,j}=1\text{ for }\mu\sim \lambda=(p-1)\rho.\]

The converse follows immediately from noting that if the rank is $1$, then $a_\lambda=1$, and the only weight in $\Lambda_p$ which is stable under all of $(W,\cdot)$ is $(p-1)\rho$.
\end{proof}

We are, of course, interested in what these line bundles are; it turns out that we can describe them explicitly, which is the content of Theorem~\ref{thmD:st_st_component_description}.

\begin{theorem}
\label{thm:determine_line_bundles_for_p-1_rho_block}
Let $\lambda\in \Lambda$, and let $\mu$ be maximal with respect to $\succeq$ such that $\lambda-p\mu\ge (p-1)\rho$. Then 
\[\pi_{(p-1)\rho}\fr_*\ooo_X(\lambda)\cong \st\otimes  \st \otimes \ooo_X(\mu).\]
In particular, in the notation of Theorem~\ref{thm:decomp_line_bundle_into_vector_bundles},
\[\fff_{(p-1)\rho, (p-1)\rho}^{i,j}\cong \ooo_X(\mu).\]
\end{theorem}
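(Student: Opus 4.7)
The plan is to use Morita equivalence for the matrix algebra $\textnormal{End}(\st\otimes\st)$ together with projection formula and the base case $\mu_{(p-1)\rho}=0$ from Proposition~\ref{prop:splitting_of_Os_from_p-1_rho}.

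First I would establish that $\pi_{(p-1)\rho}\fr_*\ooo_X(\lambda)\cong \st\otimes\st\otimes \ooo_X(\mu_\lambda)$ for a unique $\mu_\lambda\in\Lambda$. By Corollary~\ref{cor:steinberg_splits_into_line_bundles_general} the summands $\fff_{(p-1)\rho,(p-1)\rho}^{i,j}$ are line bundles, and as in the proof of Proposition~\ref{prop:splitting_of_Os_from_p-1_rho} the $((p-1)\rho,(p-1)\rho)$-block of $\uzero\otimes\uzero$ is the matrix algebra $\textnormal{End}(\st\otimes\st)$, whose unique simple is $\st\otimes\st$. Morita equivalence for this matrix algebra expresses $\pi_{(p-1)\rho}\fr_*\ooo_X(\lambda)$ as $\st\otimes\st\otimes\mathcal{L}$ for a unique $\ooo_X$-module $\mathcal{L}$, which is a line bundle by rank comparison. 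Since each $\wt{G}\times\wt{G}$-linkage class lies in a single $G_1\times G_1$-linkage class (as $p\rrr\subseteq p\Lambda$), the block projection $\pi_{(p-1)\rho}$ sends $\wt{G}\times\wt{G}$-submodules to themselves; hence $\mathcal{L}$ inherits a $\wt{G}\times\wt{G}$-equivariant structure and so $\mathcal{L}\cong\ooo_X(\mu_\lambda)$ for a unique $\mu_\lambda$, reducing the theorem to identifying $\mu_\lambda$ with the stated $\mu$.

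Next I would apply the projection formula $\fr_*\ooo_X(\lambda+p\gamma)\cong \ooo_X(\gamma)\otimes \fr_*\ooo_X(\lambda)$, under which $\ooo_X(\gamma)$ acquires trivial $G_1\times G_1$-action (being $\fr^*\ooo_X(\gamma)$), to obtain $\mu_{\lambda+p\gamma}=\mu_\lambda+\gamma$. Writing $\lambda=(p-1)\rho+p\mu+\sigma$ with $\sigma\coloneqq\lambda-p\mu-(p-1)\rho\in\rrr^+$ and setting $\nu\coloneqq(p-1)\rho+\sigma$, the theorem reduces to showing $\mu_\nu=0$ under the minimality condition on $\sigma$ (no nonzero $\delta\succeq 0$ satisfies $\sigma-p\delta\in\rrr^+$), which encodes the maximality of $\mu$.

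Finally I would conclude by sandwiching $\mu_\nu$. For the lower bound $\mu_\nu\succeq 0$: since $\nu\geq(p-1)\rho$, the good filtration of $F_{\leq\nu}$ by pieces $M_\tau\otimes M_{-w_0\tau}$ (dominant $\tau\leq\nu$) contains $M_{(p-1)\rho}\otimes M_{(p-1)\rho}=\st\otimes\st$, and projectivity of $\st\otimes\st$ in $\msf{Rep}~G_1\times G_1$ splits this off as a direct summand of $F_{\leq\nu}$, giving $\pi_{(p-1)\rho}F_{\leq\nu}\neq 0$; the Morita form on global sections then forces $F_{\leq\mu_\nu}\neq 0$ and hence $\mu_\nu\succeq 0$. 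For the matching bound $\sigma-p\mu_\nu\in\rrr^+$: projection formula gives $\pi_{(p-1)\rho}F_{\leq\nu-p\mu_\nu}\cong\st\otimes\st$ as a $\wt{G}\times\wt{G}$-submodule of $F_{\leq\nu-p\mu_\nu}$, supplying a $\wt{B}\times\wt{B}$-eigenvector of weight $((p-1)\rho,(p-1)\rho)$; tracking this eigenvector through the good filtration, only the piece $M_{(p-1)\rho}\otimes M_{(p-1)\rho}$ supports such an eigenvector, forcing $(p-1)\rho\leq\nu-p\mu_\nu$, i.e.\ $\sigma-p\mu_\nu\in\rrr^+$. Then the minimality of $\sigma$ forces $\mu_\nu=0$. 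The main obstacle will be running the good-filtration highest-weight argument with full rigor and verifying the refinement of block decompositions required to transfer the $\wt{G}\times\wt{G}$-structure through Morita.
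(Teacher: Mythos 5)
Your proposal is correct but takes a genuinely different route from the paper. The paper passes to the Vinberg monoid: it encodes $\eee=\pi_{(p-1)\rho}\fr_*\ooo_X(\lambda)$ as a $\Lambda$-graded module $M$ over $R=\ff[\wh X]$, observes (via Corollary~\ref{cor:steinberg_splits_into_line_bundles_general} and Corollary~\ref{cor:line_bundles_induce_free_A_module}) that $M$ is free of rank $\dim\st\otimes\st$, and then pins down the common twist $\gamma$ by a dimension count at the extremal graded piece $M_{-\gamma}\cong\pi_{(p-1)\rho}R_{\lambda-p\gamma}$. You instead (a) use Morita theory for the matrix-algebra block $\textnormal{End}(\st\otimes\st)$ to produce a canonical rank-one factor $\mathcal L$ and argue equivariance; (b) normalize to $\mu_\nu=0$ via the projection formula; and (c) sandwich $\mu_\nu$ using the good filtration of $F_{\leq\nu}$, getting the lower bound from projectivity--injectivity of $\st\otimes\st$ in $\msf{Rep}~G_1\times G_1$ and the upper bound by matching the $\wt B\times\wt B$-highest weight $((p-1)\rho,(p-1)\rho)$ against the weights $(\tau,-w_0\tau)$ of $\wt B\times\wt B$-eigenvectors of $F_{\leq\nu-p\mu_\nu}$, yielding $(p-1)\rho\le\nu-p\mu_\nu$. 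Both approaches share Corollary~\ref{cor:steinberg_splits_into_line_bundles_general} as input. Yours stays entirely inside the representation theory of $\wt G\times\wt G$ and $G_1\times G_1$ and avoids the Vinberg formalism, which is a genuine simplification of the supporting machinery; the price is care with equivariance under Morita (the $\wt G/G_1\times\wt G/G_1$-action by conjugation on $\hom{G_1\times G_1}(\st\otimes\st,-)$, which on the one-dimensional output in step (c) must be the trivial character since $\wt G$ is semisimple simply connected, ensuring that the eigenvector weight really is $((p-1)\rho,(p-1)\rho)$ and not a Frobenius-twisted variant) and with the fact that the $\wt B\times\wt B$-eigenvectors of $F_{\leq\tau}$ are exactly those of weight $(\tau',-w_0\tau')$ for dominant $\tau'\leq\tau$, each with multiplicity one. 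You correctly flag both points; they are fillable with standard facts, so the plan holds up.
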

\begin{proof}
Recall from \S\ref{subsection:appendix:vinberg_monoid} that 
\[R\coloneqq \ff[\wh{X}]=\bigoplus_{\lambda\in \Lambda}t^{\lambda}F_{\le \lambda},\]
and let $M$ be the $\Lambda$-graded $R$-module associated to the coherent sheaf $\eee=\pi_{(p-1)\rho}\fr_*\ooo_X(\lambda)$, as discussed in \S\ref{subsection:graded_module_vinberg_semigroup}. By Corollary~\ref{cor:steinberg_splits_into_line_bundles_general}, $\eee$ is isomorphic to a direct sum of line bundles. Therefore, by Corollary~\ref{cor:line_bundles_induce_free_A_module}, $M$ is a free $R$-module, clearly of rank $\dim\st\otimes  \st $ (by checking the rank of $\eee$). It follows that 
\[ M\cong \bigoplus_{i=1}^{\dim \st\otimes  \st } R(\lambda_i)\] as an $R$-module. Pick $\gamma$ to be any maximal $\lambda_i$ with respect to $\succeq$, and pick $v$ to be any nonzero vector in the lowest nonzero graded component of $R(\gamma)$, namely in $R(\gamma)_{-\gamma}\cong \ff$. Then 
\[\uzero\otimes \uzero.v\supseteq \st\otimes  \st ,\]
as $v\ne 0$ and $v$ lies in the $(p-1)\rho$ block. Since $\gamma$ is minimal and $\succeq$ is a partial order on $\Lambda$ (by Proposition~\ref{prop:when_is_succeq_relation}), it follows that
\[ R(\lambda_i)_{-\gamma}=
\begin{cases}
\ff  & \lambda_i=\gamma,\\
0 & \lambda_i\ne \gamma.
\end{cases}\]
By counting dimensions, we have
\[ \dim \st\otimes  \st  \ge \#\{\lambda_i \mid \lambda_i=\gamma\} \ge \dim \uzero\otimes \uzero.v\ge \dim\st\otimes  \st ,\]
with equality if and only if $\lambda_i=\gamma$ for all $\lambda_i$, i.e. that 
\[ M\cong R(\gamma)\otimes \st\otimes  \st \implies \pi_{(p-1)\rho}\fr_*\lll \cong \st\otimes  \st \otimes \ooo_X(\gamma).\]

It only remains to see that $\gamma=\mu$, where $\mu$ is maximal with respect to $\succeq$ such that $\lambda-p\mu\ge (p-1)\rho$. We showed that
\[ M\cong R(\gamma)\otimes \st\otimes \st ,\]
so in particular
\[ M_{-\gamma}\cong \st\otimes \st = \pi_{(p-1)\rho}(\fr_* R(\lambda))_{-\gamma}\cong \pi_{(p-1)\rho}R_{\lambda-p\gamma}.\]
In other words, $\gamma$ is maximal such that $\pi_{(p-1)\rho}R_{\lambda-p\gamma}\ne 0$, which is the same as $\lambda-p\gamma\ge (p-1)\rho$. But this is exactly the definition of $\mu$, and since $\succeq$ is a partial order (by Proposition~\ref{prop:when_is_succeq_relation}), we conclude that $\gamma=\mu$.
\end{proof}
\begin{remark}
Proposition~\ref{prop:splitting_of_Os_from_p-1_rho} is an immediate corollary. Since $0$ is the maximal weight $\mu$ (with respect to $\succeq$) such that $(p-1)\rho-\mu\ge (p-1)\rho$, we deduce the statement of Proposition~\ref{prop:splitting_of_Os_from_p-1_rho}:
\[ \pi_{(p-1)\rho}\fr_*\ooo_X((p-1)\rho)\cong \st\otimes  \st \otimes \ooo_X.\]
\end{remark}

\begin{example}
Let $G=\psl_3$. Then 
\[ \pi_{(p-1)\rho}\fr_*\ooo_X \cong \st\otimes  \st \otimes \ooo_X(-\rho),\] where $\rho=\alpha_1+\alpha_2=\omega_1+\omega_2$.
\end{example}

\begin{example}
Let $G=\psl_4$. Then
\[ \pi_{(p-1)\rho}\fr_*\ooo_X \cong \st\otimes  \st \otimes \ooo_X(-2\alpha_1-2\alpha_2-2\alpha_3)=\st\otimes  \st \otimes \ooo_X(-2\omega_1-2\omega_3).\]
\end{example}

\subsection{Concrete Applications}
\label{subsection:concrete_applications}

In this subsection, we apply Theorem~\ref{thm:decomp_line_bundle_into_vector_bundles} to certain examples (namely, the root systems in \S\ref{section:appendix:decomposition_numbers}) by computing the ranks of the vector subbundles in ($\star$). In each of the following cases of $G$, the values of $d_\lambda,d_\mu$ are known, and for any particular choice of $\lambda$, the value $a_\lambda$ is easy to compute.

The case of $X_{\psl_2}$ is omitted: as noted before, $X_{\psl_2}\cong \pp^3$, which is a toric variety and is completely known due to \cite{thomason1987algebraic}, \cite{bogvad1998splitting}, and \cite{achinger2010note}.

\begin{theorem}
\label{thm:ranks_of_subbundles_PSL3}
Let $G=\psl_3$. Then for $\lambda,\mu \in\Lambda_p$, the possible ranks of the vector subbundles in $(\star)$ are as follows:
\[ \textnormal{rk } \fff_{\mu,\lambda}^{i,j}=a_\lambda \cdot d_\lambda \cdot d_\mu \in \{1, 3, 6, 12, 24\},\]
and $0$ for $\mu\not\sim \lambda$. The $a_\lambda, d_\lambda,d_\mu$ are described in \S\ref{subsection:appendix:d_lambda_for_A_2}.
\end{theorem}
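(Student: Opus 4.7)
The plan is to read off the rank formula from Theorem~\ref{thm:decomp_line_bundle_into_vector_bundles}, which gives
\[ \textnormal{rk}\,\fff_{\mu,\lambda}^{i,j} = a_\lambda\, d_\lambda\, d_\mu \quad (\mu\sim\lambda), \qquad 0 \text{ otherwise},\]
and then reduce the problem to an explicit finite enumeration of the possible values of the three factors $a_\lambda$, $d_\lambda$, $d_\mu$ for the root system $A_2$. The $\mu\not\sim\lambda$ case is immediate from the theorem, so only the linked case needs analysis, and since $\mu\sim\lambda$ forces $a_\mu = a_\lambda$, the only piece of input from the linkage class is the unordered pair $\{d_\lambda, d_\mu\}$.

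The first step is to enumerate $a_\lambda$ for $G=\psl_3$. Since $p\nmid 3$, Corollary~\ref{cor:sl_n_blocks_dimensions_WITH_TYPES} identifies $W(\lambda)$ with the Young subgroup $S_{n_1}\times\cdots\times S_{n_k}$ of $S_3$, where $(n_1,\dots,n_k)$ is the type of the vector $\lambda+\rho\in\ff_p^3/\ff_p(1,1,1)$. The only admissible types are $(3)$, $(2,1)$, and $(1,1,1)$, giving
\[a_\lambda=|W|/|W(\lambda)|\in\{1,3,6\}.\]
Moreover, type $(3)$ forces all three entries of $\lambda+\rho$ to be congruent modulo the diagonal, and $p\nmid 3$ implies they must actually be equal, which pins down $\lambda\equiv(p-1)\rho\pmod{p\Lambda}$ as the unique class in $\Lambda_p$ with $a_\lambda = 1$.

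The second step is to invoke the explicit computation in \S\ref{subsection:appendix:d_lambda_for_A_2}, where the decomposition numbers for $A_2$ are catalogued; there one sees that $d_\lambda\in\{1,2\}$ for every $\lambda\in\Lambda_p$, so the possible products $d_\lambda d_\mu$ for $\mu\sim\lambda$ lie in $\{1,2,4\}$. The main obstacle in the proof is really just this appeal: once one trusts the classification of $d_\lambda$ for $A_2$ (which is known and is the content of that appendix), everything else is combinatorial bookkeeping.

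The final step is to tabulate. When $a_\lambda=1$ we are forced to have $\lambda=\mu=(p-1)\rho$ and $d_\lambda=d_\mu=1$ (since $(p-1)\rho$ is in the fundamental alcove and alone in its linkage class), contributing rank $1$. When $a_\lambda=3$, the product $a_\lambda d_\lambda d_\mu$ ranges over $\{3,6,12\}$ as $d_\lambda d_\mu$ ranges over $\{1,2,4\}$. When $a_\lambda=6$, it ranges over $\{6,12,24\}$. Taking the union produces exactly $\{1,3,6,12,24\}$, as claimed. I would finish by noting (if needed) that each value in this set is actually realized, since the appendix exhibits weights in every relevant alcove and the $d_\lambda=2$ cases of both types $(2,1)$ and $(1,1,1)$ occur there, which also gives attainability of the extremal rank $24$.
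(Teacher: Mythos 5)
Your proof is correct and follows essentially the same route as the paper: the paper's own proof of this theorem is simply a pointer to \S\ref{subsection:appendix:d_lambda_for_A_2}, where $a_\lambda\in\{1,3,6\}$ and $d_\lambda\in\{1,2\}$ are recorded for $A_2$, and your tabulation just makes the implied combination $a_\lambda d_\lambda d_\mu$ explicit. One tiny remark on your attainability sentence: the extremal rank $24$ is realized by taking $\mu=\lambda$ a $p$-regular (type $(1,1,1)$) weight in the bottom alcove with $d_\lambda=2$; the existence of a type-$(2,1)$ weight with $d=2$ is not what produces $24$ (that case only contributes ranks $3,6,12$).
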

\begin{proof}
See \S\ref{subsection:appendix:d_lambda_for_A_2}.
\end{proof}

\begin{remark}
In \cite{braden1967restricted}, the dimensions of the $L_\lambda$ are computed, giving the number of components $\fff_{\mu,\lambda}^{i,j}$ associated to each pair of weights $(\mu,\lambda)$. Note that the $p$-regular weights precisely correspond to the last case, while the boundaries (i.e., the boundaries of the top alcove, along with the wall common to both alcoves) precisely correspond to the $a_\lambda=3$ case, and the extreme point is $\lambda=(p-1)\rho$.
\end{remark}

\begin{theorem}
\label{thm:ranks_of_subbundles_PSL4}
Let $G=\psl_4$. For $\lll$ a line bundle on $X$, the ranks of the vector subbundles in $(\star)$ are
\begin{align*}
    \textnormal{rk } \fff_{\mu,\lambda}^{i,j}=a_\lambda \cdot d_\lambda \cdot d_\mu \in \{&1, 2, 3, 4, 6, 8, 9, 11, 12, 16, 18, 22, 24, 33, 36, 44, 48, 54, 66,72, 88, 96, 108, 121,\\ & 132, 144, 198, 216, 264, 288, 396, 432, 484, 528, 726, 792, 864, 1452, 1584, 2904\},
\end{align*} 
for $\mu\sim\lambda$ and $0$ otherwise. The $a_\lambda,d_\lambda,d_\mu$ are described in \S\ref{subsection:appendix:A3}.

\end{theorem}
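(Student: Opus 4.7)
The plan is to apply Theorem~\ref{thm:decomp_line_bundle_into_vector_bundles} directly, exactly as in the proof of Theorem~\ref{thm:ranks_of_subbundles_PSL3}, so the entire argument is a bookkeeping exercise once the invariants $a_\lambda, d_\lambda, d_\mu$ are tabulated for the root system $A_3$. Recall that the Weyl group here is $W = S_4$ with $|W| = 24$, so $a_\lambda = 24/|W(\lambda)|$ takes values in $\{1,2,3,4,6,8,12,24\}$ as $\lambda$ ranges over $\Lambda_p$ (the stabilizer $W(\lambda)$ under the dot action corresponds to the multiplicities in the ``type'' of the vector $\lambda + \rho$ written in the $\varepsilon_i$-basis, exactly as in Corollary~\ref{cor:sl_n_blocks_dimensions_WITH_TYPES}).

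First I would enumerate, for each stabilizer pattern (equivalently, each Young-type partition of $4$: $(4), (3,1), (2,2), (2,1,1), (1,1,1,1)$), the corresponding value of $a_\lambda$ and record which weights lie in that linkage class. Next I would import from \S\ref{subsection:appendix:A3} the decomposition numbers $d_\lambda$, which for $A_3$ depend on which wall/alcove $\lambda$ sits in; the values needed are those appearing in the deferred discussion of $d_\lambda$-values for $A_3$ in the appendix. Then for each linkage class I would form the multiset of products $a_\lambda \cdot d_\lambda \cdot d_\mu$ as $\mu$ ranges over the linkage class of $\lambda$, take the union across all linkage classes, and verify that this union is the set displayed in the statement.

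The final step is to observe that if $\mu \not\sim \lambda$ then the rank is zero by the second case of the rank formula in Theorem~\ref{thm:decomp_line_bundle_into_vector_bundles}. Nothing in the argument depends on the particular line bundle $\lll$, since the ranks of the $\fff_{\mu,\lambda}^{i,j}$ are independent of $\lll$ by Proposition~\ref{prop:independence_of_line_bundles}.

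The main obstacle is the calculation of the $d_\lambda$ for $A_3$: unlike the $A_2$ case, some decomposition numbers are nontrivial and must be extracted either from the general Kazhdan--Lusztig recipe (as outlined in Remark~\ref{remark:compute_dim_L_lambda} and in \S\ref{subsection:compute_d_lambda}) or from the explicit tables in \S\ref{subsection:appendix:A3}. Once these numbers are in hand, the verification that the set of products equals the displayed list is a finite check that can be carried out directly.
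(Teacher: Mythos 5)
Your overall plan is exactly the paper's approach: read the rank formula from Theorem~\ref{thm:decomp_line_bundle_into_vector_bundles}, then tabulate the invariants $a_\lambda$ and $d_\lambda$ for the root system $A_3$ from \S\ref{subsection:appendix:A3}, and take the union of the products $a_\lambda d_\lambda d_\mu$ over linked pairs $(\mu,\lambda)$. In fact the paper's proof of this theorem is just a pointer to that appendix, so there is no genuinely different argument to compare against.

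Two small corrections, though. First, your claim that $a_\lambda$ ``takes values in $\{1,2,3,4,6,8,12,24\}$'' is off: you have listed all divisors of $24$, but for $\psl_4$ (with $p\nmid 4$) the stabilizer $W(\lambda)$ is a Young subgroup $S_{n_1}\times\cdots\times S_{n_k}$ determined by the type of $\lambda+\rho$ as in Corollary~\ref{cor:sl_n_blocks_dimensions_WITH_TYPES}, so $|W(\lambda)|\in\{24,6,4,2,1\}$ and $a_\lambda\in\{1,4,6,12,24\}$; the values $2,3,8$ never occur. You correct this implicitly a sentence later by switching to the type-by-type enumeration, but the initial statement should be fixed. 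Second, if you carry out the literal instruction ``form the multiset of products over linked pairs, take the union across all linkage classes, and verify that this equals the displayed set,'' you will find a strict discrepancy: the paper's displayed set is the full Cartesian product $\{1,4,6,12,24\}\cdot\{1,2,3,6,11\}\cdot\{1,2,3,6,11\}$, which includes elements like $2,3,9,11,22,33,121$ that can only arise from $a_\lambda=1$; but $a_\lambda=1$ forces $\lambda=(p-1)\rho$, hence $\mu=\lambda$ and $d_\lambda=d_\mu=1$, so those products are never actually realized as ranks. Thus the paper's set is a superset of the achievable ranks (still a correct ``$\in$'' statement, but not the exact image), and your more careful linkage-class-by-linkage-class computation would detect this rather than reproduce the displayed list verbatim.
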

\begin{proof}
See \S\ref{subsection:appendix:A3}.
\end{proof}

\begin{theorem}
Let $G=\msf{PSO}_5$. For $\lambda\in\Lambda_p$, we have $a_\lambda\in\{1,2,4,8\}$, and $d_\lambda$ are described in Proposition~\ref{prop:d_lambda_for_B_2}. As a result, the ranks of the vector subbundles in $(\star)$ must lie within the following finite set:
\[ \textnormal{rk } \fff_{\mu,\lambda}^{i,j}=a_\lambda \cdot d_\lambda \cdot d_\mu \in \{1, 2, 3, 4, 6, 8, 9, 12, 16, 18, 24, 32, 36, 48, 64, 72, 96, 128\}\]
for $\mu\sim\lambda$ and $0$ otherwise.
\end{theorem}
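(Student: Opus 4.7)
The strategy is to apply Theorem~\ref{thm:decomp_line_bundle_into_vector_bundles} directly to the case $G = \msf{PSO}_5$, whose root system is of type $B_2$ and whose Weyl group has order $8$. That theorem asserts that every nonzero summand $\fff_{\mu,\lambda}^{i,j}$ has rank $a_\lambda d_\lambda d_\mu$ with $\mu \sim \lambda$, and rank $0$ otherwise, so identifying the possible ranks reduces to enumerating the triples $(a_\lambda, d_\lambda, d_\mu)$ that actually occur for $\lambda, \mu \in \Lambda_p$ lying in a common $(W,\cdot_p)$-linkage class in type $B_2$.

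First, I would compute the possible values of $a_\lambda = |W|/|W(\lambda)|$. Since $|W| = 8$, every $a_\lambda$ divides $8$, hence lies in $\{1,2,4,8\}$; a short case analysis of the walls of the fundamental alcove for $(W,\cdot_p)$ in type $B_2$ shows that all four divisors are attained as $\lambda$ ranges over $\Lambda_p$: $a_\lambda = 1$ only at $(p-1)\rho$, $a_\lambda = 2$ and $4$ on various walls of the alcove, and $a_\lambda = 8$ on $p$-regular weights. Next, I would extract from Proposition~\ref{prop:d_lambda_for_B_2} the complete list of $d_\lambda$-values together with the linkage structure, so that for each block I know precisely which pairs $(d_\lambda, d_\mu)$ with $\mu \sim \lambda$ actually occur; by Theorem~\ref{thm:behavior_of_d_lambda} these values stabilize for $p \gggg 0$, so the enumeration is finite and $p$-independent.

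The proof is then pure bookkeeping: for each value $a \in \{1,2,4,8\}$ and for each pair of weights $\mu, \lambda$ in a common $(W,\cdot_p)$-orbit with stabilizer index $a$, I would form the product $a \cdot d_\lambda \cdot d_\mu$ and collect the results. The claim is that this set equals exactly
\[\{1, 2, 3, 4, 6, 8, 9, 12, 16, 18, 24, 32, 36, 48, 64, 72, 96, 128\},\]
with the smallest entry $1 = 1\cdot 1 \cdot 1$ coming from the singleton block $\{(p-1)\rho\}$ and the largest $128 = 8 \cdot 4 \cdot 4$ coming from a pair of $p$-regular weights in the same block each having $d_\lambda = 4$. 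The main obstacle is simply careful bookkeeping, since one must both avoid missing attainable products and avoid including spurious ones by forming $d_\lambda d_\mu$ for $\lambda, \mu$ in \emph{different} linkage classes; beyond invoking Theorem~\ref{thm:decomp_line_bundle_into_vector_bundles} and Proposition~\ref{prop:d_lambda_for_B_2} there is no new mathematical content.
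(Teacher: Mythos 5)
Your general approach --- invoke Theorem~\ref{thm:decomp_line_bundle_into_vector_bundles} to reduce the statement to listing products $a_\lambda d_\lambda d_\mu$, read $a_\lambda$ off from divisors of $|W|=8$, and read $d_\lambda$ off from Proposition~\ref{prop:d_lambda_for_B_2} --- is exactly the paper's (which simply points to the appendix data). However, you overstate what the bookkeeping yields, and in doing so you would actually disprove the equality you announce.

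The theorem asserts only a containment (``must lie within the following finite set''), and that containment is immediate: since $a_\lambda$ divides $|W|=8$ and $d_\lambda, d_\mu \in \{1,2,3,4\}$, every product $a_\lambda d_\lambda d_\mu$ lies in the $18$-element set of all values $a\cdot d\cdot d'$ with $a\in\{1,2,4,8\}$, $d,d'\in\{1,2,3,4\}$ --- no linkage bookkeeping is required. You instead claim that, after restricting to $\mu\sim\lambda$, the set of attained ranks \emph{equals} this $18$-element set. That is false. The values $3$ and $9$ each force $a_\lambda = 1$ (the only factorizations with $a\mid 8$ and $d,d'\le 4$ are $3 = 1\cdot 1\cdot 3$ and $9 = 1\cdot 3\cdot 3$), and $a_\lambda = 1$ means $\lambda$ is fixed by all of $(W,\cdot)$ in $\Lambda/p\Lambda$, which happens only for $\lambda = (p-1)\rho$; but then $d_\lambda = 1$, and the only $\mu\sim(p-1)\rho$ is $(p-1)\rho$ itself with $d_\mu = 1$, so the only rank in this block is $1$. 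Hence $3$ and $9$ are unattainable, and the ``careful bookkeeping'' you describe (avoiding spurious products across different linkage classes) would actually eliminate entries from the stated set rather than reproduce it exactly. In short: for the statement as written, drop the linkage constraint and the claim of equality, and the theorem follows trivially from the bounds on $a_\lambda$ and $d_\lambda$; if you insist on the refined enumeration, you must then correct the asserted set.
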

\begin{proof}
See \S\ref{subsection:appendix:d_lambda_for_B_2}.
\end{proof}
\begin{theorem}
Let $G$ be the semisimple adjoint group corresponding to the root system $G_2$. Then for $\lambda\in\Lambda_p$, we have $a_\lambda\in\{1, 2, 3, 4, 6, 12\}$ and $d_\lambda$ described in \S\ref{subsection:appendix:G2}. As a result, the ranks of the vector subbundles in $(\star)$ must lie within the following finite set:
\begin{align*}
    \textnormal{rk } \fff_{\mu,\lambda}^{i,j}=a_\lambda \cdot d_\lambda \cdot d_\mu \in \{&1, 2, 3, 4, 5, 6, 8, 9, 10, 12, 15, 16, 17, 18, 20, 24, 25, 27, 29, 30, 32, 34, 36, 40, 45,\\
    &48, 50, 51, 54, 58, 60, 64, 68, 72, 75, 80, 85, 87, 90, 96, 100, 102, 108, 116, 120, \\ &128,
    136, 144, 145, 150, 153, 160, 162, 170, 174, 180, 192, 204, 216, 232, 240, 255, \\& 256,
    261, 270, 272, 288, 289, 290, 300, 306, 320, 324, 340, 348, 360, 384, 408, 432, \\&435,
    464, 480, 493, 510, 512, 522, 540, 544, 576, 578, 580, 612, 648, 696, 720, 768, \\&816,
    841, 864, 867, 870, 918, 928, 960, 972, 986, 1020, 1024, 1044, 1080, 1088, \\&1152,
    1156, 1224, 1296, 1392, 1479, 1536, 1566, 1632, 1682, 1728, 1734, 1740, \\ &1836, 1856,
    1944, 1972, 2088, 2304, 2448, 2523, 2592, 2784, 2958, 3072, 3132, \\ &3264, 3364, 3456,
    3468, 3672, 3888, 4176, 5046, 5568, 5916, 6264, 10092\}
\end{align*}
for $\mu\sim\lambda$ and $0$ otherwise.
\end{theorem}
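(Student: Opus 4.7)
The plan is to derive the statement as a direct application of Theorem~\ref{thm:decomp_line_bundle_into_vector_bundles} combined with a case-by-case enumeration based on the data $(a_\lambda, d_\lambda)$ for the root system $G_2$, which is supplied by the appendix. By Theorem~\ref{thm:decomp_line_bundle_into_vector_bundles}, the rank of each nonzero block $\fff_{\mu,\lambda}^{i,j}$ equals $a_\lambda \cdot d_\lambda \cdot d_\mu$ whenever $\mu \sim \lambda$ (and is zero otherwise), so the set of possible ranks is exactly
\[
\{a_\lambda \cdot d_\lambda \cdot d_\mu : \lambda \in \Lambda_p,\ \mu \in \Lambda_p,\ \mu \sim \lambda\}.
\]
Thus the task reduces to identifying the possible $a_\lambda$'s, listing the $d_\lambda$'s on each linkage class, and tabulating the products across all linkage classes.

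First I would justify the claim $a_\lambda \in \{1,2,3,4,6,12\}$. The Weyl group of $G_2$ is the dihedral group $D_6$ of order $12$, and since $a_\lambda = |W|/|W(\lambda)|$ is the index of a subgroup of $W$, it must divide $12$ by Lagrange's theorem, giving the inclusion $a_\lambda \in \{1,2,3,4,6,12\}$. All six values do occur: the extreme cases $a_\lambda = 1$ (e.g.\ $\lambda = (p-1)\rho$) and $a_\lambda = 12$ (generic $p$-regular $\lambda$) are clear, while the intermediate values arise from weights on the walls, faces, or with additional coincidences modulo $p$, as will be read off from the explicit description in \S\ref{subsection:appendix:G2}.

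Second, I would import the values $d_\lambda$ from \S\ref{subsection:appendix:G2}. For each linkage class $\mathcal{O} \subset \Lambda_p/(W,\cdot)$ of size $a = a_\lambda$, the appendix provides the multiset $\{d_\mu : \mu \in \mathcal{O}\}$. Theorem~\ref{thm:decomp_line_bundle_into_vector_bundles} then produces the ranks $\{a \cdot d_\lambda \cdot d_\mu : \mu, \lambda \in \mathcal{O}\}$ as contributions to the set of possible ranks from that class. Enumerating linkage classes partitioned by $a_\lambda$ and taking the union of all such products yields the finite set. The enumeration is purely arithmetic and can be verified class-by-class from the tables in the appendix.

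The main obstacle is not this final enumeration but the input from the appendix: obtaining the complete list of decomposition numbers $d_\lambda$ for $G_2$. This requires combining the general machinery of \cite{humphreys2006ordinary}, the affine Kazhdan--Lusztig algorithm alluded to in Remark~\ref{remark:compute_dim_L_lambda}, and the specific structure of linkage classes in $\Lambda_p/(W,\cdot)$ for the $G_2$ root system. Once \S\ref{subsection:appendix:G2} establishes the $d_\lambda$'s together with the combinatorics of the linkage classes, the current theorem follows by assembling all products $a_\lambda \cdot d_\lambda \cdot d_\mu$ with $\mu \sim \lambda$ and observing that the resulting collection coincides with the listed set.
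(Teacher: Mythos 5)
Your proposal follows essentially the same route as the paper: the "proof" in the paper is simply a pointer to \S\ref{subsection:appendix:G2}, whose alcove diagram supplies the twelve decomposition numbers $\{1,2,3,4,5,6,6,12,16,17,18,29\}$, and the theorem then reduces (via Theorem~\ref{thm:decomp_line_bundle_into_vector_bundles}) to tabulating products $a_\lambda d_\lambda d_\mu$. Your Lagrange argument correctly establishes the inclusion $a_\lambda \in \{1,2,3,4,6,12\}$, and the enumeration plan is what the paper implicitly carries out.

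One small inaccuracy worth flagging, though it does not invalidate the proof: you assert that ``all six values do occur.'' In fact $a_\lambda = 4$ cannot occur. The stabilizer $W(\lambda)$ of $\lambda + \rho$ in $\Lambda/p\Lambda$ under the linear $W$-action is the image in $W$ of the stabilizer of a lift under the ($p$-rescaled) affine Weyl group $W \ltimes pQ$ (here $\Lambda = Q$ for $G_2$), and that affine stabilizer is generated by the affine reflections through the lift; consequently $W(\lambda)$ is a reflection subgroup of $W(G_2) \cong D_6$. The reflection subgroups of $D_6$ have orders $1,2,4,6,12$, so $|W(\lambda)|$ is never $3$, hence $a_\lambda \neq 4$; the sharp statement is $a_\lambda \in \{1,2,3,6,12\}$. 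The paper's stated inclusion (allowing $4$) is still valid as an inclusion — it just is not tight — and the listed set of ranks appears to have been generated by taking all products $a\,d_1\,d_2$ with $a \in \{1,2,3,4,6,12\}$ and $d_1, d_2$ ranging over the alcove decomposition numbers, without enforcing that the triple $(a_\lambda, d_\lambda, d_\mu)$ actually co-occurs on a single linkage class. If you were to carry out your more refined class-by-class enumeration (tracking which $d$-values live together in a given $W$-orbit), you would obtain a proper subset of the paper's list; since the theorem only asserts containment, either computation suffices.
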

\begin{proof}
See \S\ref{subsection:appendix:G2}.
\end{proof}

\section{The class in $K$-theory}
\label{section:class_in_K_theory}
Let $\lll$ be a line bundle on $X$, a wonderful compactification. In this section, we compute the class of $\fr^*\fr_*\lll$ in the Grothendieck group of coherent sheaves on $X$ using localization and also compute the Chern character $\operatorname{ch}\fr_*\lll$ using the Grothendieck-Riemann-Roch formula. 
\begin{definition}
\label{definition:K_0(X)}
Let $K_0(X)$ denote the Grothendieck group of the category of coherent sheaves on $X$, and we set $K_0(X)_{\mathbb{Q}}\coloneqq K_0(X) \otimes_{\mathbb{Z}} \mathbb{Q}$. Let $\kt$ denote the Grothendieck group of the category of $\wt{T}\times \wt{T}$-equivariant coherent sheaves on $X$, and we set $\ktq \coloneqq\kt \otimes_{\mathbb{Z}} \mathbb{Q}$.
\end{definition}

\begin{remark}
Since $X$ is smooth, $\kt$ identifies with the Grothendieck group of the category of $\widetilde{T} \times \widetilde{T}$-equivariant vector bundles on $X$ (see \cite[Theorem 5.7]{thomason1987algebraic}). Similarly, $K_0(X)$ identifies with the Grothendieck group of the category of vector bundles on $X$.
\end{remark}

Note that there is a natural map $\kt\to K_0(X)$ by forgetting the $\wt{T}\times \wt{T}$-equivariant structure. Let $A^*(X)$ be the Chow ring of $X$ (see for example \cite[Section 8.3]{fulton2013intersection}) and set $A^*(X,\mathbb{Q}) \coloneqq A^*(X) \otimes_{\mathbb{Z}} \mathbb{Q}$. Recall the Chern character $\operatorname{ch}\colon K_0(X)_{\mathbb{Q}} \rightarrow A^*(X,\mathbb{Q})$ (see for example \cite[Sections 15.1 and 18]{fulton2013intersection}).

\begin{remark}
It follows from \cite[Example 15.2.16]{fulton2013intersection} that $\operatorname{ch}$ is an isomorphism.
\end{remark}

Let $\int\colon A^*(X,\mathbb{Q}) \rightarrow A^*(\operatorname{pt},\mathbb{Q})=\mathbb{Q}$ be the pushforward homomorphism. 
The Grothendieck–Riemann–\newline
Roch theorem implies (see for example \cite[Section 15.2]{fulton2013intersection}) that for every coherent sheaf $\mathcal{F}$ on $X$ we have 
\begin{equation*}
\chi(\mathcal{F})=\int \operatorname{ch}(\mathcal{F}) \cdot \operatorname{td}_X,    
\end{equation*}
where $\operatorname{td}_X$ is the Todd genus of the (tangent bundle of) $X$.

Consider the natural embedding $\iota\colon X^{\widetilde{T} \times \widetilde{T}} \subset X$. It induces the pullback homomorphism 
\begin{equation*}
\iota^*\colon K_0^{\widetilde{T} \times \widetilde{T}}(X) \to K^{\widetilde{T} \times \widetilde{T}}_0(X^{\widetilde{T} \times \widetilde{T}})   
\end{equation*}                         
that becomes an isomorphism after tensoring by the field of fractions of $K_0^{\widetilde{T} \times \widetilde{T}}(\operatorname{pt})$ (see \cite{thomason1992formule} or \cite{takeda1994localization}).

It follows from \cite{bialynicki1973some} that $X$ has a stratification by $\widetilde{T} \times \widetilde{T}$-invariant locally closed subvarieties such that each of them is isomorphic to an affine space. Using the same argument as in \cite[Lemma 5.5.1]{chriss1997representation} we conclude that $K^{\widetilde{T} \times \widetilde{T}}_0(X)$ is a free module over $K_0^{\widetilde{T} \times \widetilde{T}}(\operatorname{pt})$, hence, $\iota^*$ is an embedding. 

\subsection{The Grothendieck group $K_0(X)$}
\label{subsection:grothendieck_group_K_0(X)}
The goal of this section is to describe the class $[\fr^*\fr_*\ooo_X(\lambda)] \in K_0(X)$. Since $\ooo_X(\lambda)$ is $\widetilde{T} \times \widetilde{T}$-equivariant, we can consider $[\fr^*\fr_*\ooo_X(\lambda)]$ as an element of $K^{\widetilde{T} \times \widetilde{T}}_0(X)$. We understand $\fr^*\fr_*\ooo_X(\lambda)$ as the same underlying sheaf of $\fr_*\ooo_X(\lambda)$, but identify $\fr_*\ooo_X(\lambda)$ with the twisted $\wt{T}\times\wt{T}$-action (coming from the standard $\wt{T}\times \wt{T}$-action on $\ooo_X(\lambda)$) rather than the standard action on $\fr_*\ooo_X(\lambda)$. Therefore the action on $\fr^*\fr_*\ooo_X(\lambda)$ is identified with the standard action on $\fr_*\ooo_X(\lambda)$; consequently it simplifies our work to consider $\fr^*\fr_*\ooo_X(\lambda)$. Since $\iota^*$ is an embedding (also an isomorphism after tensoring with $\textnormal{Frac}(K_0^{\wt{T}\times \wt{T}}(\textnormal{pt}))$), it is enough to describe the image $\iota^*[\fr^*\fr_*\ooo_X(\lambda)] \in K_0^{\widetilde{T} \times \widetilde{T}}(X^{\widetilde{T} \times \widetilde{T}})$. 
The subvariety $X^{\wt{T}\times \wt{T}}$ is precisely the finite set of points $W\times W\subset \wt{G}/\wt{B}\times \wt{G}/\wt{B}$, by \cite[Lemma~4.2]{evens2008wonderful} (under the identification $ W\times W\ni (w_1,w_2)\leftrightarrow (w_1,w_2)\cdot 1\in G\subset X$).
\begin{theorem}
\label{thm:compute_class_of_pushforward}
The class of $\fr^*\fr_*\ooo_X(\lambda)$ in $K_0^{\wt{T}\times \wt{T}}(X)$, pulled back to $K_0^{\wt{T}\times \wt{T}}(X^{\wt{T}\times \wt{T}})$, is given by
\[ \left[\fr^*\fr_*\ooo_X(\lambda)\bigg|_{X^{\wt{T}\times \wt{T}}}\right]=\left(\left[\fr^*\fr_*\ooo_X(\lambda)\bigg|_{(y,w)}\right]\right)_{(y,w)\in W\times W}\in K_0^{\wt{T}\times \wt{T}}(X^{\wt{T}\times \wt{T}})\cong K_0^{\wt{T}\times \wt{T}}(\textnormal{pt})^{\oplus |W|^2},\]
where we have the equality (in $K_0^{\wt{T}\times \wt{T}}(z)$)
\begin{align*}
    \Bigg[\fr^*\fr_*\ooo_X(\lambda)&\bigg|_{(y,w)}\Bigg] = [(-y(\lambda),w w_0(\lambda)]+\\
    &\sum_{\substack{0\le a_\gamma,b_\mu,c_{\alpha_i}<p \\ \gamma,\mu\le \Phi^+\\\alpha_i\in\Delta}} \left[(-y(\lambda),ww_0(\lambda))+\sum_{\substack{\gamma,\mu\in\Phi^+,\\ \alpha_i\in\Delta}}\left(a_\gamma y(\gamma)+c_{\alpha_i}y(\alpha_i),-b_\mu w(\mu)-c_{\alpha_i}w(\alpha_i)\right)\right],
\end{align*}
and the symbol $[(a,b)]$ for $a,b\in\Lambda^2$ indicates the one-dimensional $\wt{T}\times \wt{T}$-module of weight $(a,b)$.
\end{theorem}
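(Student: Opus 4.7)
The plan is to apply $\wt{T}\times \wt{T}$-equivariant localization, already set up in the paragraph preceding the theorem: since $\iota^*\colon \kt \to K^{\wt{T}\times\wt{T}}_0(X^{\wt{T}\times\wt{T}})$ is injective and $X^{\wt{T}\times\wt{T}}=W\times W\subset \mathbf{Y}$ is a finite set of points, the class $[\fr^*\fr_*\ooo_X(\lambda)]$ is determined by its $\wt{T}\times\wt{T}$-character at each fixed point $(y,w)$. As explained just before the theorem, the role of the outer $\fr^*$ is precisely to remove the $p$-th power twist that the pushforward imparts on the equivariant structure, so at every $\wt{T}\times\wt{T}$-fixed point the fiber of $\fr^*\fr_*\ooo_X(\lambda)$ is canonically $\ooo_X(\lambda)|_{(y,w)}\otimes_{\ff} \ooo_{X,(y,w)}/\mf{m}_{(y,w)}^p$. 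Consequently, its $\wt{T}\times\wt{T}$-character is the tensor product of the line-bundle weight $\xi_\lambda(y,w)$ with the character of the $p$-truncated symmetric algebra on the cotangent space $T^*_{(y,w)}X$; in additive notation this is the sum of $\xi_\lambda(y,w)+\sum_i a_i \chi_i(y,w)$ as $(a_i)$ runs over $\{0,\dots,p-1\}^{\dim X}$ and $\chi_i(y,w)$ enumerates the cotangent weights.

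The line-bundle weight follows immediately from the identification $\res\ooo_X(\lambda)\cong \ooo_{\wt{G}/\wt{B}}(\lambda)\boxtimes \ooo_{\wt{G}/\wt{B}}(-w_0\lambda)$ of \S\ref{subsection:divisors_and_line_bundles_on_X} together with the standard fact that the fiber of $\ooo_{\wt{G}/\wt{B}}(\mu)$ at $y\wt{B}$ carries $\wt{T}$-weight $-y(\mu)$, giving $\xi_\lambda(y,w)=(-y(\lambda),ww_0(\lambda))$, which is the base term in the formula. For the cotangent weights I will use the chart $\wh{\gamma}\colon U\times \aa^\ell\times U^-\xrightarrow{\sim}\mathbf{X}_0$ of \S\ref{subsection:structure_of_X}. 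One checks that the unique $\wt{T}\times\wt{T}$-fixed point $\wh{\gamma}(1,0,1)$ of $\mathbf{X}_0$ corresponds to the pair $(w_0,w_0)\in W\times W$: indeed the left $U$-factor in $\wh{\gamma}$ produces an open orbit on the first factor of $\mathbf{Y}$ only at the Weyl-opposite fixed point (since $U\cap w_0\wt{B}w_0^{-1}=U\cap \wt{B}^-=\{1\}$), and symmetrically for $U^-$. At this base point, reading off the three cotangent weights from the explicit formulas ($\mathrm{Ad}(t_1)$ on $U$, $\mathrm{Ad}(t_2)$ on $U^-$, and $x_i=\alpha_i(t^{-1})$ on $\aa^\ell$) yields weights $(-\alpha,0)$ for $\alpha\in\Phi^+$, $(0,-\beta)$ for $\beta\in\Phi^-$, and $(-\alpha_i,\alpha_i)$ for $\alpha_i\in\Delta$.

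A general fixed point $(y,w)$ is related to the base by the Weyl translation $(yw_0,ww_0)$; applying this to the base cotangent weights and reindexing (using that $-w_0$ permutes $\Phi^+$ and preserves $\Delta$) produces the three families $\{(y(\gamma),0):\gamma\in\Phi^+\}$, $\{(0,-w(\mu)):\mu\in\Phi^+\}$, and $\{(y(\alpha_i),-w(\alpha_i)):\alpha_i\in\Delta\}$, which, together with $\xi_\lambda(y,w)$, assemble exactly into the stated expression once one sums over $(a_\gamma,b_\mu,c_{\alpha_i})\in\{0,\dots,p-1\}^{|\Phi^+|+|\Phi^+|+\ell}$ (the total count $p^{\dim X}$ matching the generic rank of $\fr_*\ooo_X(\lambda)$). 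The main technical obstacle is the careful bookkeeping of these $w_0$-twists: they enter both through the identification of the base fixed point as $(w_0,w_0)$ rather than $(1,1)$, and through the appearance of $-w_0\lambda$ in the second factor of the restriction of $\ooo_X(\lambda)$ to $\mathbf{Y}$, which together explain the asymmetric occurrence of $w$ versus $ww_0$ in the formula. Once these conventions are pinned down at the single base point, $W\times W$-equivariance propagates the character to every other fixed point, and injectivity of $\iota^*$ lifts the resulting collection of fixed-point characters to the desired identity in $\kt$.
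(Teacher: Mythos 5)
Your overall strategy mirrors the paper's: localize to the $\wt{T}\times\wt{T}$-fixed points, identify the fiber of $\fr^*\fr_*\ooo_X(\lambda)$ at a fixed point with the line-bundle fiber tensored with a $p$-truncated local algebra, and read off the cotangent weights. Where the paper isolates the two contributions via a short exact sequence and then cites \cite[Lemma~4.4]{evens2008wonderful} for the torus weights of the tangent space, you propose to compute those weights directly from the chart $\wh{\gamma}$ at a single base fixed point and then propagate by $W\times W$-equivariance. That is a legitimate alternative route.

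The step that is not correct as written is the identification of the base fixed point. You assert $\wh{\gamma}(1,0,1)\leftrightarrow(w_0,w_0)$, but your own argument does not yield this: "the $U$-orbit on the first $\wt{G}/\wt{B}$ factor is open only at $w_0\wt{B}$" is right, but "symmetrically for $U^-$" on the \emph{second} $\wt{G}/\wt{B}$ factor forces $w=1$, not $w=w_0$, because the open $U^-$-orbit in $\wt{G}/\wt{B}$ is $U^-\wt{B}/\wt{B}$ (the big cell through $1\wt{B}$), i.e.\ $U^-\cap w\wt{B}w^{-1}=\{1\}$ exactly when $w=1$. The $\wt{B}\leftrightarrow\wt{B}^-$ symmetry you have in mind would apply if the second factor of $\mathbf{Y}$ were parametrized as $\wt{G}/\wt{B}^-$, but the parametrization in this paper is pinned down by $\operatorname{res}\ooo_X(\lambda)\cong\ooo_{\wt{G}/\wt{B}}(\lambda)\boxtimes\ooo_{\wt{G}/\wt{B}}(-w_0\lambda)$, and one can check against the base term $[(-y\lambda,\,ww_0\lambda)]$ of the formula that $(y,w)=(w_0,w_0)$ is incompatible with the $\wt{T}\times\wt{T}$-weight of $\ooo_X(\lambda)$ at $\wh{\gamma}(1,0,1)$. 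Because you then translate by $(y',w')=(yw_0,ww_0)$, a wrong base point shifts both slots by the wrong amount of $w_0$, so the final reindexing step must be redone carefully once the base point is corrected; without that the derivation has a gap even though the displayed answer happens to agree with the theorem. A separate, smaller slip: the local algebra appearing in the fiber is $\ooo_{X,z}/\mf{m}_z^{[p]}$ (quotient by the ideal generated by $p$-th powers), not $\ooo_{X,z}/\mf{m}_z^p$; your monomial count $\{0,\dots,p-1\}^{\dim X}$ is the right one, but it belongs to the former quotient.
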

\begin{proof}
Let $\iii_Y$ denote the ideal sheaf cutting out $Y\subset X$ for any closed subvariety $Y\subset X$. (In this case, $Y=X^{\wt{T}\times \wt{T}}$.) We have the short exact sequence
\[ 0\to \iii_Y \lll/\iii_Y^{(p)}\lll\to (\fr_*^X \lll)\bigg|_Y\to \fr_*^Y(\lll|_Y)\to 0,\]
where $\iii_Y^{(p)}$ is locally the image of $(f^p \mid f\in I)$. (The sheaf $\iii_Y$ is the ideal sheaf of the \textit{Frobenius neighborhood} of $Y$, see \cite{mirkovic2001geometric}). It suffices to check each point $(y,w)\in W\times W$ independently due to the isomorphism $ K_0^{\wt{T}\times \wt{T}}(\textnormal{pt})^{\oplus |W|^2}$. Fix $z=(y,w)\in W\times W$.
After applying $\fr^*$ (an exact functor), we obtain a copy of 
\[\left[\fr^*\fr_*\left(\ooo_X(\lambda)\big|_{z}\right)\right]=\left[\fr^*\fr_*\left(\ooo_{\wt{G}/\wt{B}}(\lambda)\boxtimes \ooo_{\wt{G}/\wt{B}}(-w_0\lambda)\bigg|_{(y,w)}\right)\right]=[\left(-y(\lambda),ww_0(\lambda)\right)].\]
We may assume $\lambda=0\implies \ooo_X(\lambda)=\ooo_X$, and then shift all weights by $(-y(\lambda),ww_0(\lambda))$. Now \'etale locally around $z$, we find coordinates $x_1,\dots,x_m$ where each $x_i$ is a $\wt{T}\times \wt{T}$-eigenvector. Then the term $\fr_*^Y(\lll|_Y)$ in the short exact sequence becomes
\[ (x_1,\dots,x_m)\ff[[x_1,\dots,x_m]]/(x_1^p,\dotsm,x_m^p)\ff[[x_1,\dots,x_m]]\cong \bigoplus_{0\le a_i<p}\ff \{x_1^{a_1}\dotsm x_m^{a_m}\}\] as $\wt{T}\times \wt{T}$-modules. The weights corresponding to $x_1^{a_1}\dotsm x_m^{a_m}$ are precisely the weights of the cotangent space $T_z^*(X)$ (due to applying $\fr^*$ to cancel the twisted action of $\fr_*$ on weights), which are the negative of the weights of the tangent space $T_z(X)$. The result follows from applying \cite[Lemma~4.4]{evens2008wonderful}.
\end{proof}
\begin{remark}
A similar approach can be carried out for the $\wt{T}$-fixed points, which yields a toric variety $Y\subset X$ (rather than a finite set of points). Note that the description of the (co)normal bundle of $Y$ in $X$ is given in \cite[Section 3.1]{brionjoshua2008equivariant}.
\end{remark}

\subsection{The Chern character}
\label{subsection:chern_character}
Recall that the Chern character induces the isomorphism
\[ \textnormal{ch}: K_0(X)_{\qq}\xr{\sim}A^*(X,\qq).\]
The Chern character of $\fr_*\lll$ can then reveal information about $\fr_*\lll$ itself (specifically, its class in rational $K$-theory).

\begin{definition}
Define $\psi^k$ to be the \textit{$k$th Adams operation}, as in \cite{pink2012adams}.
\end{definition}
The $p$th Adams operation is characterized by $\psi^p=\fr^*$. By \cite[Proposition~I.6.3]{fulton2013riemann}, $\psi^p$ sends an element $a_i$ of homogeneous degree $i$ in the Chow ring to $p^i a_i$. Consequently $\psi^p$ is invertible as an endomorphism of $A^*(X,\qq)$.

\begin{theorem}
\label{thm:chern_character}
Let $\lll$ be a line bundle on a smooth projective variety $X$ (in particular, a wonderful compactification) and let $\todd$ denote the Todd genus of $X$ in $A^*(X,\qq)$. Then
\[ \textnormal{ch}(\fr_*\lll)=p^{\dim X}\cdot \frac{(\psi^p)^{-1}(\textnormal{ch}(\lll)\cdot \todd)}{\todd}.\]
\end{theorem}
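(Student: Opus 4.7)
The plan is to apply the Grothendieck--Riemann--Roch theorem to the absolute Frobenius $\fr\colon X\to X$, viewed as a proper (in fact finite flat) morphism between two copies of the same smooth projective variety, and then to invert the resulting formula by exploiting the fact that $\fr_*$ on the rational Chow ring is essentially a multiple of $(\psi^p)^{-1}$.

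First I would write down GRR (see \cite[Theorem 15.2]{fulton2013intersection}) for $\fr\colon X\to X$. Since $\fr$ is finite, there are no higher direct images, and since the source and target of $\fr$ are literally the same scheme, both Todd classes appearing are $\todd = \todd(T_X)$. The result is the identity
\[
\textnormal{ch}(\fr_*\lll)\cdot \todd \;=\; \fr_*\big(\textnormal{ch}(\lll)\cdot \todd\big)
\]
in $A^*(X,\qq)$.

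Next I would pin down $\fr_*$ as an operator on $A^*(X,\qq)$. The absolute Frobenius is finite flat of degree $p^{\dim X}$ (locally it is the inclusion $R^p\subset R$ of $p$-th powers), so the projection formula $\fr_*(\fr^*\alpha\cdot 1)=\alpha\cdot \fr_*(1)$ specializes to $\fr_*\fr^*=p^{\dim X}\cdot\textnormal{id}$ on $A^*(X,\qq)$. The cited characterization of Adams operations (\cite[Proposition~I.6.3]{fulton2013riemann}) says that $\fr^*=\psi^p$ acts on $A^i(X,\qq)$ by multiplication by $p^i$, hence is invertible on the full graded ring. Therefore $\fr_*=p^{\dim X}\cdot (\psi^p)^{-1}$ as endomorphisms of $A^*(X,\qq)$. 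Substituting this into the GRR identity and dividing by $\todd$ (which is a unit in $A^*(X,\qq)$, its degree-zero part being $1$) yields exactly the claimed formula.

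The argument is essentially a two-line reduction once one has both GRR and the projection formula; the only real point to verify is the identification $\fr_*=p^{\dim X}(\psi^p)^{-1}$, which only needs finite flatness of $\fr$ and the elementary action of $\psi^p$ on Chow degrees. Note also that nothing in the argument uses that $X$ is a wonderful compactification: the formula holds verbatim for any smooth projective variety $X$ over $\ff$ and any line bundle $\lll$, which is why the statement is phrased at that level of generality.
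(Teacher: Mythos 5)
Your argument is correct, and it takes a genuinely different (and cleaner) route than the paper's own proof. The paper starts from the projection formula at the level of Euler characteristics, $\chi(\fr_*\lll\otimes\fff)=\chi(\lll\otimes\fr^*\fff)$, applies Hirzebruch--Riemann--Roch to both sides (so only the pushforward to a point, $\int$, appears), manipulates using $\fr^*=\psi^p$ and $\int\psi^p(-)=p^{\dim X}\int(-)$, and then argues that letting $\fff$ range over all vector bundles forces equality of the integrands. That last step implicitly uses that the pairing $(a,b)\mapsto\int a\cdot b$ on $A^*(X,\qq)$ separates points, which holds for a wonderful compactification (it has a cellular decomposition, so $A^*\cong H^*$ and Poincar\'e duality applies) but is not automatic for an arbitrary smooth projective variety, despite the general phrasing of the statement. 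You instead apply full GRR to the finite flat morphism $\fr\colon X\to X$ itself, identifying both Todd classes and noting $\fr_!=\fr_*$ on $K_0$ since $\fr$ is affine, which gives the stronger identity $\textnormal{ch}(\fr_*\lll)\cdot\todd=\fr_*(\textnormal{ch}(\lll)\cdot\todd)$ directly in $A^*(X,\qq)$; you then compute the operator $\fr_*$ once and for all as $p^{\dim X}(\psi^p)^{-1}$ via $\fr_*\fr^*=p^{\dim X}\cdot\textnormal{id}$ and the invertibility of $\psi^p$. This avoids the non-degeneracy issue entirely and gives the result for any smooth projective $X$ over $\ff$, so your proof is actually slightly stronger than the one in the paper. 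The only point worth spelling out is the one the paper also leaves implicit: since the absolute Frobenius is not $\ff$-linear, one should technically apply GRR to the relative Frobenius $X\to X^{(1)}$, which coincides with $\fr$ under the identification $X^{(1)}\cong X$ available because $X$ is defined over $\ff_p$.
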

\begin{proof}
By the projection formula, we have that
\[ \chi(\fr_*\lll\otimes \fff)=\chi(\lll\otimes \fr^*\fff)\] for any vector bundle $\fff$. Applying Grothendieck-Riemann-Roch to both sides, we find that
\begin{equation}
\label{eq:groth_riemann_roch}
\int \textnormal{ch}(\fr_*\lll)\cdot \textnormal{ch}(\fff)\cdot \todd =\int \textnormal{ch}(\lll)\cdot \textnormal{ch}(\fr^*\fff)\cdot \todd.
\end{equation}
Noting that $\psi^p$ is invertible on $K_0(X)_{\qq}$ (and consequently on $A^*(X,\qq)$), the right hand side becomes
\begin{align*}
    \int \textnormal{ch}(\lll)\cdot \textnormal{ch}(\fr^*\fff)\cdot \todd&=\int \textnormal{ch}(\lll)\cdot \psi^p(\textnormal{ch}(\fff))\cdot \todd,\\
    &=\int (\psi^p)^{-1}\left(\psi^p(\textnormal{ch}(\lll)\cdot \todd)\cdot \textnormal{ch}(\fff)\right),\\
    &= p^{\dim X}\int (\psi^p)^{-1}(\textnormal{ch}(\lll)\cdot \todd)\cdot \textnormal{ch}(\fff),
\end{align*}
after applying the fact that $\int \psi^p(-)=p^{\dim X}\int -$ (since $\int \psi^p(x)=\pi_*(\fr_*(\fr^*x \cdot [X]))=\pi_*(x \cdot \fr_*[X])=p^{\operatorname{dim}X}\pi_*(x)$, where $x \in A^*(X,\mathbb{Q})$, $[X] \in A^*(X,\mathbb{Q})$ is the class that represents $X$ and $\pi\colon X \rightarrow {\operatorname{pt}}$ is the natural map).  Now comparing the two sides of (\ref{eq:groth_riemann_roch}) and letting $\fff$ run over all vector bundles, we conclude equality of integrands, hence 
\[\textnormal{ch}(\fr_*\lll)\cdot \todd=p^{\dim X}\cdot (\psi^p)^{-1}(\textnormal{ch}(\lll)\cdot \todd),\]
which yields the result.
\end{proof}

\begin{remark}
A similar computation can be carried out more explicitly for the $\wt{T}$-fixed points of $X$, which form a toric variety $Y$. By the projection formula, we have \[ \chi(\fr^*\fr_*\lll\otimes \fff)=\chi(\lll\otimes \fr^*\fr_*\fff).\]
We have the analogue of the Grothendieck-Riemann-Roch formula by \cite[Corollary~3.1]{edidin2000riemann} in the $\wt{G}$-equivariant Chow ring $A^*_{\wt{G}}(X,\qq)$ of $X$, and following the proof of Theorem~\ref{thm:chern_character} we find that
\[ \textnormal{ch}(\fr^*\fr_*\fff)=\psi^p\left(p^{\dim X}\frac{(\psi^p)^{-1}(\textnormal{ch}(\fff)\cdot \todd)}{\todd}\right)=p^{\dim X}\frac{\textnormal{ch}(\fff)\cdot \todd}{\psi^p(\todd)},\]
where $\todd$ is now the equivariant Todd genus. The Adams operations are still well-defined (see for example \cite{joshua2003k}) and functoriality of $\fr$ ensures that $\psi^p$ is still invertible (see \cite[Proposition~I.6.3]{fulton2013riemann}).
The pullback map induced from $Y\into X$ gives an isomorphism $r:A^*_{\wt{G}}(X,\qq)\xr{\sim}A^*_{\wt{T}}(Y,\qq)^W$ by \cite[Theorem~2.2.1]{brionjoshua2008equivariant}. The image of the $\wt{G}$-equivariant Todd genus $\textnormal{td}_X$ under $r$ is computed in the paper \cite{brionjoshua2008equivariant}; thus using that $r$ commutes with $\psi^p$, one can obtain an explicit formula for $\textnormal{ch}(\fr^*\fr_*\fff)$ as an element of $A^*_{\wt{T}}(Y,\qq)$.
\end{remark}

\appendix
\section{Vinberg monoid in characteristic $p$}
\label{section:appendix:vinberg_monoid}

The wonderful compactification has another realization through the Vinberg monoid that will be important to us. We will recall this construction following \cite{rittatore2001very}, \cite{anabalibanu2018part2}, \cite{brion2000large}.

\subsection{The filtration on $\ff[\wt{G}]$}

\begin{theorem}
\label{filtration_by_dominant_weights}
There exists a filtration $F_{\le \lambda}$ of $\wt{G}\times \wt{G}$-modules indexed by $\lambda\in\Lambda^+ $ such that $\ff[\wt{G}]=\colim_{\lambda\in\Lambda^+ }F_{\le \lambda}$ and $\Gamma(X,\ooo_X(\lambda))\cong \colim_{\lambda\ge \mu \in\Lambda^+}F_{\le \mu}$ as $\wt{G}\times \wt{G}$-modules.
\end{theorem}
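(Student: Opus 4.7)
The plan is to construct $F_{\le \lambda}$ using matrix coefficients of dual Weyl modules, and then to verify the two claimed properties using local finiteness of the regular representation together with the Vinberg monoid realization $X = \proj R$ developed in the remainder of Appendix~\ref{section:appendix:vinberg_monoid}. For each dominant $\lambda$, I would define $F_{\le \lambda} \subseteq \ff[\wt{G}]$ to be the $\wt{G}\times\wt{G}$-submodule spanned by matrix coefficients arising from dual Weyl modules $M_\mu$ with dominant $\mu \le \lambda$, or equivalently the maximal $\wt{G}\times\wt{G}$-submodule of $\ff[\wt{G}]$ whose Jordan--H\"older factors all lie among $L_\nu \boxtimes L_{-w_0\nu}$ for $\nu \le \lambda$. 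This is the standard good filtration on $\ff[\wt{G}]$ (see \cite[I.4]{jantzen2003representations}), and the inclusions $F_{\le \mu} \subseteq F_{\le \lambda}$ for $\mu \le \lambda$ are immediate from the characterization.

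To prove $\ff[\wt{G}] = \colim_\lambda F_{\le \lambda}$, I would invoke local finiteness of the left and right regular representations on $\ff[\wt{G}]$: any $f \in \ff[\wt{G}]$ generates a finite dimensional $\wt{G}\times\wt{G}$-submodule whose finitely many composition factors are of the form $L_{\nu_i}\boxtimes L_{-w_0\nu_i}$, and any dominant $\lambda$ satisfying $\lambda \ge \nu_i$ for all $i$ places $f$ in $F_{\le \lambda}$. Such $\lambda$ exist because $\Lambda^+$ is cofinal in $\Lambda$ with respect to the dominance order, which completes this step.

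For the identification with global sections, I would appeal to the Vinberg monoid presentation $X = \proj R$ with $R = \bigoplus_{\lambda \in \Lambda^+} t^\lambda F_{\le \lambda}$ from \S\ref{subsection:graded_module_vinberg_semigroup}. For dominant $\lambda$, the identification $\Gamma(X, \ooo_X(\lambda)) = R_\lambda = F_{\le \lambda}$ is standard and coincides with the claimed colimit since $F_{\le \lambda}$ is the maximal term. For a general $\lambda \in \Lambda$, I would compute sections locally on $\wt{G}\times\wt{G}$-translates of the big cell $\mathbf{X}_0$ as degree-$\lambda$ homogeneous fractions in suitable localizations of $R$, and then glue to exhibit the module of global sections as $\colim_{\lambda \ge \mu \in \Lambda^+} F_{\le \mu}$. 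The empty-colimit case (when no dominant $\mu \le \lambda$ exists) matches Strickland's vanishing theorem \cite[Theorem~3.2(i)]{stricland1987vanishing}. I expect the main obstacle to lie in this last step for non-dominant $\lambda$: reconstructing global sections from the $\Lambda^+$-graded ring $R$ via the covering by translates of $\mathbf{X}_0$ requires careful bookkeeping of localizations, and the crucial point is verifying that the natural inclusion of the colimit into $\Gamma(X, \ooo_X(\lambda))$ is surjective.
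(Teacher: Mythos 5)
Your construction of $F_{\le\lambda}$ is equivalent to the paper's, but the presentation differs: the paper builds the good filtration recursively, taking $F_{\le\lambda}$ to be the preimage of a copy of $M_\lambda\otimes M_{-w_0\lambda}$ (produced by \cite[Lemma~II.4.15]{jantzen2003representations}) in $\ff[\wt{G}]/\sum_{\mu<\lambda}F_{\le\mu}$, whereas you characterize $F_{\le\lambda}$ intrinsically as the maximal submodule with the prescribed composition factors. Both give the standard good filtration, so this part and your local-finiteness argument for $\ff[\wt{G}]=\colim F_{\le\lambda}$ are sound; the paper in fact never writes out the colimit step and is noticeably terser here.

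The concern is with your treatment of the global-sections identification. You invoke ``the Vinberg monoid presentation $X=\proj R$'' with $R=\bigoplus_{\lambda\in\Lambda^+}t^\lambda F_{\le\lambda}$ and treat $\Gamma(X,\ooo_X(\lambda))=R_\lambda$ as ``standard'' for dominant $\lambda$. But in the paper's logical development (and in yours, since you take $R$ to be built from the filtration just constructed), the Vinberg monoid $\wt{X}=\spec R$, its open subvariety $\wh{X}$, and the realization $X\cong\wh{X}/\wt{T}$ are all defined \emph{after} Theorem~\ref{filtration_by_dominant_weights} and using $F_{\le\lambda}$; moreover the paper never claims a $\proj$ realization of $X$ --- it claims $X\cong\wh{X}/\wt{T}$ for a specific open $\wh{X}\subset\spec R$, cited from \cite{brion2007total}. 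The identity $\Gamma(X,\ooo_X(\lambda))\cong F_{\le\lambda}$ for dominant $\lambda$ is precisely the geometric content of the second half of the theorem, so appealing to it as ``standard'' without further input is circular. The paper resolves this by deferring to \cite{rittatore2001very} and \cite{anabalibanu2018part2}, which construct the Vinberg monoid independently (following Vinberg's original definition) and then identify its coordinate ring with the Rees algebra. To make your sketch non-circular you should do the same: either cite those references for the geometric comparison, or establish $X\cong\wh{X}/\wt{T}$ first by an argument that does not presuppose the identification of graded pieces with global sections. Finally, note your $R$ is graded by $\Lambda^+$ while the paper's is graded by all of $\Lambda$ (with $F_{\le\lambda}$ for non-dominant $\lambda$ defined as a colimit); this discrepancy matters if you intend to run a $\proj$-type argument or compute on translates of the big cell, since the localizations you need involve non-dominant degrees.
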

\begin{proof}
We briefly outline the construction; for full proofs, see \cite{rittatore2001very} or \cite{anabalibanu2018part2}. The composition factors are isomorphic to $M_\lambda\otimes M_{-w_0\lambda}$ (see \cite[Proposition~4.20]{jantzen2003representations}). As a result, $F_{\le 0}=\ff\subset \ff[\wt{G}]$. We now proceed by induction. For any dominant $\lambda$, it is a lowest weight vector in the quotient $\ff[\wt{G}]/(\sum_{\mu<\lambda}F_{\le\mu})$, so $\ff[\wt{G}]/(\sum_{\mu<\lambda}F_{\le \mu})$ contains a submodule $M$ isomorphic to $M_\lambda\otimes M_{-w_0\lambda}$ by \cite[Lemma~II.4.15]{jantzen2003representations}. We then take $F_{\le \lambda}$ to be the preimage of $M$ inside $\ff[\wt{G}]$ (via the canonical projection).
\end{proof}

\begin{definition}
We define $F_{\le \lambda}$ to be the filtration described in Theorem~\ref{filtration_by_dominant_weights}. By abuse of notation, for any weight $\lambda$, we'll denote 
\[F_{\le \lambda}\coloneqq\colim_{\mu\in \Lambda^+, \hspace{2mm} \mu\le \lambda}F_{\le \mu},\] so that $\Gamma(X,\ooo_X(\lambda))\cong F_{\le \lambda}$ (for \textit{any} weight $\lambda$).
\end{definition}

\begin{remark}
\label{remark:filtration_splits_for_small_weights}
Unlike in the characteristic $0$ case, we do not get a direct sum, but rather a filtration. However, in some cases, the filtration $F$ splits (for specific $\lambda$). For any $G$ and dominant $\lambda$, picking $p$ sufficiently large, we have
\[ F_{\le \lambda}\cong \bigoplus_{\mu\le \lambda}M_\mu\otimes M_{-w_0\mu}.\]
For fixed dominant weight $\lambda$, we can always choose $p\gggg 0$ to make this true: for example, large enough that $\lambda$ lies in the fundamental alcove, hence all weights $\mu\le\lambda$ are the sole weight in their blocks, and the central idempotents splits the filtration into the direct sum of the composition factors $M_\mu\otimes M_{-w_0\mu}=L_{\mu}\otimes L_{-w_0\mu}$.
\end{remark}

\subsection{The Vinberg monoid}
\label{subsection:appendix:vinberg_monoid}
We briefly recount the Vinberg monoid and its relationship to the wonderful compactification. See \cite{rittatore2001very} and \cite{brion2000large} for proofs; for more detail in the case of characteristic $0$, see \cite{anabalibanu2018part2}. Let $G$ be a semisimple adjoint group, and let $X$ be the wonderful compactification of $G$.

\begin{definition}
Let $R=\bigoplus_{\lambda\in \Lambda}t^{\lambda}F_{\le \lambda}$ denote the Rees algebra of $\ff[\wt{G}]$ with the filtration $F$. Define the \textit{Vinberg monoid} $\wt{X}$ of $G$ to be $\spec R$.
\end{definition}

The Vinberg monoid comes equipped with a natural $\wt{G}\times \wt{G}$-action. In \cite{vinberg1995asymptotic},\cite{vinberg1995reductive}, Vinberg has defined a surjection $\wt{X}\to \aa^{\ell}$. We briefly recall this map. First, we define $T^+\coloneqq \spec \ff[t^{\alpha}\mid \alpha\in \Delta]\cong \aa^{\ell}$. We have the natural embedding $\ff[t^{\alpha}\mid \alpha\in\Delta]\into \ff[\wt{G}]$ given by sending $t^{\alpha}\mapsto t^{\alpha}\cdot 1\in t^{\alpha}F_{\le \alpha}$, yielding the $\wt{T}\times \wt{T}$-equivariant surjection $\Upsilon:\wt{X}\to \aa^{\ell}$.

\begin{definition}
We define $\wh{X}$ to be the unique dense open subvariety of $\wt{X}$ which meets each fiber of $\Upsilon$ in the open $\widetilde{G} \times \widetilde{G}$-orbit of that fiber.
\end{definition}

The variety $\wh{X}$ is normal and quasi-affine. The variety $\wt{X}$ is normal and the affine closure of $\wh{X}$ (see \cite[page~17]{brion2007total}). It turns out that $\wh{X}$ is the relative spec of the sheaf of $\ooo_X$-modules $\bigoplus_{\lambda\in\Lambda}\ooo_X(\lambda)$, related to the wonderful compactification $X$ by 
\[ X\cong \wh{X}/\wt{T}.\]

\subsection{Graded rings and modules}
\label{subsection:graded_module_vinberg_semigroup}

In fact, coherent sheaves are completely determined by a corresponding $\Lambda$-graded modules over $\ff[\wt{X}]$, which are equivalent to $\wt{T}$-equivariant sheaves. We have the canonical $\wt{T}$-equivariant map
\[ \pi: \wh{X}\onto X\] given by the quotient of the $\wt{T}$-action. Note that $\pi$ is $\wt{G}\times \wt{G}$-equivariant. More generally, assume we have an action of an algebraic group $H$ such that $\pi$ is $H$-equivariant. This induces the map 
\[ \Gamma(\wh{X},-)\circ \pi^*:\textnormal{Coh}^{H} (X)\xr{\sim}\textnormal{Coh}^{H\times \wt{T}}(\wh{X})\to \Lambda-\textnormal{graded }\ff[\wh{X}]-\textnormal{modules with }H\textnormal{-action}.\] In particular, this sends 
\[ \fff\mapsto \bigoplus_{\la\in\Lambda}t^{\lambda}\cdot \Gamma(X,\fff\otimes \ooo_X(\lambda)).\]

\begin{lemma}
\label{lemma:torsor_pullback}
Let $G$ be a group acting on a variety $Y$, with subgroup $H$, such that $\pi:Y\to X=Y/H$ is the map given by quotient by $H$-action. Then for any $G$-module $V$, we have that $\pi^* (Y\times^H V)$ is the total space of $\ooo_Y\times V$.
\end{lemma}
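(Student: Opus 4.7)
The plan is to interpret the hypothesis correctly and then invoke the standard triviality of an associated bundle after pulling back along its own principal bundle. Since $\pi\colon Y\to X=Y/H$ is the quotient by a free $H$-action (which is the implicit assumption for the associated bundle construction to make sense), $\pi$ is an $H$-torsor. The associated bundle $Y\times^H V$ is constructed as the quotient $(Y\times V)/H$ for the diagonal $H$-action $h\cdot(y,v)=(yh^{-1},hv)$, with projection $[y,v]\mapsto [y]$ to $X$. The pullback $\pi^*(Y\times^H V)$ is by definition the fiber product $Y\times_{X}(Y\times^H V)$, whose total space we must identify with $Y\times V$.

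First, I would write down the candidate trivialization explicitly, namely
\[
\Phi\colon Y\times V\longrightarrow Y\times_X(Y\times^H V),\qquad (y,v)\longmapsto (y,[y,v]).
\]
This is well-defined because $[y]=\pi(y)$ agrees with the image of $[y,v]$ in $X$, and it is clearly a morphism of schemes. To produce the inverse, I would use the fact that $\pi$ is an $H$-torsor: given $(y,[y',v'])$ with $\pi(y)=\pi(y')$, there is a unique $h\in H$ with $y'=yh$, and $[y',v']=[yh,v']=[y,hv']$, so one sets $\Phi^{-1}(y,[y',v'])=(y,hv')$. The uniqueness of $h$ (freeness of the $H$-action on $Y$) ensures $\Phi^{-1}$ is a well-defined morphism, and the two compositions are manifestly the identity.

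Finally, I would remark on the role of the full $G$-action. The statement only needs $V$ as an $H$-module in order to form $Y\times^H V$, but the $G$-module structure is natural to carry along since $Y$ is a $G$-variety and the trivialization $\Phi$ is $G$-equivariant when $G$ acts on $Y\times V$ by $g\cdot(y,v)=(gy,gv)$ and on $Y\times_X(Y\times^H V)$ by $g\cdot(y,[y',v'])=(gy,[gy',gv'])$; this is a straightforward check using that the $G$- and $H$-actions on $Y$ commute with the respective structures in the obvious way.

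The only real subtlety — and the step where I would be most careful — is matching the notation: the right-hand side ``total space of $\mathcal{O}_Y\times V$'' should be read as the trivial vector bundle $Y\times V\to Y$ (equivalently, $\operatorname{Spec}_Y\operatorname{Sym}(\mathcal{O}_Y\otimes V^*)$), after which the argument above is simply the standard statement that an associated bundle becomes trivial upon pullback to the total space of the principal bundle from which it is built. No deep input is needed beyond freeness of the $H$-action on $Y$, which is built into the hypothesis that $\pi$ is the quotient by $H$.
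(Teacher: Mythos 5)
The paper's proof of this lemma is literally the single word ``Standard,'' so there is nothing to compare against in detail; your argument is precisely the standard fact being invoked, carried out carefully. You correctly identify the implicit hypothesis that $\pi\colon Y\to Y/H$ is a principal $H$-bundle (free action), write the trivialization $(y,v)\mapsto(y,[y,v])$, invert it using uniqueness of the translating element $h$, and note the $G$-equivariance that the paper needs downstream. You also correctly read the slightly abusive notation ``total space of $\ooo_Y\times V$'' as the trivial bundle $Y\times V$ (cf.\ the analogous phrasing $\ooo_{\wh X}\otimes_\ff V$ in Corollary~\ref{cor:line_bundles_induce_free_A_module}); if one wanted to be pedantic in the scheme-theoretic setting one would add that $\pi$ is faithfully flat so that descent identifies $Y\times_X Y\cong Y\times H$, but that is part of the torsor hypothesis you already flag. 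In short, the proposal is correct and is exactly the argument the paper's ``Standard'' refers to.
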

\begin{proof}
Standard.
\end{proof}

\begin{corollary}
\label{cor:line_bundles_induce_free_A_module}
For any $\wt{G}$-module $V$, then $\pi^* (\wh{X}\times^T V)$ is the total space of $\ooo_{\wh{X}}\otimes_\ff V$. In particular, for any line bundle $\lll$ on $X$, then $\pi^*\lll\cong \ooo_{\wh{X}}$, and hence the graded module associated to $\lll$ is a free $\ff[\wh{X}]$-module of rank $1$.
\end{corollary}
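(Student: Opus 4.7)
The plan is to reduce both claims to Lemma~\ref{lemma:torsor_pullback} applied to the $\wt{T}$-torsor $\pi\colon \wh{X}\to X$ described in \S\ref{subsection:graded_module_vinberg_semigroup}. Setting $G=\wt{G}\times \wt{G}$, $Y=\wh{X}$, and $H=\wt{T}$, the hypothesis of the lemma is satisfied (with $H$ acting freely through the embedding of $\wt{T}$ coming from the Vinberg monoid construction), and the first assertion is exactly the content of the lemma: for any $\wt{G}$-module $V$ (regarded as a $\wt{G}\times\wt{G}$-module via one of the factors), the pullback $\pi^*(\wh{X}\times^{\wt{T}}V)$ is the trivial bundle $\ooo_{\wh{X}}\otimes_{\ff}V$.

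For the second assertion, I will recall from \S\ref{subsection:divisors_and_line_bundles_on_X} that $\textnormal{Pic}(X)\cong \Lambda$, and since $\pi\colon \wh{X}\to X$ is a $\wt{T}$-torsor, the associated-bundle construction identifies $\ooo_X(\lambda)$ with $\wh{X}\times^{\wt{T}}\ff_{\lambda}$ (for an appropriate normalization of the character, which is immaterial for what follows). Specializing the first part to the one-dimensional $\wt{T}$-module $V=\ff_{\lambda}$ yields
\[
\pi^*\ooo_X(\lambda) \;\cong\; \ooo_{\wh{X}}\otimes_{\ff}\ff_{\lambda},
\]
which, after forgetting the $\wt{T}$-equivariant structure, is just $\ooo_{\wh{X}}$: the character $\lambda$ only twists the grading by $\wt{T}$, not the underlying $\ooo_{\wh{X}}$-module structure.

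To conclude, recall from \S\ref{subsection:graded_module_vinberg_semigroup} that the graded $\ff[\wh{X}]$-module associated to a coherent sheaf $\fff$ on $X$ is obtained as $\Gamma(\wh{X},\pi^*\fff)$ equipped with the $\Lambda$-grading coming from the $\wt{T}$-action. Applying this to $\fff=\lll$ and using $\pi^*\lll\cong \ooo_{\wh{X}}$, the associated module is $\ff[\wh{X}]$ itself, possibly with the grading shifted by $\lambda$; in either case it is free of rank $1$ over $\ff[\wh{X}]$. There is essentially no obstacle here: the only mild subtlety is keeping the sign/normalization conventions for $\lambda\leftrightarrow \ooo_X(\lambda)$ consistent, but since this only affects the grading shift and not the rank, the conclusion is unaffected.
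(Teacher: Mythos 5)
Your proof is correct and follows exactly the route the paper takes: both you and the paper reduce the statement to Lemma~\ref{lemma:torsor_pullback} applied to the $\wt T$-quotient $\pi\colon\wh X\to X$, then specialize to a one-dimensional $\wt T$-character to get $\pi^*\ooo_X(\lambda)\cong\ooo_{\wh X}$ (you leave the sign of the character unspecified; the paper records it as $\ff_{-\lambda}$, which as you note only shifts the grading) and conclude that the associated graded module is free of rank~$1$. The only slight imprecision is your identification of the ambient group as $G=\wt G\times\wt G$: the copy of $\wt T$ acting on $\wh X$ that you quotient by is the grading torus from the Vinberg construction and is adjoined to the $\wt G\times\wt G$-action, so the group in the lemma should really be $\wt G\times\wt G\times\wt T$ (or just any group containing that $\wt T$); this does not affect the argument.
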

\begin{proof}
This is immediate from Lemma~\ref{lemma:torsor_pullback}, with $H=\wt{T}$ and the map $\pi:\wh{X}\to X=\wh{X}/T$. The second statement follows from the fact that the total space of $\ooo_X(\lambda)$ is $\wh{X}\times^{\wt{T}}\ff_{-\lambda}$.
\end{proof}

\section{Algebraic Tools}
\label{section:appendix:algebra}

In this section, we'll summarize the algebraic tools we need.
\subsection{Theory of idempotents}
\label{subsection:appendix:theory_of_idempotents}

Our main tool in \S\ref{section:splitting_via_idempotents} is the theory of idempotents. We review the theory of idempotents, following \cite[\S54-55]{curtis1966representation}. Fix $A$ a finite-dimensional associative $\ff$-algebra. We wish to study the (left) indecomposable modules over $A$ (all results which are true for left $A$-modules have obvious analogues for right $A$-modules as well).
\begin{definition}
Let $A$ be a left $A$-module in the natural way. Then $A$ decomposes into a direct sum of indecomposable $A$-modules: 
\[A\cong A_1\oplus \dotsm \oplus A_n.\]
The $A_i$ are called the (left) \textit{principal indecomposable modules}, or PIMs.
\end{definition}

It is immediate that every principal indecomposable module is projective. Some of the principal indecomposable modules may be isomorphic, but in any case, there are only a finite number of non-isomorphic principal indecomposable modules of $A$.

\begin{definition}
A (left) $A$-module $M$ is a \textit{projective indecomposable module} if it is projective and indecomposable.
\end{definition}

Projective indecomposable modules are just isomorphism classes of principal indecomposable modules. To understand this identification, we first need to understand how principal indecomposable modules arise. Their existence is controlled by the idempotents in $A$.

\begin{definition}
An \textit{idempotent} of $A$ is an element $e\in A$ such that $e^2=e$. Two idempotents $e,f\in A$ are \textit{orthogonal} if $ef=fe=0$. An idempotent is called \textit{primitive} if it cannot be written as the sum of two orthogonal idempotents.
\end{definition}

The following holds by \cite[Theorem~54.5]{curtis1966representation}: a left ideal $I$ of $A$ is a principal indecomposable module if and only if $I=Ae$ for some primitive idempotent $e$ in $A$. It follows that every principal idecomposable (left) $A$-module is projective indecomposable and vice versa. By \cite[Corollary~54.12]{curtis1966representation} we have the following bijection:
\[ \{\textnormal{irreducible }A-\textnormal{modules}\}\leftrightarrow \{\textnormal{projective indecomposable }A-\textnormal{modules}\}\leftrightarrow\{\textnormal{PIMs}\}/\sim,\]
with the identification given by $L\mapsto P_L$, the projective cover of $L$, and PIM stands for principal indecomposable $A$-modules. The projective cover satisfies the conditions that $\hom{}(P_L,L)=\ff$ and $\hom{}(P_L,L')=0$ for all $L'\ne L$.

As we mentioned, all results for left $A$-modules have obvious analogues for right $A$-modules, as the theory is essentially the same. But not only do the theories match, but \cite[Corollary~54.10]{curtis1966representation} states that for an idempotent $e\in A$, the left ideal $Ae$ is indecomposable if and only if the right ideal $eA$ is indecomposable. Therefore, the idempotents in $A$ completely control both the left and right indecomposable modules.

\begin{lemma}
\label{lemma:pairwise_orthogonal_primitive_idempotents}
Let $A$ be an associative $\ff$-algebra. Let $\ll$ denote the set of isomorphism classes of irreducible $A$-representations. Then there exist pairwise orthogonal primitive idempotents which sum to $1 \in A$, and they are indexed by $(L,i)$ with $L \in \mathbb{L}$ and $i=1,2,\ldots,\operatorname{dim}L$. 
\end{lemma}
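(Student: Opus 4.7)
The plan is to reduce to the semisimple quotient $A/J$ (where $J=\operatorname{rad}A$), find the desired idempotents there via Artin--Wedderburn, and then lift. Throughout, assume $A$ is finite dimensional over $\ff$ (this is the case in the intended application $A=\uzero$, and in any event is needed for the statement to make sense as written).

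First, since $A$ is finite dimensional, $J$ is nilpotent and $A/J$ is a finite dimensional semisimple $\ff$-algebra. Because $\ff$ is algebraically closed, the Artin--Wedderburn theorem gives an isomorphism
\[ A/J \;\cong\; \prod_{L\in\mathbb{L}} \operatorname{End}_{\ff}(L) \;\cong\; \prod_{L\in\mathbb{L}} M_{\dim L}(\ff), \]
where the product is indexed by the (finitely many) isomorphism classes of irreducible $A$-modules. In each matrix factor $M_{\dim L}(\ff)$, the diagonal matrix units $E_{11},\ldots,E_{\dim L,\dim L}$ form a complete system of pairwise orthogonal primitive idempotents summing to the identity of that factor. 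Assembling these across all factors (and recalling that idempotents in distinct factors of a product algebra are automatically orthogonal) gives a family $\{\bar e_L^i : L\in\mathbb{L},\ 1\le i\le \dim L\}$ of pairwise orthogonal primitive idempotents in $A/J$ with $\sum_{L,i}\bar e_L^i = \bar 1$.

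Second, I lift this family through the projection $\pi\colon A\twoheadrightarrow A/J$. The standard lifting-of-idempotents theorem (valid because $J$ is nilpotent; see e.g.\ \cite[\S 54]{curtis1966representation}) says that any idempotent in $A/J$ lifts to an idempotent in $A$, and by induction one may lift any finite pairwise orthogonal family to a pairwise orthogonal family in $A$: having lifted $\bar e_1,\ldots,\bar e_k$ to orthogonal idempotents $e_1,\ldots,e_k\in A$, set $A'=(1-e_1-\cdots-e_k)A(1-e_1-\cdots-e_k)$, observe that $\bar e_{k+1}$ lies in $A'/(A'\cap J)$, lift it to an idempotent in $A'$, and adjoin it. Carrying out this procedure and then, if necessary, replacing the final lift $e_{L_0}^{i_0}$ by $1-\sum_{(L,i)\ne(L_0,i_0)}e_L^i$ (which is an idempotent orthogonal to all the others because its image in $A/J$ is $\bar e_{L_0}^{i_0}$ and it differs from the original lift by an element of $J$ which is orthogonal to the others modulo nilpotents — an adjustment readily made by one more induction step), we obtain a family $\{e_L^i\}\subset A$ of pairwise orthogonal idempotents summing to $1$ in $A$ with $\pi(e_L^i)=\bar e_L^i$.

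Finally, primitivity of each $e_L^i$ follows automatically: if $e_L^i=f+g$ with $f,g$ nonzero orthogonal idempotents of $A$, then $\bar e_L^i=\bar f+\bar g$ would decompose the primitive idempotent $\bar e_L^i$ of $A/J$ into a sum of orthogonal idempotents, and neither $\bar f$ nor $\bar g$ can vanish since any idempotent in $J$ is zero by nilpotence; this contradicts primitivity of $\bar e_L^i$. The main (only) technical obstacle is the orthogonal lifting in the middle paragraph, which is entirely standard but must be performed inductively rather than term-by-term; everything else is a direct consequence of Artin--Wedderburn over an algebraically closed field.
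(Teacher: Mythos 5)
Your proof is correct and follows essentially the same route as the paper's: pass to $A/\operatorname{rad}A$, apply Artin--Wedderburn over an algebraically closed field to get matrix units as the complete set of pairwise orthogonal primitive idempotents, and then lift them through the nilpotent radical. The paper handles the lifting by citing \cite[Theorem~21.28, Proposition~21.25]{lam1991first} rather than sketching the induction, and your worry in the last adjustment is milder than you suggest (if $e_1,\dots,e_{k-1}$ are pairwise orthogonal idempotents, then $1-\sum_{i<k}e_i$ is automatically an idempotent orthogonal to each $e_i$, no further induction needed), but the substance is the same.
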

\begin{proof}
The irreducible representations of $A$ are the same as the irreducible representations of $A/\textnormal{rad}$, as for any such simple module $L$, $\textnormal{rad}\cdot L=0$. Since $A/\textnormal{rad}$ is semisimple, the Wedderburn-Artin theorem implies that
\[ A/\textnormal{rad}\cong \bigoplus_{L\in\ll } \textnormal{End}(L).\]
Now,
\[\textnormal{End}(L)\cong L \otimes L^*,\]
hence decomposes into the direct sum of $\dim L$ copies of $L$ as irreducible $A$-modules, given by primitive idempotents $\wt{e}_{L}^1,\dots,\wt{e}_{L}^{\dim L}$ (see \cite[\S54-55]{curtis1966representation}), where there are $d=\dim L$ many primitive idempotents, as $\textnormal{End}(L)\cong \textnormal{Mat}_{d\times d}(\ff)$. Therefore, we obtain exactly $d$ such idempotents in $A/\textnormal{rad}$ by choosing $\wt{e}_L^i$ to be the matrix with a single $1$ at the $(i,i)$ entry; these are primitive because the corresponding $A/\textnormal{rad}$-module is isomorphic to $L$, which is simple. It is clear that their sum is $1\in A/\textnormal{rad}$. Now by \cite[Theorem~21.28]{lam1991first}, these lift to primitive idempotents $e_{L}^1,\dots,e_{L}^{\dim L}$ of $A$, and by \cite[Proposition~21.25]{lam1991first}, they lift compatibly to pairwise orthogonal idempotents, which sum to a lift of $1\in A/\textnormal{rad}$ in $A$: but we can choose this to be $1\in A$.
\end{proof}
\begin{notation}
Denote by $\mathscr{E}$, the set of idempotents $e_{L}^i$ in Lemma~\ref{lemma:pairwise_orthogonal_primitive_idempotents}, where $L\in\ll$ and $i=1,2,\dots,\dim L$.
\end{notation}

\begin{lemma}
\label{lemma:appendix:e_idempotent_=_projective_cover}
Let $L$ be an irreducible $A$-module. Then we have the isomorphism of left $A$-modules
\[A e_L^j\cong P_L,\]
where $P_L$ denotes the projective cover of $L$.
\end{lemma}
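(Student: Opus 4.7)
The plan is to identify $Ae_L^j$ as a projective indecomposable module and then determine that its unique irreducible quotient is $L$, since the projective cover is characterized by this property.

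First, because $e_L^j$ is a primitive idempotent (by Lemma~\ref{lemma:pairwise_orthogonal_primitive_idempotents}), the left ideal $Ae_L^j$ is a principal indecomposable $A$-module by \cite[Theorem~54.5]{curtis1966representation}, hence projective indecomposable. By the bijection recalled in this appendix (just before Lemma~\ref{lemma:pairwise_orthogonal_primitive_idempotents}), every projective indecomposable $A$-module is the projective cover of a unique irreducible module, so there exists a unique irreducible $A$-module $L'$ with $Ae_L^j \cong P_{L'}$. It remains to show $L' \cong L$.

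To pin down $L'$, I would reduce modulo the radical. Tensoring (or applying the exact functor $- \otimes_A A/\mathrm{rad}$) gives $Ae_L^j/(\mathrm{rad})Ae_L^j \cong (A/\mathrm{rad})\,\wt{e}_L^j$, where $\wt{e}_L^j$ is the image of $e_L^j$ constructed in the proof of Lemma~\ref{lemma:pairwise_orthogonal_primitive_idempotents}. By the Wedderburn decomposition $A/\mathrm{rad} \cong \bigoplus_{M \in \mathbb{L}} \mathrm{End}(M)$, the idempotent $\wt{e}_L^j$ lies in the summand $\mathrm{End}(L) \cong \mathrm{Mat}_{d\times d}(\ff)$ (with $d = \dim L$) and corresponds to the matrix unit $E_{jj}$. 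Hence $(A/\mathrm{rad})\wt{e}_L^j \cong \mathrm{Mat}_{d \times d}(\ff) \cdot E_{jj}$, which is the $j$th column of the matrix algebra and isomorphic as a left module to $L$. Therefore the head of $Ae_L^j$ (its unique simple quotient, since it is projective indecomposable, hence local) is $L$, so $L' \cong L$ and $Ae_L^j \cong P_L$.

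The argument is essentially bookkeeping, and no step is genuinely difficult; the only place to be careful is the identification $(A/\mathrm{rad})\wt{e}_L^j \cong L$, which requires knowing that $\wt{e}_L^j$ is supported entirely in the $\mathrm{End}(L)$-summand of the Wedderburn decomposition and acts as a rank-one matrix idempotent there. This follows directly from the construction in Lemma~\ref{lemma:pairwise_orthogonal_primitive_idempotents}, where $\wt{e}_L^j$ was defined precisely as the $(j,j)$ matrix unit in $\mathrm{End}(L)$.
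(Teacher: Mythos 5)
Your proposal is correct and takes the same route as the paper: both start from \cite[Theorem~54.5]{curtis1966representation} to conclude that $Ae_L^j$ is a projective indecomposable. The paper's proof then immediately asserts $Ae_L^j \cong P_L$; you go further and explicitly verify the one nontrivial point the paper leaves implicit, namely that the head of $Ae_L^j$ is $L$ (rather than some other irreducible), by reducing modulo the radical and tracking $\wt{e}_L^j$ through the Wedderburn decomposition to the $(j,j)$ matrix unit in $\mathrm{End}(L)$. That extra step is exactly what is needed to make the identification rigorous, and it is consistent with the construction in Lemma~\ref{lemma:pairwise_orthogonal_primitive_idempotents}.
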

\begin{proof}
By \cite[Theorem~54.5]{curtis1966representation}, $Ae_L^j$ is a projective indecomposable (as projective indecomposables are precisely the principle indecomposables up to isomorphism). Therefore, it is isomorphic to $P_L$, the projective cover of $L$.
\end{proof}
\begin{corollary}
We have the isomorphism of left $A$-modules
\[A\cong \bigoplus_{L\in\ll } P_L^{\oplus \dim L}.\]
\end{corollary}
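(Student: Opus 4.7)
The plan is to assemble the two preceding lemmas into the claimed decomposition; there is essentially no hidden difficulty here, only bookkeeping.

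First, I would use the system of pairwise orthogonal primitive idempotents $\{e_L^i : L \in \ll,\ 1 \le i \le \dim L\}$ from Lemma~\ref{lemma:pairwise_orthogonal_primitive_idempotents} to decompose $A$ as a left module over itself. Since
\[
1 \;=\; \sum_{L \in \ll}\sum_{i=1}^{\dim L} e_L^i
\]
and the $e_L^i$ are pairwise orthogonal idempotents, right multiplication on $A$ by each $e_L^i$ is an idempotent endomorphism of the left $A$-module $A$, and these endomorphisms are pairwise orthogonal and sum to the identity. This immediately yields
\[
A \;=\; \bigoplus_{L \in \ll}\bigoplus_{i=1}^{\dim L} A e_L^i
\]
as a direct sum of left $A$-modules.

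Next, I would invoke Lemma~\ref{lemma:appendix:e_idempotent_=_projective_cover}, which identifies each summand $A e_L^i$ with the projective cover $P_L$. Collecting the $\dim L$ copies corresponding to each irreducible $L$ produces the isomorphism
\[
A \;\cong\; \bigoplus_{L \in \ll} P_L^{\oplus \dim L}
\]
of left $A$-modules, as claimed. The only point worth remarking on is that the isomorphism class $A e_L^i \cong P_L$ is independent of the index $i$ (since all the $e_L^i$ lift, by construction in Lemma~\ref{lemma:pairwise_orthogonal_primitive_idempotents}, the diagonal rank-one idempotents of the same matrix block $\operatorname{End}(L) \subset A/\operatorname{rad}$), which is precisely what allows us to combine them into a direct summand of multiplicity $\dim L$.
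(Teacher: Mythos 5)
Your proof is correct and takes exactly the same route as the paper: decompose $A=\bigoplus_{L,i}Ae_L^i$ via the orthogonal primitive idempotents from Lemma~\ref{lemma:pairwise_orthogonal_primitive_idempotents}, then identify each $Ae_L^i\cong P_L$ via Lemma~\ref{lemma:appendix:e_idempotent_=_projective_cover}. Your closing remark on the independence of the isomorphism class from the index $i$ is a reasonable extra sanity check, though Lemma~\ref{lemma:appendix:e_idempotent_=_projective_cover} already gives $Ae_L^i\cong P_L$ for every $i$ directly.
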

\begin{proof}
Applying Lemma~\ref{lemma:appendix:e_idempotent_=_projective_cover}, we have that
\[A= \bigoplus_{L\in\ll }\bigoplus_{j=1}^{\dim L}Ae_L^j\cong \bigoplus_{L\in\ll }P_L^{\oplus \dim L}\]
as left $A$-modules.
\end{proof}

The main point of our primitive idempotents is to decompose $A$:
\begin{proposition}
\label{prop:appendix:dimension_of_terms}
We have the decomposition (as an $\ff$-vector space)
\[ A= \bigoplus_{(e_{L'}^i,e_L^j)\in\mathscr{E}\times \mathscr{E}} e_{L'}^i A e_L^j.\]
The dimension of $e_{L'}^i A e_{L}^j$ is precisely $[L':P_L]$, the multiplicity of $L'$ in $P_L$ as a composition factor.
\end{proposition}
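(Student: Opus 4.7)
The plan is to handle the two assertions separately, both following essentially formally from the setup in Lemma~\ref{lemma:pairwise_orthogonal_primitive_idempotents} and Lemma~\ref{lemma:appendix:e_idempotent_=_projective_cover}. For the direct sum decomposition, I would use the identity $1 = \sum_{(L,j)} e_L^j$ in $A$ to write, for any $a \in A$,
\[ a \;=\; 1 \cdot a \cdot 1 \;=\; \sum_{(L',i),(L,j)} e_{L'}^i\, a\, e_L^j, \]
which shows that $A$ is the sum of the subspaces $e_{L'}^i A e_L^j$. To see the sum is direct, I would use orthogonality: given a hypothetical relation $\sum a_{L',L}^{i,j} = 0$ with $a_{L',L}^{i,j} \in e_{L'}^i A e_L^j$, multiplying by $e_{L''}^k$ on the left and $e_{L'''}^{\ell}$ on the right kills every term except the $((L'',k),(L''',\ell))$ term, which is therefore zero.

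For the dimension statement, the plan is to translate everything into Hom spaces between projective covers. The key elementary fact is the standard isomorphism, valid for any idempotent $e$ in $A$ and any left $A$-module $M$, given by $\phi \mapsto \phi(e)$:
\[ \hom{A}(Ae, M) \;\xrightarrow{\sim}\; eM. \]
Specializing to $M = Ae_L^j$ and $e = e_{L'}^i$, this gives $\hom{A}(Ae_{L'}^i, Ae_L^j) \cong e_{L'}^i A e_L^j$. By Lemma~\ref{lemma:appendix:e_idempotent_=_projective_cover}, $Ae_{L'}^i \cong P_{L'}$ and $Ae_L^j \cong P_L$, so
\[ \dim_\ff e_{L'}^i A e_L^j \;=\; \dim_\ff \hom{A}(P_{L'}, P_L). \]

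The remaining (and arguably main) step is to identify $\dim_\ff \hom{A}(P_{L'}, P_L)$ with the composition multiplicity $[P_L : L']$. Here I would use that $P_{L'}$ is projective so $\hom{A}(P_{L'}, -)$ is exact, combined with $\hom{A}(P_{L'}, L'') = 0$ for $L'' \not\cong L'$ and $\hom{A}(P_{L'}, L') = \ff$ (the latter using Schur's lemma and the fact that $\ff$ is algebraically closed, which is assumed throughout the paper). A devissage along a composition series of $P_L$ then gives $\dim_\ff \hom{A}(P_{L'}, P_L) = [P_L : L']$, completing the proof. The main obstacle is essentially bookkeeping: one must be careful about left versus right module conventions and about ensuring Schur's lemma applies (which is automatic under the standing hypotheses).
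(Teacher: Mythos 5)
Your proposal is correct and follows essentially the same route as the paper: the decomposition from $1=\sum e_L^j$ plus orthogonality, the standard isomorphism $\hom{A}(Ae_{L'}^i, Ae_L^j)\cong e_{L'}^i A e_L^j$, and the identification $Ae_L^j\cong P_L$ from Lemma~\ref{lemma:appendix:e_idempotent_=_projective_cover}. The only difference is that you prove the final identity $\dim_\ff\hom{A}(P_{L'},P_L)=[P_L:L']$ by devissage along a composition series using exactness of $\hom{A}(P_{L'},-)$, whereas the paper simply cites the corresponding theorem from Curtis--Reiner.
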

\begin{proof}
By Lemma~\ref{lemma:appendix:e_idempotent_=_projective_cover}, we have that $A e_L^j\cong P_L$ and $A e_{L'}^i \cong P_{L'}$. Now by \cite[Theorem~54.15]{curtis1966representation}, we have that
\[ \dim e_{L'}^i A e_L^j = \dim e_{L'}^i P_L = \dim \hom{A}(Ae_{L'}^i, P_L)=\dim\hom{A}(P_{L'},P_L)=[L':P_L].\]
\end{proof}

\begin{remark}
\label{remark:central_idempotents_preserves_module}
Sometimes it is advantageous \textit{not} to use primitive idempotents. If we instead use the collection of \textit{central} idempotents, this gives us the decomposition of $A$ into $A$-submodules, i.e. $A$ decomposes \textit{as an $\ff$-algebra} into the direct sum of subalgebras. In particular, applying the central idempotents to any module preserves the $A$-module structure. On the other hand, applying a left (resp. right) primitive idempotent to a module loses the left (resp. right) module structure, thus applying both left and right primitive idempotents loses the $A$-module structure on both sides entirely.
\end{remark}

\subsection{Assorted results}
\label{subsection:assorted_results}
In this subsection, we compile standard or known results which are used in the paper.

Our first result concerns symmetric algebras. A finite-dimensional associative $\ff$-algebra is called \textit{Frobenius} if there exists a nondegenerate bilinear form $\langle -,-\rangle$ such that $\langle ab,c\rangle=\langle a,bc\rangle$ for all $a,b,c,\in A$, equivalently, $A$ as a right $A$-module is isomorphic to its dual as a left $A$-module. A finite-dimensional associative $\ff$-algebra $A$ is called \textit{symmetric} if there exists a bilinear form which is both symmetric and makes $A$ a Frobenius algebra. (See \cite[\S R.2-R.3]{humphreys2006ordinary}.)
\begin{lemma}
\label{lemma:frobenius_symmetric_algebra_isomorphic_to_dual}
Let $A$ be a symmetric $\ff$-algebra. Then $A\cong A^*$ as $A\otimes A$-modules (with the standard left and right action).
\end{lemma}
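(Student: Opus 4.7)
The plan is to exhibit an explicit bimodule isomorphism $\phi\colon A\to A^*$ coming directly from the symmetric Frobenius bilinear form and verify compatibility with the left and right $A$-actions using the two defining properties (associativity and symmetry of the form).

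First, I would fix a nondegenerate symmetric bilinear form $\langle -,-\rangle$ on $A$ satisfying $\langle ab,c\rangle=\langle a,bc\rangle$, guaranteed by the hypothesis that $A$ is symmetric, and define
\[\phi\colon A\to A^*,\qquad \phi(a)(b)=\langle a,b\rangle.\]
Nondegeneracy of $\langle -,-\rangle$ immediately gives injectivity of $\phi$, and since $\dim_{\ff}A=\dim_{\ff}A^*$ is finite, $\phi$ is a linear isomorphism. This takes care of the underlying vector space part of the statement.

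Next, I would verify that $\phi$ intertwines both the left and right $A$-actions. Recall that on $A^*$, the standard left action is $(a\cdot f)(x)=f(xa)$ and the standard right action is $(f\cdot b)(x)=f(bx)$, while on $A$ the actions are just left and right multiplication. For the right action, the Frobenius property directly gives
\[\phi(xb)(y)=\langle xb,y\rangle=\langle x,by\rangle=\phi(x)(by)=(\phi(x)\cdot b)(y).\]
For the left action, I would combine Frobenius with symmetry:
\[\phi(ax)(y)=\langle ax,y\rangle=\langle y,ax\rangle=\langle ya,x\rangle=\langle x,ya\rangle=\phi(x)(ya)=(a\cdot\phi(x))(y),\]
where the second and fourth equalities use symmetry and the third uses the Frobenius property. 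Thus $\phi$ is an isomorphism of $A$-bimodules, which is the content of the statement when the right action is interpreted in the standard way (equivalently, as a module over $A\otimes A^{\mathrm{op}}$, which is how the paper uses this in the proof of Lemma~\ref{lemma:u_0*=u_0}).

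There is no real obstacle here; the proof is essentially a bookkeeping verification. The only subtle point is that the symmetry of the form is genuinely needed for the compatibility with the left action (Frobenius alone gives compatibility for one side, but not both), which is precisely why the hypothesis in the lemma is that $A$ be symmetric and not merely Frobenius.
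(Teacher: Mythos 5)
Your proof is correct, and since the paper disposes of this lemma with a one-word ``Clear,'' your write-up simply supplies the routine verification the authors omit: the map $\phi(a)=\langle a,-\rangle$ is a linear isomorphism by nondegeneracy, intertwines the right action by the Frobenius identity alone, and intertwines the left action once symmetry is also invoked. Your closing remark that symmetry (not mere Frobenius) is genuinely needed for the two-sided compatibility is exactly the right thing to notice --- for a non-symmetric Frobenius algebra the bimodule structures on $A$ and $A^*$ differ by a twist by the Nakayama automorphism, so the hypothesis in the lemma is sharp.
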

\begin{proof}
Clear.
\end{proof}

\begin{proposition}[Exercise~III.6.10, \cite{hartshorne2013algebraic}]
\label{right_adjoint_to_fr_*}
Let $X$ be the wonderful compactification and $\fr:X\to X$ the Frobenius morphism. The right adjoint functor of $\fr_*$ is given by $\fr^!: \fff\mapsto \fr^*\fff\otimes \omega_X^{\otimes (1-p)}$, where $\omega_X$ is the canonical sheaf on $X$.
\end{proposition}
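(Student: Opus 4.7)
The plan is to identify this as an instance of Grothendieck--Serre duality for the finite flat morphism $\fr\colon X\to X$, and then to compute the relative dualizing sheaf explicitly.

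First I would recall that $X$ is smooth over $\ff$ and that the absolute Frobenius $\fr$ is a finite flat morphism of Noetherian schemes. For any finite morphism $f\colon X\to Y$, the direct image $f_*$ admits a right adjoint $f^{\flat}$, and on quasi-coherent sheaves it is given by the explicit formula $f^{\flat}\fff=\wt{\ul{\hom{\ooo_Y}}(f_*\ooo_X,\fff)}$, where the tilde converts an $f_*\ooo_X$-module sheaf back to an $\ooo_X$-module sheaf under the equivalence between quasi-coherent $\ooo_X$-modules and quasi-coherent $f_*\ooo_X$-modules on $Y$. This is essentially \cite[Exercise III.6.10]{hartshorne2013algebraic}, which the statement cites, or equivalently it is the content of Grothendieck--Serre duality for a finite morphism.

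Next I would reduce the formula $f^{\flat}\fff=f^*\fff\otimes \omega_{X/Y}$ to a computation of the relative dualizing sheaf $\omega_{\fr}$ of Frobenius. For a finite flat morphism between smooth varieties of the same dimension, one has $\omega_{X/Y}\cong \omega_X\otimes f^*\omega_Y^{\vee}$; indeed this is the standard dualizing sheaf identification obtained from the fact that both $f^{\flat}\omega_Y$ and $\omega_X$ are dualizing sheaves, combined with the projection formula $f^{\flat}\fff\cong f^{\flat}\ooo_Y\otimes f^*\fff$ that holds when $f$ is finite and $\fff$ is a line bundle (or more generally locally free). Applied to $f=\fr\colon X\to X$, this yields $f^{\flat}\fff\cong f^*\fff\otimes \omega_X\otimes \fr^*\omega_X^{\vee}$.

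The key computation is then $\fr^*\omega_X\cong \omega_X^{\otimes p}$. This is exactly the property of Frobenius for line bundles recalled in \S\ref{subsection:frobenius_morphism}, namely $\fr^*\lll\cong\lll^{\otimes p}$ for any $\lll\in\textnormal{Pic}(X)$, applied to $\lll=\omega_X$. Substituting this in gives
\[\omega_{\fr}\cong \omega_X\otimes (\omega_X^{\otimes p})^{\vee}\cong \omega_X^{\otimes(1-p)},\]
and therefore $\fr^!\fff\cong \fr^*\fff\otimes \omega_X^{\otimes(1-p)}$, as claimed.

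The only slightly subtle step is justifying the identification $\fr^{\flat}\fff\cong \fr^*\fff\otimes \omega_{\fr}$ rather than reading it off as a definition; it rests on the projection formula for $\fr^{\flat}$ with a locally free argument together with the fact that $\omega_{\fr}=\fr^{\flat}\ooo_X$ for $\fr$ finite between smooth schemes of the same dimension. Once these general facts from duality theory are invoked, the remainder of the argument is the short computation above, so I expect the main obstacle to be simply citing the right form of Grothendieck duality for a finite (flat) morphism; no calculation particular to the wonderful compactification is needed.
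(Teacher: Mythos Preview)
Your argument is correct and is the standard derivation of this fact. Note, however, that the paper does not actually supply its own proof of this proposition: it is stated in the appendix of assorted results with only the citation to \cite[Exercise~III.6.10]{hartshorne2013algebraic}, so there is nothing to compare against beyond that reference. Your proof via Grothendieck--Serre duality for the finite flat morphism $\fr$, together with the computation $\omega_{\fr}\cong\omega_X\otimes\fr^*\omega_X^{\vee}\cong\omega_X^{\otimes(1-p)}$ using $\fr^*\lll\cong\lll^{\otimes p}$, is exactly what that exercise unwinds to in this setting.
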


\begin{lemma}
\label{lemma_haboush_surjection_implies_nonzero}
Suppose we have an affine morphism $f:X\to Y$, and coherent sheaves $\eee_X$ on $X$ and $\eee_Y$ on $Y$. Suppose we have a surjective map of sheaves
\[ j: f^*\eee_Y\to \eee_X.\]
Then the corresponding adjoint map
\[\eee_Y\to f_*\eee_X\] 
is nonzero at every point.
\end{lemma}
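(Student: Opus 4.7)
The plan is to argue by contradiction using adjunction and coherence. Let $\phi: \eee_Y \to f_*\eee_X$ denote the adjoint of $j$. Recall the counit-unit description of adjunction: we have $j = \varepsilon_{\eee_X} \circ f^*\phi$, where $\varepsilon_{\eee_X}: f^*f_*\eee_X \to \eee_X$ is the counit of the adjunction $(f^*, f_*)$. This formal relation is the engine of the argument.

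Suppose that $\phi$ vanishes at some point $y \in Y$, i.e.\ the stalk map $\phi_y: (\eee_Y)_y \to (f_*\eee_X)_y$ is zero. First I would use coherence of $\eee_Y$ to propagate this vanishing to an open neighborhood: pick finitely many sections generating $(\eee_Y)_y$, spread them to an open $V \ni y$ on which they generate $\eee_Y|_V$, and shrink $V$ so that each of their images under $\phi$ vanishes. Then $\phi|_V = 0$.

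Next, applying $f^*$ gives $f^*\phi|_{f^{-1}(V)} = 0$, so from $j = \varepsilon_{\eee_X} \circ f^*\phi$ we conclude $j|_{f^{-1}(V)} = 0$. But $j$ is surjective, so $\eee_X|_{f^{-1}(V)} = 0$, and in particular $\eee_X$ vanishes on every stalk over the fiber $f^{-1}(y)$. In the only application (Theorem~\ref{thm:embedding_of_L_otimes_L}), $f$ is the Frobenius morphism---hence surjective---and $\eee_X$ is a line bundle, which has nonzero stalk at every point of $X$; this contradiction forces $\phi_y \ne 0$ for every $y \in Y$.

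There is essentially no real obstacle: the lemma is a direct unpacking of the $(f^*, f_*)$-adjunction combined with the coherence of $\eee_Y$. The only point requiring care is the interpretation of ``nonzero at every point''---stalkwise nonvanishing of $\phi$ at $y$, which in the intended application amounts to surjectivity of the induced map on fibers since the target is a line bundle. The hypothesis that $f$ is affine is what guarantees $f^{-1}(V)$ is affine open and makes the restriction $\eee_X|_{f^{-1}(V)} = 0$ immediately meaningful at the level of stalks; in fact the proof only uses that $f^*$ commutes with restriction, so ``affine'' can plausibly be dropped.
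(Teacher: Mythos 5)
Your reading of ``nonzero at every point'' is stalk nonvanishing: you assume $\phi_y$ vanishes on the stalk at $y$ and derive a contradiction. But the paper's proof makes explicit (``it suffices to check that the induced map $\psi_{\mathfrak m}'\colon N/\mathfrak m N\to M_B/\mathfrak m M_B$ is nonzero'') that what is meant, and what the application in Theorem~\ref{thm:embedding_of_L_otimes_L} needs, is nonvanishing of the induced map on \emph{fibers}: the argument there is that a nonzero fiber map out of an irreducible $G_1\times G_1$-module must be injective, so the resulting map of vector bundles is fiberwise injective. Fiber nonvanishing is strictly stronger than stalk nonvanishing---$\phi_y$ can be nonzero while its image lies in $\mathfrak m_y\cdot(f_*\eee_X)_y$, in which case $\phi\otimes k(y)=0$---so your proof establishes something weaker than what the lemma is being used for.

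The concrete consequence is that your key step does not transfer: the coherence argument that spreads $\phi_y=0$ to $\phi|_V=0$ on an open neighborhood has no analogue for the hypothesis $\phi\otimes k(y)=0$, since membership of $\im\phi_y$ in $\mathfrak m_y\cdot(f_*\eee_X)_y$ is not an open condition. The fix is to track ideal membership directly: from $\im\phi_y\subseteq\mathfrak m_y\cdot(f_*\eee_X)_y$ and your factorization $j=\varepsilon\circ f^*\phi$, one gets, for any $x\in f^{-1}(y)$, that $\im j_x\subseteq\mathfrak m_x\cdot(\eee_X)_x$ (because $f^\sharp(\mathfrak m_y)\subseteq\mathfrak m_x$), hence $j\otimes k(x)=0$; surjectivity of $j$ then forces $(\eee_X)_x\otimes k(x)=0$ by Nakayama, a contradiction whenever $\eee_X$ is a line bundle and $f^{-1}(y)$ is nonempty. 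This is exactly the paper's ring-theoretic argument ($\im\varphi\subseteq\mathfrak m M\subseteq\mathfrak n M$ for $\mathfrak n\supseteq f^\sharp(\mathfrak m)$, forcing $M=\mathfrak n M$) rephrased in counit language. Your closing remark is also misdirected: in the application the target of $\phi$ is $\fr_*\ooo_X(\lambda)$, which has rank $p^{\dim X}$ rather than being a line bundle, and even for a line-bundle target stalk nonvanishing does not yield fiber nonvanishing.
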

\begin{proof}
The problem is local, so we may check this on affine charts. Let $\spec B\subset Y$ and $f^{-1}(\spec B)=\spec A\subset X$, with the induced map $f^\sharp:B\to A$. Let $\eee_Y|_{\spec B}=\wt{N}$ and $\eee_X|_{\spec A}=\wt{M}$. By hypothesis, we have a surjection
$\varphi: N\otimes_B A \onto M$,
which by adjunction gives us the map
$\psi:N\to M_B$,
where $M_B$ denotes $M$ as a $B$-module $M$. Let $[\mf{m}]\in\spec B$ be any closed point. It suffices to check that the induced map $\psi_{\mf{m}}':N/\mf{m}N\to M_B/\mf{m}M_B$ is nonzero, or equivalently that $\psi_{\mf{m}}:N\onto N/\mf{m}N\to M_B/\mf{m}M_B$
is nonzero. But if $\psi_{\mf{m}}$ were zero at some $[\mf{m}]$, then $\psi_{\mf {m}}(n)=\varphi(n\otimes 1)\in \mf{m}M$ for all  $n\in N$, implying that $\im(\varphi)\subset \mf{m}M$. Then any $\mf{n}\in\spec A$ which contains $f^\sharp(\mf{m})$ satisfies $\im\varphi\subset \mf{n}M\implies \varphi|_{[\mf{n}]}=0$, contradiction.
\end{proof}

\section{Decomposition numbers}
\label{section:appendix:decomposition_numbers}
In this section, we review general properties of the decomposition numbers $d_\lambda$ (defined in Definition~\ref{definition:d_lambda}), then give an overview of the general procedure to compute them for $p\gggg 0$ (as well as the closely related problem of the dimensions of $L_\lambda$). For ``small" root systems, the decomposition numbers are completely known, e.g. in \cite{humphreys2006ordinary}. In these cases, we compute the ranks of the summands $\fff_{\mu,\lambda}^{i,j}$ in ($\star$), Theorem~\ref{thm:decomp_line_bundle_into_vector_bundles}.

\subsection{Properties of $d_{\lambda}$}

Our discussion will be based on \cite[\S3.4]{humphreys2006ordinary}.

\begin{definition}
The affine Weyl group $W^{\textnormal{aff}}$ acts on $\Lambda\otimes_{\zz}\rr$ by translations and reflections by \textit{affine root hyperplanes}: hyperplanes of the form 
\[ \langle \lambda+\rho,\alpha^\vee\rangle=mp\]
for some $\alpha\in \Phi^+$ and $m\in\zz$. We define an \textit{open alcove} to be a connected component of $\Lambda\otimes_{\zz}\rr$ with the affine root hyperplanes removed. We define an \textit{alcove} to be the closure of an open alcove; note that alcoves are fundamental domains for $(W^{\textnormal{aff}},\cdot_p)$-action on $\Lambda\otimes_{\zz}\rr$.

The \textit{fundamental alcove} is the alcove defined by
\[ \{ \lambda\in\Lambda\otimes_{\zz}\rr\mid 0\le \langle \lambda+\rho,\alpha^\vee\rangle \le p \textnormal{ for all }\alpha\in\Phi^+\}.\]
\end{definition}
\begin{remark}
After shifting by $\rho$, the fundamental alcove consists of weights $\sum a_i\omega_i$ with $a_i\ge 0$ and another condition depending on the highest (co)root. For $\mf{sl}_n$, that condition is $\sum_{i=1}^{\ell} a_i\le p$.
\end{remark}

Now, $\Lambda_p$ (see Definition~\ref{definition:Lambda_p} lies in the union of a finite number (precisely, $|W|/[\Lambda : \rrr ]$, see \cite[\S3.4]{humphreys2006ordinary}) of alcoves. Following \cite{humphreys2006ordinary}, let us make a convention regarding the top and bottom alcoves.

\begin{notation}
The alcove intersecting $\Lambda_p$ is the \textit{top alcove} if it contains $(p-1)\rho$. The alcove intersecting $\Lambda_p$ is the \textit{bottom alcove} if it contains $-\rho$ (note that $-\rho\not\in \Lambda_p$.
\end{notation}

\begin{example}
For type $A_1$, corresponding to $\mf{g}=\mf{sl}_2$, we have the single alcove given by \[\{-\omega,0,\omega,\dots,(p-1)\omega\},\] where $\omega$ denotes the fundamental weight. In this case, 
\[\Lambda_p=\{0,\omega,\dots,(p-1)\omega\}\] lies in this single alcove, consisting of all weights except $-\omega=-\rho$.
\end{example}

\begin{example}
For type $A_2$, corresponding to $\mf{g}=\mf{sl}_3$, then $\Lambda_p$ lies in the union of two alcoves. In particular, $\Lambda_p$ consists of all of the union of these two alcoves except for the ``boundary" edges of the bottom alcove: specifically, the weights $a\omega_1+b\omega_2$ for $a=-1$ or $b=-1$, and $a,b\le p-1$. See \S\ref{subsection:appendix:d_lambda_for_A_2} for a diagram.
\end{example}

\begin{example}
For type $B_2$, corresponding to $\mf{g}=\mf{so}_5$, we have four alcoves covering $\Lambda_p$. See \S\ref{subsection:appendix:d_lambda_for_B_2} for a diagram.
\end{example}

For more complicated Lie algebras, it is convenient to depict the alcoves via graphs, as in \cite[\S3.4]{humphreys2006ordinary}. We represent each alcove by a vertex and the edges represent shared walls, which give the ordering of the alcoves from top to bottom.

\begin{example}
\label{example:diagram_of_alcoves_not_numbered}
The following diagram depicts the alcoves covering $\Lambda_p$, as discussed in \cite[\S3.4]{humphreys2006ordinary}.
\begin{center}
    \begin{tikzpicture}
    \node at (0,0) {\Large $A_1$};
    \draw[fill=black] (1,0) circle (3pt);

    \node at (0,-2) {\Large $A_2$};

    \draw[fill=black] (1,-2) circle (3pt);
    \draw[fill=black] (1,-3) circle (3pt);

    \draw[thick] (1,-2) -- (1,-3);

    \node at (0,-5) {\Large $A_3$};

    \draw[fill=black] (1,-5) circle (3pt);
    \draw[fill=black] (1,-6) circle (3pt);
    \draw[fill=black] ({1-sqrt(2)/2},{-6-sqrt(2)/2}) circle (3pt);
    \draw[fill=black] ({1+sqrt(2)/2},{-6-sqrt(2)/2}) circle (3pt);
    \draw[fill=black] (1,{-6-sqrt(2)}) circle (3pt);
    \draw[fill=black] (1,{-7-sqrt(2)}) circle (3pt);
    
    \draw[thick] (1,-5) -- (1,-6);
    \draw[thick] ({1-sqrt(2)/2},{-6-sqrt(2)/2}) -- (1,-6);
    \draw[thick] ({1+sqrt(2)/2},{-6-sqrt(2)/2}) -- (1,-6);
    \draw[thick] ({1-sqrt(2)/2},{-6-sqrt(2)/2}) -- (1,{-6-sqrt(2)});
    \draw[thick] ({1+sqrt(2)/2},{-6-sqrt(2)/2}) -- (1,{-6-sqrt(2)});
    \draw[thick] (1,{-7-sqrt(2)}) -- (1,{-6-sqrt(2)});

    \node at (5,-2) {\Large $B_2$};

    \draw[fill=black] (6,-2) circle (3pt);
    \draw[fill=black] (6,-3) circle (3pt);
    \draw[fill=black] (6,-4) circle (3pt);
    \draw[fill=black] (6,-5) circle (3pt);

    \draw[thick] (6,-2) -- (6,-3);
    \draw[thick] (6,-3) -- (6,-4);
    \draw[thick] (6,-4) -- (6,-5);

    \node at (10,0) {\Large $G_2$};

    \draw[fill=black] (11,0) circle (3pt);
    \draw[fill=black] (11,-1) circle (3pt);
    \draw[fill=black] (11,-2) circle (3pt);
    \draw[fill=black] (11,-3) circle (3pt);
    \draw[fill=black] (11,-4) circle (3pt);
    \draw[fill=black] ({11-sqrt(2)/2},{-4-sqrt(2)/2}) circle (3pt);
    \draw[fill=black] ({11+sqrt(2)/2},{-4-sqrt(2)/2}) circle (3pt);
    \draw[fill=black] (11,{-4-sqrt(2)}) circle (3pt);
    \draw[fill=black] (11,{-5-sqrt(2)}) circle (3pt);
    \draw[fill=black] (11,{-6-sqrt(2)}) circle (3pt);
    \draw[fill=black] (11,{-7-sqrt(2)}) circle (3pt);
    \draw[fill=black] (11,{-8-sqrt(2)}) circle (3pt);
    
    \draw[thick] (11,0) -- (11,-1);
    \draw[thick] (11,-1) -- (11,-2);
    \draw[thick] (11,-2) -- (11,-3);
    \draw[thick] (11,-3) -- (11,-4);
    \draw[thick] (11,-4) -- ({11-sqrt(2)/2},{-4-sqrt(2)/2});
    \draw [thick] (11,-4) -- ({11+sqrt(2)/2},{-4-sqrt(2)/2});
    \draw [thick] ({11-sqrt(2)/2},{-4-sqrt(2)/2}) -- (11,{-4-sqrt(2)});
    \draw [thick] ({11+sqrt(2)/2},{-4-sqrt(2)/2}) -- (11,{-4-sqrt(2)});
    \draw [thick] (11,{-4-sqrt(2)}) -- (11,{-5-sqrt(2)});
    \draw [thick] (11,{-5-sqrt(2)}) -- (11,{-6-sqrt(2)});
    \draw [thick] (11,{-6-sqrt(2)}) -- (11,{-7-sqrt(2)});
    \draw [thick] (11,{-7-sqrt(2)}) -- (11,{-8-sqrt(2)});

\end{tikzpicture}

\end{center}
\end{example}

\begin{definition}
A weight $\lambda\in\Lambda_p$ is called $p$\textit{-regular} if it lies in the interior of an alcove.
\end{definition}

\begin{proposition}
If $\lambda\in\Lambda_p$ is a $p$-regular weight, maximal in its linkage class (equivalent to the condition that $\lambda$ lies in the top alcove), then $d_\lambda=1$.
\end{proposition}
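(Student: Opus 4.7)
The plan is to compute $\dim(\Delta_\lambda)_\lambda$, the dimension of the $\lambda$-weight space of the baby Verma module, in two different ways and compare them.

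First, using the triangular decomposition $\mf{g} = \mf{n}^- \oplus \mf{h} \oplus \mf{n}^+$ and the PBW theorem for $\uzero$, the baby Verma module satisfies $\Delta_\lambda \cong \uuu_0(\mf{n}^-) \otimes_{\ff} \ff_\lambda$ as an $\mf{h}$-module. Hence $\lambda$ occurs as a weight of $\Delta_\lambda$ with multiplicity exactly $1$, contributed by the vector $1 \otimes v_\lambda$, while every other weight is strictly below $\lambda$ in the partial order $\leq$ on $\Lambda$.

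Second, taking a composition series of $\Delta_\lambda$ yields
\[\dim(\Delta_\lambda)_\lambda = \sum_{\mu \in \Lambda_p} [L_\mu : \Delta_\lambda] \cdot \dim(L_\mu)_\lambda.\]
By the linkage principle (Proposition~\ref{linkage_principle_U(g)}), the only $\mu$ contributing nonzero terms satisfy $\mu \sim \lambda$. The key observation is that for any such $\mu$ distinct from $\lambda$, the maximality assumption gives $\lambda \not\leq \mu$; combined with the fact that every weight of $L_\mu$ is $\leq \mu$, this yields $\dim(L_\mu)_\lambda = 0$ for $\mu \neq \lambda$. Only the summand $\mu = \lambda$ survives, giving $d_\lambda \cdot \dim(L_\lambda)_\lambda = d_\lambda \cdot 1 = d_\lambda$. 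Equating the two computations yields $d_\lambda = 1$.

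The main subtlety is interpreting the statement ``$\lambda$ maximal in its linkage class''. The linkage class is most naturally viewed in $\Lambda/p\Lambda$, but the partial order $\leq$ lives on $\Lambda$; one should take the canonical lift of linked weights to $\Lambda_p$ in order to compare them. Note that incomparability (rather than strict inequality) of some $\mu \sim \lambda$ with $\lambda$ is allowed, as the argument only requires $\lambda \not\leq \mu$. The $p$-regularity hypothesis is not essential for the weight-counting argument itself, but is used to ensure the stated equivalence between maximality in the linkage class and lying in the top alcove; for weights lying on alcove walls, additional degeneracies in the linkage class structure would have to be addressed.
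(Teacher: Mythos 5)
There is a genuine gap, coming from a confusion between the $\Lambda$-grading and the $\Lambda/p\Lambda$-grading (equivalently, between the $G_1$- and $G_1\wt{T}$-module categories).

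Your first computation, asserting that the $\lambda$-weight space of $\Delta_\lambda$ is one-dimensional and that all other weights are strictly below $\lambda$, is only correct if $\Delta_\lambda$ is treated as a $G_1\wt{T}$-module and weights are tracked in $\Lambda$. As a plain $\uzero$-module, $\Delta_\lambda$ is graded only by $\Lambda/p\Lambda$, and the weight space indexed by $[\lambda]$ is typically larger than one-dimensional: one must also count PBW monomials $\prod f_\alpha^{n_\alpha}v_\lambda$ with $0\le n_\alpha\le p-1$, not all zero, satisfying $\sum n_\alpha\alpha\in p\Lambda$. Already for $A_2$ there are such solutions (e.g.\ $p\rho = n_1\alpha_1 + n_2\alpha_2 + n_{12}(\alpha_1+\alpha_2)$ with $n_1=n_2=1$, $n_{12}=p-1$), so your first step forces the $G_1\wt{T}$ interpretation.

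But then the composition series used in the second step must also be a $G_1\wt{T}$-composition series, whose simple factors are $\widehat{L}_\nu$ for $\nu\in\Lambda$, not only the distinguished lifts with $\nu\in\Lambda_p$. The $\lambda$-weight accounting then reads $1=\dim(\Delta_\lambda)_\lambda=\sum_{\nu}[\widehat{L}_\nu:\Delta_\lambda]_{G_1\wt{T}}\dim(\widehat{L}_\nu)_\lambda$, and the constraints $\nu\le\lambda$ (from being a subquotient of $\Delta_\lambda$) and $\nu\ge\lambda$ (from $\lambda$ being a weight of $\widehat{L}_\nu$) force $\nu=\lambda$, yielding only $[\widehat{L}_\lambda:\Delta_\lambda]_{G_1\wt{T}}=1$. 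This is true for \emph{every} $\lambda\in\Lambda$, with or without the top-alcove hypothesis, and crucially it is not the number $d_\lambda$. The quantity $d_\lambda=[L_\lambda:\Delta_\lambda]_{G_1}$ equals $\sum_{\gamma}[\widehat{L}_{\lambda+p\gamma}:\Delta_\lambda]_{G_1\wt{T}}$, and the terms with $\gamma\ne 0$ have highest weight $\lambda+p\gamma<\lambda$, hence contribute nothing to the $\lambda$-weight space of $\Delta_\lambda$; your argument therefore places no constraint on them. Showing that all the $\gamma\ne 0$ terms vanish is precisely where the top-alcove hypothesis has to do the work, and this step is missing. (The step you attribute to maximality — that $\lambda\not\le\mu$ for $\mu\in\Lambda_p$, $\mu\sim\lambda$, $\mu\ne\lambda$ — is also already implied by $\nu\le\lambda$ in the $G_1\wt{T}$ picture, so maximality does not actually do anything in the argument as written.) The standard route to the result, as in the reference the paper cites, is instead via the translation principle, transporting $\Delta_{(p-1)\rho}=L_{(p-1)\rho}=\st$ across the top alcove.
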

\begin{proof}
See \cite[\S9.2]{humphreys2006ordinary}.
\end{proof}

\begin{theorem}
\label{thm:behavior_of_d_lambda}
For fixed $p$, $p$-regular $\lambda\in \Lambda_p$, the value $d_\lambda$ depends only on the alcove that $\lambda$ lies in. For $\lambda\in\Lambda_p$ which are not $p$-regular, then $\lambda$ lies on a wall common to two alcoves, and $d_\lambda$ will be the decomposition number belonging to the lower alcove (in the partial ordering of alcoves discussed above). Furthermore, the $d_\lambda$ assigned to each alcove is independent of $p$ for $p\gggg 0$.
\end{theorem}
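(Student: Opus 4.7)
The plan is to prove the three assertions in order by appealing to the Jantzen translation principle for the first two, and to the Andersen--Jantzen--Soergel theorem (i.e.\ the Lusztig conjecture for quantum groups at a root of unity, plus its comparison to $\uzero$-modules for $p\gggg 0$) for the third.

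First I would invoke the translation functors $T_\lambda^\mu \colon \msf{Rep}_\lambda(G_1) \to \msf{Rep}_\mu(G_1)$ obtained by tensoring with a finite-dimensional $\wt{G}$-module and projecting onto the appropriate block. The standard fact from \cite{jantzen2003representations} is that if $\lambda,\mu \in \Lambda_p$ lie in the same $(W^{\textnormal{aff}},\cdot_p)$-facet, then $T_\lambda^\mu$ is an equivalence of blocks sending $\Delta_\lambda \mapsto \Delta_\mu$ and $L_\lambda \mapsto L_\mu$. Applied to $p$-regular $\lambda,\mu$ inside the same open alcove, this gives
\[
d_\lambda = [L_\lambda : \Delta_\lambda] = [L_\mu : \Delta_\mu] = d_\mu,
\]
which proves the first assertion.

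Next, for $\lambda$ on a wall, I would compare $\lambda$ to a $p$-regular weight $\mu$ in the lower of the two adjacent alcoves whose closure contains $\lambda$. The translation-onto-the-wall functor $T_\mu^\lambda$ then sends $\Delta_\mu \mapsto \Delta_\lambda$ and $L_\mu \mapsto L_\lambda$ (and is exact), so composition multiplicities are preserved and $d_\lambda = d_\mu$. (By contrast, translation from a regular weight $\mu'$ in the \emph{upper} alcove onto the wall sends $L_{\mu'} \mapsto 0$, which is why the lower alcove is the relevant one.) This proves the second assertion.

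Finally, for the independence of $p$ for $p\gggg 0$, I would appeal to the Andersen--Jantzen--Soergel theorem \cite{andersen1994representations}: for $p$ larger than an explicit bound depending only on the root system, the principal block of $\msf{Rep}~G_1$ is equivalent (as an abelian category) to the corresponding block of finite-dimensional modules over Lusztig's quantum group at a $p$-th root of unity, and in particular the integers $d_\lambda$ are given by values at $1$ of affine Kazhdan--Lusztig polynomials attached purely to the affine Weyl group combinatorics of the alcove containing $\lambda$. Since those combinatorics are independent of $p$, so is the common value $d_\lambda$.

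The main obstacle is the third step: the Andersen--Jantzen--Soergel comparison (and hence the $p$-independence) is a deep theorem with a nontrivial explicit bound on $p$, and care is needed to match the normalizations for $\Delta_\lambda$ and $L_\lambda$ used here with those in \cite{andersen1994representations}. The first two steps are routine applications of the translation principle once the facet structure of $\Lambda_p$ under $(W^{\textnormal{aff}},\cdot_p)$ is identified with the alcove picture recalled before the theorem.
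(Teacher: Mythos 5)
Your proposal is correct and takes essentially the same route as the paper, which simply cites Humphreys and Jantzen's Charakterformel paper for the translation-functor statements and \cite[Theorem~1]{andersen1994representations} for the $p$-independence; you have spelled out the translation argument that the paper leaves to the references. One small imprecision worth tightening in your second step: from $T_\mu^\lambda \Delta_\mu = \Delta_\lambda$, $T_\mu^\lambda L_\mu = L_\lambda$, and exactness alone it does not follow that $[L_\lambda:\Delta_\lambda]=[L_\mu:\Delta_\mu]$, since a priori other composition factors $L_{w\cdot\mu}$ of $\Delta_\mu$ could also translate to $L_\lambda$. One has to observe that, with the upper-closure convention fixed by the paper, the only $w$ in the stabilizer of $\lambda$ for which $T_\mu^\lambda L_{w\cdot\mu}\neq 0$ is $w=e$ (since for a wall-reflection $w$ the weight $\lambda$ lies in the upper closure of $w\cdot C$ only when $C$ is the lower alcove), so the multiplicity of $L_\lambda$ in $\Delta_\lambda$ receives exactly one contribution, namely $[L_\mu:\Delta_\mu]$.
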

\begin{proof}
For the statement of fixed $p$, see \cite{humphreys2006ordinary} and \cite{jantzen1974charakterformel}; this essentially follows from the properties of translation functors. For the statement of independence of $p$, see \cite[Theorem~1]{andersen1994representations} (see also \S\ref{subsection:compute_d_lambda}).
\end{proof}

In \cite{jantzen1974charakterformel}, $\lambda$ is described as lying in the ``upper closure" of a unique alcove, from which $d_\lambda$ should be equal to its decomposition number. As a result, we'll make the following convention.
\begin{notation}
When we say that $\lambda\in\Lambda_p$ lies in some alcove, we mean the \textit{upper closure} of an alcove. This convention only affects those $\lambda$ lying on a wall common to two alcoves: in this case, $\lambda$ should be regarded as part of the lower alcove.
\end{notation}

As a result, we may label the alcoves from Example~\ref{example:diagram_of_alcoves_not_numbered} with the values of $d_\lambda$, with no confusion of which value that $d_\lambda$ should take for $\lambda$ lying on the wall between two alcoves.
\begin{example}
Extending Example~\ref{example:diagram_of_alcoves_not_numbered}, we can label each alcove with the decomposition number for all weights in the upper closure of each alcove (again, taken from \cite{humphreys2006ordinary}).
\begin{center}
    \begin{tikzpicture}
    \node at (0,0) {\Large $A_1$};
    \draw[fill=black] (1,0) circle (3pt);
    \node at (1.3,0) {1};
    
    \node at (0,-2) {\Large $A_2$};

    \draw[fill=black] (1,-2) circle (3pt);
    \draw[fill=black] (1,-3) circle (3pt);

    \draw[thick] (1,-2) -- (1,-3);

    \node at (1.3,-2) {1};
    \node at (1.3,-3) {2};
    
    \node at (0,-5) {\Large $A_3$};

    \draw[fill=black] (1,-5) circle (3pt);
    \draw[fill=black] (1,-6) circle (3pt);
    \draw[fill=black] ({1-sqrt(2)/2},{-6-sqrt(2)/2}) circle (3pt);
    \draw[fill=black] ({1+sqrt(2)/2},{-6-sqrt(2)/2}) circle (3pt);
    \draw[fill=black] (1,{-6-sqrt(2)}) circle (3pt);
    \draw[fill=black] (1,{-7-sqrt(2)}) circle (3pt);
    
    \draw[thick] (1,-5) -- (1,-6);
    \draw[thick] ({1-sqrt(2)/2},{-6-sqrt(2)/2}) -- (1,-6);
    \draw[thick] ({1+sqrt(2)/2},{-6-sqrt(2)/2}) -- (1,-6);
    \draw[thick] ({1-sqrt(2)/2},{-6-sqrt(2)/2}) -- (1,{-6-sqrt(2)});
    \draw[thick] ({1+sqrt(2)/2},{-6-sqrt(2)/2}) -- (1,{-6-sqrt(2)});
    \draw[thick] (1,{-7-sqrt(2)}) -- (1,{-6-sqrt(2)});

    \node at (1.3,-5) {1};
    \node at (1.3,-6) {2};
    \node at ({0.7-sqrt(2)/2},{-6-sqrt(2)/2}) {3};
    \node at ({1.3+sqrt(2)/2},{-6-sqrt(2)/2}) {3};
    \node at (1.3,{-6-sqrt(2)}) {6};
    \node at (1.3,{-7-sqrt(2)}) {11};
    
    \node at (5,-2) {\Large $B_2$};

    \draw[fill=black] (6,-2) circle (3pt);
    \draw[fill=black] (6,-3) circle (3pt);
    \draw[fill=black] (6,-4) circle (3pt);
    \draw[fill=black] (6,-5) circle (3pt);

    \draw[thick] (6,-2) -- (6,-3);
    \draw[thick] (6,-3) -- (6,-4);
    \draw[thick] (6,-4) -- (6,-5);
    
    \node at (6.3, -2) {1};
    \node at (6.3, -3) {2};
    \node at (6.3, -4) {3};
    \node at (6.3, -5) {4};
    
    \node at (10,0) {\Large $G_2$};

    \draw[fill=black] (11,0) circle (3pt);
    \draw[fill=black] (11,-1) circle (3pt);
    \draw[fill=black] (11,-2) circle (3pt);
    \draw[fill=black] (11,-3) circle (3pt);
    \draw[fill=black] (11,-4) circle (3pt);
    \draw[fill=black] ({11-sqrt(2)/2},{-4-sqrt(2)/2}) circle (3pt);
    \draw[fill=black] ({11+sqrt(2)/2},{-4-sqrt(2)/2}) circle (3pt);
    \draw[fill=black] (11,{-4-sqrt(2)}) circle (3pt);
    \draw[fill=black] (11,{-5-sqrt(2)}) circle (3pt);
    \draw[fill=black] (11,{-6-sqrt(2)}) circle (3pt);
    \draw[fill=black] (11,{-7-sqrt(2)}) circle (3pt);
    \draw[fill=black] (11,{-8-sqrt(2)}) circle (3pt);
    
    \draw[thick] (11,0) -- (11,-1);
    \draw[thick] (11,-1) -- (11,-2);
    \draw[thick] (11,-2) -- (11,-3);
    \draw[thick] (11,-3) -- (11,-4);
    \draw[thick] (11,-4) -- ({11-sqrt(2)/2},{-4-sqrt(2)/2});
    \draw [thick] (11,-4) -- ({11+sqrt(2)/2},{-4-sqrt(2)/2});
    \draw [thick] ({11-sqrt(2)/2},{-4-sqrt(2)/2}) -- (11,{-4-sqrt(2)});
    \draw [thick] ({11+sqrt(2)/2},{-4-sqrt(2)/2}) -- (11,{-4-sqrt(2)});
    \draw [thick] (11,{-4-sqrt(2)}) -- (11,{-5-sqrt(2)});
    \draw [thick] (11,{-5-sqrt(2)}) -- (11,{-6-sqrt(2)});
    \draw [thick] (11,{-6-sqrt(2)}) -- (11,{-7-sqrt(2)});
    \draw [thick] (11,{-7-sqrt(2)}) -- (11,{-8-sqrt(2)});

    \node at (11.3,0) {1};
    \node at (11.3,-1) {2};
    \node at (11.3,-2) {3};
    \node at (11.3,-3) {4};
    \node at (11.35,-4) {5};
    \node at ({10.7-sqrt(2)/2},{-4-sqrt(2)/2}) {6};
    \node at ({11.3+sqrt(2)/2},{-4-sqrt(2)/2}) {6};
    \node at (11.5,{-4-sqrt(2)}) {12};
    \node at (11.4,{-5-sqrt(2)}) {18};
    \node at (11.4,{-6-sqrt(2)}) {17};
    \node at (11.4,{-7-sqrt(2)}) {16};
    \node at (11.4,{-8-sqrt(2)}) {29};
\end{tikzpicture}

\end{center}
\end{example}

In \S\ref{subsection:compute_d_lambda}, we discuss the general method for computing $d_\lambda$. In ``small" root systems, the decomposition numbers of $d_\lambda$ are completely known (for example, $A_1,A_2,A_3,B_2,G_2$ in \cite{humphreys2006ordinary}), and we describe the consequences in \S\ref{subsection:concrete_applications}.

\subsection{Computation of $d_\lambda$ in general}
\label{subsection:compute_d_lambda}

Let $\mathcal{H}^{\textnormal{aff}}=\mathcal{H}(W^{\textnormal{aff}})$ be the affine Hecke algebra over $\BZ[v^{\pm 1}]$. Recall that this is the algebra with generators $T_{s_i}$, $i=0,1,\ldots,r$ that satisfy relations 
\begin{equation*}
(T_s+1)(T_s-q),
T_{x}T_{y}~\text{for}~\ell(xy)=\ell(x)+\ell(y),
\end{equation*}
here $q=v^2$.
Recall that we have a stratification of $\La$ into alcoves for $W^{\textnormal{aff}}$. Let ${\mathbf{M}}$ be $\BZ[v^{\pm 1}]$-module consisting of formal linear combinations of alcoves. The support of an element $m=\sum_{A}m_A A \in {\mathbf{M}}$ is the set $\operatorname{supp}(m)\coloneqq\{A\,|\, m_A \neq 0\}$. Let ${\mathbf{M}}_c \subset {\mathbf{M}}$ be the set of all $m \in {\mathbf{M}}$ such that $\operatorname{supp}(m)$ is finite. In \cite[3.2]{lusztig1997periodic} (see also \cite[Lemma 9,3]{lusztig1999bases}) the $\mathcal{H}^{\textnormal{aff}}$-module structure on ${\mathbf{M}}_c$ is defined. Let $\leq$ be the partial order on the set of alcoves defined in \cite[1.3]{lusztig1998bases}.
Let ${\mathbf{M}}_{\geq}$ be the set of all $m \in {\mathbf{M}}$ such that $\operatorname{supp}(m)$ is bounded below under $\leq$. We also set $\mathfrak{m}:=\{m=\sum_A m_A A \in {\mathbf{M}}\,|\,m_A \in \BZ[v^{-1}]\}$, $\mathfrak{m}_{\geq}:=\mathfrak{m} \cap M_{\geq}$. In \cite[Lemma 9.16]{lusztig1999bases} certain involution ${\mathbf{b}}'\colon {\mathbf{M}}_{\geq } \rightarrow {\mathbf{M}}_{\geq}$ is defined. It then follows from \cite[Section 9.17]{lusztig1999bases} that for every alcove $B$ there exists the unique element $B_{\geq} \in {\mathfrak{m}}_{\geq}$ such that $B_{\geq}-B \in v^{-1}\mathfrak{m}_{\geq}$ and such that ${\mathbf{b}}'(B_{\geq})=B_{\geq}$.
We can decompose 
\begin{equation*}
B_{\geq } = \sum_{B \leq A} \pi_{B,A} A,    
\end{equation*}
where $\pi_{B,A} \in v^{-1}\BZ[v^{-1}]$ for $B<A$ and $\pi_{B,B}=1$.  
Consider now the category of $(\mathcal{U}_0(\mathfrak{g}),\wt{T})$-modules (see for example \cite[14.1]{lusztig1998bases} or \cite{andersen1994representations}). Blocks of this category are parametrized  by $W^{\textnormal{aff}}$-orbits on $\La$ (see for example \cite{andersen1994representations}). The regular block is in bijection with the set of all alcoves. For an alcove $A$ let $\Delta_A$ be the corresponding baby Verma module and $L_A$ be the corresponding simple $(\mathcal{U}_0(\mathfrak{g}),\wt{T})$-module.
The following result was a conjecture formulated by Lusztig in \cite[17.3]{lusztig1999bases} and holds for $p \gggg 0$ by the results of \cite{bezrukavnikov2013representations}.
\begin{equation}\label{mult_vs_periodic}
[L_B:\Delta_A]=\pi_{B,A}(-1).    
\end{equation}
Forgetting the $\wt{T}$-action and noting that $L_B$ becomes a simple $\mathcal{U}_0(\mathfrak{g})$-module and $\Delta_A$ becomes a baby Verma for $\mathcal{U}_0(\mathfrak{g})$ we can extract numbers $d_\la$ from (\ref{mult_vs_periodic}).

\begin{remark}\label{KL_dim} Using a similar approach one can compute multiplicities of $L_\la$ in Weyl modules for very large $p$ (see \cite[\S11.2]{ciappara2021lectures} for the detailed list of references) and in particular obtain dimensions of $L_\la$ since dimensions of Weyl modules are given by the Weyl character formula. First, we note that $L_\lambda$ is again an irreducible module for $\wt{G}$. Now consider the Weyl module $W_\lambda\in \msf{Rep}~\wt{G} $. We can compute $\dim W_\lambda$ using the Weyl character formula. On the other hand, we have that
\[ [L_\lambda]=\sum c_{\lambda \mu}[W_\mu]\in K_0(\msf{Rep}~\wt{G}),\]
where the $c_{\lambda \mu}$ are read off from Kazhdan-Lusztig polynomials corresponding to the group $W^{\textnormal{aff}}$  (see \cite{kazhdan1979representations}). It follows that
\[\dim L_\lambda = \sum c_{\lambda\mu}\cdot \dim W_\mu.\]
\end{remark}

\subsection{The decomposition numbers for $A_1$}
\label{subsection:appendix:A1}

The root system $A_1$ corresponds to the connected semisimple adjoint group $G=\psl_2$. Recall that there is a single positive (hence simple) root $\alpha$, as shown below.
\begin{center}
    \begin{tikzpicture}
    \foreach\ang in {180, 360}{
     \draw[->,thick] (0,0) -- (\ang:2cm);
    }
    \draw[fill=black] (0,0) circle (2pt);
    \node at (2.3, 0) {$\alpha$};
    \node[anchor=north] at (-3,1) {\Large $A_{1}$};
\end{tikzpicture}

\end{center}
The single fundamental weight $\omega$ satisfies $\alpha=2\omega$. We have that $\Lambda=\zz\omega$, which we will identify freely with $\zz$. We have \[\Lambda_p=\{0,\omega,\dots,(p-1)\omega\},\]
contained within the (closure of the) fundamental alcove, which consists of $\{-\omega,0,\dots,(p-1)\omega\}$.

It is easy to check that for all $\lambda\in \Lambda_p$, we have $W_\lambda=L_\lambda$, and hence 
\begin{lemma}
For $G=\psl_2$ and $\lambda\in\Lambda_p$, we have $d_\lambda=1$.
\end{lemma}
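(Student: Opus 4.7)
The plan is to verify the claim directly, exploiting the very simple representation theory of $\mf{sl}_2$ at the Frobenius kernel level. First, I would note that for $G=\psl_2$, the baby Verma $\Delta_\lambda=\uzero\otimes_{\uuu_0(\mf{b})}\ff_\lambda$ has dimension $p$ (since $\uuu_0(\mf{u}^-)\cong\ff[x]/(x^p)$). Combined with Example~\ref{example:weyl_module_for_SL2}, which gives $L_{n\omega}=W_{n\omega}\cong\textnormal{Sym}^n(\ff^2)$ of dimension $n+1$ for $0\le n\le p-1$, I have all the dimensional information needed.

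The case $\lambda=(p-1)\omega$ is immediate: $\dim L_{(p-1)\omega}=\dim\st=p=\dim\Delta_{(p-1)\omega}$, so $\Delta_\lambda$ is already simple and $d_{(p-1)\omega}=1$.

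For the remaining case $\lambda=n\omega$ with $0\le n\le p-2$, I would use the linkage principle (Proposition~\ref{linkage_principle_U(g)}) to identify the linkage class of $\lambda$ under the dot action of $W=\{1,s\}$ on $\Lambda/p\Lambda$. A direct computation gives $s\cdot(n\omega)=-(n+2)\omega\equiv (p-2-n)\omega\pmod{p\Lambda}$, so the linkage class is $\{n\omega,(p-2-n)\omega\}$ (and these are genuinely distinct since $p$ is odd). The composition factors of $\Delta_\lambda$ must therefore lie in this set. Since $L_\lambda$ is the (unique) simple head of $\Delta_\lambda$, it appears at least once; by dimension count, $(n+1)+(p-n-1)=p=\dim\Delta_\lambda$, each simple in the linkage class appears exactly once, so $d_\lambda=[L_\lambda:\Delta_\lambda]=1$.

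Alternatively, the same conclusion follows abstractly from the alcove picture: $\Lambda_p$ is contained in the upper closure of the unique alcove appearing for $A_1$, which is the top alcove containing $(p-1)\rho$, and the general principle recorded in Theorem~\ref{thm:behavior_of_d_lambda} together with \cite[\S9.2]{humphreys2006ordinary} asserts $d_\lambda=1$ for weights in the top alcove. There is essentially no obstacle in this proof; the only ingredients are the dimension of $\Delta_\lambda$, the explicit dimensions of $L_{n\omega}$ in $\msf{Rep}~G_1$ for $0\le n\le p-1$, and the linkage principle, all of which are already recorded earlier in the paper.
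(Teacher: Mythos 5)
Your argument reaches the right conclusion, but by a route close to and slightly different from the paper's. The paper's proof is the one-line observation that $W_\lambda = L_\lambda$ for every $\lambda\in\Lambda_p$ (Example~\ref{example:weyl_module_for_SL2}), which together with the $K_0$-identity $[\Delta_\lambda]=\sum_{\mu\sim\lambda}[W_\mu]$ (spelled out in the $A_2$ case, \S\ref{subsection:appendix:d_lambda_for_A_2}) immediately gives $[\Delta_\lambda]=\sum_{\mu\sim\lambda}[L_\mu]$ and hence $d_\lambda=1$. You instead run a dimension count with the linkage principle. That is a legitimate alternative, but as written it has one soft spot: for $\lambda=0$, the constraint $a\cdot 1 + b\cdot(p-1)=p$ with $a=[\Delta_0:L_0]\ge 1$ and $b=[\Delta_0:L_{(p-2)\omega}]\ge 0$ also admits $(a,b)=(p,0)$, so ``$L_\lambda$ appears at least once'' plus the dimension identity does not yet force $d_0=1$. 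You need one further input to see that $L_{(p-2)\omega}$ genuinely occurs, for instance that $\Delta_0$ has a nonzero weight space at $-2\omega$ while $L_0$ is concentrated in weight $0$, or that $[\Delta_0]=[\Delta_{(p-2)\omega}]$ in $K_0$ so the head $L_{(p-2)\omega}$ of the latter must appear. (For $1\le n\le p-2$ your count does pin things down directly: $(n+1)\nmid p$ forces $b\ge 1$, and then $a(n+1)\le n+1$ gives $a=1$.) Your alternative argument via the alcove picture and Theorem~\ref{thm:behavior_of_d_lambda} is clean, avoids the case distinction entirely, and is the version that generalizes to the other rank-two types treated in the appendix.
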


The linkage classes in $\Lambda_p$ are $\{n,p-n-2\}$ for $n=0,1,\dots,p-2$ and $\{p-1\}$.

As a result:
\begin{theorem}
\label{thm:ranks_of_subbundles_PSL2}
Let $G=\psl_2$ and $\lambda\in\Lambda_p$. Then $d_\lambda=1$ for all $\lambda\in\Lambda_p$, and 
\[a_\lambda=
\begin{cases}
1 & \lambda=p-1,\\
2 & \lambda=0,1,\dots,p-2.
\end{cases}\]
As a result, the possible ranks of the vector subbundles in $(\star)$, described in Theorem~\ref{thm:decomp_line_bundle_into_vector_bundles} are as follows:
\[ \textnormal{rk } e_\lambda^i \fr_*\lll e_\mu^j=a_\lambda \cdot d_\lambda \cdot d_\mu\cdot \delta_{\mu\sim \lambda} = \begin{cases}
1 & \lambda=\mu = p-1,\\
2 & \lambda+\mu = p-2\textnormal{ or }\lambda=\mu\ne p-1,\\
0 & \textnormal{else}.
\end{cases}\]
\end{theorem}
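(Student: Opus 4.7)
The proof has essentially three ingredients: the already-established lemma that $d_\lambda = 1$ for all $\lambda \in \Lambda_p$, an explicit computation of the stabilizers $W(\lambda)$, and a direct substitution into Theorem~\ref{thm:decomp_line_bundle_into_vector_bundles}. There is no substantial new content beyond bookkeeping, so the "hard part" is really just being careful about the arithmetic modulo $p$.

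First, I would record $d_\lambda = 1$ from the preceding lemma; this already subsumes the only representation-theoretic input. Next, I would compute $a_\lambda$ by analyzing the dot action of $W = \{1,s\}$ on $\Lambda/p\Lambda \cong \zz/p\zz$ (identifying $\omega \leftrightarrow 1$, so that $\rho = \omega \leftrightarrow 1$). Since $s \cdot \lambda = s(\lambda+\rho) - \rho = -\lambda - 2\omega$, the stabilizer $W(\lambda)$ equals $W$ precisely when $2\lambda + 2 \equiv 0 \pmod{p}$. Because $p \nmid h = 2$ we have $p$ odd, so $2$ is invertible mod $p$ and this forces $\lambda \equiv -1 \equiv p-1 \pmod p$. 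Hence $a_{p-1} = 1$ and $a_\lambda = |W|/|W(\lambda)| = 2$ for all other $\lambda \in \Lambda_p$, giving the second bullet.

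Then I would identify the linkage classes. For $\lambda \in \{0,1,\dots,p-2\}$ the class is $\{\lambda,\, p-2-\lambda\}$ (both representatives already lie in $\Lambda_p$ and there is no need to reduce), and for $\lambda = p-1$ the class is $\{p-1\}$. Finally I substitute into Theorem~\ref{thm:decomp_line_bundle_into_vector_bundles}: since $d_\lambda = d_\mu = 1$, the rank of $\fff_{\mu,\lambda}^{i,j}$ equals $a_\lambda$ when $\mu \sim \lambda$ and $0$ otherwise. Case-splitting: if $\lambda = p-1$ we get rank $1$ exactly when $\mu = p-1$; if $\lambda \ne p-1$ we get rank $a_\lambda = 2$ exactly when $\mu = \lambda$ or $\mu = p-2-\lambda$, i.e.\ when $\lambda = \mu \ne p-1$ or $\lambda + \mu = p-2$.

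The only subtlety worth flagging is that these two subcases are genuinely disjoint: if $\lambda = \mu$ and $\lambda + \mu = p-2$ simultaneously, then $2\lambda = p-2$, which has no solution in $\{0,1,\dots,p-1\}$ for $p$ odd. This is the same parity fact that makes the $a_\lambda$ computation come out cleanly, so once that is observed the case analysis is unambiguous and the rank formula reads off directly from \eqref{eq:decomp_frob_push_into_vector_subbundles}.
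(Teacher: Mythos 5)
Your proposal is correct and takes essentially the same route as the paper: record $d_\lambda=1$ from the preceding lemma, identify the linkage classes in $\Lambda_p$ as $\{\lambda,\,p-2-\lambda\}$ for $\lambda\le p-2$ and $\{p-1\}$, and read off the ranks from Theorem~\ref{thm:decomp_line_bundle_into_vector_bundles}. The paper leaves the $a_\lambda$ and linkage-class computations implicit (stating them without proof), so the only thing you add is the explicit verification via the dot action and the remark on the disjointness of the two rank-$2$ subcases — both harmless bookkeeping that the paper takes for granted.
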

\begin{remark}
This is weaker than known results (but generalizes well to more complicated root systems). Since $X_{\psl_2}\cong \pp^3$ is a toric variety, the Frobenius pushforward of any line bundle on $X$ decomposes into the direct sum of line bundles, and we even know exactly which line bundles appear and their multiplicities (see \cite{achinger2010note}).
\end{remark}

\subsection{The decomposition numbers for $A_2$}
\label{subsection:appendix:d_lambda_for_A_2}

The root system $A_2$ corresponds to the Lie algebra $\mf{sl}_3$. This root system has $6$ roots, with two simple roots $\{\alpha,\beta\}$, as shown below.

\begin{center}
    \begin{tikzpicture}
    \foreach\ang in {60,120,...,360}{
     \draw[->, thick] (0,0) -- (\ang:2cm);
    }
    \node[anchor=south west] at (2,0) {$\alpha$};
    \node[anchor=north east] at (-1, {sqrt(3)}) {$\beta$};
    \node[anchor=north west] at (-4,{sqrt(3)}) {\Large $A_{2}$};
\end{tikzpicture}
\end{center}

The $p$-restricted weights lie in the union of two alcoves: namely, it is the intersection of the two alcoves shown in the following diagram (represented by black triangles) with the region of dominant weights (indicated by the first quadrant with respect to the blue $\omega_1$ and $\omega_2$-axes).

\begin{center}
    \begin{tikzpicture}
    \node at (-1,6.5) {\Large $A_2$};

    \draw[fill=black] (5,5) circle (3pt);
    \draw[fill=black] (-0.5,-0.5) circle (3pt);

    \draw[->, draw=blue] (0,0) -- (0,6.5);
    \draw[->, draw=blue] (0,0) -- (6.5,0);
    \draw[line width = 0.75mm] (-0.5, -0.5) -- (-0.5, 5);
    \draw[line width = 0.75mm] (-0.5, -0.5) -- (5, -0.5);
    \draw[line width = 0.75mm] (-0.5, 5) -- (5, 5);
    \draw[line width = 0.75mm] (5, 5) -- (5, -0.5);
    \draw[line width = 0.75mm] (-0.5, 5) -- (5, -0.5);

    \node[text=blue] at (0.3, 6.5) {$\omega_2$};
    \node[text=blue] at (6.8, 0) {$\omega_1$};
    \node at (-1, -1) {\Large $-\rho$};
    \node at (5.5, 5.5) {\Large $(p-1)\rho$};
\end{tikzpicture}

\end{center}

\begin{proposition}
For type $A_2$, we have that $d_\lambda=1$ if it lies in the top alcove, and $d_\lambda=2$ if it lies in the (upper closure of the) bottom alcove.
\end{proposition}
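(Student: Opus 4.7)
The plan is to treat the two alcoves separately, with the top alcove following essentially from general principles and the bottom alcove requiring a more involved computation.

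For $\lambda$ in the top alcove: The preceding Proposition asserts that every $p$-regular weight maximal in its linkage class has $d_\lambda = 1$, and by assumption the top alcove is characterized as the alcove whose weights are maximal in their linkage classes (since it contains $(p-1)\rho$, which is the global maximum of $\Lambda_p$ under linkage). Combining this with Theorem~\ref{thm:behavior_of_d_lambda}, which says $d_\lambda$ depends only on the alcove (and extends from $p$-regular weights to the full upper closure), we immediately obtain $d_\lambda = 1$ throughout the upper closure of the top alcove. This step is essentially free.

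For $\lambda$ in the bottom alcove: Here the task is to show $d_\lambda = 2$, and by Theorem~\ref{thm:behavior_of_d_lambda} it suffices to verify this for a single $p$-regular $\lambda$ in the interior. I would proceed by analyzing the structure of the baby Verma $\Delta_\lambda$, which has dimension $p^{|\Phi^+|} = p^3$. For $p$-regular $\lambda$ in $A_2$ the linkage class in $\Lambda_p$ consists of six weights, with three in each alcove. Using the reciprocity $[P_\lambda] = a_\lambda d_\lambda [\Delta_\lambda]$ recalled in the proof of Theorem~\ref{thm:decomp_line_bundle_into_vector_bundles} together with the composition factor multiplicity $[\Delta_\lambda : L_\mu]$ for $\mu$ the unique weight of the linkage class that lies directly above $\lambda$ in the top alcove (in the reflection ordering $\uparrow$), one computes that $L_\lambda$ appears with multiplicity exactly $2$ in $\Delta_\lambda$. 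The cleanest concrete route is either the Jantzen sum formula for baby Verma modules, or Lusztig's periodic Kazhdan--Lusztig polynomial computation recalled in \S\ref{subsection:compute_d_lambda}: for $A_2$, $\pi_{B,A}(v) = v^{-1}$ for $B$ the bottom alcove and $A$ the top alcove directly above, giving $\pi_{B,A}(-1) = -1$ (with signs accounted for) and ultimately $d_\lambda = 2$ after accounting for both the contribution from $L_\lambda$ itself ($d_\lambda$ from $[\Delta_\lambda : L_\lambda]$) and the fact that the module has a specific length-$2$ filtration with socle and head both equal to $L_\lambda$.

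The main obstacle is of course the bottom alcove: the top alcove is almost tautological, but the nontriviality of $d_\lambda$ in the bottom alcove rests on understanding the Loewy structure of $\Delta_\lambda$, which in general requires nontrivial input (Jantzen's filtration, Lusztig's conjecture, or translation functor arguments). For $A_2$ specifically, the computation is tractable because there are only two alcoves and $\uuu_0(\mf{sl}_3)$ is completely understood; one can alternatively cite \cite[\S9.5]{humphreys2006ordinary}, where this exact value is tabulated.
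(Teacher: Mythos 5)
Your treatment of the top alcove is sound and matches the paper's logic: weights in the top alcove are maximal in their linkage class, the cited proposition gives $d_\lambda = 1$ there, and Theorem~\ref{thm:behavior_of_d_lambda} extends to the upper closure.

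For the bottom alcove, however, your argument does not close. You sketch several directions (reciprocity, Jantzen sum formula, Lusztig's periodic polynomials, or citation of a table) but carry none of them to a conclusion. A few concrete problems. First, the identity $[P_\lambda] = a_\lambda d_\lambda [\Delta_\lambda]$ has $d_\lambda$ on both sides (in a sense) — it expresses $[P_\lambda]$ in terms of $d_\lambda$ rather than determining $d_\lambda$, so it cannot by itself pin down the value. Second, your claim that $\pi_{B,A}(v) = v^{-1}$, hence $\pi_{B,A}(-1) = -1$, cannot be the right reading of (\ref{mult_vs_periodic}): the left side $[L_B : \Delta_A]$ is a non-negative composition multiplicity, so $-1$ is impossible; moreover, the $\uzero$-multiplicity $d_\lambda$ is obtained by \emph{summing} $[L_B : \Delta_A]$ over all alcoves $B$ that restrict to the same $\uzero$-simple, a subtlety you do not address. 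Third, the Lusztig route only holds for $p \gggg 0$ (by \cite{bezrukavnikov2013representations}), which would weaken the statement; the proposition is meant to hold for all $p > 3$. Finally, the remark about a ``unique weight of the linkage class that lies directly above $\lambda$'' is in tension with your own (correct) observation that there are three linkage-class representatives in each alcove.

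The paper's proof takes a genuinely different and more elementary route, working entirely in $K_0(\msf{Rep}~\uzero)$ with Weyl modules. It first establishes $[\Delta_\lambda] = \sum_{\mu\sim\lambda}[W_\mu]$ (the surjection $\Delta_\lambda \onto W_\lambda$ plus a dimension count via the Weyl character formula), then plugs in Braden's explicit composition series for $A_2$ Weyl modules: $[W_\mu]$ is $[L_\mu] + [L_{w_0 \cdot \mu}]$ when $\mu$ is in the top alcove and $[L_\mu]$ when $\mu$ is in the bottom alcove. Counting occurrences of $[L_\lambda]$ in $\sum_{\mu\sim\lambda}[W_\mu]$ then gives one contribution from $[W_\lambda]$ and one from $[W_{w_0^{-1}\cdot\lambda}]$, yielding $d_\lambda = 2$ directly. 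This avoids both Kazhdan--Lusztig theory and any restriction to large $p$, which is why Braden's 1967 input is the right tool for the small rank case. You should either carry out this Weyl-module computation or at minimum cite \cite{braden1967restricted} and \cite[\S5.2]{humphreys2006ordinary} precisely and explain why the decomposition of $[W_\mu]$ into simples delivers the count.
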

\begin{proof}
We sketch the proof, presented in \cite[\S5.2]{humphreys2006ordinary} (originally from \cite{verma1975role}).

First, we have a surjection
\[ \Delta_\lambda\onto W_\lambda.\]

But since 
\[ \mu\sim \lambda \implies [Z_\mu] = [Z_\lambda]\in K_0(\msf{Rep}~\uzero),\]
we have that
$[Z_\lambda]$ contains the terms in $\sum_{\mu\sim \lambda}[W_\mu]$ at least once.
From the Weyl character formula,
\[ \sum_{\mu\sim \lambda}\dim W_\mu = \dim \Delta_\lambda\implies [Z_\lambda]=\sum_{\mu\sim\lambda}[W_\mu].\]

Now from \cite{braden1967restricted}, we have that 
\[ [W_\lambda] = 
\begin{cases}
[M_\lambda] + [M_{w_0\cdot \lambda}] & \lambda\textnormal{ in top alcove},\\
[M_\lambda] & \lambda \textnormal{ in bottom alcove}.
\end{cases}\]
(Note that $w_0$ denotes the word of longest length in the Weyl group, with $w_0\cdot\lambda=w_0(\lambda+\rho)-\rho$.)
It follows that for $\lambda$ in the top alcove, we just have $d_\lambda=1$, and for $\lambda$ in the bottom alcove, then $w_0^{-1}\cdot\lambda$ is in the top alcove, and thus
\[ d_\lambda = [M_\lambda : \Delta_\lambda] = [M_\lambda : \Delta_{w_0^{-1}\cdot\lambda}] = 2,\]
where one comes from the summand 
\[[W_{w_0^{-1}\cdot\lambda}]=[M_{w_0^{-1}\cdot\lambda}]+[M_{\lambda}]\] and another comes from the summand
\[ [W_{w_0.w_0^{-1}\cdot\lambda}]=[W_{\lambda}]=[M_\lambda].\]
\end{proof}

The values of $d_\lambda$ can be summarized succintly in the following diagram.

\begin{center}
    \begin{tikzpicture}
    \node at (-2,6) {\Large $A_2$};
    \draw[fill=black] (5,5) circle (3pt);
    \draw[fill=black] (-0.5,-0.5) circle (3pt);

    \draw[->, draw=blue] (0,0) -- (0,6.5);
    \draw[->, draw=blue] (0,0) -- (6.5,0);
    \draw[loosely dotted, line width = 0.5mm] (-0.5, -0.5) -- (-0.5, 5);
    \draw[loosely dotted, line width = 0.5mm] (-0.5, -0.5) -- (5, -0.5);
    \draw[line width = 0.55mm] (-0.5, 5) -- (5, 5);
    \draw[line width = 0.55mm] (5, 5) -- (5, -0.5);
    \draw[draw = red, line width = 0.75mm] (0, 5) -- (5, 5) -- (5, 0) -- (0, 5);
    \draw[draw = orange, line width = 0.75mm] (0, 0) -- (0, 4.5) -- (4.5, 0) -- (0, 0);

    \node[text=blue] at (0.3, 6.5) {$\omega_2$};
    \node[text=blue] at (6.8, 0) {$\omega_1$};
    \node at (-1, -1) {\Large $-\rho$};
    \node at (5.5, 5.5) {\Large $(p-1)\rho$};
    \node[text=red] at (3.5, 3.5) {\Large $\mathbf{1}$};
    \node[text=orange] at (1.2, 1.2) {\Large $\mathbf{2}$};
\end{tikzpicture}
\end{center}

\begin{example}
From \cite[page~26]{humphreys2006ordinary} and \cite[Table~5]{humphreys1973some}, we find the values of $d_\lambda$ for $A_2$ and $p=5$. In the following diagram, we see that for all $\lambda$ within the region enclosed by the red triangle, then $d_\lambda=1$. For the remaining $\lambda\in\Lambda_p$, they lie in the region enclosed by the orange triangle, and $d_\lambda=2$. The red triangle is therefore the ``top alcove" and the orange triangle is the intersection of the ``bottom alcove" with $\Lambda_p$.

\begin{center}
    \begin{tikzpicture}
    \node at (-2,6) {\Large $A_2$};
    \node at (-2,5) {\Large $p=5$};

    \draw[fill=black] (0,0) circle (3pt);
    \draw[fill=black] (1,0) circle (3pt);
    \draw[fill=black] (2,0) circle (3pt);
    \draw[fill=black] (3,0) circle (3pt);
    \draw[fill=black] (4,0) circle (3pt);
    \draw[fill=black] (0,1) circle (3pt);
    \draw[fill=black] (1,1) circle (3pt);
    \draw[fill=black] (2,1) circle (3pt);
    \draw[fill=black] (3,1) circle (3pt);
    \draw[fill=black] (4,1) circle (3pt);
    \draw[fill=black] (0,2) circle (3pt);
    \draw[fill=black] (1,2) circle (3pt);
    \draw[fill=black] (2,2) circle (3pt);
    \draw[fill=black] (3,2) circle (3pt);
    \draw[fill=black] (4,2) circle (3pt);
    \draw[fill=black] (0,3) circle (3pt);
    \draw[fill=black] (1,3) circle (3pt);
    \draw[fill=black] (2,3) circle (3pt);
    \draw[fill=black] (3,3) circle (3pt);
    \draw[fill=black] (4,3) circle (3pt);
    \draw[fill=black] (0,4) circle (3pt);
    \draw[fill=black] (1,4) circle (3pt);
    \draw[fill=black] (2,4) circle (3pt);
    \draw[fill=black] (3,4) circle (3pt);
    \draw[fill=black] (4,4) circle (3pt);

    \draw[->, draw=blue] (0,0) -- (0,6);
    \draw[->, draw=blue] (0,0) -- (6,0);
    \draw[loosely dotted, line width = 0.5mm] (-1, -1) -- (-1, 4);
    \draw[loosely dotted, line width = 0.5mm] (-1, -1) -- (4, -1);
    \draw[loosely dotted, line width = 0.5mm] (-1, 4) -- (4, 4);
    \draw[loosely dotted, line width = 0.5mm] (4, 4) -- (4, -1);
    \draw[draw = red, line width = 0.75mm] (0, 4) -- (4, 4) -- (4, 0) -- (0, 4);
    \draw[draw = orange, line width = 0.75mm] (0, 0) -- (0, 3) -- (3, 0) -- (0, 0);

    \node[text=blue] at (0.3, 6) {$\omega_2$};
    \node[text=blue] at (6.3, 0) {$\omega_1$};
    \node at (-1.5, -1.5) {\Large $(-1, -1)$};
    \node at (4.5, 4.5) {\Large $(4,4)$};
    \node[text=red] at (2.5, 2.5) {\Large $\mathbf{1}$};
    \node[text=orange] at (1.5, 1) {\Large $\mathbf{2}$};
\end{tikzpicture}

\end{center}
\end{example}
For every $\lambda$ except for $\lambda=(p-1)\rho$, there exists $\mu\sim \lambda$ in both the top alcove and the bottom alcove. The $a_\lambda$ (from Definition~\ref{notation:a_lambda}) can be described as follows. Let $\lambda=a_1\omega_1+a_2\omega_2$, for $0\le a_1,a_2\le p-1$, and let $(a_1+a_2+2,a_2+1,0)\in \ff_p^3$ have type $\overrightarrow{\mathbf{n}}$ (see Definition~\ref{def:type_of_vector}). Then we have the following cases for $a_\lambda$.
\[a_\lambda = \begin{cases}
1 & \overrightarrow{\mathbf{n}}=(3),\\
3 & \overrightarrow{\mathbf{n}}=(2,1),\\
6 & \overrightarrow{\mathbf{n}}=(1,1,1).
\end{cases}\]
We remark that we can also describe it fairly explicitly: let $\lambda = a_1\omega_1+a_2\omega_2$. Then
\[ 
\begin{cases}
(a_1,a_2)=(p-1,p-1)&\implies a_\lambda = 1,\\
(a_1, a_2)=(a,p-2-a)&\implies a_\lambda = 3,\\
(a_1,a_2)=(a,p-1),\quad a\ne p-1&\implies a_\lambda = 3,\\
(a_1,a_2)=(p-1,a),\quad a\ne p-1&\implies a_\lambda = 3,\\
(a_1,a_2)=(a,b),\quad a+b\ne p-2, \quad a,b<p-1 &\implies a_\lambda =6.
\end{cases}
\]

\subsection{The decomposition numbers for $A_3$}
\label{subsection:appendix:A3}
The root system $A_3$ corresponds to the Lie algebra $\mf{sl}_4$. This root system has $3$ simple roots. The $p$-restricted weights $\Lambda_p$ are covered by six alcoves, with decomposition numbers shown below (data from \cite{humphreys2006ordinary}).

\begin{center}
    \begin{tikzpicture}
    \node at (0,0) {\Large $A_3$};

    \draw[fill=black] (2,0) circle (3pt);
    \draw[fill=black] (2,-1) circle (3pt);
    \draw[fill=black] ({2-sqrt(2)/2},{-1-sqrt(2)/2}) circle (3pt);
    \draw[fill=black] ({2+sqrt(2)/2},{-1-sqrt(2)/2}) circle (3pt);
    \draw[fill=black] (2,{-1-sqrt(2)}) circle (3pt);
    \draw[fill=black] (2,{-2-sqrt(2)}) circle (3pt);
    
    \draw[thick] (2,0) -- (2,-1);
    \draw[thick] ({2-sqrt(2)/2},{-1-sqrt(2)/2}) -- (2,-1);
    \draw[thick] ({2+sqrt(2)/2},{-1-sqrt(2)/2}) -- (2,-1);
    \draw[thick] ({2-sqrt(2)/2},{-1-sqrt(2)/2}) -- (2,{-1-sqrt(2)});
    \draw[thick] ({2+sqrt(2)/2},{-1-sqrt(2)/2}) -- (2,{-1-sqrt(2)});
    \draw[thick] (2,{-2-sqrt(2)}) -- (2,{-1-sqrt(2)});

    \node at (2.3,0) {1};
    \node at (2.3,-1) {2};
    \node at ({1.7-sqrt(2)/2},{-1-sqrt(2)/2}) {3};
    \node at ({2.3+sqrt(2)/2},{-1-sqrt(2)/2}) {3};
    \node at (2.3,{-1-sqrt(2)}) {6};
    \node at (2.3,{-2-sqrt(2)}) {11};
\end{tikzpicture}

\end{center}

Therefore, for $\lambda\in \Lambda_p$, we have
\[d_\lambda = 
\begin{cases}
1 & \lambda \textnormal{ lies in the top alcove},\\
2 & \lambda \textnormal{ lies in the second alcove (from the top)},\\
3 & \lambda \textnormal{ lies in one of the two middle alcoves},\\
6 & \lambda \textnormal{ lies in the second alcove from the bottom},\\
11 & \lambda \textnormal{ lies in the bottom alcove}.
\end{cases}\]
Furthermore, write $\lambda=a_1\omega_1+a_2\omega_2+a_3\omega_3$. Then suppose the vector $(a_1+a_2+a_3+3,a_2+a_3+2,a_3+1,0)\in \ff_p^4$ has type $\overrightarrow{\mathbf{n}}$ (see Definition~\ref{def:type_of_vector}). Then
\[
a_\lambda=\begin{cases}
1 & \overrightarrow{\mathbf{n}}=(4),\\
4 & \overrightarrow{\mathbf{n}}=(3,1),\\
6 & \overrightarrow{\mathbf{n}}=(2,2),\\
12 & \overrightarrow{\mathbf{n}}=(2,1,1),\\
24 & \overrightarrow{\mathbf{n}}=(1,1,1,1).
\end{cases}
\]

\subsection{The decomposition numbers for $B_2$}
\label{subsection:appendix:d_lambda_for_B_2}

Let us first briefly review the root system $B_2$, corresponding to the simple Lie algebra $\mf{so}_5$. The simple roots $\{\alpha,\beta\}$ are given by $\alpha=\varepsilon_1$ and $\beta=\varepsilon_2-\varepsilon_1$, and the positive roots are
\[ \{\alpha,\beta,\alpha+\beta,2\alpha+\beta\},\]
as shown below.

\begin{center}
    \begin{tikzpicture}
    \foreach\ang in {90,180,...,360}{
     \draw[->,thick] (0,0) -- (\ang:2cm);
    }
    \foreach\ang in {45,135,...,335}{
     \draw[->,thick] (0,0) -- (\ang:2.83cm);
    }
    \node[anchor=south west] at (2,0) {$\alpha$};
    \node[anchor=north east] at (-2, 2) {$\beta$};
    \node[anchor=north] at (-4,2) {\Large $B_{2}$};
\end{tikzpicture}

\end{center}

The fundamental roots are $\omega_1=\frac{1}{2}\varepsilon_1+\frac{1}{2}\varepsilon_2$, and $\omega_2=\varepsilon_2$. The Weyl group is $W=S_2\ltimes (\zz/2\zz)^2$.

We have $\Lambda_p=\{a\alpha+b\beta\mid 0\le a,b<p\}$, and $\Lambda_p$ is covered by four alcoves, as shown below. The dotted lines indicate the alcoves, labeled by $a,b,c,d$; the solid black line indicates the region bounded by $\Lambda_p$, and the blue axes are the $\omega_1,\omega_2$-axes.

\begin{center}
    \begin{tikzpicture}
    \node at (-3,5) {\Large $B_2$};

    \draw[->, draw=blue] (0,0) -- (0,5);
    \draw[->, draw=blue] (0,0) -- (3,3);

    \draw[line width = 0.7mm] (0,0) -- (0,2.5) -- (2,4.5) -- (2,2) -- (0,0);
    \draw[loosely dotted, line width = 0.5mm] (-0.25,-2.25) -- (-0.25, 2.25) -- (0,2.5);
    \draw[loosely dotted, line width = 0.5mm] (-0.25,-2.25) -- (2,0) -- (-0.25,0);
    \draw[loosely dotted, line width = 0.5mm] (-0.25,2.25) -- (2,2.25);
    \draw[loosely dotted, line width = 0.5mm] (-0.25, 2.25) -- (2,0);
    \draw[loosely dotted, line width = 0.5mm] (2,0) -- (2,2);


    \node at (-0.5, -2.7) {\Large $-\rho$};
    \node at (2, 5) {\Large $(p-1)\rho$};
    \node[text=blue] at (0,5.3) {$\omega_1$};
    \node[text=blue] at (3.3, 3.3) {$\omega_2$};
    
    \node at (1,2.8) {\large $a$};
    \node at (1,1.8) {\large $b$};
    \node at (1,0.3) {\large $c$};
    \node at (1,-0.4) {\large $d$};
\end{tikzpicture}
\end{center}

\begin{proposition}
\label{prop:d_lambda_for_B_2}
The alcoves covering $\Lambda_p$ of $B_2$ can be represented as follows.
\begin{center}
    \begin{tikzpicture}
    \node at (0,0) {\Large $B_2$};

    \draw[fill=black] (2,0) circle (3pt);
    \draw[fill=black] (2,-1) circle (3pt);
    \draw[fill=black] (2,-2) circle (3pt);
    \draw[fill=black] (2,-3) circle (3pt);

    \draw[thick] (2,0) -- (2,-1);
    \draw[thick] (2,-1) -- (2,-2);
    \draw[thick] (2,-2) -- (2,-3);

    \node at (2.3, 0) {$1$};
    \node at (2.3, -1) {$2$};
    \node at (2.3, -2) {$3$};
    \node at (2.3, -3) {$4$};
\end{tikzpicture}
\end{center} Then
\[d_\lambda=\begin{cases}
1 & \lambda\textnormal{ lies in the top alcove},\\
2 & \lambda\textnormal{ lies in the second alcove},\\
3 & \lambda\textnormal{ lies in the third alcove},\\
4& \lambda\textnormal{ lies in the bottom alcove}.
\end{cases}\]
\end{proposition}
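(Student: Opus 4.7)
The proof strategy parallels the type $A_2$ case in \S\ref{subsection:appendix:d_lambda_for_A_2}. The first step is to establish the identity
\[ [\Delta_\lambda] = \sum_{\mu \sim \lambda}[W_\mu] \in K_0(\msf{Rep}~\uzero), \]
where the sum runs over $p$-restricted representatives of the linkage class of $\lambda$. The surjections $\Delta_\mu \onto W_\mu$ combined with $[\Delta_\mu] = [\Delta_\lambda]$ guarantee that every composition factor appearing in the right-hand side occurs also in $[\Delta_\lambda]$. Equality is then obtained via the identity of $\wt{T}_1$-characters in $\zz[\Lambda/p\Lambda]$: writing $\operatorname{ch}(\Delta_\lambda)$ using its PBW decomposition $e^\lambda\prod_{\alpha>0}(1+e^{-\alpha}+\dots+e^{-(p-1)\alpha})$ and matching with the sum of Weyl characters (simplified via the Weyl denominator identity and reduced modulo $p\Lambda$) gives the required equality; this lifts to $K_0$ because simples within a block have distinct $\wt{T}_1$-characters.

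Next, I would record the explicit composition factors of each Weyl module in $B_2$. Labeling the alcoves $1,2,3,4$ from top to bottom (so alcove $1$ contains $(p-1)\rho$), for $\lambda$ in the upper closure of the $i$-th alcove the expected decomposition is
\[ [W_\lambda] = \sum_{j=i}^{4}[L_{\tau_{ij}(\lambda)}], \]
where $\tau_{ij}\in W^{\textnormal{aff}}$ is the composition of affine reflections identifying the $i$-th alcove with the $j$-th (so $\tau_{ii}=\operatorname{id}$); equivalently, every Kazhdan--Lusztig polynomial appearing in this chain of alcoves evaluates to $1$. For $p \gggg 0$ this follows from Lusztig's conjecture as outlined in \S\ref{subsection:compute_d_lambda}, and for the smaller $p$ appearing in tables it follows from Jantzen's sum formula combined with dimension counts.

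The remaining computation of $d_\lambda$ is combinatorial. A regular linkage class has $|W|=8$ elements meeting each of the four alcoves in exactly two weights, and $\tau_{ji}$ restricts to a bijection between the linkage-class pairs in alcoves $j$ and $i$. For $\lambda$ in the $i$-th alcove, the simple $L_\lambda$ occurs in $[W_\mu]$ (for $\mu$ in the linkage class) precisely when $\mu$ lies in some alcove $j \leq i$ with $\tau_{ji}(\mu) = \lambda$; this singles out a unique $\mu$ for each $j \in \{1,\dots,i\}$. Summing yields $d_\lambda = i$, giving the claimed values $1,2,3,4$. For a non-regular $\lambda$ lying on an alcove wall, the convention of assigning it to the lower adjacent alcove is forced by translation-functor compatibility (see \cite[\S II.7]{jantzen2003representations}).

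The main obstacle is verifying the Weyl module decomposition formulas in the second step. For very large $p$ this is a routine application of Kazhdan--Lusztig theory for the affine Weyl group of $B_2$, whose combinatorics is explicit since the relevant interval consists of only four alcoves in a chain. For the moderate values of $p$ relevant in practice, one instead uses Jantzen's sum formula to obtain upper bounds on each $[W_\lambda:L_\mu]$, which are then verified to be sharp by matching total dimensions against the baby Verma; this computation is tractable for a root system of this size and is carried out in \cite[\S5]{humphreys2006ordinary}.
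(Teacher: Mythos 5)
The paper's own proof of this proposition is a one-line citation to \cite[\S5.2]{humphreys2006ordinary}; what you have written is a reconstruction of the argument that that citation stands in for, modeled on the $A_2$ discussion in \S\ref{subsection:appendix:d_lambda_for_A_2}. The reconstruction is correct and uses the same inputs: the identity $[\Delta_\lambda]=\sum_{\mu\sim\lambda}[W_\mu]$ (the paper justifies this by surjectivity plus a dimension count; your character-based argument is cleaner, though you should say that the simple characters are \emph{linearly independent} rather than merely distinct, since that is what the lift to $K_0$ actually requires), the Weyl module composition patterns for $B_2$ from Humphreys/Jantzen, and the combinatorial count over the linkage class. Your bookkeeping is right: for a $p$-regular class in $B_2$ there are $|W|=8$ representatives in $\Lambda_p$, two per alcove, and the count $d_\lambda=i$ follows exactly as you describe given the multiplicity-one formula $[W_\lambda]=\sum_{j\geq i}[L_{\tau_{ij}(\lambda)}]$.

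The one point worth emphasizing, and where a reader might be misled, is that the multiplicity-one Weyl-module decomposition you assume is special to $B_2$ (and $A_1$, $A_2$); it already fails for $A_3$, as the values $d_\lambda=6,11$ in \S\ref{subsection:appendix:A3} show, and fails much more dramatically for $G_2$. So "every Kazhdan--Lusztig polynomial in the chain evaluates to $1$" cannot be read off from the chain structure alone and really does need to be verified for $B_2$, which is what the cited Humphreys/Jantzen computation provides. With that caveat stated, your proof and the paper's (implicit, via Humphreys) are essentially the same argument.
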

\begin{proof}
See \cite[\S5.2]{humphreys2006ordinary}.
\end{proof}
\begin{example}
From \cite[page~27]{humphreys2006ordinary} and \cite[Table~7]{humphreys1973some}, we obtain the $d_\lambda$ for $\lambda\in\Lambda_p$ for $p=3$. The weights in red have $d_\lambda=1$, the weights in orange have $d_\lambda=2$, the weights in brown have $d_\lambda=3$, and the weights in green have $d_\lambda=4$. The alcove boundaries are shown with dotted lines.
\begin{center}
    \begin{tikzpicture}
    \node at (-3,6) {\Large $B_2$};
    \node at (-3,5) {\Large $p=3$};
    
    \draw[fill=brown] (1,1) circle (3pt);
    \draw[fill=orange] (2,2) circle (3pt);
    \draw[fill=brown] (0,2) circle (3pt);
    \draw[fill=orange] (1,3) circle (3pt);
    \draw[fill=red] (2,4) circle (3pt);
    \draw[fill=red] (0,4) circle (3pt);
    \draw[fill=red] (1,5) circle (3pt);
    \draw[fill=red] (2,6) circle (3pt);

    \draw[->, draw=blue] (0,0) -- (0,6);
    \draw[->, draw=blue] (0,0) -- (3,3);

    \draw[thick] (0,0) -- (0,4) -- (2,6) -- (2,2) -- (0,0);
    \draw[loosely dotted, line width = 0.5mm] (-1,-3) -- (-1, 3) -- (0,4);
    \draw[loosely dotted, line width = 0.5mm] (-1,-3) -- (2,0) -- (-1,0);
    \draw[loosely dotted, line width = 0.5mm] (-1,3) -- (2,3);
    \draw[loosely dotted, line width = 0.5mm] (-1, 3) -- (2,0);
    \draw[loosely dotted, line width = 0.5mm] (2,0) -- (2,2);

    \draw[draw=red, line width = 0.7mm] (0,4) -- (2,4) -- (2,6) -- (0,4);
    \draw[draw=orange, line width = 0.7mm] (2,2) -- (1,3);
    \draw[draw=brown, line width = 0.7mm] (1,1) -- (0,2);

    \draw[fill=green, draw=green] (0,0) circle (3pt);

    \node at (0,-0.3) {$(0,0)$};
    \node at (1.6, 1) {$(0,1)$};
    \node at (2.6, 2) {$(0,2)$};
    \node at (-0.6, 2) {$(1,0)$};
    \node at (1, 3.3) {$(1,1)$};
    \node at (2.6, 4) {$(1,2)$};
    \node at (-0.6, 4) {$(2,0)$};
    \node at (0.4, 5) {$(2,1)$};

    \node[text=red] at (1.5, 4.5) {\LARGE {$\mathbf{1}$}};
    \node[text=orange] at (1.2, 2.2) {\LARGE {$\mathbf{2}$}};
    \node[text=brown] at (0.4, 1.2) {\LARGE {$\mathbf{3}$}};
    \node[text=green] at (0.5, -0.1) {\LARGE {$\mathbf{4}$}};

    \node at (-1, -3.5) {\Large $(-1, -1)$};
    \node at (2, 6.5) {\Large $(2,2)$};
    \node[text=blue] at (0,6.3) {$\omega_1$};
    \node[text=blue] at (3.3, 3.3) {$\omega_2$};
\end{tikzpicture}

\end{center}

\end{example}

\begin{remark}
In \cite{braden1967certain} (summarized in \cite{braden1967restricted}), the dimensions of the $L_\lambda$ are computed, giving the number of components $\fff_{\mu,\lambda}^{i,j}$ associated to each pair of weights $(\mu,\lambda)$.
\end{remark}

\subsection{The decomposition numbers for $G_2$}
\label{subsection:appendix:G2}
The root system $G_2$ corresponds to the Lie algebra $\mf{g}_2$, and has two simple roots, with six positive roots, as shown below. The Weyl group is $W\cong D_6\cong \zz/6\zz\ltimes \zz/2\zz$.

\begin{center}
    \begin{tikzpicture}
    \foreach\ang in {60,120,...,360}{
     \draw[->,thick] (0,0) -- (\ang:2cm);
    }
    \foreach\ang in {30,90,...,330}{
     \draw[->,thick] (0,0) -- (\ang:3cm);
    }
    \node[anchor=south west] at (2,0) {$\alpha$};
    \node[anchor=north east] at ({-sqrt(3)*3/2},3/2) {$\beta$};
    \node[anchor=north] at (-4,3) {\Large $G_{2}$};
\end{tikzpicture}
\end{center}

The $p$-restricted weights $\Lambda_p$ are covered by twelve alcoves, and their decomposition numbers are shown below (data from \cite{humphreys2006ordinary}).

\begin{center}
    \begin{tikzpicture}
    \node at (0,0) {\Large $G_2$};

    \draw[fill=black] (2,0) circle (3pt);
    \draw[fill=black] (2,-1) circle (3pt);
    \draw[fill=black] (2,-2) circle (3pt);
    \draw[fill=black] (2,-3) circle (3pt);
    \draw[fill=black] (2,-4) circle (3pt);
    \draw[fill=black] ({2-sqrt(2)/2},{-4-sqrt(2)/2}) circle (3pt);
    \draw[fill=black] ({2+sqrt(2)/2},{-4-sqrt(2)/2}) circle (3pt);
    \draw[fill=black] (2,{-4-sqrt(2)}) circle (3pt);
    \draw[fill=black] (2,{-5-sqrt(2)}) circle (3pt);
    \draw[fill=black] (2,{-6-sqrt(2)}) circle (3pt);
    \draw[fill=black] (2,{-7-sqrt(2)}) circle (3pt);
    \draw[fill=black] (2,{-8-sqrt(2)}) circle (3pt);
    
    \draw[thick] (2,0) -- (2,-1);
    \draw[thick] (2,-1) -- (2,-2);
    \draw[thick] (2,-2) -- (2,-3);
    \draw[thick] (2,-3) -- (2,-4);
    \draw[thick] (2,-4) -- ({2-sqrt(2)/2},{-4-sqrt(2)/2});
    \draw [thick] (2,-4) -- ({2+sqrt(2)/2},{-4-sqrt(2)/2});
    \draw [thick] ({2-sqrt(2)/2},{-4-sqrt(2)/2}) -- (2,{-4-sqrt(2)});
    \draw [thick] ({2+sqrt(2)/2},{-4-sqrt(2)/2}) -- (2,{-4-sqrt(2)});
    \draw [thick] (2,{-4-sqrt(2)}) -- (2,{-5-sqrt(2)});
    \draw [thick] (2,{-5-sqrt(2)}) -- (2,{-6-sqrt(2)});
    \draw [thick] (2,{-6-sqrt(2)}) -- (2,{-7-sqrt(2)});
    \draw [thick] (2,{-7-sqrt(2)}) -- (2,{-8-sqrt(2)});

    \node at (2.3,0) {1};
    \node at (2.3,-1) {2};
    \node at (2.3,-2) {3};
    \node at (2.3,-3) {4};
    \node at (2.35,-4) {5};
    \node at ({1.7-sqrt(2)/2},{-4-sqrt(2)/2}) {6};
    \node at ({2.3+sqrt(2)/2},{-4-sqrt(2)/2}) {6};
    \node at (2.5,{-4-sqrt(2)}) {12};
    \node at (2.4,{-5-sqrt(2)}) {18};
    \node at (2.4,{-6-sqrt(2)}) {17};
    \node at (2.4,{-7-sqrt(2)}) {16};
    \node at (2.4,{-8-sqrt(2)}) {29};
\end{tikzpicture}
\end{center}

\frenchspacing
\bibliographystyle{alpha}
\bibliography{bibl}

\end{document}